\tikzset{->-/.style={decoration={  markings,  mark=at position #1 with
    {\arrow{>}}},postaction={decorate}}}
\tikzset{-<-/.style={decoration={  markings,  mark=at position #1 with
    {\arrow{<}}},postaction={decorate}}}
\theoremstyle{plain}
\newtheorem{theorem}{Theorem}[section]
\newtheorem{lemma}[theorem]{Lemma}
\newtheorem{corollary}[theorem]{Corollary}
\newtheorem{proposition}[theorem]{Proposition}
\theoremstyle{definition}
\newtheorem{definition}[theorem]{Definition}
\newtheorem{construction}[theorem]{Construction}
\newtheorem{example}[theorem]{Example}
\newtheorem{remark}[theorem]{Remark}
\newtheorem{notations}[theorem]{Notations}
\newtheorem{convention}[theorem]{Convention}
\numberwithin{equation}{section}
\numberwithin{equation}{section}
\dedicatory{Dedicated to Jie Xiao on the occasion of his sixtieth birthday}
\newcommand\Old{\bgroup\markoverwith{\textcolor{red}{\rule[0.5ex]{2pt}{0.4pt}}}\ULon}
\def\datum{(\overrightarrow{m},\simeq,\mathbf{gr})}
\def\<{\langle}
\def\>{\rangle}
\def\NN{\mathbb{N}}
\def\ZZ{\mathbb{Z}}
\def\CC{\mathbb{C}}
\newcommand{\add}{\operatorname{add}\hspace{.01in}}
\newcommand{\rep}{\mathsf{rep}}
\newcommand{\repb}{\mathsf{rep}^b}
\newcommand{\Tri}{\operatorname{Tri}^\bullet}
\renewcommand{\mod}{\mathsf{mod}\hspace{.01in}}
\newcommand{\Int}{\operatorname{Int}}
\newcommand{\oInt}{\overrightarrow{\operatorname{Int}}}
\newcommand{\oIntd}{\oInt^{\rho}}
\newcommand{\Hom}{\operatorname{Hom}\nolimits}
\newcommand{\huaHom}{\mathcal{H}om}
\newcommand{\End}{\operatorname{End}\nolimits}
\newcommand{\Ext}{\operatorname{Ext}\nolimits}
\newcommand{\I}{\operatorname{I}\nolimits}
\newcommand{\id}{\operatorname{id}\nolimits}
\newcommand{\ind}{\operatorname{ind}\nolimits}
\newcommand{\Ind}{\operatorname{Ind}\nolimits}
\newcommand{\D}{\operatorname{\mathcal{D}}}
\newcommand{\per}{\operatorname{per}}
\newcommand{\near}[2]{{#1}|_{#2\to}}
\newcommand{\ori}[2]{{#1}|_{#2\to 1-#2}}
\newcommand{\uex}[2]{u^{#1}_{#2}}
\newcommand{\vex}[2]{v^{#1}_{#2}}
\newcommand{\aex}[2]{\alpha^{#1}_{#2}}
\def\numbers{\begin{enumerate}[label=\arabic*{$^\circ$}.]}
\def\ends{\end{enumerate}}
\def\wc{\widetilde{c}}
\def\wg{\widetilde{\gamma}}
\def\we{\widetilde{\eta}}
\def\ws{\widetilde{\sigma}}
\def\wt{\widetilde{\tau}}
\def\wa{\widetilde{\alpha}}
\def\wb{\widetilde{\beta}}
\def\wsx{\ws^\times}
\def\wtx{\wt^\times}
\def\wax{\wa^\times}
\def\wbx{\wb^\times}
\def\wex{\we^\times}
\def\OO{\mathcal{W}_{\operatorname{l.s.}}}
\def\fF{\mathbb{F}^-}
\def\fG{\mathbb{F}^+}
\def\fM{\mathbb{F}}
\def\fE{\mathbb{E}}
\def\fK{\mathbb{K}}
\def\fMM{\mathbb{M}}
\def\arrow{red}
\newcommand{\MCG}{\operatorname{MCG}}
\newcommand\Dehn[1]{\mathrm{D}_{#1}}
\def\T{\mathbf{T}}
\def\M{\mathbf{M}}
\def\Y{\mathbf{Y}}
\def\MP{\mathbf{M}_\mathbf{P}}
\def\YP{\mathbf{Y}_\mathbf{P}}
\def\P{\mathbf{P}}
\def\uas{\operatorname{UA}}	% unknotted arc segment
\def\as{\operatorname{AS}}	%arc segment
\def\ac{\mathbf{A}}
\def\dac{\ac^\ast}
\def\arc{\mathcal{a}}
\def\lu{\mathfrak{p}}
\renewcommand{\k}{\mathbf{k}}
\def\C{\mathcal{C}}
\def\H{\mathcal{H}}
\def\surf{\mathbf{S}} %FST's surface
\def\surfi{\surf^\circ}
\newcommand\Sp{\operatorname{Sp}}
\tikzset{->-/.style={decoration={  markings,  mark=at position #1 with
    {\arrow{>}}},postaction={decorate}}}
\tikzset{-<-/.style={decoration={  markings,  mark=at position #1 with
    {\arrow{<}}},postaction={decorate}}}
\def\arc{\mathfrak{a}}
\def\bads{\overline{\operatorname{Ad}}(\gms)}
\def\wads{\widetilde{\operatorname{Ad}}(\gms)}
\def\nn{node{$\bullet$}}
\def\ww{node[white]{$\bullet$}node[red]{$\circ$}}
\def\grad{\lambda}
\def\OA{\operatorname{OA}}
\def\wOA{\widetilde{\OA}}
\def\gmsx{\gms\x}         %==opened surface==%
\def\x{_\vot}
\def\y{^\vot}
\def\sg{\Lambda}
\newcommand\coho[1]{\operatorname{H}^{#1}}
\newcommand\ho[1]{\operatorname{H}_{#1}}
\newcommand{\PP}{\mathrm{D}}
\newcommand{\Pp}{\mathrm{E}}
\newcommand{\pt}{\star}
\newcommand{\gms}{\surf^\grad}
\def\Amm{\wOA_{\M}^{\M}(\gms)}
\def\Amp{\wOA^{\M}_{\P}(\gms)}
\def\App{\wOA_{\P}^{\P}(\gms)}
\def\CC{\operatorname{CC}}
\def\wCC{\widetilde{\CC}}
\def\ccc{\wCC(\gms)}
\def\Amm{\wOA_{\M}^{\M}(\gms)}
\def\Amp{\wOA^{\M}_{\P}(\gms)}
\def\App{\wOA_{\P}^{\P}(\gms)}
\def\ccc{\wCC(\gms)}
\def\bione{\mathfrak{w}}
\newcommand\fc[1]{\operatorname{\Upsilon}(#1)}
\def\DD{\mathbb{D}}
\def\DBX{\DD(\dac)}
\def\DBY{\DD^\circ(\dac)}
\def\DBZ{\DD^\times(\dac)}
\def\DBXx{\DD(\dac\x)}
\def\DBYx{\DD^\circ(\dac\x)}
\def\DBZx{\DD^\times(\dac\x)}
\def\gUC{\widetilde{\operatorname{UA}}}
\def\care{\widetilde{\operatorname{OC}}_{\text{l.s.}}(\gms)}
\def\period{\aleph}
\def\SW{\mathcal{SFW}}
\def\AW{\mathcal{AFW}}
\def\SPW{\mathcal{SPW}}
\def\APW{\mathcal{APW}}
\def\Dfang{\mathrm{D}^2_\vot}
\def\vot{\text{\Biohazard}}
\def\Vot{\text{\Cancer}}
\def\was{\widetilde{a}^{\ws}}
\def\has{\widehat{a}^{\ws}}
\def\wbs{\widetilde{b}^{\ws}}
\def\hbs{\widehat{b}^{\ws}}
\def\uas{a^{\ws}}
\def\ubs{b^{\ws}}
\def\ubt{b^{\wt}}
\def\goodind{\per^@\sg}
\def\careII{\widetilde{\operatorname{TA}}(\gms)}
\def\?{B}
\def\rad{\operatorname{rad}}
\newcommand{\cpy}[1]{{\mathcal{X}}^{\bullet}_{#1}}
\newcommand{\cply}[2]{{\mathcal{X}}^{#2}_{#1}}
\newcommand{\pmo}[2]{\xi_{#1}(#2)}
\newcommand{\omo}[2]{y_{#1}(#2)}
\newcommand{\mdf}[1]{\Xi(#1)}
\def\wks{\widetilde{k}^{\ws}}	% index for sigma in ~omega
\def\wkt{\widetilde{k}^{\wt}}	% index for tau in ~omega
\def\xis{{\xi}^{\ws}}
\def\xit{{\xi}^{\wt}}
\def\hks{\widehat{k}^{\ws}}	% index for sigma in ^omega
\def\hkt{\widehat{k}^{\wt}}	% index for tau in ^omega
\newcommand{\fout}[1]{f^{out}_{#1}}
\newcommand{\fin}[1]{f^{in}_{#1}}
\newcommand{\fen}[2]{\frac{#1}{#2}}
\def\ks{k^{\ws}}	% index for sigma
\def\kt{k^{\wt}}	% index for tau
\def\Vs{V^{\ws}}	% intersection for sigma
\def\Vt{V^{\wt}}	% intersection for tau
\def\wsi{\varsigma^{\ws}}	% intersection index for sigma
\def\wti{\varsigma^{\wt}}	% intersection index for tau
\def\ints{q}	% intersection point
\def\cias{\operatorname{AS}^{\times}}	% set of punctured arc segments
\def\icias{\operatorname{AS}^{\times}_{\operatorname{in}}}	% set of interior punctured arc segments
\def\wicias{\widetilde{\operatorname{AS}}^{\times}_{\operatorname{in}}}	% set of graded interior punctured arc segments
\def\owicias{\underrightarrow{\widetilde{\operatorname{AS}}}^{\times}_{\operatorname{in}}}	% set of oriented graded interior punctured arc segments
\def\upas{\operatorname{AS}^{\circ}}	% set of unpunctured arc segments
\def\wupas{\widetilde{\operatorname{AS}}^{\circ}} % set of graded unpunctured arc segments
\def\owupas{\underrightarrow{\widetilde{\operatorname{AS}}}^{\circ}}
\def\Was{\widetilde{\operatorname{AS}}} % set of graded arc segments
\def\oWas{\underrightarrow{\widetilde{\operatorname{AS}}}} % set of graded arc segments
\def\exi{l}
\newcommand{\asu}[3]{{#1}_{#2 \sim #3}}
\newcommand{\aso}[3]{{#1}_{#2 \to #3}}
\def\wmu{\widetilde{\mu}}
\def\wnu{\widetilde{\nu}}
\def\huaRad{\mathcal{R}ad}
\newcommand{\ec}[1]{\overline{#1}}
\def\Gr{ \overrightarrow{\mbox{gr}} }
\newcommand{\inter}[1]{#1 _{\mathrm{in}}}
\newcommand{\tou}[1]{#1 _{\mathrm{en}}}
\def\bush{\k\overline{S}}
\def\bushz{\k\overline{S^+}}
\def\bushf{\k\overline{S^-}}
\def\hull{\add\bush}
\def\OME{\Omega}
\def\OMEs{\OME^{\sim}}        %original hat
\def\OMEpm{\OME^{\pm}}      %original overline
\def\OMEx{\OME^{\times}}      %original tilde
\def\OMEepm{\OME^{\emptyset,\pm}} %\OME\cup\OMEpm
\def\OMEsx{\OME^{\sim,\times}} %\OMEs\cup\OMEx
\def\vs{\mathbb{V}}
\def\bvs{\overline{\vs}}
\def\qst{Q^{\ws,\wt}}
\def\xian{arc\;}
\def\gvec{\underline{\operatorname{vect}}}
\begin{document}
%=========================================================
\title{Two geometric models for graded skew-gentle algebras}
\author{Yu Qiu}
\address{Qy:
	Yau Mathematical Sciences Center and Department of Mathematical Sciences,
	Tsinghua University,
    100084 Beijing,
    China.
    \&
    Beijing Institute of Mathematical Sciences and Applications, Yanqi Lake, Beijing, China}
\email{yu.qiu@bath.edu}
\author{Chao Zhang}
\address{Zc:
Department of Mathematics,
School of Mathematics and Statistics,
Guizhou University,
550025, Guiyang,
China.}
\email{zhangc@amss.ac.cn}
\author{Yu Zhou}
\address{Zy:
Yau Mathematical Sciences Center,
Tsinghua University,
100084 Beijing,
China}
\email{yuzhoumath@gmail.com}
%
%\dedicatory{}
\subjclass[2020]{16E35, 16G20.}
\keywords{Graded skew-gentle algebras, punctured marked surface, topological Fukaya categories, string model}

%=========================================================

\begin{abstract}
In Part~\ref{part:1}, we classify (indecomposable) objects in the perfect derived category $\mathrm{per}\Lambda$ of a graded skew-gentle algebra $\Lambda$, generalizing technique/results of Burban-Drozd and Deng to the graded setting. We also use the usual punctured marked surface $\mathbf{S}^\lambda$ with grading (and a full formal arc system) to give a geometric model for this classification.

In Part~\ref{part:2}, we introduce a new surface $\mathbf{S}^\lambda_\text{\Biohazard}$ with binary from $\mathbf{S}^\lambda$ by replacing each puncture $P$ by a boundary component $\text{\Biohazard}_P$ (called a binary) with one marked point, and composing an equivalent relation $D_{\text{\Biohazard}_P}^2=\mathrm{id}$, where $D_{\text{\Biohazard}_p}$ is the Dehn twist along $\text{\Biohazard}_P$. Certain indecomposable objects in $\mathrm{per}\Lambda$ can be also classified by graded unknotted arcs on $\mathbf{S}^\lambda_\text{\Biohazard}$. Moreover, using this new geometric model,
we show that the intersections between any two unknotted arcs provide a basis of the morphisms between the corresponding arc objects, i.e. formula $\mathrm{Int}=\mathrm{dim}\mathrm{Hom}$ holds.
\end{abstract}

\maketitle
\tableofcontents \addtocontents{toc}{\setcounter{tocdepth}{1}}

\setlength\parindent{0pt}
\setlength{\parskip}{5pt}

%=========================================================
\section*{Introduction}
%=========================================================
\subsection{Overall}
%=========================================================
Skew-gentle algebras were introduced by Gei{\ss}-de la Pe\~{n}a \cite{GP},
as a generalization of gentle algebras. They are important classes of representation-tame finite-dimensional algebras, where typical examples are type $A$/$\tilde{A}$ (for gentle case) and type $D$/$\tilde{D}$ (for skew-gentle case).

The indecomposable modules of a skew-gentle algebra (or, more generally, a clannish algebra) were classified by Crawley-Boevey \cite{CB89}, Deng \cite{Deng} and Bondarenko \cite{B}. A basis of the space of morphisms between (certain) indecomposable modules was given by Gei{\ss} \cite{G}. The indecomposable objects in the (perfect) derived category of a skew-gentle algebra were classified by Bekkert-Marcos-Merklen \cite{BMM} and Burban-Drozd \cite{BD}.

In the first part of this paper, we classify indecomposable objects in the perfect derived category $\per\Lambda$ of a graded skew-gentle algebra $\Lambda$, generalizing technique/results of Burban-Drozd and Deng to the graded setting and providing the corresponding geometric model via the usual punctured marked surface $\surf$ with grading $\grad$ (and with full formal arc system $\ac$). In the second part of this paper, we introduce a new surface $\gmsx$ by modifying punctures to binaries and show that the intersections between arcs provide a basis for the morphism space between the corresponding \xian objects.

%=========================================================
\subsection{cluster theory}
%=========================================================
One of our motivations comes from Fomin-Zelevinsky's theory of cluster algebras \cite{FZ}, which has become very popular during the last 20 years as cluster phenomenon appears in various areas in mathematics, as well as in physics. In particular, we are interested in the geometric models for cluster algebras and the additive categorification of cluster algebras via quiver representations. The former one, a marked surface (with punctures) $\surf$, was introduced by Fomin-Shapiro-Thurston (=FST) \cite{FST}, where simple tagged open arcs, i.e. simple arcs connecting marked points/punctures with tagging, correspond to cluster variables. The tricky part in their story is the tagging, which provides the extra $\ZZ^2$ symmetry at each puncture. The latter one, the (Calabi-Yau-2) cluster category (for this geometric model)
$$\C(\surf)=\per\Gamma_\T/\D_{fd}(\Gamma_\T)$$
was developed by Buan-Marsh-Reineke-Reiten-Todorov \cite{BMRRT}, Derksen-Weyman-Zelevinsky \cite{DWZ}, Amoit \cite{A}, Keller \cite{Ke11}, Cerulli-Irelli-Labardini-Fragoso \cite{CI-LF}, and so on. Here, $\T$ is a tagged triangulation, $\Gamma_\T=(Q_\T, W_\T)$ is the Ginzburg dg algebra of the quiver with potential $(Q_\T, W_\T)$ associated to $\T$ and $\D_{fd}(\Gamma_\T)\subset\per\Gamma_\T$ are finite-dimensional/perfect derived categories of $\Gamma_\T$. In our previous work \cite{QZ1}, we prove that when $\surf$ has non-empty boundary, there is a bijection
\[
    \underline{M}\colon \mathrm{OA}^\times(\surf) \to \C^@(\surf),
\]
from the set $\mathrm{OA}^\times(\surf)$ of tagged open arcs in $\surf$
to the set $\C^@(\surf)$ of \xian (indecomposable) objects in the category $\C(\surf)$
such that
\[
    \Int(\gamma_1^{\kappa_1},\gamma_2^{\kappa_2})=
    \dim\Ext_{\C(\surf)}^1(\underline{M}(\gamma_1^{\kappa_1}),\underline{M}(\gamma_2^{\kappa_2})).
\]
See also \cite{CCS} for disk case (i.e. type $A$), \cite{S} for once-punctured disk case (i.e. type $D$), \cite{ABCP,BZ,ZZZ,CS} for unpunctured case (i.e. gentle type), and \cite{AP} for punctured case via orbifolds. Note that the result above is based on the following facts. When choosing an admissible tagged triangulation $\T$ of $\surf$, the corresponding Jacobian algebra $J(Q_\T,W_\T)$ is a skew-gentle algebra \cite{GL-FS,QZ1} and we have an equivalence \cite{KR,A}
\[
    \C(\surf)/T_\T\simeq\mod J(Q_\T,W_\T)
\]
for the canonical cluster tilting object $T_\T$ associated to $\T$.
Thus, the geometric model $\surf$ in fact also provides a model for the module category of a skew-gentle algebra, see \cite{CS,BS,HZZ}.

Moreover, from the point of view of Ikeda-Qiu \cite{IQ1,IQ2}, the perfect derived categories of finite-dimensional algebras can be realized as cluster-$\mathbb{X}$ categories. This motivates us to give the geometric model for perfect derived categories. In fact, for the gentle algebras case, there are many developments in this direction, as well as some developments in the skew-gentle case (see the next subsection).

%=========================================================
\subsection{Topological Fukaya categories}
%=========================================================

The surface models for categories arising from the representation of algebras share a lot of similarities with Fukaya categories in symplectic geometry. In fact, Haiden-Katzarkov-Kontsevich (=HKK) \cite{HKK} constructed the perfect derived category of
a graded gentle algebra $\sg$ as the topological Fukaya category of a graded marked surface $\gms$. They classify all indecomposable objects by showing that there is a bijection
\[
    M \colon \mathrm{OC}^{l.s.}(\gms) \to\Ind\per\gms\colon=\Ind\per\sg,
\]
between the set $\mathrm{OC}^{l.s.}(\gms)$ of open curves with local system and the set of indecomposable objects in $\per\sg$. Later, this geometric model was used to study the derived invariants of graded gentle algebras, see \cite{OPS18,LP1,APS19}, where a basis of the space of morphisms is interpreted via intersections of curves.

The key extra information in HKK's model $\gms$, compared to FST's model $\surf$, is that one needs to add a grading (a cohomology class in the projectivization of the tangent space of $\surf$) to realize the shift in the derived category.
The grading information is implicitly contained in the triangulation
after decorating $\surf$ with a set of decorations, introduced in \cite{QQ} (cf. also \cite{QQ2,QZ2}).

As the results of \cite{FST,QZ1,S} work for punctured case, it is natural to generalize things above to the punctured case for any graded skew-gentle algebra $\sg$. The main difficulty lies on interpreting the $\ZZ_2$ symmetry at punctures, since FST's tagging trick needs some modification. Different from \cite{AB,L-FSV,A21} which uses the technique of orbifold, our solution, for generalizing HKK's model to the punctured case, is to introduce binaries, where we replace each puncture $P$ with a boundary component with a marked point (called a binary) $\Vot_P$, and compose the condition that the square of the Dehn twist along $\Vot_P$ equals the identity (cf. Figure~\ref{fig:000} below).
\begin{figure}[htpb]
	\begin{tikzpicture}[scale=.3]
    \clip (-20,-3) rectangle (20,9);
    \begin{scope}[shift={(-15,0)}]
%	\draw[ultra thick]plot [smooth,tension=1] coordinates {(3,-4.5) (-90:4) (-3,-4.5)};
	\draw[ultra thick,fill=gray!10]
        (7,2)node{\Huge{$\rightsquigarrow$}};
%    \draw[red] (90:-4) .. controls +(45:5) and +(30:4) .. (90:3.5);
%    \draw[red] (90:-4) .. controls +(135:5) and +(150:4) .. (90:3.5);
    \draw[blue] (90:8) .. controls +(-45:4) and +(-30:7) .. (90:.5) (90:.25)node[below]{$P$}
        (1,0)node[rotate=20]{$+$};
    \draw[cyan, ultra thick] (90:8) .. controls +(225:4) and +(-150:7) .. (90:.5);
    \draw[Green] (90:8)\nn;
	\draw[blue, thick] (90:.5)\nn;
%	\draw[red] (-90:4)\ww ;
    \end{scope}
%	\draw[ultra thick]plot [smooth,tension=1] coordinates {(3,-4.5) (-90:4) (-3,-4.5)};
	\draw[ultra thick,fill=gray!10] (90:2) ellipse (1.5) node {$\Vot_P$}
        (6,2)node{\Huge{=}};
%    \draw[red] (90:-4) .. controls +(45:5) and +(30:4) .. (90:3.5);
%    \draw[red] (90:-4) .. controls +(135:5) and +(150:4) .. (90:3.5);
    \draw[blue] (90:8) .. controls +(-45:4) and +(-30:7) .. (90:.5) (90:.25) node[below]{$m_P$};
    \draw[cyan, ultra thick] (90:8) .. controls +(225:4) and +(-150:7) .. (90:.5);
    \draw[Green] (90:8)\nn;
	\draw[blue, thick] (90:.5)\nn (0,3.5)node[white,above]{$y_P$}\ww;
%	\draw[red] (-90:4)\ww ;
    \begin{scope}[shift={(13,0)}]
%	\draw[ultra thick]plot [smooth,tension=1] coordinates {(3,-4.5) (-90:4) (-3,-4.5)};
	\draw[ultra thick,fill=gray!10] (90:2) ellipse (1.5) node {$\Vot_P$};
%    \draw[red] (90:-4) .. controls +(45:5) and +(30:4) .. (90:3.5);
%    \draw[red] (90:-4) .. controls +(135:5) and +(150:4) .. (90:3.5);
    \draw[blue] (90:8) .. controls +(-45:5) and +(-30:7) .. (90:.5);
    \draw[cyan, ultra thick] (90:8)
        .. controls +(-30:7) and +(-30:9) .. (0,-1)
        .. controls +(150:5) and +(160:3) .. (0,4.5)
        .. controls +(-20:4) and +(-15:2) .. (0,.5);
    \draw[Green] (90:8)\nn;
	\draw[very thick,blue] (90:.5)\nn  (0,3.5)\ww;
%	\draw[red] (-90:4)\ww ;
    \end{scope}	
    \end{tikzpicture}
\caption{$\Dehn{\Vot_P}^2$-action}\label{fig:000}
\end{figure}
On the level of classification of objects (via arcs/curves) in $\per\sg$,
this is equivalent to the tagging model. However, when studying the morphisms between objects, this new model provides a better correspondence between morphisms and (clockwise angles from one arc to the other at) intersections as shown in Figure~\ref{fig:001}. Although numerically, one can still use the tagging model by ad-hoc definition of tagged intersection, cf. Figure~\ref{fig:oriented int} and Lemma~\ref{lem:int}.

\begin{figure}[htpb]
	\begin{tikzpicture}[scale=.5]
	\draw[thick, Green,->-=.5,>=stealth](-1,1)to[bend left=30](1,1);
	\draw[thick, Green,-<-=.5,>=stealth](-1,-1)to[bend left=-30](1,-1);
	\draw[blue,very thick](2,2)to(-2,-2)(-2,2)to(2,-2);
    \begin{scope}[shift={(8,-2)}]
	\draw[thick, Green,->-=.5,>=stealth](-1.7,1.7)to[bend left=30](1.7,1.7);
    \draw[ultra thick](3,0)to(-3,0);
	\draw[blue,very thick](4,4)to(0,0)\nn to(-4,4);
    \end{scope}	
    \end{tikzpicture}
\caption{Clockwise angles from one arc to the other at intersections}\label{fig:001}
\end{figure}

Recently, there were also some works concerning geometric models for derived categories of skew-gentle algebras \cite{A21,AB,L-FSV}. One feature of our work is to consider graded skew-gentle algebras, which will be useful for silting theory, and another is to give a basis for morphism spaces (of certain objects), which was not constructed before even in an algebraic or combinatorial way.

%=========================================================
\subsection{Mirror symmetry}
%=========================================================
Our final motivation comes from homological mirror symmetry (HMS), where the Fukaya(-type) categories originally come from. For instance, Lekili-Polishchuk (\cite{LP2}) shows the following version of HMS
\[\D_{fd}(\gms)\cong \D^b\mathrm{Coh}(C)\]
for certain graded marked surface $\gms$ and nodal curve $C$, where $\D^b\mathrm{Coh}(C)$ is the derived category of coherent sheaves on $C$.

Furthermore, a geometric correspondence arising from a (general) HMS
\[\D\mathrm{Fuk}(Y)=\D^b\mathrm{Coh}(X)\]
is the conjectural isomorphism between complex manifolds:
$$\mathrm{Stab}\D\mathrm{Fuk}(Y)=\mathcal{M}(X),$$
where $(Y,X)$ is a mirror pair, $\mathrm{Stab} \D\mathrm{Fuk}(Y)$ is the space of Bridgeland stability conditions and
$\mathcal{M}(X)$ is the moduli space of certain Kahler structure on $X$.
While the general case is extremely hard,
there are several works that have been done during the past couple of years.
Namely, for the categories $\D(\surf^?_?)$ being the ones above
with the corresponding surface model $\surf^?_?$ (due to \cite{HKK,BS,KQ2,IQ2}),
one has
\[\ \mathrm{Stab}\D(\surf^?_?)=\mathrm{FQuad}(\surf^?_?), \]
where $\mathrm{FQuad}$ is the moduli space of quadratic differentials on $\surf^?_?$ with pre-described singularities and framing. We plan to use our new model to generalize HKK's result to the punctured case in the follow-up works.

%=========================================================
\subsection{Contents}
%=========================================================
In Part~\ref{part:1}, we focus on a classification of objects
and have the following:
\begin{itemize}
\item In \S~\ref{sec:GSGA}, we review basics of graded skew-gentle algebras and graded punctured marked surfaces.
\item In \S~\ref{sec:bush}, we associate a bush to a full formal closed arc system and apply results in \cite{Deng} to get a classification of representations of the bush.
\item In \S~\ref{sec:class}, we review works of Burban-Drozd on triple categories, and establish a relationship between bushes and triple categories. Then we classify indecomposable objects in the perfect derived category $\per\sg$ of a graded skew-gentle algebra $\sg$ via admissible curves with local system on graded punctured marked surfaces (Theorem~\ref{thm1}).
\end{itemize}
In Part~\ref{part:2}, we will focus on morphisms between arc objects
(the indecomposable objects that correspond to arcs in the geometric model)
and prove Theorem~\ref{thm:part2}.
More precisely:
\begin{itemize}
\item In \S~\ref{sec:AGM}, we introduce the new geometric model--graded marked surface with binary and
graded unknotted arcs on it (which will correspond to arc objects).
\item In \S~\ref{sec:DG-a}, we associate dg models to
graded unknotted arcs and show that such a construction is compatible with
the geometric model in Part~\ref{part:1} (Theorem~\ref{thm2}).
\item In \S~\ref{sec:mor}, we prove the $\Int=\dim\Hom$ formula (Theorem~\ref{thm:int=dim}), in the way that we construct a morphism explicitly from each intersection between any two graded unknotted arcs, which are shown to form a basis of the morphism space between the corresponding arc objects.
\end{itemize}
In Appendix~\ref{app:period}, the relative notations of period words are given. In Appendix~\ref{app:pfBD}, a proof of Theorem~\ref{thm:BD} is given. In Appendix~\ref{app:pfs}, technical proofs of certain results in Section~\ref{sec:mor} are given.

%=========================================================

\subsection*{Conventions}

%=========================================================
In the paper, $\k$ denotes a field. All algebras are finite-dimensional algebras over $\k$. For an algebra $\sg$, a $\sg$-module is a finitely generated right module of $\sg$, and we denote by $\mod \sg$ the category of $\sg$-modules. For a graded algebra $\sg$, the degree of a homogeneous element $a\in \sg$ is denoted by $|a|$. For any two arrows $\alpha$ and $\beta$ in a quiver $Q$, we denote by $\alpha\beta$ the path in $Q$ first $\alpha$ then $\beta$. For any two morphisms $f,g$, the composition $gf=g\circ f$ stands for first $f$ then $g$.

For any equivalence relation (or a symmetric relation that can be uniquely extended to an equivalence relation) on a set $X$ and an element $x\in X$, we use $\ec{x}$ to denote the equivalence class containing $x$. To simplify the notations, when $\ec{x}$ contains exactly one element, we also use $x$ to denote the equivalence class.

%\listofchanges

%=========================================================
\subsection*{Acknowledgement}
%=========================================================

We would like to thank Xiao-Wu Chen, Bangming Deng, Dong Yang, and Bin Zhu for their helpful discussions. Parts of the results in this paper were presented at conferences International Workshop on
Cluster Algebras and Related Topics at
Morningside Center of Mathematics, CAS, 2021, and New developments in representation theory arising from cluster algebras at Isaac Newton Institute for Mathematical Sciences, 2021. The authors would like to thank the organizers of these conferences.
We would also like to thank Ping He for assisting on tikz-drawing of some of the figures and for proofreading.
This work was supported by National Natural Science Foundation of China (Grant Nos. 12271279, 12031007 and 11961007) and National Key R\&D Program of China (NO. 2020YFA0713000).

%=========================================================
\part{Classification of objects and the usual geometric model}\label{part:1}
%=========================================================
%=========================================================
\section{Graded skew-gentle algebras and punctured marked surfaces}\label{sec:GSGA}
%=========================================================
%=========================================================
\subsection{Graded skew-gentle algebras}\label{subsec:clannish}
%=========================================================

%Skew-gentle algebras (a special class of clannish algebras \cite{CB89}) were introduced in \cite{GP}.
Skew-gentle algebras, modeled on gentle algebras \cite{AS87}, were introduced in \cite{GP} as a special class of clannish algebras defined in \cite{CB89}. We consider its graded version in this paper.

A quiver $Q$ is a tuple $(Q_0,Q_1,s,t)$, where $Q_0$ is the set of \emph{vertices}, $Q_1$ is the set of \emph{arrows}, and $s,t:Q_1\rightarrow Q_0$ are the \emph{start} and \emph{terminal} functions of arrows, respectively. We call an arrow $\alpha\in Q_1$ a \emph{loop} if $s(\alpha)=t(\alpha)$. A \emph{path} of \emph{length} $n$ in $Q$ is a sequence $a_1a_2\cdots a_n$ of arrows $a_i,1\leq i\leq n$ with $t(a_i)=s(a_{i+1})$ for any $1\leq i\leq n-1$. For each vertex $i\in Q_0$, we denote by $e_i$ the trivial path of length 0 associated to it.
We denote by $\k Q$ the path algebra of $Q$.
A \emph{relation set} of $Q$ is a finite set of linear combinations of paths in $Q$.

A \emph{graded quiver} is a quiver $Q$ with a grading, i.e., a function $|\cdot|:Q_1\to \ZZ$. For any arrow $\alpha\in Q_1$, we call $|\alpha|$ the \emph{degree} of $\alpha$. For a path $p=\alpha_1\cdots\alpha_n$ of length $n\geq 1$, the degree of $p$ is defined to be $|p|:=\sum_{i=1}^n|\alpha_i|$. For a trivial path $e_i$, $i\in Q_0$, the degree $|e_i|$ is defined to be 0. The path algebra of a graded quiver is naturally a $\ZZ$-graded algebra.

\begin{definition}\label{def:gentle}
A pair $(Q,I)$ of a graded quiver $Q$ and a relation set $I$ is called a \emph{gentle pair} provided the following hold.
\begin{enumerate}
\item[(G1)] Any element in $I$ is a path of length 2.
\item[(G2)] For each vertex in $Q_0$, there are at most two arrows in and at most two arrows out.
\item[(G3)] For any arrow $\alpha$, there is at most one arrow $\beta$ (resp. $\gamma$) with
$\alpha\beta\in I$ (resp. $\gamma\alpha\in I$).
\item[(G4)] For any arrow $\alpha$, there is at most one arrow $\beta$ (resp. $\gamma$) with
$\alpha\beta\notin I$ (resp. $\gamma\alpha\notin I$).
\end{enumerate}
\end{definition}

\begin{definition}\label{def:sg1}
	A triple $(Q,\Sp,I)$, with $Q$ a graded quiver, $\Sp$ a subset of $Q_0$ and $I$ a relation set, is called \emph{graded skew-gentle} if the pair $(Q^{\mathrm{sp}},I^{\mathrm{sp}})$ is gentle, where
	\begin{itemize}
		\item $Q_0^{\mathrm{sp}}=Q_0$,
		\item $Q_1^{\mathrm{sp}}=Q_1\cup\{\varepsilon_i\mid i\in \Sp\}$ with $s(\varepsilon_i)=t(\varepsilon_i)=i$ and $|\varepsilon_i|=0$, and
		\item $I^{\mathrm{sp}}=I\cup\{\varepsilon_i^2\mid i\in \Sp \}$.
	\end{itemize}
A finite-dimensional graded algebra $\Lambda$ is called \emph{skew-gentle} if $\Lambda$ is Morita equivalent to
$$\boxed{\sg(Q,\Sp,I):=\k Q^{\mathrm{sp}}/\<I\cup\{\varepsilon_i^2-\varepsilon_i\mid i\in \Sp\}\>}$$
for a graded skew-gentle triple $(Q,\Sp,I)$. When $\operatorname{Sp}=\emptyset$, we call the graded algebra $\sg$ \emph{gentle}.
\end{definition}

\begin{example}\label{exm:clan}
	Let $Q$ be the following quiver
	\[
	\xymatrix@C=3pc@R=1pc{
		&1\ar[ld]_b&& 6\ar[ld]_g\\
		2\ar[rd]_c&&4\ar[lu]_a\ar[rd]_e\\
		&3\ar[ru]_d&&5 \ar[uu]_f\\
	}\]
	with $|a|=|d|=-1,|b|=|c|=|e|=|g|=|f|=0$, $\Sp=\{1,2\}$ and $I=\{ab,bc,da,ef, fg, ge\}$. The quiver $Q^{\mathrm{sp}}$ is the following quiver
	\[
	\xymatrix@C=3pc@R=1pc{
		&1\ar@{->}@(ur,ul)[]_{\varepsilon_1}\ar[ld]_b&& 6\ar[ld]_g\\
		2\ar@{->}@(lu,ld)[]_{\varepsilon_2}\ar[rd]_c&&4\ar[lu]_a\ar[rd]_e\\
		&3\ar[ru]_d&&5 \ar[uu]^f\\
	}\]
	with $|\varepsilon_1|=|\varepsilon_2|=0$, and the relation set $I^{\mathrm{sp}}=I\cup\{\varepsilon_1^2,\varepsilon_2^2\}$. By definition, $(Q^{\mathrm{sp}},I^{\mathrm{sp}})$ is a gentle pair. So we get the following graded skew-gentle algebra:
	$$Q^{\mathrm{sp}}/\<ab,bc,da,ef,fg,ge,\varepsilon_1^2-\varepsilon_1,\varepsilon_2^2-\varepsilon_2\>.$$
\end{example}

%=========================================================
\subsection{Alternative description}\label{subsec:clannish2}
%=========================================================

In what follows, we will use an equivalent definition of graded skew-gentle algebras, which is the graded version of that in \cite[Section~2]{BD}.

For any subset $\widetilde{\Sigma}\subseteq\{1,\cdots,m\}$, let $M_{m,\widetilde{\Sigma}}(\k)$ be the $\k$-algebra of $(m+|\widetilde{\Sigma}|)\times(m+|\widetilde{\Sigma}|)$ matrices whose entries are in $\k$. The rows and columns of a matrix in $M_{m,\widetilde{\Sigma}}(\k)$ are both indexed by the set $\{1,\cdots,m\}\cup\{j^+,j^-\mid j\in\widetilde{\Sigma}\}\setminus\widetilde{\Sigma}$ whose elements are presented by $j^\kappa$ with $1\leq j\leq m$, $\kappa=\emptyset$ for $j\notin \widetilde{\Sigma}$, and $\kappa\in\{+,-\}$ for $j\in \widetilde{\Sigma}$. Denote by $E_{j_1^{\kappa_1},j_2^{\kappa_2}}$ the matrix in $M_{m,\widetilde{\Sigma}}(\k)$ whose $(j_1^{\kappa_1},j_2^{\kappa_2})$-entry is $1$ and the other entries are 0. Let $|\cdot|$ be a grading on $M_{m,\widetilde{\Sigma}}(\k)$ such that each $E_{j_1^{\kappa_1},j_2^{\kappa_2}}$ is homogeneous and $|E_{j_1^{\kappa_1},j_2^{\kappa_2}}|=|E_{i_1^{\xi_1},i_2^{\xi_2}}|$ if $j_1=i_1$ and $j_2=i_2$. Set $\chi_{j_1,j_2}=|E_{j_1^{\kappa_1},j_2^{\kappa_2}}|$. By the compatibility between grading and multiplication, we have
\begin{itemize}
    \item $\chi_{j,j}=0$ for any $1\leq j\leq m$,
    \item $\chi_{j_2,j_1}=-\chi_{j_1,j_2}$ for any $1\leq j_1,j_2\leq m$, and
    \item $\chi_{j_1,j_2}=\sum_{k=j_1}^{j_2-1}\chi_{k,k+1}$ for any $1\leq j_1<j_2\leq m$.
\end{itemize}
It follows that the grading is determined by the values $\chi_{j,j+1}$, $1\leq j<m$.

For any matrix $X\in M_{m,\widetilde{\Sigma}}(\k)$, denote by $X_{j_1^{\kappa_1},j_2^{\kappa_2}}$ the $(j_1^{\kappa_1},j_2^{\kappa_2})$-entry of $X$. Let $T_{m,\widetilde{\Sigma}}$ be the graded subalgebra of $M_{m,\widetilde{\Sigma}}(\k)$ as follows:
$$T_{m,\widetilde{\Sigma}}=\{X\in M_{m,\widetilde{\Sigma}}(\k)\mid X_{j_1^{\kappa_1}j_2^{\kappa_2}}=0\text{ for $j_1>j_2$}\}.$$
Then $\{E_{j_1^{\kappa_1},j_2^{\kappa_2}}\mid 1\leq j_1\leq j_2\leq m\}$ is a basis of $T_{m,\widetilde{\Sigma}}$.

\begin{definition}\label{def:sg}
    A \emph{graded skew-gentle datum} is a datum $(\overrightarrow{m},\simeq,\mathbf{gr})$ consisting of the following:
    \begin{itemize}
	\item $\overrightarrow{m}=(m_1,\cdots,m_t)\in\NN^t_{\geq 1}$,
	\item $\simeq$ is a symmetric (but not necessarily reflexive) relation on the set
	$$\OME=\OME(\overrightarrow{m}):=\{(i,j)\mid 1\leq i\leq t,1\leq j\leq m_i \}$$
	such that for any $(i_1,j_1)\in\OME$, there is exactly one $(i_2,j_2)\in\OME$ with $(i_1,j_1)\simeq(i_2,j_2)$.
	\item $\mathbf{gr}=(\Gr_1,\cdots,\Gr_t)$, where $$\Gr_i=(\chi_{1,2}^{(i)},\cdots,\chi_{m_i-1,m_i}^{(i)})\in\ZZ^{m_i-1}, 1\leq i\leq t.$$
    \end{itemize}
    We denote $\fc{\OME}=\{(i,j)\in\OME\mid (i,j)\simeq(i,j) \}$. The corresponding \emph{graded skew-gentle} algebra $\sg=\sg\datum$ is defined as follows:
    \begin{itemize}
	\item $\widetilde{\Sigma}_i=\{1\leq j\leq m_i\mid (i,j)\in\fc{\OME} \}\subset\{1,2,\cdots,m_i\}$,
	\item $H_i=T_{m_i,\widetilde{\Sigma}_i}$ whose grading is determined by $\chi^{(i)}_{j,j+1}$, $1\leq j< m_i$,
	\item $H=H\datum=H_1\times\cdots\times H_t$, and
	\item the subalgebra $\Lambda\subseteq H$ is
        \[\left\{(X(i))_{1\leq i\leq t}\in H\,\middle\vert\, \begin{matrix} X(i_1)_{j_1j_1}=X(i_2)_{j_2j_2}&\text{for $(i_1,j_1)\simeq (i_2,j_2)\notin\fc{\OME}$},\\ X(i)_{j^+j^-}=0=X(i)_{j^-j^+}& \text{for $(i,j)\in\fc{\OME}$}
        \end{matrix} \right\}.\]
    \end{itemize}
\end{definition}

\begin{remark}\label{rmk:new}
    For any graded skew-gentle datum $(\overrightarrow{m},\simeq,\mathbf{gr})$, reordering $m_1,\cdots,m_t$ if necessary, we may assume that there is an integer $1\leq N\leq t$ such that $m_i=1$ if and only if $i> N$. We define $\overrightarrow{m}'=\{1,\cdots,m_{N}\}$ and a symmetric relation $\simeq'$ on the set $\Omega(\overrightarrow{m}')$ by $(i_1,j_1)\simeq'(i_2,j_2)$ if and only if $(i_1,j_1)\simeq(i_2,j_2)$. Then we get a datum $(\overrightarrow{m}',\simeq')$ in the sense of \cite[Definition~2.3]{BD}, whose corresponding skew-gentle algebra coincides the ungraded version of $\sg\datum$.
\end{remark}

The relation $\simeq$ on $\OME$ can be extended uniquely to an equivalence relation (by adding the reflexive property). So we have the quotient set $\OMEs:=\OME/\simeq$, i.e., $$\OMEs=\{\{(i_1,j_1),(i_2,j_2)\}\mid (i_1,j_1)\simeq(i_2,j_2)\notin\fc{\OME}\}\cup\fc{\OME}.$$
For any $(i,j)\in\OME$, set
$$\OMEpm(i,j):=\begin{cases}
\{(i,j^+),(i,j^-)\}&\text{if $(i,j)\in\fc{\OME}$,}\\
\{(i,j)\}&\text{if $(i,j)\notin\fc{\OME}$.}
\end{cases}$$
Then for any $1\leq i\leq t$, the union $\bigcup_{1\leq j\leq m_i}\OMEpm(i,j)$ is the index set of matrices in $T_{m_i,\widetilde{\Sigma}_i}$. Set
$$\OMEpm=\bigcup_{(i,j)\in\OME}\OMEpm(i,j).$$
There is an induced symmetric relation $\simeq$ on $\OMEpm$ that $(i_1,j_1)\simeq(i_2,j_2)$ if $(i_1,j_1)\simeq(i_2,j_2)\notin\fc{\OME}$, and that $(i,j^+)\simeq(i,j^+)$ and $(i,j^-)\simeq(i,j^-)$ for any $(i,j)\in\fc{\OME}$. Denote
$$\fc{\OMEpm}=\{(i,j^+),(i,j^-)\mid (i,j)\in\fc{\OME} \}.$$
Similarly as $\OME$, we have the quotient set $\OMEx:=\OMEpm/\simeq$,
$$\OMEx=\{\{(i_1,j_1),(i_2,j_2)\}\mid (i_1,j_1)\simeq(i_2,j_2)\notin\fc{\OMEpm}\}\cup\fc{\OMEpm}.$$

\begin{remark}\label{rmk:datum to triple}
We give a brief explanation of the equivalence between Definition~\ref{def:sg1} and Definition~\ref{def:sg}. Let $(\overrightarrow{m},\simeq,\mathbf{gr})$ be a graded skew-gentle datum. We associate a triple $(Q=(Q_0,Q_1,s,t,|\cdot|),\operatorname{Sp},I)$, where
\begin{itemize}
\item $Q_0=\OMEs$,
\item $Q_1=\{\alpha_{(i,j)}:\overline{(i,j-1)}\to\overline{(i,j)}\mid(i,j)\in\OME\}$ with the degree $|\alpha_{(i,j)}|=\chi^{(i)}_{j-1,j}$,
\item $\operatorname{Sp}=\fc{\OME}$, and
\item $I=\{\alpha_{(i_1,j_1)}\alpha_{(i_2,j_2+1)}\mid (i_1,j_1)\simeq(i_2,j_2)\}$.
\end{itemize}
Note that $(i_1,j_1)$ may coincide with $(i_2,j_2)$ in the construction of $I$. We have an isomorphism of algebras
$$\boxed{
\sg\datum\cong\sg(Q,\operatorname{Sp},I).
}$$
\end{remark}

\begin{example}\label{ex:1}
Let $\datum$ be a datum, where
\begin{itemize}
    \item $\overrightarrow{m}=(7,2,1)\in\mathbb{N}^3_{\geq 1}$,
    \item the symmetric relation $\simeq$ on
    $$\OME=\{(1,1),(1,2),\cdots,(1,7),(2,1),(2,2),(3,1)\}$$
    is given by $(1,1)\simeq (2,2)$, $(1,2)\simeq (1,6)$, $(1,3)\simeq(1,3)$, $(1,4)\simeq(1,4)$, $(1,5)\simeq(3,1)$, and $(1,7)\simeq(2,1)$,
    \item $\mathbf{gr}=(\Gr_1,\Gr_2,\emptyset)$, where
    $$\Gr_1=(0,-1,0,0,-1,0), \Gr_2=(0).$$
\end{itemize}
The sets $\OMEpm(1,3)=\{(1,3^+),(1,3^-)\}$, $\OMEpm(1,4)=\{(1,4^+),(1,4^-)\}$ and $\OMEpm(i,j)=\{(i,j)\}$ for any $(i,j)\neq (1,3),(1,4)$.

Then the associated graded quiver $Q$ is
\[
\xymatrix@C=3pc@R=1pc{
	&\{(1,3),(1,3)\}\ar[ld]_{\alpha_{(1,4)}}&& \{(1,1),(2,2)\}\ar[ld]_{\alpha_{(1,2)}}\\
	\{(1,4),(1,4)\}\ar[rd]_{\alpha_{(1,5)}}&&\{(1,2),(1,6)\}\ar[lu]_{\alpha_{(1,3)}}\ar[rd]_{\alpha_{(1,7)}}\\
	&\{(1,5),(3,1)\}\ar[ru]_{\alpha_{(1,6)}}&&\{(1,7),(2,1)\} \ar[uu]_{\alpha_{(2,2)}}\\
}\]
with $|\alpha_{(1,3)}|=|\alpha_{(1,6)}|=-1$, $|\alpha_{(1,2)}|=|\alpha_{(1,4)}|=|\alpha_{(1,5)}|=|\alpha_{(1,7)}|=|\alpha_{(2,2)}|=0$, $\operatorname{Sp}=\{\{(1,3)\},\{(1,4)\}\}$ and $$I=\{\alpha_{(1,3)}\alpha_{(1,4)}, \alpha_{(1,4)}\alpha_{(1,5)}, \alpha_{(1,6)}\alpha_{(1,3)},\alpha_{(1,2)}\alpha_{(1,7)},\alpha_{(1,7)}\alpha_{(2,2)},\alpha_{(2,2)}\alpha_{(1,2)}\}.$$
So we get the graded skew-gentle triple in Example~\ref{exm:clan}.
\end{example}

\begin{remark}\label{rmk:id}
    One can identify $\OMEpm$ with a complete set of pairwise orthogonal primitive idempotents of $H$ by sending $(i,j^\kappa)\in\OMEpm$ to $e_{(i,j^\kappa)}=(X(i_0))_{1\leq i_0\leq t}\in H$ with $X(i_0)=0$ for $i_0\neq i$ and $X(i)=E_{j^\kappa,j^\kappa}$.

    One can identify $\OMEx$ with a complete set of pairwise orthogonal primitive idempotents of $\sg$ by sending $\{(i_1,j_1),(i_2,j_2)\}$ with $(i_1,j_1)\simeq(i_2,j_2)\notin\fc{\OME}$ to $e_{(i_1,j_1)}+e_{(i_2,j_2)}$, and sending $(i,j^+)$ and $(i,j^-)$ to $e_{(i,j^+)}$ and $e_{(i,j^-)}$ respectively for $(i,j)\in\fc{\OME}$.
\end{remark}

\begin{notations}\label{not:path}
For any $(i,j_1^{\kappa_1}),(i,j_2^{\kappa_2})\in\OMEpm$ with $1\leq j_1\leq j_2\leq m_i$, we set
$$\lu((i,j_1^{\kappa_1}),(i,j_2^{\kappa_2})):=(X(i_0))_{1\leq i_0\leq t}\in H,$$
where $X(i_0)=0$ for any $i_0\neq i$ and $X(i)=E_{j_1^{\kappa_1},j_2^{\kappa_2}}$.
\end{notations}

\begin{remark}\label{rmk:BR}
Let
$$\mathcal{B}=\{((i,j_1^{\kappa_1}),(i,j_2^{\kappa_2}))\in\OMEpm\times\OMEpm\mid j_1\leq j_2 \}.$$
The set $\{\lu(x_1,x_2)\mid (x_1,x_2)\in\mathcal{B}\}$ is a basis of $H$, and for any $(x_1,x_2),(x_2',x_3)\in\mathcal{B}$, we have
\begin{equation}\label{eq:comp}
\lu(x_1,x_2)\lu(x_2',x_3)=\begin{cases}\lu(x_1,x_3)&\text{if $x_2'=x_2$,}\\0&\text{otherwise.}\end{cases}
\end{equation}
Let
$$\mathcal{R}=\{((i,j_1^{\kappa_1}),(i,j_2^{\kappa_2}))\in\OMEpm\times\OMEpm\mid j_1<j_2\}\subset\mathcal{B}.$$
The set $\{\lu(x_1,x_2)\mid (x_1,x_2)\in\mathcal{R}\}$ is a basis of $\rad(H)=\rad(\sg)$. The algebra $\sg$ has a basis $\{\lu(x_1,x_2)\mid(x_1,x_2)\in\mathcal{R}\}\cup\OMEx$ (via the second identification in Remark~\ref{rmk:id}).
\end{remark}

%=========================================================
\subsection{Graded punctured marked surfaces and full formal arc systems}\label{subsec:surface}
%=========================================================

\begin{definition}\label{def:msp}
A {\it marked surface with punctures} $\surf=(\surf,\M,\Y,\P)$ is a compact connected oriented surface $\surf$ with
\begin{itemize}
\item a non-empty boundary $\partial \surf$;
\item a finite set $\M\subset\partial \surf$ of \emph{open marked points} and a finite set $\Y\subset\surf$ of \emph{closed marked points}, satisfying
\begin{itemize}
\item each component of $\partial \surf$ contains marked points in both $\M$ and $\Y$, and
\item marked points in $\M$ and $\Y\cap\partial\surf$ are alternative in any component of $\partial \surf$;
\end{itemize}
\item a finite set $\P$ of \emph{punctures} in $\surf\setminus\partial \surf$.
\end{itemize}
Denote $\Y^\circ=\Y\cap(\surf\setminus\partial \surf)$ the set of interior closed marked points, and $\surfi=\surf\setminus(\partial\surf\cup\P\cup\Y^\circ)$ the interior of $\surf$.
\end{definition}

\begin{convention}
In the figures,
\begin{itemize}
\item open objects (e.g. marked points/arcs/curves) are drawn in blue;
\item closed objects (e.g. marked points/arcs/curves) are drawn in red;
\item boundaries and punctures are drawn in black.
\end{itemize}
\end{convention}

\begin{definition}\label{def:grading}
A \emph{grading} $\grad$ on $\surf$ is a class in $\coho{1}\left(\mathbb{P}T(\surfi),\ZZ\right)$, where $\mathbb{P}T(\surfi)$ is a projectivized tangent bundle,
with
\begin{itemize}
    \item value 1 on each clockwise loop $\{p\}\times\mathbb{R}\mathbb{P}^1$ on $\mathbb{P}T_p(\surfi)$, and
    \item value $-1$
on each clockwise loop
$l_p\times\{x\}$ on $\surf$ around any $p \in \P$ and $x\in\mathbb{RP}^1$,
or equivalently,
\begin{gather}\label{eq:grad=1}
    \grad[ l_p\times\{x\} ]=1.
\end{gather}
\end{itemize}
Note that each grading corresponds to a section of $\mathbb{P}T(\surfi)$.
A {\it graded marked surface with punctures} (=GMSp) $\gms$ is a pair of a marked surface $\surf$ with punctures and a grading $\grad$ on it, cf. \cite{HKK,LP1,IQZ}.
\end{definition}

The new input here is \eqref{eq:grad=1}, which corresponds to the condition that $|\varepsilon_i|=0$ in Definition~\ref{def:sg1}.

\begin{remark}
The \emph{winding number} $w(\alpha)$ of a loop $\alpha$ in $\surf^\circ$ is defined to be $\grad([\dot{\alpha}])$, where $[\dot{\alpha}]\in\ho{1}\left(\mathbb{P}T(\surfi),\ZZ\right)$
is the tangent of $\alpha$.
For a boundary component $\partial$ (or a singularity $p\in\P\cup\Y^\circ$), one can also define the winding number to be $w(\alpha)$ for any $\alpha$ that is isotopy to $\partial$ (or $p$).
So \eqref{eq:grad=1} says that the winding number of any puncture is one.
\end{remark}

\begin{definition}\label{def:curve}
A \emph{curve} on $\surf$ is an immersion $c:I\to \surf$ where $I=[0,1]$ or $S^1$. A \emph{grading} $\wc$ on $c$ is given by a homotopy class of paths in $\mathbb{P}T_{c(t)}(\surfi)$ from $\grad(c(t))$ to $\dot{c}(t)$, varying continuously with $t\in I$. The pair $(c,\wc)$ (or $\wc$ for short) is called a {\it graded curve}.

For any graded curve $\wc$ and any $\rho\in\ZZ$, denote by $\wc[\rho]$ the graded curve whose underlying curve is the same as $\wc$ and whose grading is the composition of $\wc(t):\grad(c(t))\to\dot{c}(t)$ and the path from $\dot{c}(t)$ to itself given by clockwise rotation by $\rho\pi$.
\end{definition}

For any graded curves $(c_1,\wc_1), (c_2,\wc_2)$ in $\surf$.
Let $q=c_1(t_1)=c_2(t_2)\in \surfi$ be a point where $c_1$ and $c_2$ intersect transversely.
The \emph{intersection index} of $\wc_1$ and $\wc_2$ at $q$ is defined to be
\[\ind_q(\wc_1,\wc_2)=\wc_1(t_1)\cdot\kappa\cdot\wc_2^{-1}(t_2)\ \in\pi_1(\mathbb{P}T_{q}(\surf^\circ))\cong \ZZ\]
where $\kappa$ is (the homotopy class of) the path in $\mathbb{P}T_q(\surfi)$ from $\dot{c}_1(t_1)$ to $\dot{c}_2(t_2)$
given by clockwise rotation by an angle smaller than $\pi$. Sometimes, we omit $q$ if there is no confusion arising.

The notion of intersection indices can be generalized to the case $q\in\partial \surf\cup\P\cup\Y^\circ$ as follows.
Fix a small circle for $q\in\P\cup\Y^\circ$ or a small half-circle for $q\in\partial \surf$ around $q$,
denoted by $l\subset\surfi$.
Let $\alpha:[0,1]\to l$ be an embedded arc which moves clockwise around $q$,
such that $\alpha$ intersect $c_1$ and $c_2$ at $\alpha(0)$ and $\alpha(1)$, respectively.
Then $\alpha$ is unique up to a change of parametrization.
Fixing an arbitrary grading $\wa$ on $\alpha$,
the intersection index $\ind_q(\wc_1,\wc_2)$ is defined to be
\begin{equation}\label{eq:bs}
\ind_q(\wc_1,\wc_2):=\ind_{\alpha(0)}(\wc_1,\wa)-\ind_{\alpha(1)}(\wc_2,\wa).
\end{equation}
For any $\rho\in\ZZ$, we have
\begin{equation}\label{eq:shift}
    \ind_q(\wc_1,\wc_2[\rho])=\ind_q(\wc_1,\wc_2)-\rho.
\end{equation}

\begin{definition}\label{defn:int}
Let $\wc_1,\wc_2$ be two graded curves on $\surf$ in a minimal position. An intersection $q$ between $\wc_1$ and $\wc_2$ is called an \emph{oriented intersection from $\wc_1$ to $\wc_2$} if there is a small arc in $\surfi$ around $q$ from a point in $\wc_1$ to a point in $\wc_2$ clockwise. Denoted by
\begin{enumerate}
    \item $\overrightarrow{\cap}^{\rho}(\wc_1,\wc_2)$ the set of oriented intersections from $\wc_1$ to $\wc_2$ with index $\rho$;
    \item $\overrightarrow{\cap}_{\surfi}^{\rho}(\wc_1,\wc_2)$ the subset of $\overrightarrow{\cap}^{\rho}(\wc_1,\wc_2)$ consisting of oriented intersections in $\surfi$.
\end{enumerate}
\end{definition}

\begin{definition}\label{def:arc}
	An \emph{open} (resp. \emph{closed}) \emph{curve} on $\surf$ is a curve $\gamma:I\rightarrow \surf$, satisfying
	\begin{itemize}
		\item $\gamma(\partial I)\subset \M\cup\P$ (resp. $\gamma(\partial I)\subset \Y\cup\P$), and $\gamma(I\setminus\partial I)\subset\surfi$;
		\item $\gamma$ is not homotopic to a point.
	\end{itemize}
	An open/closed curve is called an open/closed \emph{arc} if $I=[0,1]$. Open/closed curves are considered up to homotopy relative to $\partial I$.
\end{definition}

\begin{definition}[Full formal arc systems]\label{def:system}
Let $\gms$ be a GMSp.
\begin{itemize}
\item[(1)] An \emph{open} (resp. \emph{closed}) \emph{arc system} $\ac$ in $\surf$ is a collection of graded open (resp. closed) arcs with $|\overrightarrow{\cap}^\rho_{\surfi}(\wc_1,\wc_2)|=0$ for any $\rho\in\ZZ$ and any $\wc_1,\wc_2\in\ac$.
\item[(2)] A \emph{full formal} \emph{open} (resp. \emph{closed}) {arc system} $\ac$ is an open (resp. closed) arc system which cuts out $\surf$ into polygons, called \emph{$\ac$-polygons}, such that each $\ac$-polygon $\PP$ contains exactly one point in $\P\cup\Y$ (resp. $\P\cup\M$), denoted by $\pt_\PP$, satisfying the following.
\begin{itemize}
\item[(I)] If $\pt_\PP\in\P$, then $\PP$ is a monogon, and $\pt_\PP$ is in the interior of $\PP$. In this case, $\PP$ is called a once-punctured monogon.
\item[(II)] If $\pt_\PP\in\partial \surf$, then $\PP$ is an $n$-gon with $n\geq 2$ (and $\pt_\PP$ is on an edge of $\PP$).
\item[(III)] If $\pt_\PP\notin (\P\cup \partial \surf)$, then $\PP$ is an $n$-gon with $n\geq 1$ and $\pt_\PP$ is in the interior of $\PP$.
\end{itemize}
An $\ac$-polygon of type (I) is called \emph{punctured} while an $\ac$-polygon of type (II) or (III) is called \emph{unpunctured}.
\end{itemize}
\end{definition}

Note that since we require $\M\subset\partial \surf$,
$\pt_\PP\notin (\P\cup \partial \surf)$ implies $\pt_\PP\in\Y^\circ$, i.e.
the type (III) can only occur in a full formal open arc system $\ac$. See Figure~\ref{fig:monogon} with the first row for $\ac$-polygons of a full formal open arc system and the second row for $\ac$-polygons of a full formal closed arc system.

\begin{figure}[htpb]\centering
	\begin{tikzpicture}[xscale=2,yscale=.3,rotate=90]
	\draw[ultra thick,white]plot [smooth,tension=1] coordinates {(170:4.5) (180:4) (190:4.5)};
	\draw[blue,thick]plot [smooth,tension=1] coordinates {(-4,0) (2,.5) (5,0) (2,-.5) (-4,0)};
	\draw[thick](0:3)node{$\bullet$}node[below]{$\pt_\PP$};
	\draw[thick,blue](180:4)node{$\bullet$};
	\end{tikzpicture}\qquad
	\begin{tikzpicture}[scale=-.4]
	\foreach \j in {1,2,0,3}{
		\draw[blue,thick](90*\j+20:4)\nn to(90*\j-20:4)\nn ;
		\draw[dashed,blue,thin](90*\j+20:4)to[bend left=-15](90*\j-20+90:4);}
	\draw[very thick](90+20:4) node[blue]{$\bullet$} to(90-20:4) node[blue]{$\bullet$};
	\draw[red] (0,3.8)\ww node[above]{$\pt_\PP$};
	\end{tikzpicture}\qquad
	\begin{tikzpicture}[scale=-.4]
	\foreach \j in {1,2,0,3}{
		\draw[blue,thick](90*\j+20:4)\nn to(90*\j-20:4)\nn ;
		\draw[dashed,blue,thin](90*\j+20:4)to[bend left=-15](90*\j-20+90:4);}
	\draw[red] (0,0)\ww(0,0)node[below]{$\pt_\PP$};
	\draw[thick,blue](90+20:4) node[blue]{$\bullet$} to(90-20:4)node[blue]{$\bullet$};
	\end{tikzpicture}
	
	\begin{tikzpicture}[xscale=2,yscale=.3,rotate=90]
	\draw[ultra thick,white]plot [smooth,tension=1] coordinates {(170:4.5) (180:4) (190:4.5)};
	\draw[red,thick]plot [smooth,tension=1] coordinates {(-4,0) (2,.5) (5,0) (2,-.5) (-4,0)};
	\draw[thick](0:3)node{$\bullet$}node[below]{$\pt_\PP$};
	\draw[thick,red](180:4)\ww;
	\end{tikzpicture}\qquad
	\begin{tikzpicture}[scale=-.4]
	\foreach \j in {1,2,0,3}{
		\draw[dashed,red,thin](90*\j+20:4)to[bend left=-15](90*\j-20+90:4);}
	\foreach \j in {1,2,0,3}{	
		\draw[red,thick](90*\j+20:4)\ww to(90*\j-20:4)\ww ;}
	\draw[blue] (0,3.8)node{$\bullet$}node[above]{$\pt_\PP$};
	\draw[very thick](90+20:4) \ww to(90-20:4) \ww;
	\end{tikzpicture}\qquad
	\caption{Types of $\ac$-polygons}\label{fig:monogon}
\end{figure}

\begin{remark}
Let $\ac$ be a full formal open/closed arc system of a GMSp $\gms$. Then
\[|\ac|=2g+b+|\Y|+|\P|-2,\]
where $g$ denotes the genus of $\surf$ and $b$ is the number of components of $\partial \surf$.
\end{remark}

\begin{lemma}\label{lem: exist-system}
    Let $\gms$ be a GMSp. Then it admits a full formal open (resp. closed) arc system.
\end{lemma}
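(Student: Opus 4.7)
The plan is to construct the underlying ungraded arc system first (a collection of topological arcs satisfying the polygon conditions of Definition~\ref{def:system}(2)), and then upgrade it to a graded one by choosing a grading on each arc. I focus on the open case, since the closed case is entirely analogous after swapping the roles of $\M$ and $\Y\cap\partial\surf$ in the polygon conditions.

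For the underlying ungraded system I would argue by induction on $n = |\P|+|\Y|$, the number of special points. The base case is a surface with a single special point: after possibly drawing a few auxiliary arcs to cut down to a disk neighborhood of the special point, the whole resulting region can be taken as the single polygon of the appropriate type (a once-punctured monogon if the special point is in $\P$, a type-(III) monogon if in $\Y^\circ$, a bigon if in $\Y\cap\partial\surf$). For the inductive step, pick any special point $q\in\P\cup\Y$ and construct a simple arc $\alpha_q$ enclosing only $q$: if $q \in \P \cup \Y^\circ$, take $\alpha_q$ to be a simple loop based at a nearby point $m \in \M$ bounding a disk containing only $q$ among the special points, yielding a type~(I) or type~(III) monogon; if $q \in \Y\cap\partial\surf$, take $\alpha_q$ to be a simple arc between the two $\M$-points adjacent to $q$ on its boundary component, bounding together with the boundary segment through $q$ a bigon containing only $q$, which is a type~(II) polygon. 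The complementary region $\surf$ minus this small polygon is itself a GMSp with one fewer special point and the same genus, so the inductive hypothesis supplies a full formal open arc system on it, and adjoining $\alpha_q$ gives one on $\surf$.

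Once the ungraded arcs are fixed, choosing a grading on each is automatic: a grading on an arc $\alpha \colon I \to \surf$ is a continuous choice of homotopy class of paths in $\mathbb{P}T_{\alpha(t)}(\surfi)$ from $\grad(\alpha(t))$ to $\dot\alpha(t)$, and since each fiber is homeomorphic to $\mathbb{RP}^1$ and such homotopy classes form a $\ZZ$-torsor, at least one choice always exists; in fact the full set of gradings on $\alpha$ is a $\ZZ$-torsor under the shift $[\rho]$ of Definition~\ref{def:curve}. Thus the passage from the ungraded construction to a graded one is free.

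The main obstacle is the inductive step: verifying that cutting off the small polygon containing $q$ actually yields a surface satisfying the GMSp axioms (non-empty boundary, every boundary component meeting $\M$, alternation of $\M$-points and $\Y\cap\partial\surf$-points on each boundary component, grading restriction still well-defined). This is handled by choosing $\alpha_q$ in a small enough neighborhood of $q$ so that the complementary region has the same global topology as $\surf$ with a disk or half-disk removed: removing a once-punctured disk preserves boundary structure up to a new boundary component carrying a single $\M$-point (which can be accommodated by a slight extension of the framework, or by verifying the corresponding base case directly), and removing a bigon around $q\in\Y\cap\partial\surf$ simply deletes $q$ from its boundary component and leaves the alternation condition intact. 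Once these base cases are checked, the induction closes.
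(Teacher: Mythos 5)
Your overall plan---enclose each special point by an arc, then reduce---has the right spirit and shares the paper's first move, but the paper does not induct and does not claim the complement is itself a GMSp. The paper encloses \emph{all} punctures by loops based at a single $M\in\M$ (forming the once-punctured monogons in one step) and then invokes HKK's Lemma~3.3 to produce a full formal arc system on the remaining unpunctured part; HKK's framework for marked surfaces without punctures is exactly set up to handle the new boundary arcs created by the enclosing loops, so no inductive claim about those pieces being GMSp's is needed.

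The genuine gap in your argument lies precisely where you gesture at ``a slight extension of the framework.'' After you cut off a once-punctured monogon along the loop $\alpha_q$ based at $m\in\M$, the complementary region acquires $\alpha_q$ as a boundary arc with the vertex $m$ at both ends and no point of $\Y$ in its interior, so that boundary component fails the alternation requirement of Definition~\ref{def:msp}. The complement is therefore \emph{not} a GMSp, and your inductive hypothesis does not apply to it. Likewise, your claim that cutting off a bigon around $q\in\Y\cap\partial\surf$ ``leaves the alternation condition intact'' is false in general: if the boundary component reads $\ldots,m_1,q,m_2,\ldots$ and you replace the segment through $q$ by the arc $\alpha_q$ from $m_1$ to $m_2$, the resulting boundary has $m_1$ and $m_2$ adjacent with no intervening $\Y$-point. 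Finally, your base case $n=1$ is not a single picture: a GMSp with one point of $\P\cup\Y$ can have arbitrary genus, so ``the whole resulting region can be taken as the single polygon'' requires first adding $2g$ loops at $m$; this can be done, but it is effectively the unpunctured content of HKK's lemma, not a triviality. To repair the proof you would either need to enlarge Definition~\ref{def:msp} so that the cut pieces remain in the category (and re-prove the base cases for the enlarged class), or, more economically, cut off all punctures simultaneously and apply HKK's result to the unpunctured remainder as the paper does.

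Your final paragraph on gradings is correct and matches the paper's implicit treatment: once the underlying arcs exist, a grading on each is a free $\ZZ$-torsor of choices and always exists.
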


\begin{proof}
    Since $\partial \surf\neq\emptyset$, we have $\M\neq\emptyset$. Let $M\in \M$ be an open marked point. First, we take arcs whose endpoints are $M$ such that each of them encloses exactly one puncture. Then these arcs divide $\surf$ into several once-punctured monogons and one marked surface without punctures. Then by \cite[Lem.~3.3]{HKK}, the lemma follows.
\end{proof}

For each full formal open arc system $\ac=\{\wg_1,\wg_2,\cdots,\wg_n\}$ of $\surf$,
denote by $\ac^\ast=\{\we_1,\we_2,\cdots,\we_n\}$ the \emph{dual} of $\ac$,
which is a full formal closed arc system consisting of the graded closed arcs $\we_i$ satisfying
$$|\overrightarrow{\cap}^{\rho}_{\surfi}(\wg_j,\we_i)|=\begin{cases}
0&\text{if $j\neq i$ or $\rho\neq 0$,}\\
1&\text{if $j=i$, $\rho=0$ and $\wg_i$ does not enclose a puncture,}\\
2&\text{if $j=i$, $\rho=0$ and $\wg_i$ does enclose a puncture.}
\end{cases}$$
Indeed, the map $\ac\mapsto\dac$ gives a bijection between full formal open arc systems and full formal closed arc systems of $\gms$. See Figure~\ref{fig:full. formal arc system} for an example of a full formal open arc system and its dual.

\begin{figure}[htpb]\centering
		\begin{tikzpicture}[scale=.8]
		\draw[ultra thick](0,0) circle (3.5);
		\draw[ultra thick](0,0) circle (1);
		
		\draw(-1.25,2.25)\nn (1.25,2.25)\nn (-1,0)\ww (1,0)\ww (0,-1.5)\ww (0,3.5)\ww (0,1)[blue]\nn (0,-1)[blue]\nn (0,-3.5)[blue]\nn;
		
		\draw[blue,thick](0,1)to[out=160,in=-135](-1.5,2.5)to[out=45,in=110](0,1)
		(0,1)to[out=20,in=-45](1.5,2.5)to[out=135,in=70](0,1);
		
		\draw[blue,thick](0,-3.5)to[out=140,in=-90](-2.7,0)to[out=90,in=160](0,1);
		
		\draw[blue,thick](0,-1)to[out=-10,in=-90](1.5,0)to[out=90,in=10](0,1)
		(0,1)to[out=170,in=90](-1.5,0)to[out=-90,in=-170](0,-1);
		
		\draw[blue,thick](0,1)to[out=15,in=90](2,0)to[out=-90,in=0](0,-2)to[out=180,in=-90](-2,0)to[out=90,in=165](0,1);
		\begin{scope}[shift={(8,0)}]
			\draw[ultra thick](0,0) circle (3.5);
			\draw[ultra thick](0,0) circle (1);			
			\draw[red,thick](0,3.5)to[out=-70,in=-135](1.5,2)to[out=45,in=-20](0,3.5) (0,3.5)to[out=-110,in=-45](-1.5,2)to[out=135,in=-160](0,3.5);
			\draw[red,thick](-1,0)to[out=-100,in=160](0,-2)to[out=20,in=-80](1,0);
			\draw[red,thick](0,-2)to[out=0,in=-90](2.5,.75)to[out=90,in=-10](0,3.5);
			\draw[red,thick](0,3.5)to[out=-5,in=90](3,.25)to[out=-90,in=0](0,-3)to[out=180,in=-90](-3,.25)to[out=90,in=-175](0,3.5);
		\draw(-1.25,2.25)\nn (1.25,2.25)\nn (-1,0)\ww (1,0)\ww (0,-2)\ww (0,3.5)\ww (0,1)[blue]\nn (0,-1)[blue]\nn (0,-3.5)[blue]\nn;
		\end{scope}
	\end{tikzpicture}
	\caption{A full formal open arc system $\ac$ and its dual $\ac^\ast$.}
	\label{fig:full. formal arc system}
\end{figure}

%=========================================================
\subsection{Graded skew-gentle algebras from full formal arc systems}\label{subsec:sys-to-alg}
%=========================================================
Let $\gms$ be a GMSp and $\dac=\{\we_1,\we_2,\cdots,\we_n\}$ a full formal closed arc system. Denote by
\begin{itemize}
    \item $\DBX$ the set of all $\dac$-polygons,
    \item  $\DBY=\{\PP_1,\cdots,\PP_t\}$ the set of unpunctured $\dac$-polygons (i.e. of type (II)),
    \item $\DBZ=\{\Pp_P\mid P\in\P\}$ the set of punctured $\dac$-polygons (i.e. of type (I)).
\end{itemize}

\begin{construction}\label{cons:datum}
We associate a datum $\datum$ to $\dac$ as follows.
\begin{itemize}
\item $\overrightarrow{m}=(m_1,\cdots,m_t)$, where $m_i+1$ is the number of edges of $\PP_i\in\DBY$ (where exactly one edge of $\PP_i$ is not a closed arc in $\dac$).
\item Label the edges of $\PP_i\in\DBY$ by $(i,0),\cdots,(i,m_i)$ respectively in the anticlockwise order such that $(i,0)$ is a segment of $\partial\surf$, or equivalently, $(i,0)$ contains the open marked point $\pt_{\PP_i}$, cf. Figure~\ref{fig:order2}.
\item The symmetric relation $\simeq$ on $\OME$ is given by
\begin{itemize}
\item $(i_1,j_1)\simeq (i_2,j_2)$ for $(i_1,j_1)\neq(i_2,j_2)$,
if the edge $(i_1,j_1)$ of $\PP_{i_1}$ and the edge $(i_2,j_2)$ of $\PP_{i_2}$ coincide in $\ac^\ast$
(note that $i_1$ may coincide with $i_2$);
\item $(i,j)\simeq(i,j)$, if the edge $(i,j)$ coincides with the edge of a punctured $\dac$-polygon in $\DBZ$;
\end{itemize}
\item $\mathbf{gr}=(\Gr_1,\Gr_2,\cdots,\Gr_t)$,  $\Gr_i=(\chi_{1,2}^{(i)},\cdots,\chi_{m_i-1,m_i}^{(i)})\in\ZZ^{m_i-1}, 1\leq i\leq t,$ with
$$\chi^{(i)}_{j,j+1}=1-\ind((i,j),(i,j+1)),$$
for any $1\leq j<m_i$.
%where the edges $(i,j)$ and $(i,j+1)$ meet at the vertex $p$ of $\PP_i\in\DBY$.
\end{itemize}
By definition, this datum $\datum$ is a graded skew-gentle datum. The set $\OMEs$ can be identified with $\ac^\ast$. The set $\fc{\OME}$, when regarded as a subset of $\OME$, can be identified with the set of edges of punctured $\dac$-polygons; and when regarded as a subset of $\OMEs$, can be identified with the set of self-folded loops in $\dac$ (i.e. the loops in $\dac$ which encloses a puncture).
\end{construction}

\begin{example}\label{ex:datum}
	The graded skew-gentle datum associated to the full formal closed arc system shown in Figure~\ref{fig:full. formal arc system} (cf. Figure~\ref{fig:grading} for the local grading) is the one in Example~\ref{ex:1}, see Figure~\ref{fig:datum}, where $\DBY=\{\PP_1,\PP_2,\PP_3\}$ and $\DBZ=\{E_{p_1},E_{p_2}\}$.
\begin{figure}[htpb]
\begin{tikzpicture}[scale=1,font=\tiny]
    \draw[ultra thick](0,0) circle (3.5);
    \draw[ultra thick](0,0) circle (1);    		
    \draw[red,thick](0,3.5)to[out=-70,in=-135](1.5,2)to[out=45,in=-20](0,3.5)
        (0,3.5)to[out=-110,in=-45](-1.5,2)to[out=135,in=-160](0,3.5);	
    \draw[red,thick](-1,0)to[out=-100,in=160](0,-2)to[out=20,in=-80](1,0);
    \draw[red,thick](0,-2)to[out=0,in=-90](2.5,.75)to[out=90,in=-10](0,3.5);
    \draw[red,thick](0,3.5)to[out=-5,in=90](3,.25)to[out=-90,in=0](0,-3)to[out=180,in=-90](-3,.25)to[out=90,in=-175](0,3.5);
    \draw(1,2.2)node{$p_1$}(-1,2.2)node{$p_2$}(.8,2.7)node{$E_{p_1}$}(-.8,2.7)node{$E_{p_2}$}
        (0,.7)node{$(1,0)$}(0,-.7)node{$(2,0)$}(0,-1.5)node{$D_2$}(-2,0)node{$D_1$}(0,-3.3)node{$D_3$}(0,-3.7)node{$(3,0)$};
    \draw[font=\tiny]
        (1.2,1.6)node[red]{$(1,3)\simeq(1,3)$}
        (-1.2,1.6)node[red]{$(1,4)\simeq(1,4)$}
        (-1.1,-1.2)node[rotate=-68][red]{$(1,7)\simeq(2,1)$}
        (1.1,-1.2)node[red,rotate=68]{$(2,2)\simeq(1,1)$}
        (2.2,.5)node[red,rotate=83]{$(1,2)\simeq(1,6)$}
        (0,-2.7)node[red]{$(1,5)\simeq(3,1)$};
    \draw(-1.25,2.25)\nn (1.25,2.25)\nn (-1,0)\ww (1,0)\ww (0,-2)\ww (0,3.5)\ww (0,1)[blue]\nn (0,-1)[blue]\nn (0,-3.5)[blue]\nn;
\end{tikzpicture}
	\caption{A full formal closed arc system $\dac$ and the corresponding datum}\label{fig:datum}
	\end{figure}
\end{example}

\begin{theorem}\label{thm:A}
    Any graded skew-gentle datum $\datum$ is obtained in the way of Construction~\ref{cons:datum}.
\end{theorem}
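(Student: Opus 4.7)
The plan is constructive: given a graded skew-gentle datum $\datum$, I exhibit a GMSp $\gms$ together with a full formal closed arc system $\dac$ for which Construction~\ref{cons:datum} recovers $\datum$. The strategy is to assemble $\surf$ by gluing oriented polygons along the combinatorics encoded by $\simeq$, and then define a grading that encodes $\mathbf{gr}$.

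First I build the surface combinatorially. For each $1\le i\le t$ take an oriented $(m_i+1)$-gon $\PP_i$ (a digon when $m_i=1$) with edges labelled $(i,0),(i,1),\ldots,(i,m_i)$ anticlockwise, and designate $(i,0)$ as a boundary segment carrying one open marked point. For each $(i,j)\in\fc{\OME}$ take a once-punctured monogon $\Pp_{(i,j)}$. I then glue as dictated by $\simeq$: for each unordered pair $\{(i_1,j_1),(i_2,j_2)\}\in\OMEs$ with $(i_1,j_1)\simeq(i_2,j_2)\notin\fc{\OME}$ I identify the edges $(i_1,j_1)$ of $\PP_{i_1}$ and $(i_2,j_2)$ of $\PP_{i_2}$ in an orientation-reversing way, producing an interior closed arc; for each $(i,j)\in\fc{\OME}$ I glue the single edge of $\Pp_{(i,j)}$ onto the edge $(i,j)$ of $\PP_i$, turning it into a self-folded loop enclosing the puncture. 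Symmetry of $\simeq$ makes these gluings consistent. The result is a compact oriented surface-with-boundary $\surf$. The midpoints of the $(i,0)$ edges constitute $\M$, the interior points of the punctured monogons contribute $\P$, and the vertices of the glued polygons give $\Y$, with any interior vertex lying in $\Y^\circ$. Alternation of $\M$ and $\Y\cap\partial\surf$ on every boundary component is immediate from the construction. If the resulting surface is disconnected, I work on each component separately (its datum being the obvious sub-datum).

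Next I equip $\surf$ with a grading $\grad$. The candidate arc system $\dac$ consists of the closed arcs produced above together with the self-folded loops around punctures. I prescribe the intersection index at the corner of $\PP_i$ between the edges $(i,j)$ and $(i,j+1)$ to be $1-\chi^{(i)}_{j,j+1}$, inverting the formula in Construction~\ref{cons:datum}, and I choose gradings on the self-folded loops so that the winding number around each puncture equals $1$, as required by \eqref{eq:grad=1}. Since each $\PP_i$ and each $\Pp_{(i,j)}$ is simply connected, these local prescriptions extend uniquely over each polygon to a section of $\mathbb{P}T(\surfi)$, and they glue along the arcs of $\dac$ to a global class $\grad\in\coho{1}(\mathbb{P}T(\surfi),\ZZ)$. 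By construction $\dac$ is then a full formal closed arc system whose $\dac$-polygons are precisely the $\PP_i$ (type (II)) and the $\Pp_{(i,j)}$ (type (I)), and running Construction~\ref{cons:datum} on $(\gms,\dac)$ literally reads off $\overrightarrow{m}$, $\simeq$, and $\mathbf{gr}$.

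The main obstacle lies in the grading step: one must verify that the local data at the polygon corners assemble into a globally well-defined cohomology class and are compatible with the winding-number constraint \eqref{eq:grad=1} at each puncture. Concretely, at every interior closed marked point where several polygons meet, the prescribed winding angles must sum correctly around the vertex, and the local gradings at the two corners adjacent to a self-folded loop must be consistent with the winding number $1$ of the enclosed puncture. These are bookkeeping checks already encoded in the definition of a graded skew-gentle datum, but they are where the real content of the proof sits; the remaining verifications that $\dac$ is full and formal, and that Construction~\ref{cons:datum} returns the original datum, are then a matter of comparing the gluing pattern with the definitions.
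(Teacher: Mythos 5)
Your construction of the surface agrees with the paper's: build the oriented $(m_i+1)$-gons, glue edges paired by $\simeq$, and turn each self-identified edge $(i,j)\in\fc{\OME}$ into a self-folded loop bounding a once-punctured monogon. The gap is in the grading step. You write that ``the main obstacle'' is verifying that the locally prescribed corner indices assemble into a global class in $\coho{1}(\mathbb{P}T(\surfi),\ZZ)$, that ``at every interior closed marked point where several polygons meet, the prescribed winding angles must sum correctly around the vertex,'' and then you declare this to be bookkeeping ``already encoded in the definition of a graded skew-gentle datum'' without doing it. That is not acceptable, and moreover it misdescribes the situation. The $\chi^{(i)}_{j,j+1}$ in a graded skew-gentle datum are \emph{arbitrary} integers, subject to no compatibility; if the sum of corner indices around a vertex were constrained, the theorem would simply be false. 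What makes it true is that the interior closed marked points $\Y^\circ$ are \emph{excised} from $\surfi$ by Definition~\ref{def:msp}, so there is no winding-number constraint at the vertices at all, and likewise none on boundary components. The only genuine constraint is \eqref{eq:grad=1} at the punctures, and that must be enforced by the choice of grading inside each once-punctured monogon.

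The paper's proof handles all of this by giving an explicit local model instead of an abstract existence argument: on each $\PP_i$ a foliation orthogonal to the edges and converging to $\pt_{\PP_i}$, corrected by folded-half-plane insertions along a segment to $\pt_{\PP_i}$ (Figure~\ref{fig:grading}, middle) so that the intersection index between consecutive edges $(i,j)$ and $(i,j+1)$ becomes $1-\chi^{(i)}_{j,j+1}$; and inside each punctured monogon a specific line field (Figure~\ref{fig:grading}, right) with winding number $1$ around the puncture and index $0$ between the two ends of the self-folded loop at the closed marked point. Because the grading is built concretely polygon by polygon and then glued, no global cohomological obstruction needs to be checked. Your proposal either needs to observe explicitly that the excision of $\Y^\circ$ (and of $\partial\surf$) kills the obstruction you raise, or to replace the abstract extension argument with the concrete model; as written, the part you yourself single out as ``where the real content of the proof sits'' is missing.
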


\begin{proof}
Let $\datum$ be a graded skew-gentle datum with $\overrightarrow{m}=(m_1,\cdots,m_t)$ and  $\mathbf{gr}=(\Gr_1,\Gr_2,\cdots,\Gr_t)$, where $\Gr_i=(\chi_{1,2}^{(i)},\cdots,\chi_{m_i-1,m_i}^{(i)})\in\ZZ^{m_i-1}, 1\leq i\leq t$. We take $t$ oriented polygons $\PP_1, \PP_2, \cdots, \PP_t$,  where $\PP_i$ is an $(m_i+1)$-gon. Then we add one open marked point on one edge, labeled by $(m_i,0)$, of $\PP_i$, and label the rest edges of $\PP_i$ by $(i,1), (i,2), \cdots, (i, m_i)$ anticlockwise. Moreover, we define a line field on $D_i$ according to the degree $\Gr_i=(\chi_{1,2}^{(i)},\cdots,\chi_{m_i-1,m_i}^{(i)})$, such that the intersection index of the edges $(i, j-1)$ and $(i, j)$ is precisely $1-\chi_{j-1, j}^{(i)}$.
This can be done (for $\PP=\PP_i$) as follows:
\begin{itemize}
    \item First take a line field induced by a foliation that connects $\pt_\PP$ to every point in $\partial \PP$,
    such that it is orthogonal to the edges (but not at the vertices), cf. the left picture of Figure~\ref{fig:grading}.
    \item Then one can expend a line segment connecting $\pt_\PP$ and a closed marked point
    into many copies of the folded half plane, as shown in the right picture of Figure~\ref{fig:grading}, where each half plane changes the intersection index by $\pm 1$.
    This adjusts the intersection indices between the corresponding edges into the desired ones.
\end{itemize}

\begin{figure}[ht]\centering
\begin{tikzpicture}[yscale=1.4,xscale=1.6,scale=.9]
\draw[Emerald,thick]
  (1,1.7)to[bend left=25](2.5,1)
  (1,2.8).. controls +(0:.6) and +(130:.3) ..(2.5,1)
  (4,1.7)to[bend left=-25](2.5,1)
  (4,2.8).. controls +(0:-.6) and +(70:.3) ..(2.5,1)
  (1,4)to[bend left=10](2.5,1)(4,4)to[bend left=-10](2.5,1)
  (2,4).. controls +(-90:.5) and +(95:.5) ..(2.5,1)
  (3,4).. controls +(-90:.5) and +(85:.5) ..(2.5,1);
\draw[thick](1,1)to(4,1);
\draw[red,thick](1,1)\ww rectangle(4,4)\ww (1,4)\ww (4,1)\ww;
\draw[blue](2.5,1)\nn;
\draw[blue](2.5,1)node[below]{$\pt_D$};
\end{tikzpicture}
\begin{tikzpicture}[yscale=2,xscale=1]
\draw[ultra thick, Emerald] plot [smooth,tension=1] coordinates
    {(0,-1) (180:.8) (90:.8) (0:.8)  (0,-1)};
\foreach \j in {.5,.6,.7}
{\draw[Emerald,thick] plot [smooth,tension=1] coordinates
    {(0,-1) (180:\j) (90:\j) (0:\j)  (0,-1)};}
\draw[Emerald,thick] (0,-1)
    .. controls +(55:2.3) and +(125:2.3) .. (0,-1)
    .. controls +(63:1.9) and +(117:1.9) .. (0,-1)
    .. controls +(73:1.6) and +(107:1.6) .. (0,-1);
\draw[red,thick]
    (0,-1).. controls +(18:2.4) and +(-15:.5) .. (0,1.2)
    (0,-1).. controls +(162:2.4) and +(195:.5) .. (0,1.2);
\draw[](0,-1)\nn;\draw(0,1.2)\nn;
\end{tikzpicture}
\begin{tikzpicture}[xscale=1.8,yscale=2]
\clip plot [smooth,tension=1] coordinates
    {(0,-1.03) (180:.8) (90:.8) (0:.8)  (0,-1.03)};
\begin{scope}[shift={(0,0)}]
\foreach \j in {1,...,36}
    {\draw[thick,Emerald] (10*\j:1)to(10*\j:-1);}
\end{scope}
\draw[white,fill=white](2,0)rectangle(-2,-1);
\foreach \j in {0,...,-10}
    {\draw[thick,Emerald] (1,.09*\j)to(-1,.09*\j);}
\draw[ultra thick, red] plot [smooth,tension=1] coordinates
    {(0,-1) (180:.8) (90:.8) (0:.8)  (0,-1)};
\draw[](0,0)\nn(0,-1)\ww;
\end{tikzpicture}
\caption{Local grading}\label{fig:grading}
\end{figure}

Now we glue the edges according to the equivalence relation $\simeq$ in the graded skew-gentle datum. More precisely, for the case
$(i,j)\simeq (i',j')\notin\fc{\OME}$, we glue two edges $(i,j)$ and $(i',j')$ together; while for the case $(i,j)\simeq (i,j)$, we glue two endpoints of the edge $(i, j)$ to get a monogon and add a puncture $P$ in its interior, where the line field is given as shown in the right picture of Figure~\ref{fig:grading}. Note that such a grading ensures \eqref{eq:grad=1}. Thus we get a GMSp $\gms$ with an initial full formal closed arc system $\dac$, whose associated graded skew-gentle datum given in Construction \ref{cons:datum} is exactly $\datum$.
\end{proof}

\begin{remark}
Let $\dac$ be a full formal closed arc system of a GMSp and $\datum$ be the associated graded skew-gentle datum in Construction~\ref{cons:datum}. Then the graded skew-gentle triple $(Q,\operatorname{Sp},I)$ associated to $\datum$ in Remark~\ref{rmk:datum to triple} can be obtained from the dual $\ac$ of $\dac$ in a direct way.
\begin{itemize}
\item $Q_0=\{1,2,\cdots,n\}$ indexed by the open arcs $\wg_1,\cdots,\wg_n$ in $\ac$.
\item There is an arrow $i\to j$ in $Q_1$ with degree $i_{p}(\wg_j,\wg_i)$ (cf. Figure~\ref{fig:p-s} and the implication there)
whenever there is an unpunctured $\ac$-polygon having $\wg_i$ and $\wg_j$ as consecutive edges meeting at $p$, with $\wg_j$ following $\wg_i$ in the clockwise order.
\item $\operatorname{Sp}$ consists of vertices $i$ of $Q$ such that $\wg_i$ is the edge of a punctured $\ac$-polygon.
\item $I$ consists of $a_1a_2$ for $a_1:i\to j, a_2:j\to l$, such that
$i,j,l$ are consecutive edges in an unpunctured $\ac$-polygon.
\end{itemize}
It is worth mentioning that, the grading near a puncture as shown in the right picture of Figure~\ref{fig:grading}
ensures that the two ends of the red loop there have intersection index 0
at the closed marked point,
which is also the degree of the corresponding loop in $\operatorname{Sp}$.
\end{remark}

\begin{figure}[htpb]
	\begin{tikzpicture}[scale=.4]
	\draw[blue,thick] (-4,-2)to(0,2)\nn to(4,-2);
	\draw[red,thick] (-4,2)to(0,-2)\ww to(4,2);
	\draw[blue] (-1.25,.75)node[above]{$\wg_i$} (1.25,.75)node[above]{$\wg_j$};
	\draw[red] (-1.25,-.75)node[below]{$\we_i$} (1.25,-.75)node[below]{$\we_j$};
	\draw (0,2)node[above]{$a$} (0,-2)node[below]{$b$} (-2,0)node[left]{$c$} (2,0)node[right]{$d$};
	\draw (0,-3.5)node{$i_c(\wg_i,\we_i)=0=i_d(\wg_j,\we_j)\implies i_b(\we_i,\we_j)=1-i_a(\wg_j,\wg_i)$}; %(0,-3.2)node[below]{$i_b(\we_i,\we_j)=1-i_a(\wg_j,\wg_i)$};
	\end{tikzpicture}
	\caption{Intersection indices}\label{fig:p-s}
\end{figure}

%=========================================================
\subsection{Arc Segments}\label{sec:AS}

%=========================================================
For later use (e.g., Section~\ref{sec:bush}),
we describe some notions and notations concerning arc segments.
\begin{definition}\label{def:seg}
    Let $\dac$ be a full formal closed arc system of a GMSp $\gms$. An \emph{arc segment} of $\dac$ is a curve on $\surf$ satisfying that
    \begin{itemize}
        \item it lives in an $\dac$-polygon, i.e. its interior does not meet any arc in $\dac$;
        \item it has no self-intersections;
        \item its endpoints are in $\M\cup\P$ or interiors of arcs in $\dac$;
        \item it is not homotopic to a point relative to $\M\cup\P$ and the interiors of arcs in $\dac$.
    \end{itemize}
    Moreover,
    \begin{enumerate}
        \item An arc segment of $\dac$ is called \emph{interior} if its both endpoints are in the interiors of arcs in $\dac$.
        \item An orientation of an interior arc segment $\mu$ of $\dac$ is called \emph{positive} (resp. \emph{negative}) if $\pt_\PP$ is on its left (resp. right) hand side, where $\PP$ is the $\dac$-polygon containing $\mu$. See Figure~\ref{fig:st}. An orientation of a non-interior arc segment of $\dac$ is called \emph{positive} (resp. \emph{negative}) if it starts (resp. ends) at $\M\cup\P$.
        \item An arc segment of $\dac$ is called \emph{unpunctured} (resp. \emph{punctured}) if it lives in an unpunctured (resp. punctured) $\dac$-polygon.
	\end{enumerate}
\end{definition}

\begin{figure}[htpb]\centering
	\begin{tikzpicture}[xscale=2.5,yscale=.333,rotate=-90]
		\draw[thick, blue,-<-=.5,>=stealth](-2,-.25)tonode[below]{$+$}(-2,.25);
		\draw[ultra thick,white]plot [smooth,tension=1] coordinates {(170:4.5) (180:4) (190:4.5)};
		\draw[red,thick]plot [smooth,tension=1] coordinates {(-4,0) (2,.5) (5,0) (2,-.5) (-4,0)};
		\draw[thick](0:3)node{$\bullet$}node[below]{$\pt_\PP$};
		\draw[thick,red](180:4)node{$\bullet$};
	\end{tikzpicture}\qquad
	\begin{tikzpicture}[scale=-.4]
		\draw[thick, blue,-<-=.5,>=stealth](-5:3.8)to[bend left=45]node[below]{$+$}(185:3.8);
		\foreach \j in {1,2,0,3}{
			\draw[red,thick](90*\j+20:4)\nn to(90*\j-20:4)\nn ;
			\draw[dashed,red,thin](90*\j+20:4)to[bend left=-15](90*\j-20+90:4);}
		\draw[very thick](90+20:4) node[red]{$\bullet$} to(90-20:4) node[red]{$\bullet$};
		\draw[blue] (0,3.8)\nn (0,3.2)node[above]{$\pt_\PP$};
	\end{tikzpicture}\qquad
	\caption{Positive arc segments w.r.t. a full formal closed arc system}\label{fig:st}
\end{figure}

\linespread{1.5}

\begin{notations}\label{not:set}
    We shall use the following notations for certain sets of arc segments of $\dac$:
    \begin{center}
    \begin{tabular}{|c|l|}
    \hline $\as(\dac)$ & the set of all arc segments of $\dac$ \\\hline
    $\upas(\dac)$ & the set of unpunctured arc segments of $\dac$\\\hline
    $\wupas(\dac)$ & the set of graded unpunctured arc segments of $\dac$\\\hline
    $\owupas(\dac)$ & the set of oriented graded unpunctured arc segments of $\dac$\\\hline
    $\cias(\dac)$ & the set of punctured arc segments of $\dac$\\\hline
    $\icias(\dac)$ & the set of interior punctured arc segments of $\dac$\\\hline
    $\wicias(\dac)$ & the set of graded interior punctured arc segments of $\dac$\\\hline
    $\owicias(\dac)$ & the set of oriented graded interior punctured arc segments of $\dac$\\\hline
    $\as(\PP)$ & the set of arc segments living in an $\dac$-polygon $\PP$ \\\hline
    %$\Was(\PP)$ & the set of graded arc segments living in an $\dac$-polygon $\PP$ \\\hline
    $\Was(\PP)$ & the set of graded arc segments living in an $\dac$-polygon $\PP$ \\\hline
    $\oWas(\PP)$ & the set of oriented graded arc segments living in an $\dac$-polygon $\PP$ \\\hline
    \end{tabular}
    \end{center}
\end{notations}

Recall that each $\PP_i\in\DBY$ has $m_i+1$ edges labeled by $(i,0),(i,1),\cdots,(i,m_i)$ in the anticlockwise order such that $(i,0)$ is the edge containing the open marked point $\pt_{\PP_i}$. We fix an arbitrary grading for each $(i,0)$. Since $\M=\{\pt_{\PP_i}\mid \PP_i\in\DBY \}$, one can identify $\M$ with $\{(i,0)\mid 1\leq i\leq t\}$. Using this, we have
\begin{equation}\label{eq:omem}
    \Sigma:=\OME\cup\M=\{(i,j)\mid 1\leq i\leq t,\ 0\leq j\leq m_i \}.
\end{equation}

\begin{notations}\label{not:as}
    For any unpunctured $\dac$-polygon $\PP_i\in\DBY$, any arc segment in $\as(\PP_i)$ is determined by the edges of $\PP_i$ where its endpoints are. We denote by
    \begin{itemize}
        \item $(i,j_1)-(i,j_2)$ the arc segment in $\as(\PP_i)$ connecting $(i,j_1)$ and $(i,j_2)$, and
	    \item $(i,j_1)\rightarrow(i,j_2)$ the orientation of $(i,j_1)-(i,j_2)$ from $(i,j_1)$ to $(i,j_2)$.
    \end{itemize}
\end{notations}

By definition, an unpunctured arc segment $(i,j_1)-(i,j_2)$ is interior if and only if both $j_1$ and $j_2$ are not $0$; an orientation $(i,j_1)\rightarrow(i,j_2)$ is positive if and only if $j_1<j_2$.

\begin{lemma}\label{lem:r1r2}
For any arc segment $\mu=(i,j_1)-(i,j_2)\in\as(\PP_i)$ with a grading,
let $r_1=\ind(\mu,(i,j_1))$ and $r_2=\ind(\mu,(i,j_2))$. If $0<j_1<j_2$, we have $$r_1-r_2=\sum_{j=j_1}^{j_2-1}\chi^{(i)}_{j,j+1}-1.$$
\end{lemma}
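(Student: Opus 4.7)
The plan is to prove the formula by induction on $\ell := j_2 - j_1 \geq 1$, using as key tools equation~\eqref{eq:bs}, the defining relation $\ind_{v_{j+1}}\bigl((i,j),(i,j+1)\bigr) = 1 - \chi^{(i)}_{j,j+1}$ at the vertex $v_{j+1}$ between consecutive edges of $\PP_i$ (built into the line field by the proof of Theorem~\ref{thm:A}), and the antisymmetry identity $\ind_p(\wc_1,\wc_2)+\ind_p(\wc_2,\wc_1)=1$ at any transverse intersection $p$. The latter follows directly from the definition of intersection index together with the normalization in Definition~\ref{def:grading}, since the concatenation of the clockwise path from $\dot c_1$ to $\dot c_2$ with the clockwise path from $\dot c_2$ back to $\dot c_1$ is a full clockwise loop in $\mathbb{P}T_p(\surfi)$, having winding number $+1$.

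For the base case $\ell=1$, $\mu$ joins two consecutive edges $(i,j_1)$ and $(i,j_1+1)$ sharing a common vertex $v$ in $\PP_i$ and, together with the corner at $v$, bounds a triangular disk inside $\PP_i$. Within this disk, $\mu$ (suitably reparametrized) is homotopic to the small clockwise arc around $v$ required by~\eqref{eq:bs}, so it may serve as the arc $\alpha$ there. Applying~\eqref{eq:bs} yields
\[
1-\chi^{(i)}_{j_1,j_1+1} \;=\; \ind_v\bigl((i,j_1),(i,j_1+1)\bigr) \;=\; \ind_{\mu(0)}\bigl((i,j_1),\wmu\bigr)\;-\;\ind_{\mu(1)}\bigl((i,j_1+1),\wmu\bigr),
\]
and rewriting each term on the right-hand side via antisymmetry converts it to $(1-r_1)-(1-r_2) = r_2 - r_1$, giving the base case $r_1-r_2 = \chi^{(i)}_{j_1,j_1+1}-1$.

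For the inductive step $\ell\ge2$, I would retract $\mu$ near its $(i,j_2)$-endpoint through the vertex $v_{j_2}$ onto an auxiliary graded arc segment $\mu' = (i,j_1)-(i,j_2-1)$ in $\PP_i$ whose grading agrees with $\wmu$ near $\mu(0)$ (so $\ind(\mu',(i,j_1))=r_1$), and reconnect $\mu'(1)$ to $\mu(1)$ by a short arc $\tau$ in $\PP_i$ rounding the corner $v_{j_2}$ clockwise. The inductive hypothesis applied to $\mu'$ gives $r_1-\ind(\mu',(i,j_2-1))=\sum_{j=j_1}^{j_2-2}\chi^{(i)}_{j,j+1}-1$, while~\eqref{eq:bs} at $v_{j_2}$ with $\tau$ as the arc $\alpha$ provides a second identity relating $\ind(\mu',(i,j_2-1))$ to $r_2$ up to a winding-number correction coming from the tangent rotation contributed by the detour around $v_{j_2}$ (this correction is extracted directly from the relation $\ind_{v_{j_2}}\bigl((i,j_2-1),(i,j_2)\bigr)=1-\chi^{(i)}_{j_2-1,j_2}$ and the antisymmetry identity). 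Summing the two identities then yields $r_1-r_2=\sum_{j=j_1}^{j_2-1}\chi^{(i)}_{j,j+1}-1$.

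The main obstacle is the careful tracking of the grading comparison in the inductive step: the auxiliary concatenation $\mu'\cdot\tau$ is homotopic to $\mu$ in the contractible region $\PP_i$, but the tangent of $\mu'\cdot\tau$ winds differently near the corner $v_{j_2}$ than that of $\mu$, producing a grading shift that contributes a precise integer correction. Identifying this correction (via a local rotation-number computation at $v_{j_2}$ using the defining property of the line field built in Theorem~\ref{thm:A}) and combining it with the naive sum of the inductive and base-case identities produces exactly the constant $-1$ on the right-hand side, rather than the naive $-2$; once this step is verified, the induction closes.
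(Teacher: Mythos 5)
Your base case is correct and matches the spirit of the paper's: apply~\eqref{eq:bs} at the vertex $v_{j_1+1}$ with $\mu$ itself playing the role of the small clockwise arc (legitimate, since they are homotopic in the contractible triangle cut off by $\mu$), and then convert via the antisymmetry identity $\ind_p(\wc_1,\wc_2)+\ind_p(\wc_2,\wc_1)=1$, which is a valid consequence of Definition~\ref{def:grading} and is worth stating, though the paper does not spell it out.

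The inductive step, however, is not a proof as written. You retract $\mu$ to $\mu'$ and reconnect by $\tau$, sum the inductive hypothesis $r_1-r'=\sum_{j=j_1}^{j_2-2}\chi^{(i)}_{j,j+1}-1$ with an application of~\eqref{eq:bs} at $v_{j_2}$, observe that the naive sum yields $-2$ rather than $-1$, and then assert that a ``winding-number correction'' of exactly $+1$ arises from the corner-smoothing at $\mu'(1)$. That correction \emph{is} the entire non-trivial content of the step, and you do not compute it: the grading of the concatenation $\mu'\cdot\tau$ at $\mu(1)$ must be compared to that of $\mu$ at $\mu(1)$, and because the tangent directions $\dot\mu'(\mu'(1))$ and $\dot\tau(\tau(0))$ differ, the ``less-than-$\pi$ clockwise rotation'' $\kappa$ appearing in the definition of intersection index can wrap by $\pm 1$ depending on the direction in which the corner is smoothed. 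Without tracking the sign and the geometry of this corner (and matching it against the sign conventions of Construction~\ref{cons:datum} and Figure~\ref{fig:order2}), you have not produced a proof, and you yourself flag this with ``once this step is verified.''

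The paper instead invokes a single global statement: the arc segment together with the edges $(i,j_1),\dots,(i,j_2)$ bounds a contractible $(j_2-j_1+2)$-gon, and for a contractible $n$-gon with graded sides $\we_1,\dots,\we_n$ in cyclic order the intersection indices at the $n$ corners sum to $n-2$. Plugging in the values $1-\chi^{(i)}_{j,j+1}$ at the vertices $v_{j_1+1},\dots,v_{j_2}$ and $r_1$, $1-r_2$ (via antisymmetry) at $\mu(0)$, $\mu(1)$ yields the formula immediately, no induction needed. Your inductive argument is morally equivalent to proving that polygon identity one vertex at a time, so the approach is viable; but to make it work you would have to carry out precisely the local rotation-number computation you defer, or equivalently just cite and justify the polygon identity up front.
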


\begin{proof}
By Construction~\ref{cons:datum}, $\chi^{(i)}_{j,j+1}=1-\ind((i,j),(i,j+1))$. For $j_2=j_1+1$, the formula follows directly from the definition of intersection index, see \eqref{eq:bs}. Then the general case follows from the fact that the arc segment
$(i,j_1)-(i,j_2)$ and the edges $(i,j_1),(i,j_1+1),\cdots,(i,j_2)$ of $\PP_i$ form a contractible polygon.
\end{proof}

\begin{notations}\label{not:grad}
    A grading of an arc segment $\mu=(i,j_1)-(i,j_2)$ is denoted by $$\wmu=(i,j_1){}^{r_1}-{}^{r_2}(i,j_2),$$
    where $r_1=\ind(\wmu,(i,j_1))$ and $r_2=\ind(\wmu,(i,j_2))$.
Since $r_1$ and $r_2$ determine each other (e.g. see  Lemma~\ref{lem:r1r2}),
we also use $(i,j_1){}^{r_1}-(i,j_2)$ or $(i,j_1)-{}^{r_2}(i,j_2)$ to denote it. For any integer $r\in\ZZ$, denote by
$$\wmu[r]=(i,j_1){}^{r_1+r}-{}^{r_2+r}(i,j_2).$$
\end{notations}

\begin{example}\label{ex:as}
We continue Example~\ref{ex:datum}. Let $\mu_1=(2,0)-^{0}(2,2)$, $\mu_2=(1,1)^{0}-^{2}(1,3)$,  $\mu_3=(1,3)^{2}-^{3}(1,4)$, $\mu_4=(1,4)^{3}-^{5}(1,7)$, $\mu_5=(2,0)-^{5}(2,1)$, $\mu_6=(1,2)^{0}-^{3}(1,6)$, $\mu_7=(1,1)^{2}-^{3}(1,2)$, $\mu_8=(2,1)^{1}-^{2}(2,2)$ and $\mu_9=(1,6)^{0}-^{1}(1,7)$, see Figure~\ref{fig:as}. We denote by $\overrightarrow{\mu_i}$ the positive orientation of $\mu_i$ and $\overleftarrow{\mu_i}$ the negative one, e.g. $\overrightarrow{\mu_2}=(1,1)^{0}\to^{2}(1,3)$ and $\overleftarrow{\mu_2}=(1,3)^{2}\to^{0}(1,1)$.

\begin{figure}[htpb]
\begin{tikzpicture}[scale=1,font=\tiny]
\draw[ultra thick](0,0) circle (3.5);
\draw[ultra thick](0,0) circle (1);
\draw[red,thick](0,3.5)to[out=-70,in=-135](1.5,2)to[out=45,in=-20](0,3.5)
    (0,3.5)to[out=-110,in=-45](-1.5,2)to[out=135,in=-160](0,3.5);
\draw[red,thick](-1,0)to[out=-100,in=160](0,-2)to[out=20,in=-80](1,0);
\draw[red,thick](0,-2)to[out=0,in=-90](2.5,.75)to[out=90,in=-10](0,3.5);
\draw[red,thick](0,3.5)to[out=-5,in=90](3,.25)to[out=-90,in=0](0,-3)to[out=180,in=-90](-3,.25)to[out=90,in=-175](0,3.5);
\draw(-1.2,2.15)node[red]{$(1,4)$}(1.2,2.15)node[red]{$(1,3)$};
\draw[blue,thick,bend right=20](-.7,2.2)to(.7,2.2) (-1.5,2)to(-.95,-1) (.95,-1)to(1.5,2) (-.7,-1.5)to(0,-1)to(.7,-1.5) (.4,-1.8)to(-.4,-1.8) (1,-1.8)to(.8,-1.3);
\draw[blue,thick](1.9,-1)to[out=-100,in=0](0,-2.5)to[out=180,in=-90](-2.5,-.3)to[out=90,in=180](0,1.7)to[out=0,in=130](2.5,.5);
\draw[blue,thick](-.3,-1.88)to[out=-90,in=160](-.1,-2.2)to[out=-20,in=-90](.6,-1.93);
\draw[blue](0,2.3)node{$\mu_3$}(0,-1.6)node{$\mu_8$}(1.6,0)node{$\mu_2$}(-1.6,0)node{$\mu_4$}(-.5,-1.2)node{$\mu_5$}(.5,-1.2)node{$\mu_1$}(1.15,-1.45)node{$\mu_7$}(-2.7,-.3)node{$\mu_6$}(-.5,-2.05)node{$\mu_9$};
\draw(-1.15,2.45)\nn (1.15,2.45)\nn (-1,0)\ww (1,0)\ww (0,-2)\ww (0,3.5)\ww
    (0,1)[blue]\nn (0,-1)[blue]\nn (0,-3.5)[blue]\nn;
\end{tikzpicture}
	\caption{Arc segments}\label{fig:as}
	\end{figure}
\end{example}

%=========================================================
\section{A bush and its representations}\label{sec:bush}
%=========================================================

Let $\dac$ be a full formal closed arc system of a GMSp $\gms$. In this section, we associate a bush $S=S(\dac)$ to the full formal closed arc system $\dac$ and classify its representations by applying results in \cite{Deng}.

%=========================================================
\subsection{A bush from a full formal closed arc system}\label{subsec:segment}
%=========================================================

\begin{construction}\label{cons:bush}
    The bush $S=S(\dac)$ (see \cite[\S~1.3]{Deng} for the definition of a bush) is defined as follows:
	\begin{itemize}
		\item The index set $\widetilde{\Sigma}=\Sigma\times\ZZ=\{((i,j),r)\mid 1\leq i\leq t,0\leq j\leq m_i,r\in\ZZ\}$ (see \eqref{eq:omem} for the notation $\Sigma=\M\cup\OME$).
		\item For any $((i,j),r)\in\widetilde{\Sigma}$, the rod (i.e. a finite ordered set)
		$$S_{(i,j),r}^+=\begin{cases}
		\OMEpm(i,j)\times\{r\}&\text{if $(i,j)\in\OME$, i.e. $j\neq 0$,}\\
		\emptyset&\text{if $(i,j)\in\M$, i.e. $j=0$,}
		\end{cases}$$
		with a trivial order. The union $$S^+:=\bigcup_{((i,j),r)\in\widetilde{\Sigma}}S_{(i,j),r}^+=\OMEpm\times\ZZ$$ admits an equivalence relation $\sim$ induced by $((i_1,j_1),r)\sim((i_2,j_2),r)$ for any  $(i_1,j_1)\simeq(i_2,j_2)$ and $r\in\ZZ$.
		\item For any $((i,j),r)\in\widetilde{\Sigma}$, the rod
		$$S_{(i,j),r}^-=\{(i,j){}^{r}\rightarrow (i,j')\mid 0\leq j'\leq m_i,j'\neq j\}\subset\oWas(\PP_i)$$
		with an order given by
		$$(i,j){}^{r}\rightarrow (i,j')<(i,j){}^{r}\rightarrow (i,j'')$$
		if and only if one of $j''<j'<j$, $j<j''<j'$ and $j'<j<j''$ holds. That is, $S_{(i,j),r}^-$ consists of the oriented graded arc segments living in $\PP_i$ and starts at (a point in) the edge $(i,j)$ with intersection index $r$,  with the order given by left smaller than right, see Figure~\ref{fig:order2}. The union $$S^-:=\bigcup_{((i,j),r)\in\widetilde{\Sigma}}S_{((i,j),r)}^-=\owupas(\dac)$$
		admits an equivalence relation $\sim$ given by $(i,j){}^{r}\rightarrow (i,j')\sim(i,j')\rightarrow {}^{r}(i,j)$ (cf. Notations~\ref{not:seg}), i.e. two oriented graded arc segments with the same underlying graded arc segment are equivalent.
		\item $S=S^+\cup S^-$.
	\end{itemize}
\end{construction}

\begin{figure}[hptb]
	\begin{tikzpicture}
    \clip (-4,-3.5) rectangle (4,3.5);
	\draw[blue,thick,bend right,->-=.5,>=stealth] (-.4,-2.9)to(-150:2.9);
	\draw[blue,thick,bend right,->-=.5,>=stealth] (-.2,-2.9)to(-210:2.9);
	\draw[blue,thick,->-=.5,>=stealth] (0,-2.9)to(0,2.9);
	\draw[blue,thick,bend left,->-=.5,>=stealth] (.2,-2.9)to(30:2.9);
	\draw[blue,thick,bend left,->-=.5,>=stealth] (.4,-2.9)to(-30:2.9);
	\draw (-1,-1.5)node{$<$} (1,-1.5)node{$<$} (-.3,-.5)node{$<$} (.3,-.5)node{$<$};
	\draw[red] (0,-3)node[below]{$(i,j)$};

	\foreach \j in {0,...,5}
			{\draw[red,dashed] (60*\j-15:3)to(60*\j+15:3);
			\draw[red] (60*\j-45:3)to(60*\j-15:3);}
	\draw[black,ultra thick] (105:3)to(75:3);
	\draw[blue] (90:2.88)node{$\bullet$} (90:3.2)node{$(i,0)$};
	\draw[red] (60:3.3)node{$(i,m_i)$} (120:3.3)node{$(i,1)$};
	\foreach \j in {0,...,11}
	{\draw[red] (30*\j+15:3)\ww;}
	\end{tikzpicture}
	\caption{The set $S^-_{(i,j),r}$ with order: left $<$ right}\label{fig:order2}
\end{figure}

The bush $S=S(\dac)$ is the only bush we will consider in this paper. We shall need the following notations  from \cite[\S~2.1]{Deng}.

\begin{notations}\label{not:wedge}
    For any $s\in S$, we set
    $$s^\sim   =  \begin{cases}
	(i,j')\rightarrow {}^{r}(i,j)&\text{if $s=(i,j){}^{r}\rightarrow (i,j')\in S^-$}\\
	((i_2,j_2),r)&\text{if $s=((i_1,j_1),r)\in S^+$ with $(i_1,j_1)\simeq(i_2,j_2)\notin\fc{\OME}$,}\\
	s&\text{otherwise,}
	\end{cases} $$$$
      s^\wedge  =  \begin{cases}
	((i,j^-),r)&\text{if $s=((i,j^+),r)\in S^+$,}\\
	((i,j^+),r)&\text{if $s=((i,j^-),r)\in S^+$,}\\
	s&\text{otherwise,}
	\end{cases} $$$$
	s^\ast=(s^\sim)^\wedge  =  \begin{cases}
	((i_2,j_2),r)&\text{if $s=((i_1,j_1),r)\in S^+$ with $(i_1,j_1)\simeq (i_2,j_2)\notin\fc{\OME}$,}\\
	((i,j^{-}),r)&\text{if $s=((i,j^+),r)\in S^+$,}\\
	((i,j^{+}),r)&\text{if $s=((i,j^-),r)\in S^+$,}\\
	(i,j')\rightarrow{}^{r} (i,j)&\text{if $s=(i,j){}^{r}\rightarrow (i,j')\in S^-$.}
	\end{cases}
    $$
	In particular, the bush $S$ is complete in the sense that $s^\ast\neq s$ for any $s\in S$.
\end{notations}

\begin{example}
    In Example~\ref{ex:as}, we have $\overrightarrow{\mu_2}^\ast=\overleftarrow{\mu_2}$.
\end{example}

%=========================================================
\subsection{Representations of the bush}
%=========================================================

The category $\k S$ \cite[\S~1.3]{Deng} associated to $S$ is a $\k$-linear category with
\begin{itemize}
	\item $\mbox{Obj}(\k S)=S$,
	\item $\k S(s_1,s_2)=\begin{cases}\k(s_2\ge s_1)& \text{if $s_2\geq s_1$,}\\ 0&\text{otherwise,}\end{cases}$
    \item the composition is such that
    $$(s_3\geq s_2')\circ(s_2\ge s_1)=\begin{cases}(s_3\geq s_1)&\text{if $s_2=s_2'$,}\\0&\text{otherwise.}\end{cases}$$
\end{itemize}
For any rod $R$ of $S$, there is a module $\vs_R$ over $\k S$ given by
\begin{itemize}
	\item $\vs_R(s)=0$ if $s\notin R$, and
	\item $\vs_R(s_1)=\vs_R(s_2)=\k$, $\vs_R(s_2\ge s_1)=\id_\k$ if $s_1,s_2\in R$ and $s_2\geq s_1$.
\end{itemize}
To simplify the notations, for any $({(i,j),r})\in\widetilde{\Sigma}$, we denote $$\vs^-_{(i,j),r}=\vs_{S_{(i,j),r}^-},\ \vs^+_{(i,j),r}=\vs_{S_{(i,j),r}^+}.$$

Denote by $\overline{S^+}$ and $\overline{S^-}$ the sets of equivalence classes of $S^+$ and $S^-$, respectively. Then we have the identifications:
\begin{equation}\label{eq:id}
    \overline{S^+}=\OMEx\times\ZZ,\ \overline{S^-}=\wupas(\dac).
\end{equation}

Let $\overline{S}=\overline{S^+}\cup\overline{S^-}$. The category $\bush$ \cite{Deng} associated to $\overline{S}$ is a $\k$-linear category with
\begin{itemize}
	\item $\mbox{Obj}(\bush)=\overline{S}=\left(\OMEx\times\ZZ\right)\cup\wupas(\dac)$,
	\item for any $a_1,a_2\in\overline{S}$, the space of radical morphisms is $$\operatorname{rad}_{\bush}(a_1,a_2)=\bigoplus_{s_1\in a_1,s_2\in a_2,s_2>s_1}\k(s_2\ge s_1),$$
	\item the composition is such that
	$$(s_3\ge s_2')\circ(s_2\ge s_1)=\begin{cases}
	(s_3\ge  s_1)&\text{if }s_2'=s_2,\\
	0&\text{otherwise.}
	\end{cases}$$
\end{itemize}

Let $\bushz$ and $\bushf$ be the categories associated to $\overline{S}^+$ and $\overline{S}^-$, respectively. Then $\bushz$ and $\bushf$ are full subcategories of $\bush$ and there are no nonzero morphisms between $\bushz$ and $\bushf$.

Denote by $\hull$ the additive hull of $\bush$. For any rod $R$ of $S$, there is a module $\bvs_R$ over $\hull$ with
\begin{itemize}
\item $\bvs_R(a)=\bigoplus_{s\in a}\vs_R(s)$ for each $a\in\overline{S}$;
\item for a morphism $(s_2\ge s_1)$ in $\bush$, the action of $\bvs_R(s_2\ge s_1)$ on the direct summand $\vs_R(s)$ of $\bvs_R(a)$ coincides with that of $\vs_R(s_2\ge s_1)$ if $s=s_1$ or else is 0.
\end{itemize}
To simplify the notations, for any $((i,j),r)\in\widetilde{\Sigma}$, we denote $$\bvs^-_{(i,j),r}=\bvs_{S_{(i,j),r}^-},\ \bvs^+_{(i,j),r}=\bvs_{S_{(i,j),r}^+}.$$
For any $((i,0),r)\in\M\times\ZZ\subset\widetilde{\Sigma}$, since $S^+_{(i,0),r}=\emptyset$, we have $\bvs^+_{(i,0),r}=0$ and hence there is no nonzero map from $\bvs^-_{(i,0),r}(X)$ to $\bvs^+_{(i,0),r}(X)$ for any $X\in\add\bush$. So we only care about $\bvs^-_{(i,j),r}$ and $\bvs^+_{(i,j),r}$ for $((i,j),r)\in\OME\times\ZZ$, and set
$$\bvs^-=\bigoplus_{((i,j),r)\in\OME\times\ZZ}\bvs^-_{(i,j),r},\ \bvs^+=\bigoplus_{((i,j),r)\in\OME\times\ZZ}\bvs^+_{(i,j),r}.$$

\begin{definition}[{\cite[\S~1.3, 1.1]{Deng}}]
A \emph{representation} of $S$ is a sequence $(X;f)$,
where $X\in\hull$ and $f\in\Hom_\k(\bvs^-(X),\bvs^+(X))$ is a diagonal map
$$f=\operatorname{diag}\left(f_{(i,j),r},\ ((i,j),r)\in\OME\times\ZZ\right)\colon\bigoplus_{((i,j),r)\in\OME\times\ZZ}\bvs_{(i,j),r}^-(X)\to\bigoplus_{((i,j),r)\in\OME\times\ZZ}\bvs_{(i,j),r}^+(X).$$
A \emph{morphism} of representations from $(X;f)$ to $(X';f')$ is given by a morphism $\alpha\in\hull(X,X')$ such that the following diagram commutes
\begin{equation}\label{eq:mor1}
\xymatrix{
\bvs^-(X)\ar[r]^{f}\ar[d]_{\bvs^-(\alpha)} & \bvs^+(X)\ar[d]^{\bvs^+(\alpha)}\\
\bvs^-(X')\ar[r]^{f'} & \bvs^+(X')
}
\end{equation}
Denote by $\rep(S)$ the category of representations of $S$ and by $\repb(S)$ the full subcategory of $\rep(S)$ formed by the representations $(X;f)$ such that $f$ is a bijection.
\end{definition}

\begin{example}
    Consider the graded arc segment $\mu_6=(1,2)^{0}-^{3}(1,6)$ from Example~\ref{ex:as} and see Figures~\ref{fig:datum} and \ref{fig:as}. Let $X=\mu_6\oplus\{((1,2),0),((1,6),0)\}$. Then
    \[\bvs^-_{(i,j),r}(X)=\begin{cases}
        \k & \text{if $((i,j),r)=((1,2),0)$ or $((1,6),3)$,}\\
        0 & \text{otherwise,}
    \end{cases}\]
    and
    \[\bvs^+_{(i,j),r}(X)=\begin{cases}
        \k & \text{if $((i,j),r)=((1,2),0)$ or $((1,6),0)$,}\\
        0 & \text{otherwise.}
    \end{cases}\]
    One can take $f$ to be the diagonal map with only one nonzero component $\id:\bvs^-_{(1,2),0}\to \bvs^+_{(1,2),0}$ such that $(X;f)\in\rep(S)$. However, this representation $(X;f)$ is not in $\repb(S)$. Indeed, for the $X$ in this example, there is no linear map $f$ such that $(X;f)\in\repb(S)$.
\end{example}

\begin{remark}\label{rmk:rep and mor}
    We regard $\bvs^-$ and $\bvs^+$ as $(\OME\times\ZZ)$-graded modules of $\k\overline{S^-}$ and $\k\overline{S^+}$, respectively. In other words, $\bvs^-$ and $\bvs^+$ are functors from $\k\overline{S^-}$ and $\k\overline{S^+}$ respectively to the category $(\OME\times\ZZ)$-$\gvec$ of finite-dimensional $(\OME\times\ZZ)$-graded vector spaces. Note that both $\bvs^-$ and $\bvs^+$ are faithful.

    For any $X\in\add\overline{S}$, by definition, there are $X_-\in\k\overline{S^-}$ and $X_+\in\k\overline{S^+}$ such that $X=X_-\oplus X_+$. Moreover, we have $\bvs^-_{(i,j),r}(X)=\bvs^-_{(i,j),r}(X^-)$ and $\bvs^+_{(i,j),r}(X)=\bvs^+_{(i,j),r}(X^+)$. Therefore, a representation of $S$ can be realized as a triple $(X_-,X_+,f)$, where
    \begin{itemize}
        \item $X_-\in\k\overline{S^-}$,
        \item $X_+\in\k\overline{S^+}$, and
        \item $f:\bvs^-(X^-)\to\bvs^+(X_+)$ a morphism in $(\OME\times\ZZ)$-$\gvec$.
    \end{itemize}
    We have that $(X_-,X_+,f)\in\repb(S)$ if and only if $f$ is an isomorphism. In this way, a morphism between representations of $S$ from $(X_-,X_+,f)$ to $(X'_-,X'_+,f)$ is a pair $(\alpha_-,\alpha_+)$ of $\alpha_-:X_-\to X'_-$ and $\alpha_+:X_+\to X'_+$ such that the following diagram commutes
    %Write $X=X_-\oplus X_+$ and $X'=X'_-\oplus X'_+$ with $X_-,X'_-\in\add\k\overline{S^-}$ and $X_+,X'_+\in\add\k\overline{S^+}$. Then $$f\in\Hom_\k(\bvs^-(X_-),\bvs^+(X_+)) \text{ and } f'\in\Hom_\k(\bvs^-(X'_-),\bvs^+(X'_+)).$$
    %Moreover, the morphism $\alpha:X\to X'$ is a diagonal map formed by $\alpha_-\colon X_-\to X'_-$ and $\alpha_+\colon X_+\to X'_+$. Thus, the commutative diagram~\eqref{eq:mor1} becomes
    \begin{equation}\label{eq:mor2}
    \xymatrix{
    \bvs^-(X_-)\ar[r]^{f}\ar[d]_{\bvs^-(\alpha_-)} & \bvs^+(X_+)\ar[d]^{\bvs^+(\alpha_+)}\\
    \bvs^-(X'_-)\ar[r]^{f'} & \bvs^+(X'_+)
}
    \end{equation}
    One can compare this with the definition of triple categories introduced in Section~\ref{subsec:triple to bush}.

    In the case that $f$ and $f'$ are bijections, it follows from the commutative diagram~\eqref{eq:mor2} that $\bvs^-(\alpha_-)$ and $\bvs^+(\alpha_+)$ determine each other. Hence, by the faifulness of $\bvs^-$ and $\bvs^+$, the morphisms $\alpha_-$ and $\alpha_+$ determine each other, too.
\end{remark}

%=========================================================
\subsection{Words with local system}
%=========================================================

A sequence $w=w_1 w_2\cdots w_m$ of elements in the bush $S$ is called a \emph{finite word} if $w_k^\ast|w_{k+1}$ for all $1\leq k<m$, where $w_k^\ast|w_{k+1}$ stands for that $$(w_k^\ast,w_{k+1})\in\bigcup_{((i,j),r)\in\widetilde{\Sigma}}((S_{((i,j),r)}^-\times S_{((i,j),r)}^+)\cup(S_{((i,j),r)}^+\times S_{((i,j),r)}^-)).$$
Two finite words $w=w_1 w_2\cdots w_m$ and $w'=w'_1 w'_2\cdots w'_{m'}$ are called \emph{equivalent} if $m=m'$ and $w_k'=w_k$ or $w_k^\wedge$ for all $1\leq k\leq m$. The \emph{inverse} of a finite word $w=w_1 w_2\cdots w_m$ is $w^\ast=w_m^\ast\cdots w_2^\ast w_1^\ast$. A finite word $w$ is called \emph{symmetric} if $\ec{w}=\ec{w^\ast}$,
and \emph{asymmetric} otherwise, where recall that $\ec{w}$ denotes the equivalence class containing $w$.

\begin{remark}\label{rmk:3.1}
By Notations~\ref{not:wedge}, two finite words $w=w_1 w_2\cdots w_m$ and $w'=w'_1 w'_2\cdots w'_m$ are equivalent if and only if for any $1\leq k\leq m$, if $w_k\neq w_k'$ then $\{w_k,w_k'\}=S_{(i,j),r}^+$ for some $(i,j)\in\fc{\OME}$ and $r\in\ZZ$.
\end{remark}

An order on the set of equivalence classes of finite words is given by
$$\ec{v_1\cdots v_n}<\ec{w_1\cdots w_m}$$
if and only if one of the following three cases occurs
\begin{enumerate}
    \item $n<m$, $\ec{v_1\cdots v_n}=\ec{w_1\cdots w_n}$ and $w_{n+1}\in S^-$;
    \item $m<n$, $\ec{v_1\cdots v_m}=\ec{w_1\cdots w_m}$ and $v_{m+1}\in S^+$;
    \item there exists $1\leq k\leq \min\{m,n\}$ such that $\ec{v_1\cdots v_{k-1}}=\ec{w_1\cdots w_{k-1}}$ and $v_k<w_k$ in $S$ (see Construction~\ref{cons:bush} and cf. Figure~\ref{fig:order2} for the order in $S$).
\end{enumerate}
When $v_1,w_1$ are in the same rod, $\ec{v}$ and $\ec{w}$ are always comparable. In particular, for any finite word $w=w_1w_2\cdots w_m$ and any $1\leq k\leq m$, if $w_k\in S_{(i,j),r}^+$ for some $(i,j)\in\fc{\OME}$, then $\ec{w_{k+1}\cdots w_m}$ and $\ec{w_{k-1}^\ast\cdots w_1^\ast}$ are comparable, because $w_{k+1}$ and $w_{k-1}^\ast$ are in the same rod.

Let $\AW$ (resp. $\SW$) be a set of asymmetric (resp. symmetric) finite words, which contains one representative of each class $\ec{w}\cup\ec{w^\ast}$ (resp. $\ec{w}$) such that for any element $w_1w_2\cdots w_m$ in it and for any $1\leq k\leq m$, if $w_k\in S_{(i,j),r}^+$ for some $(i,j)\in\fc{\OME}$ and $r\in\ZZ$, then
$$w_k=\begin{cases}
	((i,j^+),r)&\text{if $\ec{w_{k+1}\cdots w_m}\geq\ec{w_{k-1}^\ast\cdots w_1^\ast}$,}\\
	((i,j^-),r)&\text{if $\ec{w_{k+1}\cdots w_m}<\ec{w_{k-1}^\ast\cdots w_1^\ast}$.}
\end{cases}$$

\begin{definition}
A finite word $w=w_1w_2\cdots w_m$ is called \emph{inextensible} if there do not exist $w_0$ and $w_{m+1}$ in $S$ such that $w_0w$ or $ww_{m+1}$ is a finite word.
\end{definition}

\begin{lemma}\label{lem:inex}
    A finite word $w_1 w_2\cdots w_m$ is inextensible if and only if $$w_1,w_m^\ast\in\bigcup_{((i,j),r)\in\M\times\ZZ}S^-_{(i,j),r}.$$
\end{lemma}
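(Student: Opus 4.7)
The plan is to prove both directions by directly unpacking the definition of a finite word against the rod structure from Construction~\ref{cons:bush}. First I would record the key dichotomy: $S^+_{(i,j),r}$ is empty precisely when $(i,j)\in\M$ (i.e.\ $j=0$), while $S^-_{(i,j),r}$ is always nonempty, since it contains an oriented arc segment from $(i,j)$ to each of the remaining $m_i\geq 1$ edges of $\PP_i$. Moreover, from Notations~\ref{not:wedge} the involution $w\mapsto w^\ast$ preserves the splitting $S=S^+\cup S^-$; on $S^-$ it reverses the orientation of an arc segment, and on $S^+$ it either swaps the $+/-$ tags above a point of $\fc{\OME}$ or jumps to the partner prescribed by $\simeq$.

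Next, for left-inextensibility, I would unpack $w_0^\ast\,|\,w_1$: this asserts that $(w_0^\ast,w_1)$ lies in $S^+_{(i,j),r}\times S^-_{(i,j),r}$ or in $S^-_{(i,j),r}\times S^+_{(i,j),r}$ for some index $((i,j),r)$. Since $w\mapsto w^\ast$ is a bijection on each of $S^+$ and $S^-$, the existence of a suitable $w_0$ amounts to the partner rod of the rod containing $w_1$ being nonempty. Combined with the dichotomy above: if $w_1\in S^+_{(i,j),r}$ one can always extend on the left (because $S^-_{(i,j),r}\neq\emptyset$), whereas if $w_1\in S^-_{(i,j),r}$ one can extend iff $S^+_{(i,j),r}\neq\emptyset$, i.e.\ iff $(i,j)\notin\M$. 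Contrapositively, $w$ is not left-extensible iff $w_1\in\bigcup_{((i,j),r)\in\M\times\ZZ}S^-_{(i,j),r}$.

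A symmetric argument applied to $w_m^\ast\,|\,w_{m+1}$ shows that $w$ is not right-extensible iff $w_m^\ast\in\bigcup_{((i,j),r)\in\M\times\ZZ}S^-_{(i,j),r}$; conjoining the two criteria yields the lemma. There is no real obstacle here, as the whole statement is a direct unpacking of definitions. The only conceptual point worth flagging is the asymmetry $S^+_{(i,0),r}=\emptyset$ versus $S^-_{(i,0),r}\neq\emptyset$, which is the combinatorial shadow of the geometric fact that the edge $(i,0)$ of each unpunctured $\dac$-polygon $\PP_i$ carries the open marked point $\pt_{\PP_i}$, so no arrow of the bush can point into it.
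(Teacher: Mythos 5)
Your proof is correct and follows essentially the same route as the paper's: both arguments hinge on the dichotomy that $S^-_{(i,j),r}$ is always nonempty while $S^+_{(i,j),r}=\emptyset$ precisely when $(i,j)\in\M$, and both then unpack the definition of a finite word to read off the inextensibility criterion at each end. Your version is slightly more explicit about the role of the involution $s\mapsto s^\ast$ as a bijection preserving the splitting $S=S^+\cup S^-$, which the paper leaves implicit, but the substance is identical.
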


\begin{proof}
	By construction, for any $((i,j),r)\in\widetilde{\Sigma}$, $S^-_{(i,j),r}$ is always not empty, and $S^+_{(i,j),r}$ is empty if and only if $(i,j)\in\M$. So for any $s\in S$, there is no $s'\in S$ such that $s|s'$ if and only if $s\in\bigcup_{((i,j),r)\in\M\times\ZZ}S^-_{(i,j),r}$. Then the lemma follows.
\end{proof}

Denote by $\AW^b$ (resp. $\SW^b$) the subset of $\AW$ (resp. $\SW$) consisting of inextensible finite words.

A \emph{periodic word} is a sequence $w=(w_k)_{k\in\ZZ}$ satisfying $w_k^\ast|w_{k+1}$ for all $k\in\ZZ$ and there exists a natural number $\period\geq 1$ such that $w_{k+\period}=w_k$ for all $k\in\ZZ$. The smallest $\period$ satisfying these conditions is called the \emph{period} of $w$. The \emph{rotation} $w\{p\}$ of a periodic word with $p\in\ZZ$ is a periodic word with $w\{p\}_k=w_{p+k},k\in\ZZ$. Two periodic words $w=(w_k)_{k\in \ZZ}$ and $w'=(w'_k)_{k\in \ZZ}$ are called \emph{equivalent} if there is $p\in\ZZ$ such that $w'\{p\}_k=w_k$ or $w_k^\wedge$ for all $k\in\ZZ$. The \emph{inverse} of a periodic word $w=(w_k)_{k\in\ZZ}$ is $w^\ast=(w_{-i}^\ast)_{i\in\ZZ}$. A periodic word $w$ is called \emph{symmetric} if $\ec{w^\ast}=\ec{w\{p\}}$ for some $p\in\ZZ$, and \emph{asymmetric} otherwise.

Let $\APW$ (resp. $\SPW$) be a set of asymmetric (resp. symmetric) periodic words which contains one representation of each class $\bigcup_{p\in\ZZ}(\ec{w\{p\}}\cup\ec{w^\ast\{p\}})$ (resp. $\bigcup_{p\in\ZZ}\ec{w\{p\}}$) such that for any element $(w_k)_{k\in\ZZ}\in\APW\cup\SPW$ in it and any $1\leq k\leq \period$, if $w_k\in S^+_{(i,j),r}$ for some $(i,j)\in\fc{\OME}$ and $r\in\ZZ$, then
$$w_k=\begin{cases}
	((i,j^+),r)&\text{if $\ec{w_{k+1}\cdots w_{\period+k}}\geq\ec{w_{k-1}^\ast\cdots w_{-\period+k}^\ast}$,}\\
	((i,j^-),r)&\text{if $\ec{w_{k+1}\cdots w_{\period+k}}<\ec{w_{k-1}^\ast\cdots w_{-\period+k}^\ast}$.}
\end{cases}$$
Note that if $\ec{w_{k+1}\cdots w_{\period+k}}=\ec{w_{k-1}^\ast\cdots w_{-\period+k}^\ast}$, then $\ec{w_{k+1}\cdots w_{k+1+l}}=\ec{w_{k-1}^\ast\cdots w_{k-1-l}^\ast}$ for any $l\in\NN$, since $\period$ is the period of $w$.

\begin{definition}\label{def:wwls}
    A word with \emph{local system} is a pair $(w,N)$, where $$w\in\AW^b\cup\SW^b\cup\APW\cup\SPW$$ and $N$ is an isoclass of indecomposable $A_w$-modules, where
    $$A_w=\begin{cases}
    \k&\text{if $w\in\AW^b$},\\
    \k[x]/(x^2-x)&\text{if $w\in\SW^b$},\\
    \k[x,x^{-1}]&\text{if $w\in\APW$},\\
    \k\langle x,y\rangle/(x^2-x,y^2-y)&\text{if $w\in\SPW$.}
    \end{cases}$$
    We denote by $\OO$ the set of words with local system.
\end{definition}

\begin{example}\label{ex:words}
    We continue Example~\ref{ex:as}. Let $$w'=\overrightarrow{\mu_1}((2,2),0)\overrightarrow{\mu_2}((1,3^+),2)\overrightarrow{\mu_3}((1,4^+),3)\overleftarrow{\mu_3}((1,3^+),2)\overrightarrow{\mu_3}((1,4^+),3)\overrightarrow{\mu_4}((1,7),5)\overleftarrow{\mu_5},$$
    $$w''=\overrightarrow{\mu_1}((2,2),0)\overrightarrow{\mu_2}((1,3^+),2)\overrightarrow{\mu_3}((1,4^+),3)\overleftarrow{\mu_3}((1,3^-),2)\overleftarrow{\mu_2}((1,1),0)\overleftarrow{\mu_1},$$
    $w'''=(w'''_i)_{i\in\ZZ}$ with period 8 and
    $$w'''_1w'''_2\cdots w'''_8=\overrightarrow{\mu_6}((1,6),3)\overleftarrow{\mu_7}((1,1),2)\overleftarrow{\mu_8}((2,1),1)\overleftarrow{\mu_9}((1,6),0),$$
    $w''''=(w''''_i)_{i\in\ZZ}$ with period 4 and
    $$w''''_1\cdots w''''_4=\overrightarrow{\mu_3}((1,4^+),1)\overleftarrow{\mu_3}((1,3^+),0).$$
    Then we have $w'\in\AW^b,w''\in\SW^b,w'''\in\APW$ and $w''''\in\SPW$.
\end{example}

%=========================================================
\subsection{Representations attached to words with local system}\label{sec:Rep_b}
%=========================================================

For any $s\in S$, we define an object $\widehat{s}$ in $\hull$ as follows.
$$\widehat{s}=\begin{cases}
s\oplus s^\wedge&\text{if $s\neq s^\wedge$,}\\
\overline{s}&\text{otherwise,}
\end{cases}$$
where $\overline{s}$ denotes the equivalence class containing $s$.

\begin{construction}\label{cons:fw}
Recall from \cite[Section~2.2]{Deng}, that to each finite word $w=w_1w_2\cdots w_m$,
there is an associated representation $R(w):=(X_w;f_{w})$ as follows. Set
$$X_w=\widehat{w_1}\oplus\widehat{w_2}\oplus\cdots\oplus\widehat{w_m}.$$
Thus each term $x$ of the sequence $w_1w_1^\ast w_2w_2^\ast\cdots w_m w_m^\ast$ which is in $S^{\varepsilon}_{(i,j),r}$ contributes a 1-dimensional direct summand $\k\underline{x}$ to the space $\bvs_{(i,j),r}^\varepsilon (X_w)$.

For any $\underline{x}\in\{\underline{w_1},\underline{w_1^\ast},\underline{w_2},\underline{w_2^\ast},\cdots,\underline{w_m},\underline{w_m^\ast}\}$ with $x\in \bigcup_{((i,j),r)\in\OME\times\ZZ}S_{(i,j),r}^-$, define $f_w(\underline{x})\in S^+$ as follows.
\begin{enumerate}
\item Case $x=w_{k+1},1\leq k< m$:
\begin{enumerate}
\item if $w_{k}^{\ast}\in S^+_{(i,j),r}$ for some $(i,j)\notin\fc{\OME}$ and $r\in\ZZ$, see the first picture in Figure~\ref{fig:fw},
$$f_w(\underline{x})=\underline{w_{k}^\ast};$$
\item if $w_{k}^{\ast}\in S^+_{(i,j),r}$ for some $(i,j)\in\fc{\OME}$ and $r\in\ZZ$,
$$f_w(\underline{x})=\begin{cases}
\underline{w_{k}^\ast}, &\text{if $w_k^\ast=((i,j^+),r)$, see the second picture in Figure~\ref{fig:fw},}\\
\underline{w_{k}^\ast}+\underline{w_{k}}, &\text{if $w_k^\ast=((i,j^-),r)$, see the third picture in Figure~\ref{fig:fw}.}
\end{cases}$$
\end{enumerate}
\item Case $x=w_{k-1}^\ast,1< k\leq m$:
\begin{enumerate}
\item if $w_{k}\in S^+_{(i,j),r}$ for some $(i,j)\notin\fc{\OME}$ and $r\in\ZZ$, see the first picture in Figure~\ref{fig:fw},
$$f_w(\underline{x})=\underline{w_{k}};$$
\item if $w_{k}\in S^+_{(i,j),r}$ for some $(i,j)\in\fc{\OME}$,
$$f_w(\underline{x})=\begin{cases}
\underline{w_{k}}, &\text{if $w_k=((i,j^+),r)$, see the third picture in Figure~\ref{fig:fw},}\\
\underline{w_{k}}+\underline{w_{k}^\ast}, &\text{if $w_k=((i,j^-),r)$, see the second picture in Figure~\ref{fig:fw}.}
\end{cases}$$
\end{enumerate}
\item Case $x=w_1$ or $w_m^\ast$:
$$f_w(\underline{x})=0.$$
\end{enumerate}

\begin{figure}[htpb]
	\begin{tikzpicture}
	\draw[red,thick] (0,-2)\ww to(0,2)\ww ;
	\draw[blue,thick] (-2,0)to(0,0)to(2,0);
	\draw[blue] (-1.5,0)node[below]{$w_{k-1}^\ast$} (1.5,0)node[below]{$w_{k+1}$};
	\draw[blue] (-.25,0)node[below]{$w_k$} (.25,0)node[below]{$w_k^\ast$};
	\draw[Green,thick,bend right,->-=1,>=stealth] (-1.5,-.55)to(-.3,-.4);
	\draw[Green] (-.9,-.55)node[below]{$f_{w}$};
	\draw[Green,thick,bend left,->-=1,>=stealth] (1.5,-.4)to(.3,-.55);
	\draw[Green] (.9,-.55)node[below]{$f_{w}$};
	\end{tikzpicture}\qquad\qquad
	
	\begin{tikzpicture}[xscale=1.5,yscale=2]
	%\draw[ultra thick] (0,-.95) ellipse (.2 and 0.15);
	\draw[thick] (0,-.8)node{$\bullet$};
	\draw[red, thick] (0,0) .. controls +(-55:1) and +(0:.5) .. (0,-1) .. controls +(180:.5) and +(-125:1) .. (0,0);
	\draw[blue,thick]plot [smooth, tension=1] coordinates {(-2.2,-1) (0,-.3) (2.2,-1)};
	\draw[blue] (-1.5,-.45)node{$w_{k-1}^\ast$} (1.5,-.45)node{$w_{k+1}$};
	\draw[blue] (-.33,-.2)node{$w_k$} (.3,-.2)node{$w_k^\ast$};
	\draw[Green,thick,bend left,->-=.999,>=stealth] (-1.5,-.3)to(-.35,-.1);
	\draw[Green,thick,bend right,->-=.999,>=stealth] (1.5,-.3)to(.35,-.05);
	\draw[Green,thick,bend left=50,->-=.999,>=stealth] (-1.5,-.3)to(.3,-.1);
	\draw[Green] (-.95,-.2)node{$f_{w}$} (.95,.15)node{$f_{w}$} (-.5,.35)node{$f_{w}$};
	\draw[red] (0,0)\ww ;
	\end{tikzpicture}\qquad
	\begin{tikzpicture}[xscale=1.5,yscale=2]
	%\draw[ultra thick] (0,-.95) ellipse (.2 and 0.15);
	\draw[thick] (0,-.8)node{$\bullet$};
	\draw[red, thick] (0,0) .. controls +(-55:1) and +(0:.5) .. (0,-1) .. controls +(180:.5) and +(-125:1) .. (0,0);
	\draw[blue,thick]plot [smooth, tension=1] coordinates {(-2.2,-1) (0,-.3) (2.2,-1)};
	\draw[blue] (-1.5,-.45)node{$w_{k+1}$} (1.5,-.45)node{$w_{k-1}^\ast$};
	\draw[blue] (-.35,-.2)node{$w_k^\ast$} (.3,-.2)node{$w_k$};
	\draw[Green,thick,bend left,->-=.999,>=stealth] (-1.5,-.3)to(-.35,-.1);
	\draw[Green,thick,bend right,->-=.999,>=stealth] (1.5,-.3)to(.35,-.05);
	\draw[Green,thick,bend left=50,->-=.999,>=stealth] (-1.5,-.3)to(.3,-.1);
	\draw[Green] (-.95,-.2)node{$f_{w}$} (.95,.15)node{$f_{w}$} (-.5,.35)node{$f_{w}$};
	\draw[red] (0,0)\ww ;
	\end{tikzpicture}
	\caption{Construction of $f_{w}$}\label{fig:fw}
\end{figure}

\end{construction}

\begin{lemma}\label{lem:bw}
	For any finite word $w$, $R(w)\in\repb(S)$ if and only if $w$ is inextensible.
\end{lemma}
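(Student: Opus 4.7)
My plan is to exploit the alternating-sign structure of finite words (which follows from $s^\ast\in S^\varepsilon$ whenever $s\in S^\varepsilon$, together with the fact that $w_k^\ast\mid w_{k+1}$ forces $w_k^\ast$ and $w_{k+1}$ to lie in the same rod with opposite signs) and to read off the matrix of $f_w$ explicitly from Construction~\ref{cons:fw}.

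$(\Leftarrow)$ Suppose $w$ is inextensible. By Lemma~\ref{lem:inex}, both $w_1$ and $w_m^\ast$ lie in $S^-_{(i,0),r}$ for some $(i,0)\in\M$. Since $\bvs^\pm(X_w)$ is indexed only over $\OME\times\ZZ$ (and not over $\M\times\ZZ$), this is precisely the condition that removes $\underline{w_1}$ and $\underline{w_m^\ast}$ from the basis of $\bvs^-(X_w)$, so Case~3 of Construction~\ref{cons:fw} produces no kernel element. I will then check that each of the $m-1$ junctions $(w_k^\ast,w_{k+1})$ lives in exactly one rod $((i,j),r)$ with $(i,j)\in\OME$ and yields a unique source in $\bvs^-$ and a unique primary target in $\bvs^+$, and that these sources and primary targets exhaust the bases of $\bvs^-(X_w)$ and $\bvs^+(X_w)$ respectively. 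Secondary targets occur only in the $\fc{\OME}$ case and couple precisely the two consecutive junctions sharing that rod; hence the restriction of $f_w$ to one rod is either a permutation matrix (in the non-$\fc{\OME}$ case) or a block-diagonal matrix whose blocks are the $2\times 2$ unipotent matrices $\bigl(\begin{smallmatrix}1&1\\0&1\end{smallmatrix}\bigr)$ or $\bigl(\begin{smallmatrix}1&0\\1&1\end{smallmatrix}\bigr)$ (in the $\fc{\OME}$ case). Every such matrix is invertible, so $f_w$ is bijective and $R(w)\in\repb(S)$.

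$(\Rightarrow)$ I argue the contrapositive. If $w$ is extensible then, WLOG, $w_1\notin S^-_{(i,0),r}$ for every $(i,0)\in\M$ (the case of $w_m^\ast$ being entirely symmetric). If $w_1\in S^-$ with starting edge in $\OME$, then $\underline{w_1}$ is a basis vector of $\bvs^-(X_w)$ and Case~3 of Construction~\ref{cons:fw} gives $f_w(\underline{w_1})=0$, so $f_w$ fails injectivity. If $w_1\in S^+$, then since $S^+_{(i,0),r}=\emptyset$ we have $w_1\in S^+_{(i,j),r}$ with $(i,j)\in\OME$, so $\underline{w_1}$ is a basis vector of $\bvs^+(X_w)$; since position~$1$ appears only in junction~$1$, $\underline{w_1}$ can occur among the images of $f_w$ only as a secondary target there. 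A short case analysis across $w_1\notin\fc{\OMEpm}$, $w_1=((i,j^+),r)$ and $w_1=((i,j^-),r)$ shows that the intersection of $\mathrm{Im}(f_w)$ with the contribution of $\widehat{w_1}$ to $\bvs^+(X_w)$ inside the rod of $\underline{w_1}$ is always at least one dimension too small, so $f_w$ fails surjectivity.

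The principal technical obstacle is the $\fc{\OME}$ bookkeeping in the $(\Leftarrow)$ direction: when several junctions share a single rod, one must verify that the secondary-target rule does not destroy invertibility. This is resolved by observing that secondary targets only couple \emph{consecutive} junctions into the $2\times 2$ unipotent blocks described above, so the rod-matrix decomposes into a direct sum of such invertible blocks.
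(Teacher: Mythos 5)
Your proof is correct and takes essentially the same route as the paper: the paper's one-line assertion that ``restricting to the subspace with basis $\underline{w_1^\ast},\underline{w_2},\underline{w_2^\ast},\cdots,\underline{w_m}$, $f_w$ is a bijection'' is exactly your block-diagonal analysis with $2\times 2$ unipotent (or identity) blocks indexed by the even positions $k$, and the rest of the argument — that $R(w)\in\repb(S)$ iff $\underline{w_1}$ and $\underline{w_m^\ast}$ are absent from both domain and codomain, iff $w_1,w_m^\ast\in\bigcup_{\M\times\ZZ}S^-_{(i,0),r}$, iff $w$ is inextensible by Lemma~\ref{lem:inex} — is what you carry out in your two directions. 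The only cosmetic point: the invariant blocks are cleanest described as indexed by the even positions $k$ (each pairing the two junctions adjacent to $w_k$ into a $2\times 2$ map $\{\underline{w_{k-1}^\ast},\underline{w_{k+1}}\}\to\{\underline{w_k},\underline{w_k^\ast}\}$) rather than ``by rod,'' since a single rod may contain several such positions, but the rod-restriction is then a direct sum of these position blocks exactly as you say, so the argument goes through.
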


\begin{proof}
	By the construction, restricting to the subspace with basis $\underline{w_1^\ast},\underline{w_2},\underline{w_2^\ast},\cdots,\underline{w_m}$, $f_w$ is a bijection. So $R(w)\in\repb(S)$ if and only if $w_1,w_m^\ast\in \bigcup_{((i,j),r)\in\M\times\ZZ}S^-_{(i,j),r}$ (that is, they are neither in the domain nor the codomain of $f_w$), which is equivalent to that $w$ is inextensible by Lemma~\ref{lem:inex}.
\end{proof}

For any $w=w_1w_2\cdots w_m\in\SW$, we have $m=2m'+1$ for some integer $m'$, and $w_{m'+1}\in S^+_{(i,j),r}$ for some $(i,j)\in\fc{\OME}$. Then the representation $$R(w)\cong R(w,+)\oplus R(w,-),$$ where $R(w,\kappa)$, $\kappa\in\{+,-\}$, is given by the restriction of $R(w)$ on the direct summand $\widehat{w_1}\oplus\widehat{w_2}\oplus\cdots\oplus\widehat{w_{m'}}\oplus((i,j^\kappa),r)$ of $X_w$.

%Similarly as Lemma~\ref{lem:bw}, we have the following.

\begin{lemma}\label{lem:bw2}
	For any $w\in\SW$ and any $\kappa\in\{+,-\}$, $R(w,\kappa)$ is in $\repb(S)$ if and only if $w\in\SW^b$.
\end{lemma}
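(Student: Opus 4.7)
The proof will parallel that of Lemma~\ref{lem:bw}, with the additional bookkeeping needed to track the decomposition $R(w)\cong R(w,+)\oplus R(w,-)$. Fix $w=w_1w_2\cdots w_m\in\SW$ with $m=2m'+1$, so that $w_{m'+1}\in S^+_{(i,j),r}$ for some $(i,j)\in\fc{\OME}$. By definition of symmetry, we have an equality of equivalence classes $\ec{w}=\ec{w^\ast}$, which forces $w_{m'+1-k}^\ast=w_{m'+1+k}$ (up to the $\wedge$-relation) for $1\le k\le m'$. The underlying object of $R(w,\kappa)$ is
\[
X_{R(w,\kappa)}=\widehat{w_1}\oplus\widehat{w_2}\oplus\cdots\oplus\widehat{w_{m'}}\oplus ((i,j^\kappa),r),
\]
and the basis of the corresponding $\bvs^\pm$ consists of the $\underline{w_k},\underline{w_k^\ast}$ for $1\le k\le m'$ together with the single basis vector $\underline{((i,j^\kappa),r)}$.

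The first step will be to describe $f_w$ restricted to $R(w,\kappa)$ explicitly, using the three cases of Construction~\ref{cons:fw}. The interior pairings $\underline{w_{k+1}}\mapsto\underline{w_k^\ast}$ and $\underline{w_{k-1}^\ast}\mapsto\underline{w_k}$ for $2\le k\le m'$ proceed exactly as in the non-symmetric case and match basis vectors bijectively within $\{\underline{w_2},\ldots,\underline{w_{m'}},\underline{w_2^\ast},\ldots,\underline{w_{m'}^\ast}\}$. The only novelty is at the central step $k=m'+1$: here the image (or preimage) of $f_w$ is precisely the summand $\underline{((i,j^\kappa),r)}$ of $\bvs^+(X_{R(w,\kappa)})$, because according to case 1(b)/2(b) of Construction~\ref{cons:fw}, applying $f_w$ to $\underline{w_{m'}^\ast}$ uses only the $((i,j^\kappa),r)$-component (when $\kappa=+$) or the sum $\underline{((i,j^+),r)}+\underline{((i,j^-),r)}$ (when $\kappa=-$), which restricted to the $\kappa$-summand is still $\underline{((i,j^\kappa),r)}$. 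Thus this middle basis vector is hit bijectively in each summand.

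The second step will be to conclude: the restriction of $f_w$ to $R(w,\kappa)$ is injective and surjective precisely on the basis $\{\underline{w_2},\underline{w_2^\ast},\ldots,\underline{w_{m'}},\underline{w_{m'}^\ast},\underline{((i,j^\kappa),r)}\}$, so $f_w$ is a bijection on $R(w,\kappa)$ if and only if the remaining endpoint vectors $\underline{w_1}$ and $\underline{w_1^\ast}$ do not appear in $\bvs^-(X_{R(w,\kappa)})$ or $\bvs^+(X_{R(w,\kappa)})$ respectively. (By symmetry, the condition for the right endpoint $\underline{w_m},\underline{w_m^\ast}$ is the same as that for $\underline{w_1},\underline{w_1^\ast}$, so only one side needs to be checked.) This happens exactly when $w_1\in\bigcup_{((i,j),r)\in\M\times\ZZ}S^-_{(i,j),r}$; together with its symmetric counterpart for $w_m^\ast$, this is Lemma~\ref{lem:inex}'s characterization of inextensibility, i.e.\ $w\in\SW^b$.

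The main obstacle will be the middle step: verifying that the ``diagonal'' action $\underline{w_{m'}^\ast}\mapsto\underline{((i,j^+),r)}+\underline{((i,j^-),r)}$ (or the analogous image in case 2(b)) decomposes correctly along the direct sum $R(w,+)\oplus R(w,-)$ and that no cross terms between $R(w,+)$ and $R(w,-)$ survive. This requires invoking the convention on the choice of representative $w_{m'+1}=((i,j^\kappa),r)$ set in the definition of $\SW$, which governs which of case 1(b) versus 2(b) of Construction~\ref{cons:fw} applies at the midpoint and hence guarantees that the two summands $R(w,\pm)$ are genuinely $f_w$-invariant subrepresentations.
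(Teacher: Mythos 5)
Your overall structure follows the paper's in spirit (reduce to the inextensibility criterion of Lemma~\ref{lem:inex}), but your handling of the middle step contains a concrete error, and it is precisely the obstacle you flagged.

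By the convention defining $\SW$, the middle letter is always $w_{m'+1}=((i,j^+),r)$ (because symmetry forces $\ec{w_{m'+2}\cdots w_m}=\ec{w_{m'}^\ast\cdots w_1^\ast}$, hence $\geq$). Thus $f_w$ is one fixed map, independent of $\kappa$, and Construction~\ref{cons:fw} case 2(b) gives $f_w(\underline{w_{m'}^\ast})=\underline{w_{m'+1}}=\underline{((i,j^+),r)}$ in \emph{both} cases. Your claim that this image is ``$\underline{((i,j^+),r)}+\underline{((i,j^-),r)}$ when $\kappa=-$'' conflates it with $f_w(\underline{w_{m'+2}})=\underline{w_{m'+1}^\ast}+\underline{w_{m'+1}}$ (case 1(b), triggered by $w_{m'+1}^\ast=((i,j^-),r)$), which comes from the \emph{second} half of the word, not from $\underline{w_{m'}^\ast}$.

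This matters because the naive restriction of $f_w$ to the summand $\widehat{w_1}\oplus\cdots\oplus\widehat{w_{m'}}\oplus((i,j^-),r)$ does \emph{not} land in that summand: $f_w(\underline{w_{m'}^\ast})=\underline{((i,j^+),r)}\notin\bvs^+\bigl(X_{R(w,-)}\bigr)$, and projecting onto the $(i,j^-)$-component kills it outright, yielding $0$ rather than $\underline{((i,j^-),r)}$. The $2\times 2$ block of $f_w$ in the middle (domain spanned by $\underline{w_{m'}^\ast},\underline{w_{m'+2}}$; codomain by $\underline{((i,j^+),r)},\underline{((i,j^-),r)}$) is $\begin{pmatrix}1&1\\0&1\end{pmatrix}$, which genuinely does have a cross term; the isomorphism $R(w)\cong R(w,+)\oplus R(w,-)$ requires a non-identity change of basis (in the source) to diagonalize it. Your proof never supplies this, so the claim that ``no cross terms between $R(w,+)$ and $R(w,-)$ survive'' and that ``the middle basis vector is hit bijectively in each summand'' is unjustified for $\kappa=-$.

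The paper sidesteps all of this: it takes the decomposition $R(w)\cong R(w,+)\oplus R(w,-)$ as already established (stated just before the lemma), notes that a direct sum of representations is in $\repb(S)$ iff each summand is (since $\bvs^\pm$ commutes with $\oplus$), and then invokes Lemma~\ref{lem:bw} for $R(w)$. If you want a direct proof instead, you would need to either work with the quotient representation $R(w)/R(w,+)$ for the $\kappa=-$ case, or explicitly perform the basis change on $\widehat{w_{m'}}\oplus\widehat{w_{m'+2}}$ that block-diagonalizes $f_w$ near the middle; alternatively, the ``only if'' direction actually falls out of a pure dimension count (if $w_1\notin\bigcup_{(i,0),r}S^-_{(i,0),r}$ then $\dim\bvs^-(X_{R(w,\kappa)})=m'+1\neq m'=\dim\bvs^+(X_{R(w,\kappa)})$), while the ``if'' direction is most cleanly obtained exactly as the paper does.
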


\begin{proof}
    By the construction, $R(w)$ is in $\repb(S)$ if and only if so is any of $R(w,+)$ and $R(w,-)$. Hence this lemma follows from Lemma~\ref{lem:bw}.
\end{proof}

\begin{example}
    For the word $w''$ in Example~\ref{ex:words}, the representation $R(w'',+)=(X,f)$ is given by
    $X=X_-\oplus X_+$ with $$X_-=\mu_1\oplus\mu_2\oplus\mu_3,$$
    $$\begin{array}{rl}
        X_+= & \{((2,2),0),((1,1),0)\}\oplus\{((1,3^+),2)\}\oplus\{((1,3^-),2)\}\oplus\{((1,4^+),3)\},
    \end{array}$$
    and
    $$f=\begin{pmatrix}
    1\\
    &1\\
    &&1&1\\
    &&&1\\
    &&&&1
    \end{pmatrix}$$
    from $\k\overleftarrow{\mu_1}\oplus\k\overrightarrow{\mu_2}\oplus\k\overleftarrow{\mu_2}\oplus\k\overrightarrow{\mu_3}\oplus\k\overleftarrow{\mu_3}$ to $\k((2,2),0)\oplus\k((1,1),0)\oplus\k((1,3^+),2)\oplus\k((1,3^-),2)\oplus\k((1,4^+),3).$
\end{example}

For a period word $w\in\APW\cup\SPW$ and an isoclass of indecomposable $A_w$-modules $N$, there is an associated invertible matrix $P(N)$ and a representation $R(w,P(N))$ whose construction is based on $R(w)$ but
\begin{enumerate}
    \item identify $w_k$ with $w_{k+\period}$ for any $k\in\ZZ$,
    \item each 1-dimensional direct summand $\k\underline{x}$ is tensored with a vector space of dimension $\dim N$, and
    \item almost every component of the map $f_w$ is tensored with the corresponding identities between the tensored vector spaces unless, at one point, tensored with the matrix $P(N)$.
\end{enumerate}
We refer to Appendix~\ref{app:period} for a more explicit construction. Since $w$ is period and $P(N)$ is invertible, we have the following.

\begin{lemma}\label{lem:bw3}
    For any period word $w\in\APW\cup\SPW$, the representation $R(w,P(N))$ is in $\repb(S)$.
\end{lemma}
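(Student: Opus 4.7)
The strategy is to analyze the map $f$ in $R(w,P(N))$ directly, parallel to the proof of Lemma~\ref{lem:bw}, and to exploit the cyclic closure afforded by periodicity.

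First I would unpack the construction in Appendix~\ref{app:period}: the underlying object decomposes as $X\cong\bigoplus_{k=1}^{\period}\widehat{w_k}\otimes V_N$, where $V_N$ is the underlying vector space of the $A_w$-module $N$ and $\period$ is the period of $w$; correspondingly the spaces $\bvs^-(X)$ and $\bvs^+(X)$ acquire compatible tensor decompositions. The map $f$ is then a block matrix whose off-diagonal entries, for pairs of consecutive terms $w_k,w_{k+1}$, mirror the case analysis in Construction~\ref{cons:fw} tensored with identity maps on $V_N$, except at one distinguished block (the ``seam'' of the period) where the tensor factor is replaced by $P(N)$.

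Next I would compare with the finite word situation. In Lemma~\ref{lem:bw} the obstruction to $R(w)\in\repb(S)$ for a finite word $w=w_1\cdots w_m$ came entirely from the two unmatched basis elements $\underline{w_1}$ and $\underline{w_m^{\ast}}$. Periodicity removes this obstruction: the identification $w_{k+\period}=w_k$ closes up the staircase so that every basis element of $\bvs^-(X)$ is paired via the wrap-around with an element of $\bvs^+(X)$. After choosing a compatible ordering of the periodic indices, the block triangular reduction used in Lemma~\ref{lem:bw} applies verbatim to every block except the seam, and bijectivity of $f$ reduces to invertibility of the seam block. For $w\in\APW$, $A_w=\k[x,x^{-1}]$, so $N$ is a module over which $x$ acts invertibly and $P(N)$ is literally an invertible matrix, concluding the proof at once. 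For $w\in\SPW$, $P(N)$ encodes the actions of the two idempotent generators of $A_w=\k\langle x,y\rangle/(x^2-x,y^2-y)$, and one must verify that the composite ``monodromy'' around one full period is invertible.

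The main obstacle lies in the $\SPW$ case, where the seam is effectively distributed through the cyclic staircase because both idempotent generators of $A_w$ enter the construction of $P(N)$. Here the claim ``$P(N)$ is invertible'' must be interpreted as invertibility of the total composite map along one period, and I expect this to follow from the classification of indecomposable modules over $A_w$ used in Deng's framework \cite{Deng}, together with the observation that the construction of $R(w,P(N))$ is calibrated precisely so that indecomposable $A_w$-modules correspond to representations in $\repb(S)$. The verification of this correspondence, while conceptually clear, is the routine but delicate bookkeeping step to be carried out in detail.
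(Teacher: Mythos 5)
Your argument for the $\APW$ case is correct and matches the paper's (one-line) justification: periodicity closes the staircase so that the two unmatched basis elements $\underline{w_1}$, $\underline{w_m^\ast}$ from the finite case disappear, the adjacent blocks of $f$ are unitriangular, and the single seam block equals the companion matrix $P(N)$, which is literally invertible since its characteristic polynomial is prime to $x$.

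Your treatment of the $\SPW$ case has a genuine gap. First, the claim that "the seam is effectively distributed through the cyclic staircase" is a misconception: if you inspect the formulas in Appendix~\ref{subapp:spw}, $P(N)=\left(\begin{smallmatrix}A & B \\ C & D\end{smallmatrix}\right)$ appears in exactly one place, namely as the combined map from $\k\underline{w_1}\otimes\k^{d}\oplus\k\underline{w_{\period-1}^{\ast}}\otimes\k^{d'}$ to $\k\underline{w_{\period}^{\ast}}\otimes\k^{c}\oplus\k\underline{w_{\period}}\otimes\k^{c'}$ (the cases $x=w_1$ and $x=w_{\period-1}^{\ast}$); the block at $w_{\period/2}$ contributes only identity maps since both rod elements are forced to be of the form $((i,j^{+}),r)$. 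Second, "$P(N)$ is invertible" is to be taken literally: each of the nine block matrices listed in Appendix~\ref{subapp:spw} is square of size $(c+c')\times(d+d')$ and invertible, as one checks directly (for instance via the Schur complement $\det(D-CA^{-1}B)$ or row reduction; the cases involving $K$ are where one uses that $x-1$ is excluded from the characteristic polynomial). You need to make this observation rather than defer to it. Third, the final appeal to "the classification of indecomposable modules over $A_w$ ... is calibrated precisely so that indecomposable $A_w$-modules correspond to representations in $\repb(S)$" is circular: that correspondence is the content of Theorem~\ref{thm:Deng}, of which Lemma~\ref{lem:bw3} is itself an input. Once the two points above are supplied, the same block-triangular reduction you described for $\APW$ closes the argument for $\SPW$ as well.
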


\begin{construction}\label{cons:rep}
    We define a map
    \begin{equation}\label{eq:rep}
    \boxed{
        R\colon \OO \to \repb(S)
    }\end{equation}
    as follows. Let $(w,N)$ be a word with local system.
    \begin{enumerate}
        \item If $w\in\AW^b$, since $A_w=\k$, there is a unique indecomposable module $N$ up to isomorphism. We define $R(w,N)=R(w)$.
        \item If $w\in\SW^b$, since $A_w=\k[x]/(x^2-x)$, there are two indecomposable modules $N$ up to isomorphism, one with $N(x)=1$ and the other with $N(x)=0$. We define $R(w,N)=R(w,+)$ if $N(x)=1$ and $R(w,N)=R(w,-)$ if $N(x)=0$.
        \item If $w\in\APW\cup\SPW$, we define $R(w,N):=R(w,P(N))$.
    \end{enumerate}
\end{construction}

By the main result in \cite{Deng}, together with Lemmas~\ref{lem:bw}, \ref{lem:bw2} and \ref{lem:bw3}, we get a classification of indecomposable objects in $\repb(S)$.

\begin{theorem}\label{thm:Deng}
The representations
$$\{R(w,N)\;|\;(w,N)\in\OO\}$$
form a complete set of pairwise non-isomorphic indecomposable objects in $\repb(S)$.
\end{theorem}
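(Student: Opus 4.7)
The plan is to reduce the theorem to a direct application of Deng's main classification theorem for representations of bushes \cite{Deng}, combined with the three lemmas already proved in this subsection.

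First, I would invoke Deng's theorem for our complete bush $S = S(\dac)$. Note that the completeness property $s^\ast \neq s$ was already verified at the end of Notations~\ref{not:wedge}, so Deng's theorem applies verbatim. It asserts that a complete set of pairwise non-isomorphic indecomposable objects in $\rep(S)$ is indexed by pairs $(w, N)$, where $w$ ranges over the union of (i) a set of representatives of asymmetric finite words modulo the equivalence $\ec{w} \sim \ec{w^\ast}$, (ii) a set of representatives of symmetric finite words, (iii) asymmetric periodic words modulo rotation and inversion, and (iv) symmetric periodic words modulo rotation; and where $N$ is an isoclass of indecomposable $A_w$-modules. The explicit representations attached are precisely $R(w)$, $R(w,\pm)$ and $R(w, P(N))$ constructed in Section~\ref{sec:Rep_b}, which by Construction~\ref{cons:rep} are packaged uniformly as $R(w,N)$.

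Second, I would cut down to the subcategory $\repb(S)$ using the three lemmas already established. By Lemma~\ref{lem:bw}, a finite asymmetric word $w$ yields a representation $R(w)$ in $\repb(S)$ if and only if $w$ is inextensible, i.e. $w \in \AW^b$. By Lemma~\ref{lem:bw2}, the analogous statement holds for symmetric finite words, matching exactly $\SW^b$. By Lemma~\ref{lem:bw3}, every periodic word (asymmetric or symmetric) gives a representation in $\repb(S)$ with no further restriction, which matches $\APW \cup \SPW$. Putting these together, the intersection of Deng's indexing set with the condition "$f$ is bijective" is precisely $\OO = \AW^b \cup \SW^b \cup \APW \cup \SPW$ coupled with local systems.

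Third, pairwise non-isomorphism and exhaustion follow directly from Deng's theorem: the constructed representatives in $\AW^b, \SW^b, \APW, \SPW$ are chosen to contain exactly one element of each orbit under equivalence and inversion, so distinct $(w,N) \in \OO$ give non-isomorphic indecomposables, and no further indecomposable in $\repb(S)$ can appear since every indecomposable of $\rep(S)$ is already accounted for by Deng. The main work, in my view, is not in this final proof but in the bookkeeping already done in Construction~\ref{cons:bush} and Notations~\ref{not:wedge} to check that our specific bush $S$ and the associated notions of equivalence, rotation, inverse, and symmetry line up faithfully with Deng's abstract framework; once that dictionary is in place, the theorem is an immediate consequence.
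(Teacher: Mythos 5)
Your proposal is correct and takes essentially the same route as the paper: the paper's justification for Theorem~\ref{thm:Deng} is precisely the one-line observation that it follows from Deng's main classification of indecomposables in $\rep(S)$ together with Lemmas~\ref{lem:bw}, \ref{lem:bw2} and \ref{lem:bw3}, which identify exactly which words produce representations landing in $\repb(S)$. Your expanded version correctly supplies the implicit bookkeeping (completeness of $S$, the matching of $\AW^b\cup\SW^b\cup\APW\cup\SPW$ with the admissible indexing set, and the fact that full subcategories closed under direct summands preserve indecomposability) without changing the argument.
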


\begin{remark}\label{rmk:1-dim}
    In Part~\ref{part:2}, we shall care about (the objects in $\per\sg$ associated to) words with local system $(w,N)$ such that $w\in\AW^b\cup\SW^b\cup\SPW$ and $\dim_\k N=1$. Note that when $w\in\AW^b\cup\SW^b$, any indecomposable $A_w$-module is 1-dimensional, and the associated representation $R(w,N)$ has been explicitly described. Now, let us describe the representation $R(w,N)$ for $w\in\SPW$ and $\dim_\k N=1$ more explicitly (cf. Appendix~\ref{subapp:spw}). Up to rotation, we may write $w$ as a repeat of
    $$w_0w_1\cdots w_{\frac{\period}{2}-1}w_{\frac{\period}{2}}w_{\frac{\period}{2}-1}^\ast\cdots w_1^\ast,$$
    with $w_0\in S_{(i_1,j_1),r_1}^+$ and $w_{\frac{\period}{2}}\in S_{(i_2,j_2),r_2}^+$ for some $(i_1,j_1), (i_2,j_2)\in\fc{\OME}$ and $r_1,r_2\in\ZZ$. The indeterminate $x,y$ in the algebra $A_w=\k\langle x,y\rangle/(x^2-x,y^2-y)$ are associated to $w_0$ and $w_{\frac{\period}{2}}$, respectively. Then the associated representation $R(w,N)=(X_{w,N},f_{w,N})$ is constructed as follows. Set
    $$X_{w,N}=((i_1,j_1^{\kappa_1}),r_1)\oplus\widehat{w_1}\oplus\widehat{w_2}\oplus\cdots\oplus\widehat{w_{\frac{\period}{2}-1}}\oplus ((i_2,j_2^{\kappa_2}),r_2),$$
    where $\kappa_1=+/-$ if $N(x)=1/0$ and $\kappa_2=+/-$ if $N(y)=1/0$. Thus, each term $x$ of the sequence $((i_1,j_1^{\kappa_1}),r_1)w_1w_1^\ast w_2w_2^\ast\cdots w_{\frac{\period}{2}-1} w_{\frac{\period}{2}-1}^\ast((i_2,j_2^{\kappa_2}),r_2)$ which is in $S^{\varepsilon}_{(i,j),r}$ contributes a 1-dimensional direct summand $\k\underline{x}$ to the space $\bvs_{(i,j),r}^\varepsilon (X_w)$. Define $f_{w,N}$ to be the same as $f_w$ in Construction~\ref{cons:fw} except $$f_{w,N}(\underline{w_{1}})=((i_1,j_1^{\kappa_1}),r_1) \text{ and } f_{w,N}(\underline{w_{\frac{\period}{2}-1}^\ast})=((i_2,j_2^{\kappa_2}),r_2).$$

    %Each term $x$ of the sequence $w_1w_1^\ast w_2w_2^\ast\cdots w_{\frac{\period}{2}} w_m^\ast$ which is in $S^{\varepsilon}_{(i,j),r}$ contributes a 1-dimensional direct summand $\k\underline{x}$ to the space $\bvs_{(i,j),r}^\varepsilon (X_w)$.
\end{remark}

%=========================================================
\subsection{Admissible curves with local system}
%=========================================================

\begin{definition}
Let $\gamma$ be an open curve. The \emph{completion} $\overline{\gamma}$ of $\gamma$ is defined as shown in Figure~\ref{fig:completion} if $\gamma\cap\P\neq\emptyset$, or $\overline{\gamma}=\gamma$ otherwise. For any grading $\wg$ of $\gamma$, denote by $\widetilde{\overline{\gamma}}$ the induced grading of $\overline{\gamma}$ ensured by \eqref{eq:grad=1}.
\end{definition}

\begin{figure}[htpb]\centering
	\begin{tikzpicture}[xscale=1.5,yscale=.3,rotate=90,scale=.8]
		\draw[ultra thick]plot [smooth,tension=1] coordinates {(170:4.5) (180:4) (190:4.5)};
		\draw[blue,thick](180:4)to(0:4);
		\draw[thick] (1,-1.5)node{$\Longrightarrow$};
		\draw[thick](0:4)node{$\bullet$};
		\draw[thick,blue](180:4)\nn;
	\end{tikzpicture}\quad
	\begin{tikzpicture}[xscale=1.5,yscale=.3,rotate=90,scale=.8]
		\draw[ultra thick]plot [smooth,tension=1] coordinates {(170:4.5) (180:4) (190:4.5)};
		\draw[blue,thick]plot [smooth,tension=1] coordinates {(-4,0) (2,.5) (5,0) (2,-.5) (-4,0)};
		\draw[blue,thick](5,0)to(5,0.001);
		\draw[blue,dotted,thick](180:4)to(0:3);
		\draw[thick](0:3)node{$\bullet$};
		\draw[thick,blue](180:4)\nn;
	\end{tikzpicture}\qquad\qquad
	\qquad\begin{tikzpicture}[xscale=.7,yscale=.3,scale=.8]
		\draw[blue,thick](0,4)to(0,-4) (0,-4.5)node{ }(0,7)node{ };\draw(3,0)node{$\Longrightarrow$};
		\draw[thick](0,4)node{$\bullet$}(0,-4)node{$\bullet$};
	\end{tikzpicture}\qquad
	\begin{tikzpicture}[xscale=.7,yscale=.3,scale=.8]
		\draw[blue,thick](0,0) ellipse (1 and 5);
		\draw[blue,dotted,thick](0,3)to(0,-3);
		\draw[thick](0,3)node{$\bullet$}(0,-3)node{$\bullet$};
	\end{tikzpicture}
	\caption{Completions of curves}
	\label{fig:completion}
\end{figure}

\begin{definition}\label{def:ad}
	An \emph{admissible} curve on $\surf$ is an open curve $\gamma:I\to\surf$ such that
	\begin{enumerate}[label=(A\arabic*),ref=(A\arabic*)]
	\item\label{itm:A1} $\gamma$ does not cut out a once-punctured monogon by a self-intersection (see Figure~\ref{fig:monogon cut});
	\item\label{itm:A2} if $I=[0,1]$ and $\gamma(\{0,1\})\subset\P$, then the completion $\overline{\gamma}$ is not a proper power (in the quotient group of the fundamental group of $\surf$ by the squares of the loops enclosing a puncture) of another open curve $\gamma':S^1\to \surf$;
	\item\label{itm:A3} if $I=S^1$, then $\gamma$ is neither a proper power (in the quotient group of the fundamental group of $\surf$ by the squares of the loops enclosing a puncture) of another open curve $\gamma':S^1\to \surf$, nor the completion of an open arc $\gamma'':[0,1]\to\surf$ with $\gamma''(\{0,1\})\subset\P$.
	\end{enumerate}
	Denote by $\wads$ the set of graded admissible curves.
\end{definition}

\begin{figure}[htpb]
	\begin{tikzpicture}[xscale=2,yscale=.3,rotate=90]
		\draw[ultra thick,white]plot [smooth,tension=1] coordinates {(170:4.5) (180:4) (190:4.5)};
		\draw[blue,thick]plot [smooth,tension=1] coordinates {(-3,-.5) (2,.5) (5,0) (2,-.5) (-3,.5)};
		\draw[thick](0:3)node[below]{$\bullet$};
	\end{tikzpicture}\qquad
	\begin{tikzpicture}[xscale=2,yscale=.3,rotate=90]
	    \draw[ultra thick,white]plot [smooth,tension=1] coordinates {(170:4.5) (180:4) (190:4.5)};
		\draw[ultra thick]plot [smooth,tension=1] coordinates {(170:3.5) (180:3) (190:3.5)};
		\draw[blue,thick]plot [smooth,tension=1] coordinates {(-3,0) (2,.5) (5,0) (2,-.5) (-3,0)};
		\draw[thick](0:3)node[below]{$\bullet$};
		\draw[blue] (-3,0)\nn;
	\end{tikzpicture}\qquad
	\begin{tikzpicture}[xscale=2,yscale=.3,rotate=90]
	    \draw[ultra thick,white]plot [smooth,tension=1] coordinates {(170:4.5) (180:4) (190:4.5)};
		\draw[blue,thick]plot [smooth,tension=1] coordinates {(-3,0) (2,.5) (5,0) (2,-.5) (-3,0)};
		\draw[thick](0:3)node[below]{$\bullet$};
		\draw (-3,0)node{$\bullet$};
	\end{tikzpicture}
    \caption{Curves cutting out a once-punctured monogon by a self-intersection}\label{fig:monogon cut}
\end{figure}

\begin{example}\label{ex:non-ad}
    The curve $\gamma_1$ shown in Figure~\ref{fig:non-ad cur} is admissible. The curves $\gamma_2$ and $\gamma_3$ are not admissible, because $\gamma_2$ is the completion of $\gamma_1$ and the completion $\gamma_3$ is the third power of $\gamma_2$.

\begin{figure}[htpb]
    \begin{tikzpicture}[scale=1.3]
    \clip(-4,1.3) rectangle (4,3.6);
   	\draw[ultra thick](0,0) circle (3.5);
   	\draw[ultra thick](0,0) circle (1);    	
   	\draw[red,thick](0,3.5)to[out=-70,in=-135](1.5,2)to[out=45,in=-20](0,3.5)
        (0,3.5)to[out=-110,in=-45](-1.5,2)to[out=135,in=-160](0,3.5);
    	
    \draw[red,thick](-1,0)to[out=-100,in=160](0,-2)to[out=20,in=-80](1,0);
    \draw[red,thick](0,-2)to[out=0,in=-90](2.5,.75)to[out=90,in=-10](0,3.5);	\draw[red,thick](0,3.5)to[out=-5,in=90](3,.25)to[out=-90,in=0](0,-3)to[out=180,in=-90](-3,.25)to[out=90,in=-175](0,3.5);
   	\draw[blue,thick,bend left=5](-1.25,2.25)to(1.25,2.25);
   	\draw[blue,thick](0,2.25) ellipse (1.8 and .8);
    \draw[cyan,thick](1.25,2.25)to[out=-180,in=0](-1.25,2)to[out=180,in=-90](-1.5,2.25)to[out=90,in=-175](-1.25,2.5)to[out=5,in=175](1.25,2.5)to[out=-5,in=90](1.5,2.25)to[out=-90,in=0](1.25,2)to[out=180,in=0](-1.25,2.25);
    \draw(0,2.4)node[blue]{$\gamma_1$}(1.25,1.45)node[blue]{$\gamma_2$}(0,1.9)node[cyan]{$\gamma_3$};
    \draw(-1.25,2.25)\nn (1.25,2.25)\nn (-1,0)\ww (1,0)\ww (0,-2)\ww (0,3.5)\ww (0,1)[blue]\nn (0,-1)[blue]\nn (0,-3.5)[blue]\nn;
\end{tikzpicture}
    \caption{Examples of admissible/non-admissible curves}
    \label{fig:non-ad cur}
\end{figure}
\end{example}

Let $\bads$ be the set of graded open curves whose underlying curve $\gamma:I\to\surf$ satisfies
\begin{enumerate}[label=($\operatorname{\overline{A}}$\arabic*),ref=($\operatorname{\overline{A}}$\arabic*)]
\item\label{itm:barA1} $\gamma$ does not cut out a once-punctured monogon by an interior self-intersection (see the first picture in Figure~\ref{fig:monogon cut});
\item\label{itm:barA2} if $I=[0,1]$, then $\gamma(\{0,1\})\subset\M$;
\item\label{itm:barA3} if $I=S^1$, then $\gamma$ is not a proper power of another open curve $\gamma':S^1\to \surf$.
\end{enumerate}

\begin{lemma}\label{lem:comp}
    There is a bijection
    $$\wads\to\bads$$
    sending a graded admissible curve to its completion.
\end{lemma}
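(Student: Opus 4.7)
The plan is to construct an explicit inverse to the completion map, verify it is well-defined, and check that both compositions are the identity. Since a grading on a curve is determined by the underlying oriented curve together with the normalization \eqref{eq:grad=1} near the punctures, the essential content is a combinatorial bijection between the underlying curves, after which the grading transports uniquely along the retraction or extension performed at punctures.

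First I would check that completion maps $\wads$ into $\bads$. Condition \ref{itm:barA1} is inherited from \ref{itm:A1}, because completion is local near each puncture endpoint and does not introduce any new interior self-intersection. Condition \ref{itm:barA2} is built into the construction of $\overline{\gamma}$: a puncture endpoint of $\gamma$ is replaced by a loop around the puncture returning to the neighboring marked point in $\M$, so the endpoints of $\overline{\gamma}$ lie in $\M$ whenever $\overline{\gamma}$ is an arc. Condition \ref{itm:barA3} is the main item: if $\overline{\gamma}$ is closed, then either $\gamma$ has both endpoints at punctures, in which case \ref{itm:A2} prevents $\overline{\gamma}$ from being a proper power, or else $\gamma$ is already closed and $\overline{\gamma}=\gamma$, which is covered by \ref{itm:A3}.

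Next, I would construct the inverse $\bads \to \wads$. Given $\wb \in \bads$ with underlying curve $\bar{\gamma}$: if $\bar{\gamma}\colon [0,1]\to \surf$, inspect each endpoint and, whenever $\bar{\gamma}$ approaches that marked point by wrapping once around a puncture, retract the wrap so that the endpoint lands at the puncture itself. If $\bar{\gamma}\colon S^1 \to \surf$, detect whether $\bar{\gamma}$ is homotopic to the completion of an open arc from $\P$ to $\P$, i.e.\ whether $\bar{\gamma}$ admits a decomposition into two disjoint subarcs, each encircling a puncture exactly once, joined by a body arc; in that case retract both subarcs, otherwise keep $\bar{\gamma}$ closed. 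The grading is carried over by \eqref{eq:grad=1} in a neighborhood of each affected puncture and by restriction elsewhere.

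The main obstacle is the well-definedness and uniqueness of the inverse in the closed-curve case. One must verify that such a decomposition, when it exists, determines the same open arc from $\P$ to $\P$ up to homotopy, and that \ref{itm:barA3} for $\bar{\gamma}$ is equivalent to the non-proper-power condition \ref{itm:A2} for the retracted arc. After this is settled, it remains to check that the output $\gamma$ satisfies \ref{itm:A1}--\ref{itm:A3}: \ref{itm:A1} follows from \ref{itm:barA1} by reversing the local argument near punctures; \ref{itm:A2} follows from \ref{itm:barA3}; and \ref{itm:A3} holds because $\bar{\gamma}$ is kept closed only when it does not admit the two-subloop decomposition. The two compositions are then routinely the identity, completing the bijection.
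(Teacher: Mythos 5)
Your outline tracks the paper's structure closely (well-definedness of completion, then bijection by inversion), but you dismiss the one step that actually carries the weight of the proof. Your claim that condition \ref{itm:barA1} for $\overline{\gamma}$ ``is inherited from \ref{itm:A1}, because completion is local near each puncture endpoint and does not introduce any new interior self-intersection'' is false. Completion is emphatically not local: when $\gamma$ has an endpoint at a puncture $P$, the completed curve $\overline{\gamma}$ retraces an entire second copy of $\gamma$ on the way back around $P$. Parameterizing $\overline{\gamma}$ over $[0,2]$, the two halves $\gamma_0 = \overline{\gamma}(\epsilon,1-\epsilon)$ and $\gamma_1 = \overline{\gamma}(1+\epsilon,2-\epsilon)$ are each homotopic to $\gamma$ but are forced apart by the loops $l_i$ encircling the punctures, and in minimal position they can — and generically do — cross each other. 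A new self-intersection at a crossing of $\gamma_0$ with $\gamma_1$ can cut out a once-punctured monogon even when $\gamma$ itself never does. Ruling this out is the entire content of the paper's proof of this lemma: one shows the hypothetical monogon $L$ must contain exactly one of the loops $l_i$, then that the complementary loop $L'$ must enclose exactly two punctures (one of them the endpoint $\gamma(0)$), and finally that this would force $\overline{\gamma}$ to be a proper power of a small loop around those two punctures in the quotient group, contradicting \ref{itm:A2}. So \ref{itm:barA1} is not inherited trivially; it is precisely where the (A2) hypothesis is used, and without that argument the forward map is not shown to land in $\bads$.

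A secondary point worth flagging in your converse direction: \ref{itm:barA3} forbids $\overline{\gamma}$ from being a proper power in $\pi_1(\surf)$, while \ref{itm:A2} forbids it from being a proper power in the quotient of $\pi_1(\surf)$ by the squares of puncture loops. The quotient has more elements expressible as proper powers, so \ref{itm:barA3} does not by itself imply \ref{itm:A2} for the retracted arc; you would need to argue that the extra relations cannot produce a new power decomposition in the relevant situation. The paper sidesteps this by only proving well-definedness carefully and treating the bijectivity as routine, but if you are going to write out the inverse direction in detail, this implication needs a justification rather than the bare assertion that ``\ref{itm:A2} follows from \ref{itm:barA3}.''
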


\begin{proof}
The map is clearly injective and surjective provided it is well-defined. Thus, we only need to show that the completion of an admissible curve satisfies the conditions \ref{itm:barA1}, \ref{itm:barA2} and \ref{itm:barA3}. In fact, \ref{itm:barA2} is due to the definition of completion of curves, and \ref{itm:barA3} is due to \ref{itm:A2} and \ref{itm:A3}. So we only prove \ref{itm:barA1}.
To see this one, consider the completion $\overline{\gamma}$ of an admissible curve $\gamma$.
We assume that both endpoints of $\gamma$ are in $\P$; the case that exactly one endpoint of $\gamma$ is in $\P$ can be proved similarly. One can parameterize $\overline{\gamma}\colon[0,2]\to S^1\to \surf^\circ$ (in a minimal position),
for $[0,2]\to S^1:t\mapsto e^{t\mathbf{i}\pi}$ such that there exists $0<\epsilon<1$ and:
\begin{itemize}
  \item $l_i=\overline{\gamma}(i-\epsilon,i+\epsilon)$ is in a neighborhood $N_i$ of $\gamma(i)$
  without self-intersection, for $i=0,1$.
  \item When shrinking $N_i$ to $\gamma(i)$, the curves $\gamma_0\colon=\overline{\gamma}(\epsilon,1-\epsilon)$ and $\gamma_1\colon=\overline{\gamma}(1+\epsilon,2-\epsilon)$ are homotopic to $\gamma$.
\end{itemize}
Since $\gamma$ does not cut out a once-punctured monogon by a self-intersection,
if $\overline{\gamma}$ cuts out a once-punctured monogon $L$ by a self-intersection,
such a monogon can not be contained in $\gamma_0$ nor in $\gamma_1$. Since $L$ can not contain both $l_i$,
%as there will be more than two points in $L$ intersecting simultaneously.
$L$ must contain exactly one of $l_i$, say $l_0$.
Furthermore, suppose that $L$ is parameterized as $\overline{\gamma}[-s,t]$,
for $-1+\epsilon<-s<-\epsilon<\epsilon<t<1-\epsilon$, with $\overline{\gamma}(-s)=\overline{\gamma}(t)$.
If $s=t$, then such an intersection can be resolved, which is not possible.
Hence, we can assume that $s<t$.
As $\overline{\gamma}[s,t]=\gamma[s,t]$ is a non-trivial loop $L'$,
the loop $L$ is the sum of $L'$ and the loop $L''=\overline{\gamma}[-s,s]$ around $\gamma(0)$ in the fundamental group.
But $L$ and $L''$ are (homotopy to) once-punctured monogons, which implies $L'$ must enclose exactly two punctures, one of which is $\gamma(0)$, as shown in Figure~\ref{fig:0-st}.
Consider the first intersection of $\gamma(t,1-\epsilon]$ with $L'$,
since it does not cut out a monogon (cf. red dashed lines in Figure~\ref{fig:0-st}), it must stop at the other puncture in $L'$ (cf. blue lines in Figure~\ref{fig:0-st}),
or continue to wind the two punctures (cf. green lines in Figure~\ref{fig:0-st}).
In the latter case,
it can only wind finitely many times before stopping at the other puncture. Hence \ref{itm:A2} fails (cf. Example~\ref{ex:non-ad} and Figure~\ref{fig:non-ad cur}), a contradiction.

\begin{figure}[htpb]	
\begin{tikzpicture}[xscale=1,yscale=1,rotate=0]
\draw[thick,->,>=stealth](0,0)node[below]{$\gamma(0)$} to (1,0);
\draw[thick](1,0)arc(-90:90:1.5)(1,3)to(0,3)arc(90:270:2.5).. controls +(0:1) and +(-90:1) ..(1,0);
\draw(0,0)\nn(0,1)\nn (1,0)node[below right]{$t$}node[above right]{$s$};
\draw[blue,dashed,thick](0,1) to[bend left=45] (1,0);
\draw[red,dashed,thick](-2.5,.5) to[bend left=35] (1,0)     .. controls +(150:1) and +(90:1.5) ..(-1,-.5)
    .. controls +(-90:.5) and +(-90:1.5) ..(.5,0);
\draw[Green,dashed,thick](1,0)to(1,1)
    .. controls +(90:2) and +(90:1.5) ..(-1.5,-.5)
    .. controls +(-90:1.5) and +(-90:2) ..(.5,0);
\end{tikzpicture}
\caption{}\label{fig:0-st}
\end{figure}

\end{proof}

\begin{notations}\label{nota:four}
We divide $\wads$ into the following four subsets:
\begin{itemize}
\item $\Amm$ the set of graded admissible arcs $\wg$ with both endpoints in $\M$,
\item $\Amp$ the set of graded admissible arcs $\wg$ with one endpoint in $\M$ and the other in $\P$,
\item $\App$ the set of graded admissible arcs $\wg$ with both endpoints in $\P$, and
\item $\ccc$ the set of graded admissible curves $\wg$ without endpoints.
\end{itemize}
Denote by $\wOA(\gms)$ the set of graded admissible arcs, i.e.
$$\wOA(\gms)=\Amm\cup\Amp\cup\App.$$
\end{notations}

\begin{definition}\label{def:cwls}
    A graded admissible curve with \emph{local system} is a pair $(\wg,N)$, where $\wg$ is a graded admissible curve and $N$ is the isoclass of an indecomposable $A_{\wg}$-module, where
    $$A_{\wg}=\begin{cases}
    \k\langle x_t\mid \gamma(t)\in\P\rangle/(x_t^2-x_t\mid \gamma(t)\in\P)&\text{if $\wg\in\wOA(\gms)$},\\
    \k[x,x^{-1}]&\text{if $\wg\in\ccc$.}
    \end{cases}$$
    We denote by $\care$ the set of graded admissible curves with local system.
\end{definition}

Let $w=w_1 w_2\cdots w_m\in\AW^b\cup\SW^b$. Since $w$ is inextensible, we have $w_1,w_m\in S^-$ by Lemma~\ref{lem:inex}. Then $m$ is odd and for any $1\leq k\leq m$, $\omega_k\in S^{-}$ for $k$ odd and $\omega_k\in S^+$ for $k$ even. Moreover, for any $((i,j),r)\in\widetilde{\Sigma}$ and any even $k$, we have (cf. Figure~\ref{fig:fw})
$$w_{k-1}^\ast\in S^-_{(i,j),r}\Leftrightarrow w_{k}\in S^+_{(i,j),r}\Leftrightarrow w_{k+1}\in S^-_{(i',j'),r},$$
where $(i',j')\simeq(i,j)$. That is, the oriented graded arc segment $w_{k-1}$ ends at the same arc $\{(i,j),(i',j')\}\in\dac$ with the same intersection index $r$ as $w_{k+1}$ starts (cf. Figure~\ref{fig:fw}).

\begin{notations}
Recall that $\fc{\OME}\subset\OMEs$ can be identified with the set of edges of punctured $\dac$-polygons. For any $(i,j)\in\fc{\OME}$, we denote by $\stackrel{\frown}{(i,j)}\in\icias(\dac)$ the unique interior arc segment in the punctured $\dac$-polygon whose edge is $(i,j)$. Then we have
$$\icias(\dac)=\{\stackrel{\frown}{(i,j)}\mid (i,j)\in\fc{\OME} \}.$$

Denote by $\stackrel{\frown}{{}^{r_1}(i,j){}^{r_2}}$ the grading of $\stackrel{\frown}{(i,j)}$ such that $r_1$ and $r_2$ are the intersection indexes of the two endpoints of $\stackrel{\frown}{{}^{r_1}(i,j){}^{r_2}}$, respectively, meeting $(i,j)$. By \eqref{eq:grad=1}, we have $r_1=r_2$. Then we have
$$\wicias(\dac)=\{\stackrel{\frown}{{}^{r}(i,j){}^{r}}\mid (i,j)\in\fc{\OME},r\in\ZZ \}.$$

For any $\stackrel{\frown}{(i,j)}\in\icias(\dac)$, we use $\stackrel{\curvearrowleft}{(i,j)}$ (resp. $\stackrel{\curvearrowright}{(i,j)}$) to denote its positive (resp. negative) orientation, see Figure~\ref{fig:pn}. Then we have
$$\owicias(\dac)=\{\stackrel{\curvearrowleft}{{}^{r}(i,j){}^{r}},\stackrel{\curvearrowright}{{}^{r}(i,j){}^{r}}\mid (i,j)\in\fc{\OME} \}.$$
\end{notations}

\begin{figure}[htpb]\centering
	\begin{tikzpicture}[xscale=2.5,yscale=.333,rotate=-90]
		\draw[thick, blue,-<-=.5,>=stealth](-1,-.35)tonode[above]{$\stackrel{\curvearrowleft}{(i,j)}$}(-1,.35);
		\draw[thick, blue,->-=.5,>=stealth](-.5,-.37)tonode[below]{$\stackrel{\curvearrowright}{(i,j)}$}(-.5,.37);
		\draw[ultra thick,white]plot [smooth,tension=1] coordinates {(170:4.5) (180:4) (190:4.5)};
		\draw[red,thick]plot [smooth,tension=1] coordinates {(-4,0) (2,.5) (5,0) (2,-.5) (-4,0)} (5,0)node[below]{$(i,j)$};
		\draw[thick](0:3.5)node{$\bullet$};
		\draw[thick,red](180:4)\ww;
	\end{tikzpicture}
	\caption{Oriented interior punctured arc segments}\label{fig:pn}
\end{figure}

\begin{construction}\label{cons:wordtocurve}
	Let $w=w_1 w_2\cdots w_m\in\AW^b\cup\SW^b$. We construct a graded open arc $\arc(w)$ from $w$ in the following way. Locally, for any even $k$ (i.e. $w_k\in S^+$),
	\begin{enumerate}
	\item if $w_k=((i,j^+),r)$, we define $\arc(w_{k-1}w_kw_{k+1})$ to be the gluing (in order) of $w_{k-1}$, $\stackrel{\curvearrowleft}{{}^r(i,j){}^r}$ and $w_{k+1}$, see the first picture in Figure~\ref{fig:2};
	\item if $w_k=((i,j^-),r)$, we define $\arc(w_{k-1}w_kw_{k+1})$ to be the gluing (in order) of $w_{k-1}$, $\stackrel{\curvearrowright}{{}^r(i,j){}^r}$ and $w_{k+1}$, see the second picture in Figure~\ref{fig:2};
	\item otherwise, we define $\arc(w_{k-1}w_kw_{k+1})$ to be the gluing (in order) of $w_{k-1}$ and $w_{k+1}$, see the third picture in Figure~\ref{fig:2}.
	\end{enumerate}
	\begin{figure}[htpb]
	\begin{tikzpicture}[scale=1]
	\draw[ultra thick] (0,0)node{$\bullet$};
	\draw[red,thick] (0,2)to[out=-150,in=180](0,-1.5)to[out=0,in=-30](0,2);
	\draw[blue,thick,-<-=.5,>=stealth] (-2,1)to(-.75,1);
	\draw[blue,thick,-<-=.5,>=stealth] (.75,1)to(2,1);
	\draw[blue] (-1.5,1)node[below]{$w_{k+1}$} (1.5,1)node[below]{$w_{k-1}$} (0,2.5)node{$w_k=((i,j^+),r)$};
	\draw[red] (0,-1.5)node[below]{$(i,j)\simeq(i,j)$};
	\draw[blue,thick,-<-=.5,>=stealth] (-.75,1)to node[below]{$\stackrel{\curvearrowleft}{{}^r(i,j){}^r}$}(.75,1);
	\draw[] (0,2)\ww;
\end{tikzpicture}\qquad
\begin{tikzpicture}[scale=1]
	\draw[ultra thick] (0,0)node{$\bullet$};
	\draw[red,thick] (0,2)to[out=-150,in=180](0,-1.5)to[out=0,in=-30](0,2);
	\draw[blue,thick,->-=.5,>=stealth] (-2,1)to(-.75,1);
	\draw[blue,thick,->-=.5,>=stealth] (.75,1)to(2,1);
	\draw[blue] (-1.5,1)node[below]{$w_{k-1}$} (1.5,1)node[below]{$w_{k+1}$} (0,2.5)node{$w_k=((i,j^-),r)$};
	\draw[red] (0,-1.5)node[below]{$(i,j)\simeq(i,j)$};
	\draw[blue,thick,->-=.5,>=stealth] (-.75,1)to node[below]{$\stackrel{\curvearrowright}{{}^r(i,j){}^r}$}(.75,1);
	\draw[] (0,2)\ww;
\end{tikzpicture}\qquad
\begin{tikzpicture}[scale=1]
	%\draw[red,thick] (0,2)to[out=-150,in=180](0,-1.5)to[out=0,in=-30](0,2);
	\draw[blue,thick] (-2,1)to(2,1);
	\draw[blue] (-1.5,1)node[below]{$w_{k-1}$} (1.5,1)node[below]{$w_{k+1}$} (0,2.5)node{$w_k\ w_k^\ast$};
	\draw[red] (0,-1.5)node[below]{$(i,j)$};
	\draw[red,thick] (0,2.5)to(0,-1.5)\ww;
\end{tikzpicture}
	\caption{Gluing arc segments}\label{fig:2}
\end{figure}
Let $w=(w_k)_{k\in\ZZ}\in\APW\cup\SPW$ with period $\period$. We construct a graded open curve $\arc(w):S^1\to\surf$ from $w$ as above unless we additionally identify $w_k$ with $w_{k+\period}$ for any $k\in\ZZ$.
\end{construction}

\begin{lemma}\label{lem:bi2}
There is a bijection
    \begin{equation}\label{eq:l.s.}
    \boxed{
        \bione\colon\care\to\OO
    }\end{equation}
sending a graded admissible curve with local system $(\wg,N)$ to $(w,N)$, such that $\arc(w)=\overline{\wg}$. Moreover, this bijection restricts to bijections
$$\Amm\to\AW^b,\ \Amp\to\SW^b,\ \App\to\SPW,\ \ccc\to\APW.$$
\end{lemma}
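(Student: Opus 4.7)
The plan is to construct an explicit inverse to $\arc$ (from Construction~\ref{cons:wordtocurve}) at the level of curves by reading a curve in minimal position against $\dac$. Given $\wg \in \wads$, first pass to its completion $\overline{\wg} \in \bads$ via Lemma~\ref{lem:comp} and put it in minimal position with respect to $\dac$. Partition the domain of $\overline{\wg}$ into the maximal closed subintervals lying inside a single $\dac$-polygon: each such piece is a graded oriented arc segment, i.e.\ an element of $S^-$, and each transverse crossing with an arc $\we \in \dac$ contributes an element of $S^+$, where for edges labeled by $(i,j) \in \fc{\OME}$ the sign $\kappa \in \{+,-\}$ is fixed by the ordering convention used to define $\AW, \SW, \APW, \SPW$. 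The interleaved sequence is a word because two consecutive $S^-$-letters meeting the same arc with the same intersection index produce $S^+$-letters in a common rod, which yields the required condition $w_k^\ast \mid w_{k+1}$. Setting $\bione(\wg, N) := (\text{resulting word}, N)$ gives the candidate map.

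Next, I would verify that the word lies in the correct subset according to the type of $\wg$. For $\wg \in \Amm$, both end-segments start at open marked points, so by Lemma~\ref{lem:inex} the word is inextensible and asymmetric, lying in $\AW^b$. For $\wg \in \Amp$, completing the puncture endpoint produces exactly one central $S^+$-letter $((i,j^\kappa),r)$ with $(i,j) \in \fc{\OME}$ about which $\overline{\wg}$ is symmetric; condition \ref{itm:A1} forbids a degenerate once-punctured monogon completion, and the resulting inextensible symmetric word lies in $\SW^b$. For $\wg \in \App$, two such central symmetric $S^+$-letters arise and $\overline{\wg}$ is a closed loop, yielding a symmetric periodic word; condition \ref{itm:A2} ensures it is not a proper power of a shorter one, placing it in $\SPW$. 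For $\wg \in \ccc$, reading around the loop gives a periodic word which by \ref{itm:A3} is asymmetric and of minimal period, hence lies in $\APW$.

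For well-definedness on equivalence classes and the identity $\bione \circ \arc = \id$, note by Remark~\ref{rmk:3.1} that two finite words are equivalent precisely at $S^+$-letters indexed by $(i,j) \in \fc{\OME}$; Construction~\ref{cons:wordtocurve} and our reading map use the same sign choice for $\kappa$ as the defining ordering of $\AW, \SW, \APW, \SPW$, so both commit to the same representative and the compositions are identities (with $\arc \circ \bione = \id$ holding geometrically, since gluing arc segments at the crossings reconstructs $\overline{\wg}$ up to homotopy, and then $\wg$ is recovered by reversing the completion). Matching the local system algebras is then direct: $\Amm \to \AW^b$ both give $\k$; $\Amp \to \SW^b$ both give $\k[x]/(x^2-x)$, with the variable corresponding to the puncture endpoint resp.\ the central $S^+$-letter; $\App \to \SPW$ both give $\k\langle x,y\rangle/(x^2-x,y^2-y)$; and $\ccc \to \APW$ both give $\k[x,x^{-1}]$.

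The main obstacle will be confirming that the geometric sign choice for $\kappa$ when the completion wraps around a puncture agrees with the lexicographic comparison $\ec{w_{k+1}\cdots} \gtrless \ec{w_{k-1}^\ast\cdots}$ that selects representatives in $\AW, \SW, \APW, \SPW$. This reduces to comparing, at the crossing with a self-folded loop $\we$ around a puncture, the clockwise angles to the two adjacent arc segments against the lexicographic order on the segments' continuations, and should follow cleanly from the left-to-right ordering convention of $S^-_{(i,j),r}$ in Construction~\ref{cons:bush} together with the symmetry of $\overline{\wg}$ about that puncture forced by the completion.
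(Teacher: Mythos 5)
Your overall plan---build $\bione$ by reading $\overline{\wg}$ against $\dac$ and then verify the type dichotomy, inextensibility/periodicity, and matching of local-system algebras---is exactly the route the paper takes (it defers the details to the cited proofs of Lemmas~4.11--4.12 of \cite{QZ1}, which construct the same correspondence). The type-checking paragraph, the handling of the order convention on $\fc{\OME}$-letters, and the matching of $A_{\wg}$ with $A_w$ are all in order.

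However, the reading rule as stated in your first paragraph is not correct and would not produce a valid word. Two concrete problems: first, a ``maximal closed subinterval lying inside a single $\dac$-polygon'' may lie inside a \emph{punctured} monogon, in which case it is an interior punctured arc segment, and such segments are \emph{not} elements of $S^- = \owupas(\dac)$ (this set contains only unpunctured arc segments by Construction~\ref{cons:bush}). Second, when $\overline{\wg}$ traverses a punctured monogon bounded by a self-folded loop $\we = \overline{(i,j)}$ with $(i,j)\in\fc{\OME}$, it crosses $\we$ \emph{twice} (entering and exiting), yet this whole traversal is encoded by a single $S^+$-letter $((i,j^\kappa),r)$, not two. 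Applied literally, ``one $S^-$-letter per piece, one $S^+$-letter per crossing'' produces a sequence of the form $\cdots w_{k-1}\,((i,j^+),r)\,\mu\,((i,j^-),r)\,w_{k+1}\cdots$ with $\mu$ the interior punctured arc segment; this fails the adjacency condition $w_k^\ast\mid w_{k+1}$, because $\mu$ is not an element of the bush $S$ at all. The correct rule is to keep only the unpunctured arc segments as $S^-$-letters and to collapse each punctured-monogon traversal to a single $S^+$-letter whose sign $\kappa\in\{+,-\}$ records the side of the monogon on which the interior segment lies, matching Construction~\ref{cons:wordtocurve}. After this repair the rest of your argument goes through and agrees with the paper's citation-based proof.
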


\begin{proof}
    By a proof similar to the proofs of \cite[Lemmas~4.11 and 4.12]{QZ1}, the map $w\mapsto\arc(w)$ gives bijections
    $$\AW^b\to\overline{\Amm},\ \SW^b\to\overline{\Amp},\ \SPW\to\overline{\App},\ \APW\to\overline{\ccc},$$
    where $\overline{?}$ denotes the set of completions of graded admissible curves in $?$. Then the lemma follows by the bijection between $\wads$ and $\bads$ and by the compatibility between local systems in Definition~\ref{def:wwls} and Definition~\ref{def:cwls}.
\end{proof}

\begin{remark}
    For the four bijections in Lemma~\ref{lem:bi2}, the first and the fourth correspond $w$ to $\arc(w)$, while the second corresponds $w=w_1\cdots w_m$ to the gluing of $\arc(w_1\cdots w_{\frac{m}{2}-1})$ with a non-interior punctured arc segment, and the third corresponds $w=(w_k)_{k\in\ZZ}$ to the gluing $\arc(w_1\cdots w_{\frac{\period}{2}-1})$ with two non-interior punctured arc segments.
\end{remark}

\begin{example}\label{ex:ad cur}
    The admissible curves $\gamma',\gamma'',\gamma''',\gamma''''$ (with certain gradings) shown in Figure~\ref{fig:ad cur} correspond to the words $w',w'',w''',w''''$ in Example~\ref{ex:words}, respectively.

\begin{figure}[htpb]
\begin{tikzpicture}[scale=1,font=\tiny]
\draw[ultra thick](0,0) circle (3.5);
\draw[ultra thick](0,0) circle (1);
\draw[red,thick](0,3.5)to[out=-70,in=-135](1.5,2)to[out=45,in=-20](0,3.5)
    (0,3.5)to[out=-110,in=-45](-1.5,2)to[out=135,in=-160](0,3.5);
\draw[red,thick](-1,0)to[out=-100,in=160](0,-2)to[out=20,in=-80](1,0);
\draw[red,thick](0,-2)to[out=0,in=-90](2.5,.75)to[out=90,in=-10](0,3.5);
\draw[red,thick](0,3.5)to[out=-5,in=90](3,.25)to[out=-90,in=0](0,-3)to[out=180,in=-90](-3,.25)to[out=90,in=-175](0,3.5);
\draw[purple,thick,bend right=10](-1.25,2.25)to(1.25,2.25);
\draw[blue,thick](0,-1)to[out=-10,in=-90](2.2,1)to[out=90,in=-15](1.1,2.6)to[out=165,in=15](-1.1,2.6)to[out=-165,in=90](-1.5,2.25)to[out=-90,in=165](-1.1,1.9)to[out=-15,in=-165](1.1,1.9)to[out=15,in=-110](1.5,2.25)to[out=70,in=-15](1.1,2.8)to[out=165,in=15](-1.1,2.8)to[out=-165,in=90](-2.4,.9)to[out=-90,in=-170](0,-1);
\draw[cyan,thick](0,-1)to[out=0,in=-90](2,1)to[out=90,in=-15](1.1,2.5)to[out=165,in=20](-1.25,2.25);
\draw(-2.2,1)node[blue]{$\gamma'$}(1.5,0)node[cyan]{$\gamma''$}(0,2.3)node[purple]{$\gamma''''$}(1.7,-2.2)node[orange]{$\gamma'''$};
\draw[orange,thick](0,-1.5)to[out=180,in=90](-.5,-2)to[out=-90,in=170](0,-2.5)to[out=-10,in=-90](2,-.5)to[out=90,in=0](0,1.5)to[out=180,in=90](-2,-.5)to[out=-90,in=-170](0,-2.5)to[out=10,in=-90](.5,-2)to[out=90,in=0](0,-1.5);
\draw(-1.25,2.25)\nn (1.25,2.25)\nn (-1,0)\ww (1,0)\ww
    (0,-2)\ww (0,3.5)\ww (0,1)[blue]\nn (0,-1)[blue]\nn (0,-3.5)[blue]\nn;
\end{tikzpicture}
\caption{Examples of admissible curves}
\label{fig:ad cur}
\end{figure}
\end{example}

Combining Lemma~\ref{lem:bi2} and Theorem~\ref{thm:Deng}, we have the following classification of indecomposable representations in $\repb(S)$ via graded admissible curves with local system.

\begin{corollary}\label{cor:Deng}
    There is a bijection
    $$\boxed{R\circ\bione\colon\care\to\Ind\repb(S)},$$
    where $\Ind\repb(S)$ is the set of isoclasses of indecomposable representations in $\repb(S)$.
\end{corollary}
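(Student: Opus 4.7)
The plan is to observe that Corollary~\ref{cor:Deng} is essentially the composition of two bijections that have already been established in the excerpt, so the proof should be a one-line verification.

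First, I would invoke Lemma~\ref{lem:bi2}, which provides the bijection
\[
\bione \colon \care \to \OO
\]
between graded admissible curves with local system and words with local system, with the refinement that it restricts to bijections on the four pieces $\Amm \leftrightarrow \AW^b$, $\Amp \leftrightarrow \SW^b$, $\App \leftrightarrow \SPW$, and $\ccc \leftrightarrow \APW$. In particular, the local-system data on each side (indecomposable modules over the matching algebras $A_{\wg}$ and $A_w$) correspond, because when $\wg \in \wOA(\gms)$ has $k$ endpoints in $\P$, then $w = \bione(\wg,N)$ lies in $\AW^b$ (if $k=0$), $\SW^b$ (if $k=1$), or $\SPW$ (if $k=2$), and in each case the algebras $A_{\wg}$ and $A_w$ coincide.

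Next, I would apply Theorem~\ref{thm:Deng}, which states that the assignment $R$ in Construction~\ref{cons:rep} gives a complete system of representatives of the indecomposable objects of $\repb(S)$ up to isomorphism, and that distinct elements of $\OO$ produce pairwise non-isomorphic representations. Rephrased, this says precisely that
\[
R \colon \OO \to \Ind\repb(S)
\]
is a bijection.

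Composing these two bijections yields the desired bijection $R\circ\bione \colon \care \to \Ind\repb(S)$. There is no real obstacle here, since all substantive work has been done: the classification of representations of the bush via words with local system is the content of Theorem~\ref{thm:Deng} (imported from \cite{Deng}), and the translation between such words and admissible curves (including the completion bijection of Lemma~\ref{lem:comp} to handle curves ending at punctures) is the content of Lemma~\ref{lem:bi2}. The only point to double-check is the compatibility of the algebras governing the local systems on the two sides, which is transparent from Definitions~\ref{def:wwls} and~\ref{def:cwls} combined with the case-by-case restriction of $\bione$.
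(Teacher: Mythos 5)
Your proposal is correct and matches the paper's argument exactly: the corollary follows immediately by composing the bijection $\bione$ of Lemma~\ref{lem:bi2} with the bijection $R$ furnished by Theorem~\ref{thm:Deng}.
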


%=========================================================
\section{Indecomposables in the perfect derived category of a GSGA}\label{sec:class}
%=========================================================

Let $\dac$ be a full formal closed arc system of a GMSp $\gms$
%. Let $\datum$ be the associated graded skew-gentle datum
and $\sg$ the associated graded skew-gentle algebra. In this section, we will classify all indecomposable objects in the perfect derived category $\per\sg$ of $\sg$ using this geometric model when $\sg$ is non-positive. When $\sg$ is not necessarily non-positive, we classify a certain class of indecomposable objects.

%=========================================================
\subsection{Perfect derived categories}
%=========================================================

We follow \cite{Ke94}. For a differential graded $\k$-space $M$, we denote by $|M|=\oplus_{i\in\ZZ}M^i$ the underlying graded $\k$-space and $d_M$ the differential.

Let $C=(|C|,d_C)$ be a differential graded $\k$-algebra (or dg algebra for short). For any two dg $C$-modules $M=(|M|,d_M)$ and $N=(|N|,d_N)$, the differential graded $\k$-space $$\huaHom_C(M,N)=(\bigoplus_{i\in\ZZ}\huaHom^i(M,N),d),$$
where $\huaHom^i(M,N)$ is the subspace of $\prod_{j\in\ZZ}\Hom_\k(M^j,N^{j+i})$ consisting of morphisms $f$ such that $f(mc)=f(m)c$, for all $m\in M$ and all $c\in C$, and the differential $d$ on $\huaHom_C(M,N)$ is given by
$$d(f)=f\circ d_M-(-1)^{|f|}d_N\circ f,$$
for a homogeneous morphism $f$ of degree $|f|$.
%We denote
%$$Z\huaHom_C(M,N)=\ker d,\ B\huaHom_C(M,N)=\im d,$$
%$$Z^0\huaHom_C(M,N)=Z\huaHom_C(M,N)\cap\huaHom^0_C(M,N),$$ %$$B^0\huaHom_C(M,N)=\im d\cap\huaHom^0_C(M,N),$$
%and
%$$H^0\huaHom_C(M,N)=\frac{Z^0\huaHom_C(M,N)}{B^0\huaHom_C(M,N)}.$$
%For a differential graded algebra $C$, denote by
We denote by
\begin{itemize}
    \item $\C(C)$ the category of dg $C$-modules, whose objects are the dg $C$-modules, and whose morphism spaces are $Z^0\huaHom_C(M,N)$;
    \item $\H(C)$ the homotopy category of dg $C$-modules, whose objects are the dg $C$-modules, and whose morphism spaces are $H^0\huaHom_C(M,N)$;
    \item $\mathcal{D}(C)$ the derived category of dg $C$-modules, which is the localization of $\H(C)$ at the full subcategory of acyclic dg $C$-modules;
    \item $\operatorname{per}(C)$ the perfect derived category of dg $C$-modules, which is the smallest full subcategory of $\mathcal{D}(C)$ containing $C$ and closed under taking shifts, extensions and direct summands.
\end{itemize}

Suppose that the dg algebra $C$ is finite-dimensional (i.e. $\dim_\k|C|<\infty$), and have zero differential (i.e., $d_C=0$). Let $1=e_1+\cdots+e_n$ be a decomposition of the unity into a sum of primitive orthogonal idempotents. A dg $C$-module $M=(|M|,d_M)$ is called \emph{strictly perfect}, provided that there is a decomposition
\begin{itemize}
    \item $|M|=\bigoplus_{l=1}^t R_l$ for some $t\in\NN$, where $R_l=e_{a_l}C[b_l]$ for some $1\leq a_l\leq n$ and for some $b_l\in\ZZ$, and
    \item $d_M=(f_{l,l'})_{1\leq l,l'\leq t}$ is a $t\times t$ strictly upper triangular matrix, where $f_{l,l'}:R_{l'}\to R_{l}$ is of degree $1$.
\end{itemize}
Such a strictly perfect dg $C$-module $M$ is called  \emph{minimal} if each entry $f_{l,l'}:R_{l'}\to R_l$ of $d_M$ is in the radical. By definition, strictly perfect dg $C$-modules have property (P) (see \cite[Section~3.1]{Ke94} for the definition).

The dg algebra $C$ is called \emph{non-positive} if $C^i=0$ for any $i>0$. Note that $\sg$ is non-positive if and only if so is $H$.

\begin{lemma}[{\cite[Lemma~4.2]{KY}}]\label{lem:KY}
    Suppose that $C$ is non-positive. Then any object in $\per C$ is isomorphic to a minimal strictly perfect dg $C$-module.
\end{lemma}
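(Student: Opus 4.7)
The plan is to build a minimal strictly perfect representative of any $M\in\per C$ in two stages: first exhibit a (possibly non-minimal) strictly perfect model, and then trim it by removing contractible summands.

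For the first stage, $\per C$ is by definition the thick closure of $C=\bigoplus_{i=1}^{n}e_iC$ in $\D(C)$, so every object is a direct summand of an iterated extension of shifts $e_iC[b]$. Any such iterated extension is realized by a dg $C$-module whose underlying graded module is $\bigoplus_{l}e_{a_l}C[b_l]$ with strictly upper triangular differential, i.e.\ a strictly perfect module in the sense of the lemma. To produce such a model for an arbitrary $M\in\per C$ (not merely for an iterated extension), I would argue by a bottom-up induction on cohomology. Since $M$ has finite-dimensional total cohomology supported in finitely many degrees and $H^{0}(C)$ is a finite-dimensional ordinary algebra, one starts with a projective cover over $H^{0}(C)$ of the top nonzero cohomology of $M$, lifts it to a direct sum of terms $e_{a_l}C[b_l]$, and inductively attaches further summands together with differentials realizing the lower cohomology. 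Non-positivity is crucial here: it forces every newly attached differential to land in already-constructed summands, producing the strictly upper triangular shape required.

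For the second stage, assume $(\bigoplus_{l=1}^{t}R_l,(f_{l,l'}))$ is strictly perfect but not minimal, so that some entry $f_{l,l'}\colon R_{l'}\to R_l$ has a component outside $\rad(C)$. Because $f_{l,l'}$ has degree $1$, $C$ is concentrated in non-positive degrees, and $R_l=e_{a_l}C[b_l]$, such a component can only be a nonzero scalar multiple of the identity with $a_l=a_{l'}$ and $b_{l'}=b_l+1$, identifying a contractible direct summand of the shape $e_{a_l}C[b_l+1]\xrightarrow{\lambda\,\id}e_{a_l}C[b_l]$ for some $\lambda\in\k^{\times}$. A standard Gaussian-elimination change of basis then splits this acyclic piece off, producing an isomorphic strictly perfect dg $C$-module with $t$ reduced by $2$. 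The process terminates since $t$ strictly decreases at each step, and what remains is a minimal strictly perfect representative of $M$.

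The main obstacle is the first stage: one must show that the strictly perfect class is closed under the idempotent splittings involved in forming the thick closure, rather than merely accommodating iterated cones. This is exactly where non-positivity is essential, guaranteeing both the existence of projective covers of cohomology over $H^{0}(C)$ and the upper-triangularity of the inductively constructed differential. By contrast, the second stage is a purely formal Gaussian-elimination argument once the set-up is in place.
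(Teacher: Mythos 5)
The paper does not prove this lemma at all: it is quoted as \cite[Lemma~4.2]{KY} and used as a black box, so there is no in-house proof to compare against. Judged on its own merits, your second stage (Gaussian elimination of non-radical entries of the differential) is correct and is exactly the standard trick. The first stage, however, has a real gap. You propose to build a strictly perfect model of an arbitrary $M\in\per C$ by inductively covering cohomology from the top down via projective covers over $H^0(C)$. That produces a semifree-type resolution, but nothing in your sketch guarantees the process stops after \emph{finitely} many summands, which the definition of strictly perfect requires ($t\in\NN$). In general $H^0(C)$ has infinite global dimension, so projective resolutions over it need not terminate, and the invariant you would like to induct on does not visibly decrease: after coning off a projective cover of $H^{\mathrm{top}}(M)$, the remaining object has top cohomology one degree lower, but its dimension involves a syzygy and its lower cohomology can spread further down by the amplitude of $C$, so neither the total dimension nor the cohomological width is a clean induction parameter. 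Perfectness of $M$ is exactly what forces finiteness, but after the opening sentence you never invoke it.

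The route that actually works, and is the one behind the cited lemma, is to show directly that the class of objects isomorphic to strictly perfect dg modules is closed under the three thick-closure operations. Shifts and cones are immediate: a cone of a strict chain map between strictly perfect modules is again strictly perfect with block upper-triangular differential. The only real content is closure under direct summands. There non-positivity enters in the form you half-identified: after Gaussian elimination one may assume $N$ is \emph{minimal} strictly perfect, and then every null-homotopic degree-zero endomorphism of $N$ has the form $fd_N+d_Nf$ with all matrix entries of $d_N$ in $\rad(C)$, so $B^0\huaHom_C(N,N)$ lies in the nilpotent ideal of matrices with entries in $\rad(C)$. Idempotents in $\Hom_{\per C}(N,N)=H^0\huaHom_C(N,N)$ therefore lift to genuine idempotents in $Z^0\huaHom_C(N,N)$, and such a lift splits $N$ as a direct sum of strictly perfect dg modules, one of which represents the given summand. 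This idempotent-lifting step is precisely the obstacle your write-up names but does not resolve; replacing your inductive construction with it closes the argument.
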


Let $D$ be another basic finite-dimensional dg algebra with zero differential, and $f: C\to D$ a homomorphism of dg algebras. Then $D$ can be regarded as a dg $C$-module via the action given by $f$. There is an induced left derived tensor functor
$$-\otimes^{\mathbf{L}}_{C}D:\per C\to\per D,$$
which acts on strictly perfect dg $C$-modules and morphisms between them as the usual tensor functor $-\otimes_{C}D$ (cf. \cite[Section~6.4]{Ke94}).

%=========================================================
\subsection{Triple categories}\label{subsec:triple to bush}
%=========================================================

The main idea in this subsection is from \cite{BD}. Denote by $\overline{H}=H/\mbox{rad}(H)$ and $\overline{\sg}=\sg/\mbox{rad}(\sg)$. The category of triples $\Tri\sg$ is defined as follows.
\begin{itemize}
	\item An object is a triple $(Y^\bullet,V^\bullet,\theta)$, where
	\begin{itemize}
		\item $Y^\bullet\in \per H$,
		\item $V^\bullet\in \per\overline{\sg}$, and
		\item $\theta:Y^\bullet\otimes^{\mathbf{L}}_{H}\overline{H}\to V^\bullet\otimes^{\mathbf{L}}_{\overline{\sg}}\overline{H}$ is an isomorphism in $\per\overline{H}$.
	\end{itemize}
	\item A morphism from $(Y_1^\bullet,V_1^\bullet,\theta_1)$ to $(Y_2^\bullet,V_2^\bullet,\theta_2)$ is given by a pair $(g,h)$, where
	\begin{itemize}
		\item $g:Y^\bullet_1\to Y^\bullet_2$ is a morphism in $\per H$.
		\item $h:V_1^\bullet\to V_2^\bullet$ is a morphism in $\per\overline{\sg}$.
	\end{itemize}
	such that the following diagram is commutative in $\per\overline{H}$:
	\begin{equation}\label{eq:comm}
	\xymatrix{
	Y^\bullet_1\otimes^{\mathbf{L}}_H\overline{H}\ar[r]^{\theta_1}\ar[d]_{g\otimes^{\mathbf{L}}_H {\overline{H}}}&V^\bullet_1\otimes^{\mathbf{L}}_{\overline{\sg}}\overline{H}\ar[d]^{h\otimes^{\mathbf{L}}_{\overline{\sg}}{\overline{H}}}\\
	Y_2^\bullet\otimes^{\mathbf{L}}_H \overline{H}\ar[r]_{\theta_2}&V^\bullet_2\otimes^{\mathbf{L}}_{\overline{\sg}}\overline{H}
    }
	\end{equation}
\end{itemize}

We shall establish a relationship between the triple category $\Tri\sg$ and the category $\repb(S)$.

First, we investigate $\per H$. Recall from Remark~\ref{rmk:id} that $\OMEpm$ is identified with a complete set of pairwise orthogonal primitive idempotents of $H$, and from Remark~\ref{rmk:BR} that
we have a basis $\{\lu(x_1,x_2)\mid (x_1,x_2)\in\mathcal{B} \}$ of $H$, where $$\mathcal{B}=\{((i,j_1^{\kappa_1}),(i,j_2^{\kappa_2}))\in\OMEpm\times\OMEpm\mid 1\leq i\leq t,1\leq j_1\leq j_2\leq m_i \}.$$
For any $(x_1,x_2)\in\mathcal{B}$, we denote
$$\begin{array}{rccc}
	f^H_{x_1,x_2}:&x_2H&\to&x_1H\\
	&h&\mapsto&\lu(x_1,x_2)h
\end{array}.$$
the morphism in $\huaHom_H(x_2H,x_1H)$ between indecomposable direct summands of $H$ induced by $\lu(x_1,x_2)$. In particular, we have $f^H_{x,x}=\id_{xH}$ for any $x\in\OMEpm$.

\begin{remark}\label{rmk:fH}
Note that $f^H_{x_1,x_2}$ is indeed a basis of $\huaHom_H(x_2H,x_1H)$. Moreover, since $d_H=0$, the differential of $\huaHom_H(x_2H,x_1H)$ is also zero. So we have $$\huaHom_H(x_2H,x_1H)=H^\ast\huaHom_H(x_2H,x_1H)=\bigoplus_{\rho\in\ZZ}\Hom_{\per H}(x_2H,x_1H[\rho]).$$
Hence, $f^H_{x_1,x_2}$ is also a basis of $\Ext^\ast_{\per H}(x_2 H,x_1 H):=\bigoplus_{\rho\in\ZZ}\Hom_{\per H}(x_2 H,x_1 H[\rho])$.
\end{remark}

Since $(i,j^+)H\cong (i,j^-)H$ for any $(i,j)\in\fc{\OME}$, the set of isoclasses of indecomposable direct summands of $H$ is indexed by $\OME$.

\begin{notations}
For any $(i,j)\in\OME$, we denote
$$H_{(i,j)}=\begin{cases}
(i,j)H&\text{if $(i,j)\notin\fc{\OME}$,}\\
(i,j^+)H&\text{if $(i,j)\in\fc{\OME}$,}
\end{cases}$$
and for any $(i,j_1),(i,j_2)\in\OME$ with $j_1\leq j_2$, denote
$$f^H_{(i,j_1),(i,j_2)}:=f^H_{(i,j_1^{\kappa_1}),(i,j_2^{\kappa_2})}:H_{(i,j_2)}\to H_{(i,j_1)},$$
where $(i,j_1^{\kappa_1}),(i,j_2^{\kappa_2})\in\OMEpm$ with $\kappa_1,\kappa_2\in\{\emptyset,+\}$. For the convenience, we take $H_{(i,0)}=0$ and $f^H_{(i,0),(i,j)}=0$; we also take $f^H_{(i,j_1),(i,j_2)}=0$ for the case $j_1>j_2$. Sometimes, we also use $f^H_{(i,j_1),(i,j_2)}$ to denote its induced morphism of certain degree between the shifts of $H_{(i,j_2)}$ and $H_{(i,j_1)}$, if there is no confusion arising.
\end{notations}

Similarly, the set of isoclasses of indecomposable direct summands of $\overline{H}$ is also indexed by $\OME$. For any $(i,j)\in\OME$, denote
$$\overline{H}_{(i,j)}=H_{(i,j)}\otimes_H \overline{H}.$$

\begin{remark}
    Since $\overline{H}$ is semisimple, the perfect category $\per\overline{H}$ is semisimple and whose simples, up to isomorphism, are $\overline{H}_{(i,j)}[r], (i,j)\in\OME,r\in\ZZ$. Hence there is a natural equivalence
    \begin{equation}\label{eq:simeq}
        \fMM\colon\per\overline{H}\xrightarrow{\simeq}(\OME\times\ZZ)\text{-}\gvec,
    \end{equation}
    sending $\overline{H}_{(i,j)}[r]$ to a homogeneous 1-dimensional vector space $\k_{((i,j),r)}$ of degree $((i,j),r)$.
    Recall from Remark~\ref{rmk:rep and mor} that $(\OME\times\ZZ)\text{-}\gvec$ is the category of finite-dimensional $(\OME\times\ZZ)$-graded vector spaces.
\end{remark}

Now we study the relationship between $\per H$ and $\add\k\overline{S^-}$.

\begin{construction}\label{cons:W}
For any graded unpunctured arc segment $\wmu=(i,j_1){}^{r_1}-{}^{r_2}(i,j_2)\in\wupas(\dac)$, with $j_1<j_2$, we define a dg $H_i$-module (and hence becomes a dg $H$-module) $$W^\bullet(\wmu)=\left(H_{(i,j_1)}[r_1]\oplus H_{(i,j_2)}[r_2],\begin{pmatrix}
    0&f^H_{(i,j_1),(i,j_2)}\\0&0
\end{pmatrix}\right).$$
\end{construction}

By the construction, $W^\bullet(\wmu)$ is a minimal strictly perfect dg $H$-module.

\begin{remark}\label{rmk:geo}
    Each category $\per H_i$ has a geometric model by the unpunctured $\dac$-polygon $D_i$ (cf. e.g. \cite{HKK}) as follows.
    \begin{itemize}
        \item The indecomposable objects are $W^\bullet(\wmu)$,  $\wmu\in\Was(D_i)$.
        \item For any $\wmu,\wnu\in\Was(D_i)$, using Notations~\ref{not:as} and \ref{not:grad}, we write $\wmu=(i,j_1){}^{r_1}-{}^{r_2}(i,j_2)$ and $\wnu=(i,j_3){}^{r_3}-{}^{r_4}(i,j_4)\in$ with $j_1<j_2$ and $j_3<j_4$. The morphism space $\Hom_{\per H_i}(W^\bullet(\wmu), W^\bullet(\wnu))$ is nonzero if and only if it is 1-dimensional if and only if one of the following cases occurs.
    \begin{enumerate}[label={(\alph*)}]
        \item\label{item:a} [\textbf{Non-crossing case}] There exist $l\in\{1,2\}$, $h\in\{3,4\}$ such that $j_l=j_h$, $r_l=r_h$ and $\wmu$ is to the left of $\wnu$ when starting at $(i,j_l)=(i,j_h)$. This case is divided into the following subcases.
        \begin{enumerate}[label={(a\arabic*)}]
            \item\label{item:a1} $l=1$, $h=3$ and $j_2\geq j_4$, see the left picture of Figure~\ref{fig:typeA}. A basis is
            \[\begin{pmatrix}
            \id&0\\0&f^H_{(i,j_4),(i,j_2)}
            \end{pmatrix}:H_{(i,j_1)}[r_1]\oplus H_{(i,j_2)}[r_2]\to H_{(i,j_3)}[r_3]\oplus H_{(i,j_4)}[r_4],\]
            where if $j_1=j_3=0$, we remove the identity.
            \item\label{item:a2} $l=2$, $h=3$ and $j_1< j_4$, see the middle picture of Figure~\ref{fig:typeA}. A basis is
            \[\begin{pmatrix}
            0&\id\\ 0&0
            \end{pmatrix}:H_{(i,j_1)}[r_1]\oplus H_{(i,j_2)}[r_2]\to H_{(i,j_3)}[r_3]\oplus H_{(i,j_4)}[r_4].\]
            \item\label{item:a3} $l=2$, $h=4$ and $j_1\geq j_3$, see the right picture of Figure~\ref{fig:typeA}. A basis is
            \[\begin{pmatrix}
            f^H_{(i,j_3),(i,j_1)}&0\\ 0&\id
            \end{pmatrix}:H_{(i,j_1)}[r_1]\oplus H_{(i,j_2)}[r_2]\to H_{(i,j_3)}[r_3]\oplus H_{(i,j_4)}[r_4].\]
        \end{enumerate}
        \begin{figure}[htpb]
             \begin{tikzpicture}[xscale=1.6,yscale=1.4]
            	\begin{scope}[shift={(-3,0)}]
            		\draw[ultra thick](-1,1)to(1,1);
            		\draw[red,thick,dotted](-.4,1)to(-.8,.6)  (-.4,-1)to(.4,-1) (.8,-.6)to(1,0) (.8,.6)to(.4,1);
            		\draw[red,thick](-.8,.6)to(-.4,-1) (.4,-1)to(.8,-.6) (1,0)to(.8,.6);
            		\draw[blue](0,1)\nn;
            		\draw[red](-.65,0)node[left]{$j_1$}(-.55,-.4)node[left]{$j_3$}(1.1,.3)node{$j_2$}(.7,-.9)node{$j_4$};
            		
            		\draw[blue,thick,bend right=20] (.6,-.8)tonode[below]{$\wnu$}(-.55,-.4) (.9,.3)tonode[above]{$\wmu$}(-.65,0);
            		\draw[orange,thick,bend left=50,->-=1,>=stealth](-.6,0)tonode[right]{$\id$}(-.5,-.4);
                    \draw[orange,thick,bend left=20,-<-=.1,>=stealth](.6,-.8)to(.9,.3);
            	\end{scope}
            	\draw[ultra thick](-1,1)to(1,1);
            	\draw[red,thick,dotted](-.4,1)to(-.8,.6) (-1,0)to(-.8,-.6) (.8,-.6)to(1,0) (.8,.6)to(.4,1);
            	\draw[red,thick](-.8,.6)to(-1,0) (-.8,-.6)to(.8,-.6) (1,0)to(.8,.6);
            	\draw[blue](0,1)\nn;
            	\draw[red](-1.1,.3)node{$j_1$}(1.1,.3)node{$j_4$}(0,-.8)node{$j_2=j_3$};
                \draw[blue,thick,bend right=20](-.2,-.6)tonode[above]{$\wmu$}(-.9,.3);
                \draw[blue,thick,bend left=20](.2,-.6)tonode[above]{$\wnu$}(.9,.3);
            	\draw[orange,thick,->-=1,>=stealth,bend left=50](-.2,-.6)tonode[above]{$\id$}(.2,-.6);
            	\begin{scope}[shift={(3,0)}]
            		\draw[ultra thick](1,1)to(-1,1);
            		\draw[red,thick,dotted](.4,1)to(.8,.6)  (.4,-1)to(-.4,-1) (-.8,-.6)to(-1,0) (-.8,.6)to(-.4,1);
            		\draw[red,thick](.8,.6)to(.4,-1) (-.4,-1)to(-.8,-.6) (-1,0)to(-.8,.6);
            		\draw[blue](0,1)\nn;
            		\draw[red](.65,0)node[right]{$j_4$}(.55,-.4)node[right]{$j_2$}(-1.1,.3)node{$j_3$}(-.7,-.9)node{$j_1$};
            		
            		\draw[blue,thick,bend left=20] (-.6,-.8)tonode[below]{$\wmu$}(.55,-.4) (-.9,.3)tonode[above]{$\wnu$}(.65,0);
            		\draw[orange,thick,bend right=50,->-=1,>=stealth](.6,0)tonode[left]{$\id$}(.5,-.4);
                    \draw[orange,thick,bend right=20,-<-=.1,>=stealth](-.6,-.8)to(-.9,.3);
            	\end{scope}
            \end{tikzpicture}
            \caption{Subcases \ref{item:a1}, \ref{item:a2} and \ref{item:a3} in Remark~\ref{rmk:geo}}
            \label{fig:typeA}
        \end{figure}
        \item\label{item:b} [\textbf{Crossing case}] $j_1,j_2,j_3,j_4$ are different from each other, and there is an oriented intersection from $\wmu$ to $\wnu$ of index $0$, cf. Figure~\ref{fig:rad2}. This case is divided into the following subcases.
        \begin{enumerate}[label={(b\arabic*)}]
            \item\label{item:b1} If $0\leq j_1<j_3<j_2<j_4$, a basis is
            \[\begin{pmatrix}
            0&f^H_{(i,j_3),(i,j_2)}\\0&0
            \end{pmatrix}:H_{(i,j_1)}[r_1]\oplus H_{(i,j_2)}[r_2]\to H_{(i,j_3)}[r_3]\oplus H_{(i,j_4)}[r_4].\]
            \item\label{item:b2} If $0\leq j_3<j_1<j_4<j_2$, a basis is
            \[\begin{pmatrix}
            f^H_{(i,j_3),(i,j_1)}&0\\0&f^H_{(i,j_4),(i,j_2)}
             \end{pmatrix}:H_{(i,j_1)}[r_1]\oplus H_{(i,j_2)}[r_2]\to H_{(i,j_3)}[r_3]\oplus H_{(i,j_4)}[r_4].\]
        \end{enumerate}
        \begin{figure}[htpb]
             \begin{tikzpicture}[xscale=1.6,yscale=1.4]
            	\begin{scope}[shift={(-3,0)}]
            		\draw[ultra thick](-1,1)to(1,1);
            		\draw[red,thick,dotted](-.4,1)to(-.8,.6) (-1,0)to(-.8,-.6) (-.4,-1)to(.4,-1) (.8,-.6)to(1,0) (.8,.6)to(.4,1);
            		\draw[red,thick](-.8,.6)to(-1,0) (-.8,-.6)to(-.4,-1) (.4,-1)to(.8,-.6) (1,0)to(.8,.6);
            		\draw[blue](0,1)\nn;
            		\draw[red](-1.2,.3)node{$(i,j_1)$}(-.9,-1)node{$(i,j_3)$}(1.2,.3)node{$(i,j_4)$}(.9,-1)node{$(i,j_2)$};
            		
            		\draw[blue,thick,bend right=20](.9,.3)to(-.6,-.8) (.6,-.8)to(-.9,.3) (.5,.3)node{$\tilde{\nu}$}(-.5,.3)node{$\tilde{\mu}$};
            		\draw[orange,thick,bend left=20,-<-=.1,>=stealth](-.6,-.8)to(.6,-.8);
            	\end{scope}
            	\draw[ultra thick](-1,1)to(1,1);
            	\draw[red,thick,dotted](-.4,1)to(-.8,.6) (-1,0)to(-.8,-.6) (.8,-.6)to(1,0) (.8,.6)to(.4,1);
            	
            	\draw[red,thick](-.8,.6)to(-1,0) (-.8,-.6)to(.8,-.6) (1,0)to(.8,.6);
            	
            	\draw[blue](0,1)\nn;
            	\draw[red](-1.2,.3)node{$(i,j_1)$}(1.2,.3)node{$(i,j_2)$}(0,1.2)node{$(i,j_3=0)$}(0,-.8)node{$(i,j_4)$};
            	
            	\draw[blue,thick](0,1)to(0,-.6);
            	\draw[blue,thick,bend right=20](.9,.3)to(-.9,.3) (-.2,.1)node{$\tilde{\nu}$}(.5,.6)node{$\tilde{\mu}$};
            	\draw[orange,thick,->-=1,>=stealth,bend left=10](.9,.3)to(0,-.6);
            	\begin{scope}[shift={(3,0)}]
            		\draw[ultra thick](-1,1)to(1,1);
            		\draw[red,thick,dotted](-.4,1)to(-.8,.6) (-1,0)to(-.8,-.6) (-.4,-1)to(.4,-1) (.8,-.6)to(1,0) (.8,.6)to(.4,1);
            		\draw[red,thick](-.8,.6)to(-1,0) (-.8,-.6)to(-.4,-1) (.4,-1)to(.8,-.6) (1,0)to(.8,.6);
            		\draw[blue](0,1)\nn;
            		\draw[red](-1.2,.3)node{$(i,j_3)$}(-.9,-1)node{$(i,j_1)$}(1.2,.3)node{$(i,j_2)$}(.9,-1)node{$(i,j_4)$};
            		
            		\draw[blue,thick,bend right=20](.9,.3)to(-.6,-.8) (.6,-.8)to(-.9,.3) (.5,.3)node{$\tilde{\mu}$}(-.5,.3)node{$\tilde{\nu}$};
            		\draw[orange,thick,bend right=10,->-=1,>=stealth](.9,.3)to(.6,-.8);
            		\draw[orange,thick,bend right=10,->-=1,>=stealth](-.6,-.8)to(-.9,.3);
            	\end{scope}
            \end{tikzpicture}
            \caption{Cases for \ref{item:b} for crossing $\wmu$ and $\wnu$}
            \label{fig:rad2}
        \end{figure}
    \end{enumerate}
    \end{itemize}

    It is straightforward to see that the morphisms in case (b) linearly generate an ideal of $\per H_i$.
\end{remark}

\begin{lemma}\label{lem:full}
There is a full and dense functor
\[\boxed{
    \fF:\per H\to{\add\bushf},
}\]
sending $W^\bullet(\wmu)$ to $\wmu$ for any $\wmu\in\wupas(\dac)$, such that the following properties hold.
\begin{enumerate}
    \item The functor $\fF$ gives rise to a bijection between isoclasses of indecomposable objects in these two categories.
    %\item For any $\wmu\in\wupas(\dac)$, we have
    %\begin{equation}\label{eq:zo}
    %    \overline{H}\otimes_{H}W^\bullet(\wmu)=\bigoplus_{((i,j),r)\in\OME\times\ZZ}(\overline{H}_{(i,j)}[r])^{\dim\bvs_{(i,j),r}^-(\wmu)},
    %\end{equation}
    %which naturally induces a linear map of vector spaces $$\varphi_{\wmu}\colon\overline{H}\otimes_{H}W^\bullet(\wmu)\xrightarrow{\cong}\bvs^-(\wmu).$$
    %\item For any morphism $g:W^\bullet(\wmu_1)\to W^\bullet(\wmu_2)$ in $\C(H)$ and its image $\overline{g}$ in $\per H$, we have the following commutative diagram
    \item The following commutative diagram commutes
    \begin{equation}\label{eq:zm}
    \begin{tikzcd}[column sep=35,row sep=30]
      \per H \ar[r,"\fF"] \ar[d,"-\otimes_{H}^{\mathbf{L}}{\id_{\overline{H}}}"']    
        & \add\k\overline{S^-} \ar[d,"\bvs^-"]\\
      \per \overline{H} \ar[r,"\fMM","\eqref{eq:simeq}"']& (\OME\times\ZZ)\text{-}\gvec
    \end{tikzcd}
    \end{equation}
\end{enumerate}
\end{lemma}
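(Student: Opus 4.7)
My plan is to build $\fF$ in three stages—on objects, on morphisms, and then verify functoriality, fullness, the bijection on indecomposables, and the commutative diagram—exploiting the fact that $H=H_1\times\cdots\times H_t$ decomposes $\per H$ as $\prod_{i=1}^t\per H_i$, where each $\per H_i$ already admits the explicit geometric description of Remark~\ref{rmk:geo}. On objects I define $\fF$ by sending each minimal strictly perfect dg $H_i$-module $W^\bullet(\wmu)$ to the graded unpunctured arc segment $\wmu\in\upas(D_i)$. Density follows immediately: by Lemma~\ref{lem:KY} every object of $\per H$ is minimal strictly perfect, and since $H_i=T_{m_i,\widetilde{\Sigma}_i}$ is triangular the minimal strictly perfect dg $H_i$-modules that are indecomposable are precisely those of the form $W^\bullet(\wmu)$ (of rank one or two) listed in Remark~\ref{rmk:geo}, and $\add\bushf$ is by construction the additive hull generated by $\wupas(\dac)=\overline{S^-}$, giving a bijection on indecomposables as claimed in~(1).

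On morphisms I use the explicit basis of $\Hom_{\per H_i}(W^\bullet(\wmu),W^\bullet(\wnu))$ given in Remark~\ref{rmk:geo}. I send the non-crossing basis morphisms of case (a1)/(a2)/(a3) to the order relation $(\wnu\geq\wmu)$ in the rod containing the common endpoint (with the common intersection index), and I send the crossing case (b) morphisms to zero. The key observation is that the three subcases of~(a) correspond exactly to the three ways in which two oriented graded arc segments starting at the same edge with the same intersection index $r$ can be ordered in the rod $S^-_{(i,j),r}$ (cf.\ Figure~\ref{fig:order2}, ``left smaller than right''), so $\fF$ is well-defined on morphism spaces and compatible with the order.

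For functoriality I must check composition. Any composition of case-(a) morphisms is again a composition of one-indexed idempotent blocks and scalars $f^H_{\cdot,\cdot}$, and by equation~\eqref{eq:comp} this composite is either again a case-(a) morphism with the same common endpoint (when the orders line up) or lies in the ideal generated by the crossing maps of case~(b), and thus matches the composition rule of $\bushf$ (compose to the composite order relation if transitivity holds, zero otherwise). Fullness is then automatic: every basis element $(\wnu\geq\wmu)$ of $\rad_{\bush}(\wmu,\wnu)$ arises from a unique case-(a) morphism, and identities lift to identities. The main obstacle in this step is a careful bookkeeping of signs/indices to confirm that the three geometric subcases exhaust (and precisely match) the two possible orderings ``$j_1<j_2$ vs.\ $j_1>j_2$'' combined with the choice of common endpoint; this is a finite case check that follows directly from Lemma~\ref{lem:r1r2} and the definition of the order on $S^-_{(i,j),r}$.

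Finally, for the commutative diagram in~(2), observe that $H_{(i,j)}\otimes_H^{\mathbf{L}}\overline{H}=\overline{H}_{(i,j)}$ and that the differential of $W^\bullet(\wmu)$, whose entries lie in $\rad H$ by minimality, vanishes after applying $-\otimes_H^{\mathbf{L}}\overline{H}$. Hence
\[
W^\bullet(\wmu)\otimes_H^{\mathbf{L}}\overline{H}\;\simeq\;\overline{H}_{(i,j_1)}[r_1]\oplus\overline{H}_{(i,j_2)}[r_2],
\]
which under $\fMM$ of~\eqref{eq:simeq} becomes $\k_{((i,j_1),r_1)}\oplus\k_{((i,j_2),r_2)}$. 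On the other hand, via the identification $\overline{S^-}=\wupas(\dac)$ in~\eqref{eq:id}, the arc segment $\wmu$ lies in exactly the two rods $S^-_{(i,j_1),r_1}$ and $S^-_{(i,j_2),r_2}$ (as $\wmu=(i,j_1){}^{r_1}{-}^{r_2}(i,j_2)$ and its reverse orientation are the two elements of its equivalence class), so $\bvs^-(\wmu)$ is the same graded vector space. Functoriality of the two lower-triangular identifications and the compatibility of the order morphisms with the projection to $\overline{H}$ then yields commutativity on morphisms.
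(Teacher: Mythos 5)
Your overall strategy—decomposing $\per H=\prod_i\per H_i$, defining $\fF_i$ on objects by $W^\bullet(\wmu)\mapsto\wmu$, sending the non-crossing basis morphisms of Remark~\ref{rmk:geo} to the order relations and the crossing morphisms to zero, and then checking the diagram \eqref{eq:zm} componentwise—is the same architecture as the paper's proof. The parts about morphisms, functoriality, fullness, and diagram commutativity are in the right spirit, though you gloss over the $j_1=j_3=0$ boundary subcase in (a1) that the paper separates out, and commutativity on morphisms is asserted rather than verified case-by-case as the paper does. Those are omissions of detail, not errors.

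However, there is a genuine gap in your density argument: you invoke Lemma~\ref{lem:KY} to conclude that every object of $\per H$ is isomorphic to a minimal strictly perfect dg $H$-module. That lemma requires the dg algebra to be \emph{non-positive}, and Lemma~\ref{lem:full} carries no such hypothesis. The grading on $H_i=T_{m_i,\widetilde{\Sigma}_i}$ is determined by the arbitrary integers $\chi^{(i)}_{j,j+1}$ from the graded skew-gentle datum, so $H$ is in general not non-positive (the paper treats the non-positive case as a special case only later, in Section~\ref{subsec:class}). The paper instead obtains the classification of indecomposables in $\per H_i$ from the HKK-type geometric model of the graded disc recorded in Remark~\ref{rmk:geo}, which does not rely on non-positivity. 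You should replace the appeal to Lemma~\ref{lem:KY} by the geometric classification of objects in $\per H_i$ (i.e.\ the cited HKK result), since as written your argument does not establish density for the general $H$ to which Lemma~\ref{lem:full} must apply.
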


\begin{proof}
    Recall from \eqref{eq:id} that the indecomposable objects in $\add\bushf$ are identified with $\wupas(\dac)$. Denote by ${\bushf}_i$ the full subcategory of $\bushf$ whose object set is $\Was(D_i)$. Since arc segments in different polygons are incomparable, the category $\add\bushf$ is the direct sum of $\add\bushf_i$. On the other hand, $\per H$ is also the direct sum of $\per H_i$. So we only need to construct a functor $\fF_i:\per H_i\to\add{\bushf}_i$ for each $1\leq i\leq t$, satisfying the properties in the lemma.
	
%	The perfect derived category $\per H_i$ is equivalent to the perfect derived category of a graded quiver of type $A_{m_i}$. So $\per H_i$ has a nice geometric model via the polygon $D_i$ (cf. e.g. \cite{HKK}), that the indecomposable objects are identified with the graded arc segments in $\Was(D_i)$ via the map $W^\bullet$, and a basis of morphism space between two indecomposable objects is interpreted by the set of oriented intersections of index 0 between the corresponding graded arc segments. Moreover,
    By the geometric model of $\per H_i$ in Remark~\ref{rmk:geo}, there is a full and dense functor $\fF_i:\per H_i\to \add{\bushf}_i$ satisfying property (1) and whose kernel is linearly generated by the morphisms in case (b).
	
	For property (2), for any $\wmu=(i,j_1){}^{r_1}-{}^{r_2}(i,j_2)\in\Was(D_i)$, we have
    $$W^\bullet(\wmu)\otimes_H \overline{H}=\overline{H}_{(i,j_1)}[r_1]\oplus\overline{H}_{(i,j_2)}[r_2],$$
    and
    $$\dim\bvs_{(i,j),r}^-(\wmu)=\begin{cases}
    1&\text{if $((i,j),r)=((i,j_1),r_1)$ or $((i,j_2),r_2)$,}\\0&\text{otherwise.}
    \end{cases}$$
    So we have
    $$W^\bullet(\wmu)\otimes_{H}\overline{H}=\bigoplus_{((i,j),r)\in\OME\times\ZZ}(\overline{H}_{(i,j)}[r])^{\dim\bvs_{(i,j),r}^-(\wmu)},$$
    which implies the commutative diagram~\eqref{eq:zm} on the object level.

    On the morphism level, we only need to consider the basis, denoted by $g$, given in Remark~\ref{rmk:geo}.
    \begin{itemize}
    \item In case \ref{item:a1}, there are the following subcases.
    \begin{itemize}
    \item $j_2=j_4$ (then $r_2=r_4$). Then $g$ is the identity. So both $g\otimes_{H}\id_{\overline{H}}$ and $\bvs^-(\fF_i(g))$ are the identity.
    \item If $j_1=j_3=0$ and $j_2\neq j_4$, we have $g\otimes_{H}\id_{\overline{H}}=0$. On the other hand, $\fF_i(g)=\left(\left((i,j_3){}^{r_3}\to{}^{r_4}(i,j_4)\right)\geq\left((i,j_1){}^{r_1}\to{}^{r_2}(i,j_2)\right)\right)$ belongs to $S^-_{(i,0),r_1}$. Since $((i,0),r_1)\notin\Omega\times\ZZ$, we have $\bvs^-(\fF_i(g))=0$.
    \item If $j_1=j_3\neq 0$ and $j_2\neq j_4$, we have
    \[g\otimes_{H}\id_{\overline{H}}=\begin{pmatrix}
        \id &0\\0&0            \end{pmatrix}
    :\overline{H}_{(i,j_1)}[r_1]\oplus\overline{H}_{(i,j_2)}[r_2]\to \overline{H}_{(i,j_3)}[r_3]\oplus\overline{H}_{(i,j_4)}[r_4].\]
    On the other hand, $\fF_i(g)=\left(\left((i,j_3){}^{r_3}\to{}^{r_4}(i,j_4)\right)\geq\left((i,j_1){}^{r_1}\to{}^{r_2}(i,j_2)\right)\right)$. So
    \[\bvs^-(\fF_i(g))=\begin{pmatrix}
        \id &0\\0&0            \end{pmatrix}
    :\k_{((i,j_1),r_1)}\oplus\k_{((i,j_2),r_2)}\to \k_{((i,j_3),r_3)}\oplus\k_{((i,j_4),r_4)}.\]
    \end{itemize}
    \item In case \ref{item:a2}, we have
    \[g\otimes_{H}\id_{\overline{H}}=\begin{pmatrix}
        0 &\id\\0&0            \end{pmatrix}
    :\overline{H}_{(i,j_1)}[r_1]\oplus\overline{H}_{(i,j_2)}[r_2]\to \overline{H}_{(i,j_3)}[r_3]\oplus\overline{H}_{(i,j_4)}[r_4].\]
    On the other hand, $\fF_i(g)=\left(\left((i,j_3){}^{r_3}-{}^{r_4}(i,j_4)\right)\geq\left((i,j_2){}^{r_2}\to{}^{r_1}(i,j_1)\right)\right)$. So
    \[\bvs^-(\fF_i(g))=\begin{pmatrix}
        0 & \id\\0&0            \end{pmatrix}
    :\k_{((i,j_1),r_1)}\oplus\k_{((i,j_2),r_2)}\to \k_{((i,j_3),r_3)}\oplus\k_{((i,j_4),r_4)}.\]
    \item Case \ref{item:a3} is similar with case \ref{item:a1}. So we omit the details.
    \item In case \ref{item:b}, since $g$ is in the kernel of $\fF_i$, we have $\fF_i(g)=0$. On the other hand, since each component of $g$ is in the radical, we have $g\otimes_{H}\id_{\overline{H}}=0$.
    \end{itemize}
    In each case, we have $\bvs^-(\fF_i(g))=\fMM(g\otimes_{H}\id_{\overline{H}})$. Thus, the proof is complete.
\end{proof}

%By Lemma~\ref{lem:full}, any object $Y^\bullet$ in $\per H$ is isomorphic to a minimal strictly perfect $H$-module of the form
%$$\bigoplus_{\wmu\in\wupas(\dac)}W^\bullet(\wmu)^{n_{\wmu}}$$
%for some natural numbers $n_{\wmu}$. We denote
%$$\varphi_{Y^\bullet}=\bigoplus_{\wmu\in\wupas(\dac)}\varphi_{\wmu}^{n_{\wmu}}:\overline{H}\otimes^\mathbf{L}_HY^\bullet\to \bvs^-(\fF(Y^\bullet)).$$

Next, we investigate the relationship between $\per\overline{\sg}$ and $\add\k\overline{S^+}$.

\begin{lemma}\label{lem:equiv}
    There is an equivalence
    \[\boxed{
    \fG:\per\overline{\sg}\to\add\bushz,
    }\]
    sending $z\overline{\sg}[r]$ to $(z,r)$ for any $(z,r)\in\OMEx\times\ZZ$, such that the following diagram commutes
    %properties hold.
    %\begin{enumerate}
     %   \item For any $(z,r)\in\OMEx\times\ZZ$, we have
   % \begin{equation}\label{eq:fo}
    %    \overline{H}\otimes_{\overline{\sg}}z\overline{\sg}[r]=\bigoplus_{((i,j),r)\in\OME\times\ZZ}(\overline{H}_{(i,j)}[r])^{\dim\bvs_{(i,j),r}^+((z,r))},
    %\end{equation}
    %which naturally induces a linear map of vector spaces $$\psi_{(z,r)}\colon\overline{H}\otimes_{\overline{\sg}}z\overline{\sg}[r]\xrightarrow{\cong}\bvs^+((z,r)).$$
%    \item For any morphism $h:z_1\overline{\sg}[r_1]\to z_2\overline{\sg}[r_2]$ in $\per\overline{\sg}$, we have the following commutative diagram
    \begin{equation}\label{eq:fm}
    \begin{tikzcd}[column sep=35,row sep=30]
      \per \overline{\sg} \ar[r,"\fG"] \ar[d,"-\otimes_{H}^{\mathbf{L}}{\id_{\overline{H}}}"']
        & \add\k\overline{S^+} \ar[d,"\bvs^+"]\\
      \per \overline{H} \ar[r,"\fMM","\eqref{eq:simeq}"']& (\OME\times\ZZ)\text{-}\gvec
    \end{tikzcd}    
    \end{equation}
    %\end{enumerate}
\end{lemma}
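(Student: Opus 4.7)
The plan is to exploit the fact that both sides of $\fG$ are semisimple categories whose indecomposables are in natural bijection, so that the functor is essentially determined by its action on simples.

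First, by Remark~\ref{rmk:id} the set $\OMEx$ indexes a complete set of primitive orthogonal idempotents $\{e_z\}$ of $\sg$, and hence also of $\overline{\sg}=\sg/\rad(\sg)$. Since $\overline{\sg}$ is semisimple with one $\k$-dimension-one simple $e_z\overline{\sg}$ per $z\in\OMEx$, the perfect category $\per\overline{\sg}$ is semisimple with indecomposables $\{z\overline{\sg}[r]:(z,r)\in\OMEx\times\ZZ\}$, each having endomorphism ring $\k$ and no nonzero morphisms between distinct ones. On the target side, each rod $S_{(i,j),r}^+$ in Construction~\ref{cons:bush} has trivial internal order, so all radical morphisms of $\bushz$ vanish. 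Combined with the identification \eqref{eq:id}, this makes $\add\bushz$ semisimple with indecomposables $\{(z,r):(z,r)\in\OMEx\times\ZZ\}$, each of endomorphism ring $\k$. I then define $\fG$ on indecomposables by $z\overline{\sg}[r]\mapsto(z,r)$, sending each identity to the identity of its image, and extend $\k$-linearly. Since both categories are semisimple with matched indecomposables, $\fG$ is automatically a $\k$-linear equivalence.

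For the commutativity of \eqref{eq:fm}, since both functors respect the block decomposition and preserve identities, it suffices to check that the two compositions agree on each indecomposable object. I would split into cases on the shape of $z\in\OMEx$. If $z=(i,j^\kappa)$ is a split vertex with $(i,j)\in\fc{\OME}$, then $z\overline{\sg}\otimes_{\overline{\sg}}\overline{H}\cong e_{(i,j^\kappa)}\overline{H}\cong\overline{H}_{(i,j)}$, so the left-bottom path returns $\k_{((i,j),r)}$; while the right-top path returns $\bvs^+((z,r))=\k_{((i,j),r)}$ since the singleton class $(z,r)=\{((i,j^\kappa),r)\}$ meets exactly the rod $S_{(i,j),r}^+$. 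If instead $z=\{(i_1,j_1),(i_2,j_2)\}$ with $(i_1,j_1)\simeq(i_2,j_2)\notin\fc{\OME}$, then $e_z=e_{(i_1,j_1)}+e_{(i_2,j_2)}$ gives $z\overline{\sg}\otimes_{\overline{\sg}}\overline{H}=\overline{H}_{(i_1,j_1)}\oplus\overline{H}_{(i_2,j_2)}$, while $(z,r)=\{((i_1,j_1),r),((i_2,j_2),r)\}$ meets the rods $S_{(i_1,j_1),r}^+$ and $S_{(i_2,j_2),r}^+$ in one element each; both paths therefore yield $\k_{((i_1,j_1),r)}\oplus\k_{((i_2,j_2),r)}$ under $\fMM$.

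No serious obstacle is anticipated: unlike Lemma~\ref{lem:full} (whose functor $\fF$ has a nontrivial kernel encoding the crossing relations in the geometric model), here both $\per\overline{\sg}$ and $\add\bushz$ are semisimple, reducing the proof to a bijection on indecomposables together with routine bookkeeping between $\OME$, $\OMEpm$, and $\OMEx$.
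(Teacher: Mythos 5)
Your proof is correct and follows essentially the same route as the paper's: observe that both $\per\overline{\sg}$ and $\add\bushz$ are semisimple with indecomposables indexed by $\OMEx\times\ZZ$, define $\fG$ on indecomposables, and verify commutativity of \eqref{eq:fm} by computing $z\overline{\sg}\otimes_{\overline{\sg}}\overline{H}$ in the two cases $z\in\fc{\OMEpm}$ and $z=\{(i_1,j_1),(i_2,j_2)\}$. Your observation that semisimplicity lets you reduce the morphism check to the object level is exactly the implicit point the paper relies on when it says the two categories ``have the same property.''
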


\begin{proof}
    Since $\overline{\sg}=\sg/\rad(\sg)$ is semisimple and has a complete set of orthogonal primitive idempotents identified with $\OMEx$, the isoclasses of the indecomposable objects in $\per\overline{\sg}$ are indexed by $\OMEx\times\ZZ$ and there are no radical morphisms in $\per\overline{\sg}$. Since $S^+$ has trivial order and $\overline{S^+}=\OMEx\times\ZZ$ (see \eqref{eq:id}), the category $\add\bushz$ has the same property as $\per\overline{\sg}$. Hence we have the required equivalence, where \eqref{eq:fm} follows from that for any $z\in\OMEx$, we have $$z\overline{\sg}\otimes_{\overline{\sg}} \overline{H}= \begin{cases} \overline{H}_{(i_1,j_1)}\oplus\overline{H}_{(i_2,j_2)}&\text{if }z=\{(i_1,j_1),(i_2,j_2) \} \\ \overline{H}_{(i,j)}&\text{if $z=(i,j^+)$ or $(i,j^-)$.}\end{cases}$$.
\end{proof}

%By Lemma~\ref{lem:equiv}, any object $V^\bullet$ in $\per \overline{\sg}$ is isomorphic to a minimal strictly perfect $\overline{\sg}$-module of the form
%$$\bigoplus_{(z,r)\in\OMEx\times\ZZ}z\overline{\sg}[r]^{m_{(z,r)}}$$
%for some natural numbers $m_{(z,r)}$. We denote
%$$\psi_{V^\bullet}=\bigoplus_{(z,r)\in\OMEx\times\ZZ}\psi_{(z,r)}^{m_{(n,r)}}:\overline{H}\otimes^\mathbf{L}_{\overline{\sg}}V^\bullet\to \bvs^+(\fG(V^\bullet)).$$

Finally, we establish a functor from
$\Tri\sg$ to $\repb(S)$.

\begin{theorem}\label{thm:tribu}
    There is a full and dense functor
    \begin{gather}\label{eq:tribu}\boxed{
	\fM:\Tri\sg\to \repb(S),
	}\end{gather}
    which sends an object $(Y^\bullet,V^\bullet,\theta)$ to $(\fF(Y^\bullet),\fG(V^\bullet),\fMM(\theta))$, and which sends a morphism $(g,h)$ to $(\fF(g),\fG(h))$. Moreover, the following hold.
	\begin{enumerate}
		\item For any objects $T_1,T_2$ in $\Tri\sg$, we have $\fM(T_1)\cong \fM(T_2)$ if and only if $T_1\cong T_2$.
		\item For any object $T$ in $\Tri\sg$, $\fM(T)$ is indecomposable if and only if $T$ is indecomposable.
	\end{enumerate}
\end{theorem}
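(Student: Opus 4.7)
The plan is to leverage the two commutative squares \eqref{eq:zm} and \eqref{eq:fm} together with the properties of $\fF$ (Lemma~\ref{lem:full}) and $\fG$ (Lemma~\ref{lem:equiv}) to transfer the triple-category data into representation-of-bush data. For an object $(Y^\bullet,V^\bullet,\theta)$, the diagrams \eqref{eq:zm} and \eqref{eq:fm} identify $\fMM(Y^\bullet\otimes_H^{\mathbf{L}}\overline{H})$ with $\bvs^-(\fF(Y^\bullet))$ and $\fMM(V^\bullet\otimes_{\overline{\sg}}^{\mathbf{L}}\overline{H})$ with $\bvs^+(\fG(V^\bullet))$, so that $\fMM(\theta)$ is already a diagonal isomorphism of $(\OME\times\ZZ)$-graded vector spaces, realizing $(\fF(Y^\bullet),\fG(V^\bullet),\fMM(\theta))$ as an object of $\repb(S)$ in the form of Remark~\ref{rmk:rep and mor}. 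On morphisms, applying $\fMM$ to the compatibility square \eqref{eq:comm} and invoking the two diagrams yields exactly the square \eqref{eq:mor2} needed for a morphism in $\repb(S)$; functoriality is then immediate from that of $\fF,\fG,\fMM$.

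For density, given $(X_-,X_+,f)\in\repb(S)$, I would use density of $\fF$ and essential surjectivity of $\fG$ to choose $Y^\bullet\in\per H$ and $V^\bullet\in\per\overline{\sg}$ with $\fF(Y^\bullet)\cong X_-$ and $\fG(V^\bullet)\cong X_+$, transport $f$ through these isomorphisms, and use the equivalence $\fMM$ to pull the resulting diagonal iso back to a unique $\theta$ in $\per\overline{H}$. For fullness, given $(\alpha_-,\alpha_+)\colon\fM(T_1)\to\fM(T_2)$ in $\repb(S)$, lift $\alpha_-$ via fullness of $\fF$ to some $g$ and lift $\alpha_+$ uniquely via the equivalence $\fG$ to $h$; faithfulness of $\fMM$ combined with the two commutative squares then promotes the commutativity of \eqref{eq:mor2} for $(\alpha_-,\alpha_+)$ to the commutativity of \eqref{eq:comm} for $(g,h)$, so $(g,h)$ is an honest morphism of triples.

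The heart of (1) and (2) is a precise description of $\ker\fM$. Since $\fG$ is an equivalence, a morphism $(g,h)$ lies in $\ker\fM$ iff $h=0$ and $g\in\ker\fF$. From the case analysis in Remark~\ref{rmk:geo}, a basis of $\ker\fF$ is furnished by the ``crossing'' morphisms \ref{item:b1}, \ref{item:b2}, whose matrix entries are elements $f^H_{(i,j_1),(i,j_2)}$ with $j_1<j_2$ and therefore lie in $\rad(H)$. For (1)$(\Rightarrow)$, given an iso $(\alpha_-,\alpha_+)$ and its inverse, fullness supplies lifts $(g,h),(g',h')$ in $\Tri\sg$; the equivalence $\fG$ makes $h,h'$ mutual inverses outright, and since $g'g-\id$ and $gg'-\id$ lie in $\ker\fF$ their matrices have all entries in $\rad(H)$, so Nakayama's lemma (every diagonal entry is a unit in $H$) forces $g'g$ and $gg'$ to be isomorphisms in $\per H$, giving $g$ invertible and $(g,h)$ an iso of triples. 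Part (2) now follows formally from (1) together with density: a non-trivial direct-sum decomposition $\fM(T)\cong X_1\oplus X_2$ lifts, via density, to $\fM(T_1\oplus T_2)$, whence (1) identifies $T\cong T_1\oplus T_2$; the converse direction is automatic since $\fM$ is additive and reflects zero objects through the equivalence $\fG$.

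The main obstacle I foresee is the last step of part (1) -- verifying that elements of $\ker\fF$ have \emph{all} their matrix components in $\rad(H)$ (rather than merely radical as morphisms in $\per H$), and then cleanly applying Nakayama in the block-matrix setting of a strictly perfect dg $H$-module -- since the rest of the argument reduces to diagram chasing through \eqref{eq:zm} and \eqref{eq:fm}.
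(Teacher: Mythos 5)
Your proposal is essentially the paper's own proof: the functor is assembled from $\fF$, $\fG$, $\fMM$ via the two commuting squares, fullness and density descend from those of $\fF$ and $\fG$, and property (1) is proved by lifting an isomorphism and its inverse, observing that the discrepancies $\widetilde{\psi}\widetilde{\varphi}-\id$ and $\widetilde{\varphi}\widetilde{\psi}-\id$ land in $\ker\fM$ (so $h$-component zero by the equivalence $\fG$, $g$-component nilpotent since $\ker\fF$ is generated by radical morphisms), with (2) following formally from (1) and density. Your invocation of Nakayama in the block-matrix setting is just a different name for the nilpotence/radical argument the paper uses, and your final worry — that the entries of $\ker\fF$-morphisms lie in $\rad(H)$ — is exactly what Remark~\ref{rmk:geo} supplies, since the spanning case-(b) morphisms are built from $f^H_{(i,j_1),(i,j_2)}$ with $j_1<j_2$.
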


\begin{proof}
    Fisrt, $(\fF(g),\fG(h))$ is a morphism in $\repb(S)$ due to the commutative diagrams~\eqref{eq:comm}, \eqref{eq:zm} and \eqref{eq:fm}. Then it is straightforward to show that $\fM$ is a functor. Since by Lemmas~\ref{lem:full} and \ref{lem:equiv}, the functors $\fF$ and $\fG$ are full and dense, so is $\fM$ by the construction.
	
    To show (1), for any object $T$ in $\Tri\sg$ and any morphism $(g,h)\in\End_{\Tri\sg}(T)$, if $\fM((g,h))=(\fF(g),\fG(h))=0$, then $h=0$ since $\fG$ is an equivalence, and $g$ is nilpotent since the kernel of $\fF$ is the ideal spanned by the morphisms induced by the interior oriented intersections. So $(g,h)$ is nilpotent. Assume that $\fM(T_1)\cong \fM(T_2)$ in $\repb(S)$. Then we have mutually inverse isomorphism $\varphi: \fM(T_1)\to \fM(T_2)$ and $\psi: \fM(T_2)\to \fM(T_1)$. Since $\fM$ is full, there exist $\widetilde{\varphi}:T_1\to T_2$ and $\widetilde{\psi}:T_2\to T_1$ such that $\varphi=\fM(\widetilde{\varphi})$ and $\psi=\fM(\widetilde{\psi})$. Since $\fM(\id_{T_1}-\widetilde{\psi}\widetilde{\varphi})=0$, the endomorphism $\id_{T_1}-\widetilde{\psi}\widetilde{\varphi}$ is nilpotent. So $\widetilde{\psi}\widetilde{\varphi}$ is an isomorphism. Similarly, we have $\widetilde{\varphi}\widetilde{\psi}$ is also an isomorphism. Hence we have $T_1\cong T_2$. (2) follows from (1) and the denseness of $\fM$.
\end{proof}

%=========================================================
\subsection{Classification of indecomposable objects in the perfect derived category}\label{subsec:class}\
%=========================================================

\def\tw{\per^s}
\def\glue{\operatorname{Tri}^s}
\def\cares{\widetilde{\operatorname{OC}}_{\text{l.s.}}^s(\gms)}

Let $(Y^\bullet,V^\bullet,\theta)$ be an object of $\Tri\sg$ such that $Y^\bullet$ and $V^\bullet$ are minimal strictly perfect (dg $H$-module and dg $\overline{\sg}$-module, respectively). Then we have $$Y^\bullet\otimes^{\mathbf{L}}_H\overline{H}=Y^\bullet\otimes_H\overline{H}\ \text{and}\ V^\bullet\otimes^{\mathbf{L}}_{\overline{\sg}}\overline{H}=V^\bullet\otimes_{\overline{\sg}}\overline{H}.$$ Since $\overline{\sg}$ and $\overline{H}$ are semisimple, the differentials of $V^\bullet$, $Y^\bullet\otimes_H \overline{H}$ and $V^\bullet\otimes_{\overline{\sg}} \overline{H}$ are zero. Hence the isomorphism $\theta:Y^\bullet\otimes_H \overline{H}\to V^\bullet\otimes_{\overline{\sg}} \overline{H}$ in $\per\overline{H}$ is indeed an isomorphism of graded $\overline{H}$-modules. Let $\theta^{-1}$ be the inverse of $\theta$ and take
$$\widetilde{\theta}=\theta^{-1}\circ \iota_{\overline{\sg}}^{\overline{H}}: V^\bullet \to Y^\bullet\otimes_H \overline{H},$$
where $\iota_{\overline{\sg}}^{\overline{H}}: V^\bullet \to V^\bullet\otimes_{\overline{\sg}}{\overline{H}}$ stands for the canonical map given by $-\otimes_{\overline{\sg}} 1_{\overline{H}}$ (similar for below). We define an object $\fK(Y^\bullet,V^\bullet,\theta)$ in $\C(\sg)$ associate with $(Y^\bullet,V^\bullet,\theta)$ to be
\begin{equation}\label{eq:defK}
    \fK(Y^\bullet,V^\bullet,\theta)=\operatorname{ker}\left(Y^\bullet\oplus V^\bullet\xrightarrow{\begin{pmatrix}
\iota_H^{\overline{H}}&-\widetilde{\theta}
\end{pmatrix}} Y^\bullet\otimes_H \overline{H}\right).
\end{equation}

Let $\tw\sg$ be the full subcategory of $\per\sg$ consisting of minimal strictly perfect dg $\sg$-modules, and let $\glue\sg$ be the full subcategory of $\Tri(\sg)$ consisting of $(Y^\bullet,V^\bullet,\theta)$ satisfying that $Y^\bullet$ and $V^\bullet$ are minimal strictly perfect and $\fK(Y^\bullet,V^\bullet,\theta)$ is strictly perfect.

\begin{theorem}[{Graded version of \cite[Theorem 4.2]{BD}}]\label{thm:BD}
The functor
\begin{gather}\label{eq:BD}\boxed{
\begin{array}{rccc}
\fE:&\tw \sg&\to&\glue\sg\\
&X^\bullet&\mapsto&(X^\bullet\otimes^{\mathbf{L}}_\sg H,X^\bullet\otimes^{\mathbf{L}}_\sg \overline{\sg},\theta_{X^\bullet})
\end{array}
}\end{gather}
where $\theta_{X^\bullet}\colon(X^\bullet\otimes^{\mathbf{L}}_\sg H)\otimes^{\mathbf{L}}_{H}\overline{H}\to (X^\bullet\otimes^{\mathbf{L}}_\sg\overline{\sg})\otimes^{\mathbf{L}}_{\overline{\sg}}\overline{H}$ is the canonical isomorphism in the category $\per\overline{H}$, has the following properties.
\begin{enumerate}
    \item $\fE$ is dense.
	\item $\fE(X^\bullet_1)\cong \fE(X^\bullet_2)$ if and only if $X^\bullet_1\cong X^\bullet_2$.
	\item $\fE(X^\bullet)$ is indecomposable if and only if $X^\bullet$ is indecomposable.
\end{enumerate}
In the case that $\sg$ is non-positive, $\fE$ is a functor from $\per\sg$ to $\Tri\sg$.
\end{theorem}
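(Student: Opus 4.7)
The plan is to generalize the strategy of Burban--Drozd \cite{BD} from the abelian setting to the graded dg setting by constructing an essential inverse (on objects) of $\fE$ via the kernel construction $\fK$ of \eqref{eq:defK}, and then deducing all three claims from the two natural isomorphisms $\fK\circ\fE\cong\id$ and $\fE\circ\fK\cong\id$. The very last sentence of the theorem would follow from Lemma~\ref{lem:KY}: when $\sg$ is non-positive, every object of $\per\sg$ is isomorphic to an object of $\tw\sg$, and a direct check shows the resulting triple always lies in $\glue\sg$.

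The starting observation is the pullback square of graded algebras
\[
\begin{tikzcd}[column sep=28,row sep=24]
\sg \ar[r] \ar[d] & H \ar[d] \\
\ec{\sg} \ar[r] & \ec{H}
\end{tikzcd}
\]
which yields a short exact sequence $0\to \sg \to H\oplus \ec{\sg} \to \ec{H}\to 0$ of graded $\sg$-bimodules. Since every direct summand of a minimal strictly perfect $X^\bullet\in\tw\sg$ has the form $e_{a_l}\sg[b_l]$, tensoring with $X^\bullet$ over $\sg$ preserves this exactness, and one checks directly that the induced differential on the kernel agrees with $d_{X^\bullet}$. This produces the natural identification
\[
X^\bullet \;\cong\; \ker\!\bigl(X^\bullet\otimes_\sg H\oplus X^\bullet\otimes_\sg \ec{\sg}\longrightarrow X^\bullet\otimes_\sg \ec{H}\bigr)\;=\;\fK(\fE(X^\bullet)),
\]
yielding $\fK\circ\fE\cong\id_{\tw\sg}$.

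In the converse direction, for $(Y^\bullet,V^\bullet,\theta)\in\glue\sg$ I set $X^\bullet:=\fK(Y^\bullet,V^\bullet,\theta)$, which lies in $\tw\sg$ by the defining condition of $\glue\sg$. The two projections from the kernel induce, after base change, canonical morphisms $X^\bullet\otimes_\sg H\to Y^\bullet$ and $X^\bullet\otimes_\sg \ec{\sg}\to V^\bullet$ that intertwine $\theta_{X^\bullet}$ with $\theta$. I would verify these are isomorphisms by counting indecomposable summands on both sides: reducing modulo the radical and using the semisimplicity of $\ec{\sg}$ and $\ec{H}$ shows both maps are isomorphisms after tensoring with $\ec{H}$, and a Nakayama-type lifting together with minimality of $X^\bullet$ promotes this to isomorphisms over $H$ and $\ec{\sg}$. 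Compatibility with $\theta$ is then built into the construction, giving $\fE(\fK(T))\cong T$ in $\glue\sg$. Properties (1)--(3) follow formally: density (1) is immediate; an isomorphism $\fE(X_1^\bullet)\cong\fE(X_2^\bullet)$ yields $X_1^\bullet\cong\fK\fE(X_1^\bullet)\cong\fK\fE(X_2^\bullet)\cong X_2^\bullet$, proving (2); and any decomposition $\fE(X^\bullet)=T_1\oplus T_2$ transports via the additive functor $\fK$ to $X^\bullet\cong\fK(T_1)\oplus\fK(T_2)$ and vice versa, proving (3).

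The main obstacle is the second reconstruction step $\fE\circ\fK\cong\id$: one must show that the projections out of the kernel become isomorphisms after base change. In BD's ungraded module setting this is routine Nakayama plus projective-cover bookkeeping, but in the present graded dg setting one must additionally check that the gluing data $\theta$ respects differentials and that the hypothesis ``$\fK(Y^\bullet,V^\bullet,\theta)$ is strictly perfect'' (encoded in the definition of $\glue\sg$) is the correct input for these arguments to close up. The detailed verification is deferred to Appendix~\ref{app:pfBD}.
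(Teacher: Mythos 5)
Your proposal is correct and follows essentially the same route as the paper's Appendix~\ref{app:pfBD}: both hinge on the kernel construction $\fK$ together with the short exact sequence $0\to\sg\to H\oplus\ec{\sg}\to\ec{H}\to0$, the observation that this stays exact after tensoring with a strictly perfect $X^\bullet$, the semisimplicity of $\ec{\sg}$ and $\ec{H}$ to control the base-changed data, and Lemma~\ref{lem:KY} for the final non-positive claim. The one stylistic difference is in verifying $\fE\circ\fK\cong\id$: you propose building natural comparison maps $X^\bullet\otimes_\sg H\to Y^\bullet$ and $X^\bullet\otimes_\sg\ec{\sg}\to V^\bullet$ and promoting their base-changed isomorphisms by a Nakayama/minimality argument, whereas the paper instead normalizes the triple (using semisimplicity of $\ec{H}$ to reduce to $\theta=\id$), reads off the underlying graded modules explicitly from the exact sequence, and then checks that the differential of $Y^\bullet$ is determined by that of $\fK(Y^\bullet,V^\bullet,\theta)$; both work and do comparable labor, with the paper's version being more explicit and yours more formal. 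You also leave implicit the first step the paper records -- that $\fK$ is well-defined on isomorphism classes of triples (not just on objects), which is needed before any of the $\fE\circ\fK\cong\id$ and $\fK\circ\fE\cong\id$ statements make sense; this is a small but real omission in your sketch, though it is routine once spelled out.
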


%Using Lemma~\ref{lem:KY},
The proof of \cite[Theorem 4.2]{BD} also works for our setting. We contain a proof in Appendix~\ref{app:pfBD} for the completeness of the paper. We also refer to \cite{CL} for another approach to \cite[Theorem 4.2]{BD} in a more general framework.

Recall from Definition~\ref{def:cwls} that $\care$ is the set of graded admissible curves with local system. Let $\cares$ be the subset of $\care$ consisting of $(\wg,N)$ satisfying $R(\bione(\wg,N))\in\fM(\glue\sg)$. Denote by $\Ind\mathcal{C}$ the set of isoclasses of indecomposable objects in a category $\mathcal{C}$. Combining Corollary~\ref{cor:Deng}, Theorem~\ref{thm:tribu} and Theorem~\ref{thm:BD}, we have the following main result of Part~\ref{part:1}.

\begin{theorem}\label{thm1}
There is a bijection
    \begin{equation}\label{eq:glue}\boxed{
    \widetilde{X}:\cares\to\Ind\tw\sg},
    \end{equation}
    which fits into the following commutative diagram
    \begin{equation}\label{eq:thmcomm2}
    \xymatrix{
    &\cares\ar[ld]_{\widetilde{X}}\ar[rr]^{\bione}_{\eqref{eq:l.s.}}  && \OO\ar[rd]^{R}_{\eqref{eq:rep}}\\
    \tw\sg\ar[rr]^{\fE}_{\eqref{eq:BD}} &&\glue\sg\subseteq\Tri\sg\ar[rr]^{\fM}_{\eqref{eq:tribu}} &&\repb(S)
    }
\end{equation}
In the case that $\sg$ is non-positive, \eqref{eq:glue} becomes a bijection
\begin{equation}\label{eq:care}
    \widetilde{X}:\care\to\Ind\per\sg,
\end{equation}
and \eqref{eq:thmcomm2} becomes a commutative diagram
\begin{equation}\label{eq:thmcomm}
   \xymatrix{
&\care\ar[ld]_{\widetilde{X}}\ar[rr]^{\bione}&&\OO\ar[rd]^{R}\\
\per\sg\ar[rr]^{\fE}&&\Tri\sg\ar[rr]^{\fM}&&\repb(S)
}
\end{equation}
\end{theorem}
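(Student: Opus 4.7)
My plan is to assemble the bijection $\widetilde{X}$ as a composition of the four maps already at our disposal, by tracking how they restrict to the relevant subsets of indecomposables. Concretely:

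\textbf{Step 1: Assemble the bijections.} By Lemma~\ref{lem:bi2}, $\bione\colon\care\to\OO$ is a bijection. By Corollary~\ref{cor:Deng}, $R\colon\OO\to\Ind\repb(S)$ is a bijection (so in fact $R\circ\bione\colon\care\to\Ind\repb(S)$ is a bijection). By Theorem~\ref{thm:tribu}(1)(2), the functor $\fM\colon\Tri\sg\to\repb(S)$ preserves indecomposability and reflects isomorphisms, so it induces an \emph{injection} $\overline{\fM}\colon\Ind\Tri\sg\hookrightarrow\Ind\repb(S)$; restricting to $\glue\sg\subseteq\Tri\sg$ gives an injection $\Ind\glue\sg\hookrightarrow\Ind\repb(S)$ with image $\fM(\Ind\glue\sg)$. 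Finally, by Theorem~\ref{thm:BD}(1)(2)(3), $\fE\colon\tw\sg\to\glue\sg$ is dense, iso-reflecting, and indecomposable-preserving, so $\overline{\fE}\colon\Ind\tw\sg\to\Ind\glue\sg$ is a bijection.

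\textbf{Step 2: Define $\widetilde{X}$ and verify the diagram.} By construction, $\cares$ is exactly the preimage of $\fM(\Ind\glue\sg)$ under $R\circ\bione$. Since $R\circ\bione$ is a bijection from $\care$ onto $\Ind\repb(S)$, its restriction
\[
  R\circ\bione\colon\cares\;\xrightarrow{\;\sim\;}\;\fM(\Ind\glue\sg)
\]
is a bijection onto the image. Combining with the bijection $\fM\circ\fE\colon\Ind\tw\sg\xrightarrow{\sim}\fM(\Ind\glue\sg)$ obtained from Steps~1, we define
\[
  \widetilde{X}\;:=\;(\fM\circ\fE)^{-1}\circ (R\circ\bione)\colon\cares\;\xrightarrow{\;\sim\;}\;\Ind\tw\sg.
\]
The commutativity of diagram~\eqref{eq:thmcomm2} is immediate from this definition.

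\textbf{Step 3: The non-positive case.} Assume $\sg$ is non-positive. By Lemma~\ref{lem:KY}, every object of $\per\sg$ is isomorphic to a minimal strictly perfect dg $\sg$-module, i.e.\ every iso-class of $\per\sg$ has a representative in $\tw\sg$. Hence the inclusion $\tw\sg\hookrightarrow\per\sg$ induces a bijection $\Ind\tw\sg=\Ind\per\sg$. By the last sentence of Theorem~\ref{thm:BD}, $\fE$ extends to a functor $\per\sg\to\Tri\sg$, and density of $\fE\colon\tw\sg\to\glue\sg$ lifts to density of this extension; thus $\Ind\glue\sg=\Ind\Tri\sg$ as subsets of isoclasses. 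Consequently $\fM(\Ind\glue\sg)=\overline{\fM}(\Ind\Tri\sg)=\Ind\repb(S)$ (the last equality by denseness of $\fM$ in Theorem~\ref{thm:tribu}). Since $\cares$ was defined as the preimage of $\fM(\Ind\glue\sg)$ under $R\circ\bione$, we conclude $\cares=\care$, and $\widetilde{X}$ becomes the claimed bijection $\care\to\Ind\per\sg$ making \eqref{eq:thmcomm} commute.

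\textbf{Main obstacle.} The technical heart of the argument is hidden in the inputs, not this assembly: the injectivity/indecomposable-preservation for $\fM$ (which relies on the delicate control of nilpotent endomorphisms coming from the ``crossing'' ideal in $\per H_i$) and the density and iso-reflection for $\fE$ (which is the graded analogue of Burban--Drozd's theorem, proved in Appendix~\ref{app:pfBD}). Assuming these, the only subtle point remaining is the identification $\cares=\care$ in the non-positive case; this hinges on combining Lemma~\ref{lem:KY} with the density of $\fE$ to ensure that every representation in $\Ind\repb(S)$ lifts to $\glue\sg$, which is why the argument collapses cleanly only under the non-positivity hypothesis.
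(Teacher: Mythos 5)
Your proposal is correct in its overall structure and matches the paper's (one-line) proof, which simply cites Corollary~\ref{cor:Deng}, Theorem~\ref{thm:tribu} and Theorem~\ref{thm:BD}. Steps~1 and~2 faithfully unpack what that citation encodes: $R\circ\bione$ is a bijection $\care\to\Ind\repb(S)$, the composite $\fM\circ\fE$ gives a bijection $\Ind\tw\sg\to\fM(\Ind\glue\sg)$, and $\cares$ is by definition the preimage of $\fM(\Ind\glue\sg)$ under $R\circ\bione$ (using Theorem~\ref{thm:tribu}(2) to ensure the witnessing triple is itself indecomposable), so your definition of $\widetilde{X}$ and the commutativity of~\eqref{eq:thmcomm2} are immediate.

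There is a logical slip in Step~3. You claim that density of $\fE\colon\tw\sg\to\glue\sg$ ``lifts to density of this extension'' $\per\sg\to\Tri\sg$, and deduce $\Ind\glue\sg=\Ind\Tri\sg$. But density of the restricted functor only tells you that $\glue\sg$ lies in the essential image of $\fE|_{\tw\sg}$; it is silent about objects of $\Tri\sg$ that are not already in $\glue\sg$, so by itself it cannot imply density of the extension. The correct argument runs in the opposite direction, via the kernel functor $\fK$ (and is exactly what Appendix~\ref{app:pfBD} supplies): given any $(Y^\bullet,V^\bullet,\theta)\in\Tri\sg$, one replaces $Y^\bullet$ and $V^\bullet$ by minimal strictly perfect representatives (possible since $H$ and $\overline{\sg}$ are non-positive along with $\sg$, by Lemma~\ref{lem:KY}); then~\eqref{eq:sp} identifies $|\fK(Y^\bullet,V^\bullet,\theta)|$ as $\bigoplus_l z_l\sg[d_l]$, and non-positivity of $\sg$ forces $\fK(Y^\bullet,V^\bullet,\theta)$ to be (minimal) strictly perfect, so that $(Y^\bullet,V^\bullet,\theta)\cong\fE(\fK(Y^\bullet,V^\bullet,\theta))$ lies in $\glue\sg$ up to isomorphism. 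That is what gives $\Ind\Tri\sg=\Ind\glue\sg$; density of the extended $\fE$ is a consequence of this fact, not its cause. With this substituted, the rest of your Step~3 --- using Lemma~\ref{lem:KY} to identify $\Ind\tw\sg=\Ind\per\sg$ and density of $\fM$ to conclude $\fM(\Ind\glue\sg)=\Ind\repb(S)$, hence $\cares=\care$ --- goes through as written.
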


Although we do not have a topological criterion for a graded admissible curve with local system to be in $\cares$ (and hence we do not give a name for them), there is an example (below) showing that there is some graded admissible curve with local system not in $\cares$ even for the unpunctured case.

\begin{example}
    Let $\sg$ be the graded gentle algebra given by the graded quiver
    \[\xymatrix{
    1\ar@/^/[r]^{a}&2\ar@/^/[l]^{b}
    },\ |a|=|b|=1,\]
    with relations $ab,ba$. Then $\sg$ can be realized, in the way of Section~\ref{subsec:clannish2}, as the subalgebra of $H=H_1\times H_2$, where
    \[H_1=\k\left((1,1)\xrightarrow{a}(1,2)\right)\text{ and }H_2=\k\left((2,1)\xrightarrow{b}(2,2)\right),\]
    associated to the idempotents $e_1=(1,1)+(2,2)$ and $e_2=(1,2)+(2,1)$. The corresponding $\gms$ with full formal closed arc system $\dac=\{\we_1,\we_2\}$ is shown in Figure~\ref{fig:non-grable}. The circle $\gamma$ on $\gms$ surrounding the interior closed marked point (which is gradable), with any grading and any local system $N$, is not in $\cares$.

\begin{figure}[htpb]
    \begin{tikzpicture}[scale=.5]
    \draw[ultra thick](0,0) circle (3.5);
    \draw[thick, red](0,3.5) to (0,0) to (0,-3.5);
    \draw (0,0)\ww;
    \draw[blue,thick](0,0) circle (1.75);
    \draw (0,3.5)\ww (0,-3.5)\ww (3.5,0)[blue]\nn (-3.5,0)[blue]\nn;
    \draw[red] (0,2.5)[right]node{$\we_1$} (0,-2.5)[right]node{$\we_2$};
    \draw[blue] (1.75,0)[right]node{$\wg$};
    \end{tikzpicture}
    \caption{An example of admissible curves not in $\cares$}\label{fig:non-grable}
    \end{figure}

    For instance, if we take the grading $\wg$ of $\gamma$ such that the intersection index between $\wg$ and $\we_1$ is 0, and take $N$ to be the 1-dimensional module given by the irreducible polynomial $x-1$ (cf. Appendix~\ref{subapp:apw}), then the triple $(Y^\bullet,V^\bullet,\theta)=\fM^{-1}\circ R\circ\bione (\wg,N)$ is given by
    \begin{itemize}
        \item $Y^\bullet=\left((1,1)H_1\oplus(1,2)H_1,d_1\right)\oplus \left((2,1)H_2\oplus(2,2)H_2,d_2\right)$, where $d_1$ and $d_2$ are induced by $a$ and $b$, respectively;
        \item $V^\bullet=e_1\overline{\sg}\oplus e_2\overline{\sg}$;
        \item $\theta$ is the identity from $(1,1)\overline{H_1}\oplus(1,2)\overline{H_1}\oplus(2,1)\overline{H_2}\oplus(2,2)\overline{H_2}$ to itself.
    \end{itemize}
    Hence the dg $\sg$-module $\fK(Y^\bullet,V^\bullet,\theta)$ is $$\left(e_1\sg\oplus e_2\sg,\begin{pmatrix}
        0&d_1\\d_2&0
    \end{pmatrix}\right),$$
    which is not strictly perfect. Moreover, $\fK(Y^\bullet,V^\bullet,\theta)$ is null-homotopic and hence is isomorphic to zero in $\per\sg$.

\end{example}

The following class of indecomposable objects and morphisms between them will be studied in Part~\ref{part:2}, using another geometric model.

\begin{definition}\label{def:xian}
    An \emph{\xian} object in $\per\sg$ is of the form $\widetilde{X}(\wg,N)$ for $\wg\in\wOA(\gms)$ and $\dim N=1$. Denote by $\goodind$ the set of \xian objects.
\end{definition}

We will show that the arc objects are indeed in $\tw\sg$ (see Theorem~\ref{thm2} with the identify~\eqref{eq:tag to ls}), i.e. $\goodind\subset\tw\sg$.

\begin{remark}
    For $(\wg,N)\in\care$, if $\wg\in\Amm\cup\Amp$, then $\dim N=1$. In particular, we have
    $$\begin{array}{cl}
        \goodind= & \{\widetilde{X}(\wg,N)\mid \wg\in\Amm\cup\Amp\} \\
         & \cup\{\widetilde{X}(\wg,N)\mid \wg\in\App,\ \dim N=1\}.
    \end{array}$$
\end{remark}

\newpage

%=========================================================
\part{Classification of morphisms via surfaces with binary}\label{part:2}
%=========================================================

%=========================================================
\section{Surfaces with binary}\label{sec:AGM}
%=========================================================
In this section, we introduce binaries as an alternative realization of $\ZZ_2$-symmetry at punctures,
comparing to tagging.

As in Part~\ref{part:1}, we fix a GMSp $\gms=(\surf,\M,\Y,\P,\lambda)$ with a full formal closed arc system $\dac$.
%$\datum$ be the associated graded skew-gentle datum and
Let $\sg$ be the associated graded skew-gentle algebra. Recall that
\begin{itemize}
    \item $\OME$ parameters the non-boundary edges of unpunctured $\dac$-polygons,
    \item $\OMEs$ parameters the arcs in $\dac$, and
    \item $\fc{\OME}\subset \OMEs$ parameters the edges of punctured $\dac$-polygons.
\end{itemize}

The geometric model using binaries will interpret $\OMEpm$ and $\OMEx$, and classify arc objects (see Definition~\ref{def:xian}) in $\per\sg$ by graded unknotted arcs (compared with Theorem~\ref{thm1}).

%=========================================================
\subsection{Graded marked surfaces with binary}\label{subsec:binary}
%=========================================================

\begin{construction}
We construct a \emph{graded marked surface with binary} (=GMSb) $\gmsx$,
together with an inherited full formal closed arc system $\ac^\ast\x$ as follows:
\begin{itemize}
\item replace each puncture $P\in\P$ by a boundary component $\Vot_P$,
called a \emph{binary}, with one open marked point $m_P$ and one closed marked point $y_P$ on it, and
\item decompose any graded arc $(i,j)$ in $\fc{\OME}$ into two graded arcs $(i,j^+),(i,j^-)\in\OMEx$ (which inherit the grading from $(i,j)$) as shown in Figure~\ref{fig:split}.
\end{itemize}
Denote by $\vot$\footnote{ \footnotesize{We will call this notation bios.} } the set of all binaries $\Vot_P$, $P\in\P$.
\end{construction}

\begin{figure}[htpb]
	\begin{tikzpicture}[scale=.4]\clip ((6,-5)rectangle(-20,6.5);
	\draw[ultra thick]plot [smooth,tension=1] coordinates {(3,-4.5) (-90:4) (-3,-4.5)};
	\draw[ultra thick,fill=gray!10] (90:2) ellipse (1.5) node {$\Vot_P$};
	\draw[very thick,blue] (90:.5)\nn node[below]{$m_P$};
	\draw[red](3,0)node[right]{$(i,j^-)$}(-3,0)node[left]{$(i,j^+)$};
    \draw[red,very thick] (90:-4) .. controls +(45:5) and +(30:8) .. (90:3.5);
    \draw[red,very thick] (90:-4) .. controls +(135:5) and +(150:8) .. (90:3.5);
    \draw[] (0,3.5)node[red,above]{$y_P$}\ww;
    \draw[] (-90:4)\ww;
    \draw[] (-9,0) node {\Large{$\Longrightarrow$}};
    \begin{scope}[shift={(-15,1)}]
       	\draw[ultra thick]plot [smooth,tension=1] coordinates {(3,-4.5) (-90:4) (-3,-4.5)};
    	\draw[ultra thick] (90:1) \nn;
    	\draw[red] (0,3.5)node[above]{$(i,j)$};
        \draw[red,very thick] (90:-4) .. controls +(45:5) and +(0:4) .. (0,3.5);
        \draw[red,very thick] (90:-4) .. controls +(135:5) and +(180:4) .. (0,3.5);
        \draw[] (90:1)node[below]{$P$};
       	\draw[] (-90:4)\ww;
    \end{scope}	
    \end{tikzpicture}
	\caption{From $\dac$ to $\dac\x$}\label{fig:split}
\end{figure}

\begin{remark}
By the construction, $\gmsx$ can be regarded as a graded marked surface without punctures $(\surf\x,\M\x,\Y\x,\lambda)$, with $\M\x=\M\cup\MP$ and $\Y\x=\Y\cup\YP$, where $\MP=\{m_P\mid P\in\P \}$ and $\YP=\{y_P\mid P\in\P \}.$ The set $\dac\x$ is a full formal closed arc system of $\gmsx$ in the sense of Definition~\ref{def:system}. So there is a skew-gentle datum associated to $\dac\x$, which gives a gentle algebra. However, we are only interested in how to realize $\sg$ from $\dac\x$.
\end{remark}

There is a natural bijection from
the set $\DBX$ of $\dac$-polygons to the set $\DBXx$ of $\dac\x$-polygons. We denote by $\PP\y$ the $\dac\x$-polygon corresponding to $\PP\in\DBX$. Recall that (see Section~\ref{subsec:sys-to-alg}) $$\DBX=\DBY\cup\DBZ,$$
where $\DBY=\{\PP_1,\cdots,\PP_t\}$ consists of the unpunctured $\dac$-polygons and $\DBZ=\{\Pp_P\mid P\in\P\}$ consists of the punctured $\dac$-polygons.

\begin{definition}\label{def:poly}
We denote
$$\DBYx=\{\PP_1\y,\cdots,\PP_t\y\},\ \DBZx=\{\Pp_P\y\mid P\in\PP\},$$
and call $\PP_i\y$ \emph{unbinaried $\dac\x$-polygons} and call $\Pp_P\y$ \emph{binaried $\dac\x$-polygons}. Arc segments in unbinaried (resp. binaried) $\dac\x$-polygons are called \emph{unbinaried} (resp. \emph{binaried}) arc segments.
\end{definition}

Note that the set of edges of $\PP_i$ is $\{(i,0),(i,1),\cdots,(i,m_i)\}$ while the set of edges of $\PP_i\y$ is $\cup_{1\leq j\leq m_i}\OMEpm(i,j)\cup\{(i,0)\}$; $\Pp_P$ is a once-punctured monogon while $\Pp_P\y$ is a triangle with binary, whose edges in the clockwise order are $(i,j^-)$, $(i,j^+)$ and the loop $\Vot_P$ based at $y_P$ (cf. Figure~\ref{fig:split}).

Hence, we have the following identifications:
\begin{itemize}
\item $\OMEpm$ parameters non-boundary edges of unbinaried $\dac\x$-polygons,
\item $\OMEx$ parameters arcs in $\ac^\ast\x$, and
\item $\fc{\OMEpm}$ parameters non-boundary edges of binaried $\dac\x$-polygons.
\end{itemize}

\begin{example}

The GMSb model for the Example~\ref{ex:datum} is shown in Figure~\ref{fig:gmsb}.

\begin{figure}[htpb]
\begin{tikzpicture}[scale=1,font=\tiny]
\draw[ultra thick](0,0) circle (3.5);
\draw[ultra thick](0,0) circle (1);
\draw[red,thick](-1,0)to[out=-100,in=160](0,-2)to[out=20,in=-80](1,0);
\draw[red,thick](0,-2)to[out=0,in=-90](2.8,.8)to[out=90,in=-10](0,3.5);
\draw[red,thick](0,3.5)to[out=-5,in=90](3.3,.25)to[out=-90,in=0](0,-3)to[out=180,in=-90](-3.3,.25)to[out=90,in=-175](0,3.5);
\draw[ultra thick](1.25,2.25) circle (.35);
\draw[ultra thick](-1.25,2.25) circle (.35);
\draw[blue](-1,2.5)\nn(-.75,2.6)node{$m_{p_2}$}(1,2.5)\nn(.75,2.65)node{$m_{p_1}$};\draw[red,thick](0,3.5)to[out=-150,in=150](-1.9,2)to[out=-30,in=180](-1.5,2) (0,3.5)to[out=-120,in=-60](-1.5,1.6)to[out=120,in=-90](-1.5,2);
\draw[red,thick](0,3.5)to[out=-30,in=30](1.9,2)to[out=-150,in=0](1.5,2) (0,3.5)to[out=-60,in=-120](1.5,1.6)to[out=60,in=-90](1.5,2);
\draw[font=\tiny](0,.8)node{$(1,0)$}(0,-.8)node{$(2,0)$}(0,-1.5)node{$D^\ast_2$}
    (-2,0)node{$D^\ast_1$}(0,-3.3)node{$D^\ast_3$}
    (-.5,3)node{$E^\ast_{p_2}$}(.5,3)node{$E^\ast_{p_1}$};
\draw[font=\tiny]
    (-1.1,-1.2)node[rotate=-68][red]{$(1,7)\simeq(2,1)$}
    (1.1,-1.2)node[red,rotate=68]{$(2,2)\simeq(1,1)$}
    (2.4,.5)node[red,rotate=83]{$(1,2)\simeq(1,6)$}
    (0,-2.7)node[red]{$(1,5)\simeq(3,1)$}
    (0,-3.7)node{$(3,0)$}(1.25,1.2)node[red]{$(1,3^-)$}(2.1,1.8)node[red]{$(1,3^+)$}
    (-1.25,1.2)node[red]{$(1,4^+)$}(-2.2,1.8)node[red]{$(1,4^-)$};
\draw[red](-1.5,2)\ww(-1.25,2.1)node{$y_{p_2}$}(1.5,2)\ww(1.25,2.1)node{$y_{p_1}$};
\draw(-1,0)\ww (1,0)\ww (0,-2)\ww (0,3.5)\ww (0,1)[blue]\nn (0,-1)[blue]\nn (0,-3.5)[blue]\nn;	
\end{tikzpicture}
\caption{An example of GMSb model}
\label{fig:gmsb}
\end{figure}
\end{example}

\def\shk{F\x}

%=========================================================
\subsection{Unknotted arcs and \texorpdfstring{$\ZZ_2$-}\ actions via Dehn twists}
%=========================================================

For any graded arc $\ws$ on $\gmsx$, we denote by $\shk(\ws)$ the graded arc on $\gms$ obtained from $\ws$ by shrinking each binary $\Vot_P$ back to the puncture $P$. For any graded open arc $\ws$ on $\gmsx$, the endpoints of $\shk(\ws)$ are in $\M\cup\P$. So $\shk(\ws)$ is a graded open arc on $\gms$.

\begin{definition}
A graded open arc $\ws$ on $\gmsx$ is called \emph{admissible} provided that $\shk(\ws)$ is admissible on $\gms$ (i.e. $\shk(\ws)\in\wOA(\gms)$). Denote by $\wOA(\gmsx)$ the set of graded admissible arcs on $\gmsx$.
\end{definition}

Figure~\ref{fig:knot} shows some non-admissible cases for a GMSb, where in the first three cases, $\shk(\ws)$ is homotopic to a point, while in the last three cases, $\shk(\ws)$ cuts out a once-punctured monogon by one of its self-intersections (i.e., does not satisfy condition \ref{itm:A1} in Definition~\ref{def:ad}).

\begin{figure}[htpb]
\begin{tikzpicture}[xscale=1.2,yscale=1.6,rotate=180]
	\draw[blue,thick] (0,-.76)\nn[out=-10,in=90]to(.8,-1.2)[out=-90,in=0]to(0,-2)node[above]{$\ws$};
	\draw[blue,thick] (0,-.76)[out=190,in=90]to(-.8,-1.2)[out=-90,in=180]to(0,-2);
\end{tikzpicture}\quad
\begin{tikzpicture}[xscale=1.2,yscale=1.6,rotate=180]
	\draw[ultra thick,fill=gray!11] (0,-.93) ellipse (.27 and 0.17);
	\draw[red] (0,-1.1)\ww;
	\draw[thick,blue] (0,-.76)\nn;
	\draw[blue,thick] (0,-.76)[out=20,in=90]to(.8,-1.2)[out=-90,in=0]to(0,-2)node[above]{$\ws$};
	\draw[blue,thick] (0,-.76)[out=160,in=90]to(-.8,-1.2)[out=-90,in=180]to(0,-2);
\end{tikzpicture}\quad
\begin{tikzpicture}[xscale=1.2,yscale=1.6,rotate=180]
	\draw[ultra thick,fill=gray!11] (0,-.93) ellipse (.27 and 0.17);
	\draw[red] (0,-1.1)\ww;
	\draw[thick,blue] (0,-.76)\nn;
	\draw[blue,thick] (0,-.5)[out=0,in=90]to(.8,-1.2)[out=-90,in=0]to(0,-2)node[above]{$\ws$};
	\draw[blue,thick] (0,-.76)[out=160,in=90]to(-.8,-1.2)[out=-90,in=180]to(0,-2);
	\draw[blue,thick] (0,-.5)[out=180,in=90]to(-.9,-1.2)[out=-90,in=180]to(0,-1.8);
	\draw[blue,thick] (0,-.76)[out=20,in=90]to(.6,-1.2)[out=-90,in=0]to(0,-1.8);
	\draw[blue,thick] (0,-1.95)node{$\cdot$} (0,-1.9)node{$\cdot$} (0,-1.85)node{$\cdot$};
\end{tikzpicture}\quad
\begin{tikzpicture}[xscale=1.2,yscale=1.6,rotate=180]
	\draw[ultra thick,fill=gray!11] (0,-.93) ellipse (.27 and 0.17);
	\draw[thick,blue] (0,-.76)\nn;
	\draw[blue,thick] (0,-.5)[out=-10,in=90]to(.8,-1.2)[out=-90,in=0]to(0,-2)node[above]{$\ws$};
	\draw[blue,thick] (0,-.5)[out=190,in=90]to(-.8,-1.2)[out=-90,in=180]to(0,-2);
	\draw[red] (0,-1.1)\ww;
	\draw[thick,blue] (0,-.5)\nn;
\end{tikzpicture}\quad
\begin{tikzpicture}[xscale=1.2,yscale=1.6,rotate=180]
	\draw[ultra thick,fill=gray!11] (0,-.93) ellipse (.27 and 0.17);
	\draw[thick,blue] (0,-.76)\nn;
	\draw[blue,thick] (-.5,-.5)[out=10,in=90]to(.8,-1.2)[out=-90,in=0]to(0,-2)node[above]{$\ws$};
	\draw[blue,thick] (.5,-.5)[out=170,in=90]to(-.8,-1.2)[out=-90,in=180]to(0,-2);
	\draw[red] (0,-1.1)\ww;
\end{tikzpicture}\quad
\begin{tikzpicture}[xscale=1.2,yscale=1.6,rotate=180]
	\draw[ultra thick,fill=gray!11] (0,-1.43) ellipse (.27 and 0.17);
	\draw[red] (0,-1.6)\ww;
	\draw[thick,blue] (0,-1.26)\nn;
	\draw[ultra thick,fill=gray!11] (0,-.93) ellipse (.27 and 0.17);
	\draw[red] (0,-1.1)\ww;
	\draw[thick,blue] (0,-.76)\nn;
	\draw[blue,thick] (0,-.76)[out=20,in=90]to(.8,-1.2)[out=-90,in=0]to(0,-2)node[above]{$\ws$};
	\draw[blue,thick] (0,-.76)[out=160,in=90]to(-.8,-1.2)[out=-90,in=180]to(0,-2);
\end{tikzpicture}
\caption{Some cases of non-admissible arcs on $\gmsx$}\label{fig:knot}
\end{figure}

Let $\Dfang$ be the subgroup of the usual mapping class group of $\gmsx$ generated by the squares of Dehn twists $\mathrm{D}_{\Vot}$ along any binary $\Vot\in\vot$.
The binary mapping class group $\MCG(\gmsx)$ of $\gmsx$ is the quotient of the usual one by $\Dfang$.

\begin{definition}\label{defn:orbit}
    The \emph{$\Dfang$-orbit} $\Dfang\cdot\ws$ of a graded admissible arc $\ws\in\wOA(\gmsx)$ consists of the graded admissible arcs which are obtained from $\ws$ by actions of $\Dfang$ on the ends (which are in binaries) separately.
\end{definition}

See the cyan arcs in the two pictures of Figure~\ref{fig:nomaps}, which are in the same $\Dfang$-orbit.

\begin{figure}[htpb]
	\begin{tikzpicture}[scale=.35]
	\draw[ultra thick]plot [smooth,tension=1] coordinates {(3,-4.5) (-90:4) (-3,-4.5)};
	\draw[ultra thick,fill=gray!10] (90:2) ellipse (1.5) node {$\Vot_P$};
    \draw[red] (90:-4) .. controls +(45:5) and +(30:4) .. (90:3.5);
    \draw[red] (90:-4) .. controls +(135:5) and +(150:4) .. (90:3.5);
    \draw[blue, thick] (90:8) .. controls +(-45:4) and +(-30:7) .. (90:.5);
    \draw[cyan, ultra thick] (90:8) .. controls +(225:4) and +(-150:7) .. (90:.5);
    \draw[Green] (90:8)\nn;
	\draw[blue, thick] (90:.5)\nn (90:.25) node[below]{$m_P$} (0,3.5)node[white,above]{$y_P$}\ww;
	\draw[red] (-90:4)\ww ;
    \begin{scope}[shift={(13,0)}]
	\draw[ultra thick]plot [smooth,tension=1] coordinates {(3,-4.5) (-90:4) (-3,-4.5)};
	\draw[ultra thick,fill=gray!10] (90:2) ellipse (1.5) node {$\Vot_P$};
    \draw[red] (90:-4) .. controls +(45:5) and +(30:4) .. (90:3.5);
    \draw[red] (90:-4) .. controls +(135:5) and +(150:4) .. (90:3.5);
    \draw[blue, thick] (90:8) .. controls +(-45:5) and +(-30:7) .. (90:.5);
    \draw[cyan, ultra thick] (90:8)
        .. controls +(-30:7) and +(-30:9) .. (0,-1)
        .. controls +(150:5) and +(160:3) .. (0,4.5)
        .. controls +(-20:4) and +(-15:2) .. (0,.5);
    \draw[Green] (90:8)\nn;
	\draw[very thick,blue] (90:.5)\nn  (0,3.5)\ww;
	\draw[red] (-90:4)\ww ;
    \end{scope}	
    \end{tikzpicture}
\caption{$\Dehn{\Vot_P}^2$-action on the cyan arc}\label{fig:nomaps}
\end{figure}

\begin{lemma}\label{lem:minimal}
    For any graded admissible arc $\ws\in\wOA(\gmsx)$, the following are equivalent:
	\begin{enumerate}
		\item $|\overrightarrow{\cap}(\ws,\we)|\leq|\overrightarrow{\cap}(\ws',\we)|$ for any $\ws'\in\Dfang\cdot\ws$ and any $\we\in\dac\x$;
		\item $\ws$ does not have a segment in the situation shown in Figure~\ref{fig:non-per}.
		\begin{figure}[htpb]
			\begin{tikzpicture}[scale=2]
			\draw[ultra thick,fill=gray!11] (0,1.2) circle (.3) node {$\Vot_P$};
			\draw[ultra thick] (-1,2)to(1,2);
			\draw[thick,blue] (0,1.5)node{$\bullet$};
			\draw[red, thick]plot [smooth, tension=1] coordinates {(0,2) (-.4,1.2) (0,0.9)};
			\draw[red, thick]plot [smooth, tension=1] coordinates {(0,2) (.4,1.2) (0,0.9)};
			\draw[blue,thick]plot [smooth, tension=1] coordinates {(-.28,1.6) (-.6,1.1) (0,.7) (.6,1.1) (.28,1.6)};
			\draw[blue, thick] (0,.7)node[below]{$\ws$};
			\draw[red] (0,0.9)\ww  (0,2)\ww ;
			\end{tikzpicture}
			\caption{non-minimal segment around a binary}
			\label{fig:non-per}
		\end{figure}
	\end{enumerate}
    Moreover, each $\Dfang$-orbit of graded admissible arcs contains exactly one arc satisfying the above conditions.
\end{lemma}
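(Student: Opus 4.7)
The plan is to argue both equivalence and the existence/uniqueness parts by tracking how the intersection numbers $|\overrightarrow{\cap}(\ws,\we)|$ for $\we\in\dac\x$ change when we apply a generator $\Dehn{\Vot_P}^{\pm 2}$ of $\Dfang$ to the endpoint(s) of $\ws$ lying on the binary $\Vot_P$. The key local observation is that the two arcs $(i,j^+),(i,j^-)\in\dac\x$ flanking $\Vot_P$ (see Figure~\ref{fig:split}) together bound the binaried polygon $\Pp_P\y$, so each full winding of a segment of $\ws$ around $\Vot_P$ contributes exactly one additional transverse intersection with each of $(i,j^+)$ and $(i,j^-)$.

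To prove (2)$\Rightarrow$(1), suppose $\ws$ has no winding segment as in Figure~\ref{fig:non-per}, and let $\ws'=\Dehn{\Vot_P}^{2k}\cdot\ws$ for some $P\in\P$ and some $k\neq 0$ (or a product of such twists at various binaries; by independence at different binaries it suffices to treat one at a time). Put $\ws'$ in minimal position with $\dac\x$. One checks that exactly $|k|$ new winding loops around $\Vot_P$ are forced on $\ws'$, none of which can be removed by isotopy since $\ws$ was already in the configuration opposite to Figure~\ref{fig:non-per}. Hence $|\overrightarrow{\cap}(\ws',(i,j^+))|=|\overrightarrow{\cap}(\ws,(i,j^+))|+2|k|$, establishing (1).

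For (1)$\Rightarrow$(2), argue contrapositively. If $\ws$ contains the segment shown in Figure~\ref{fig:non-per}, then applying $\Dehn{\Vot_P}^{\mp 2}$ (with the sign chosen to unwind) produces an $\ws'\in\Dfang\cdot\ws$ that, after isotopy, has two fewer intersections with $(i,j^+)$ (and with $(i,j^-)$), so $\ws$ is not minimal. Here we must also verify that $\ws'$ remains admissible, which is automatic because $\shk(\ws')=\shk(\ws)$ in $\wOA(\gms)$ and admissibility depends only on the image under $\shk$.

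Finally, for existence and uniqueness of the representative: start from any $\ws\in\Dfang\cdot\ws_0$ in minimal position with $\dac\x$; while some winding segment of the forbidden type survives at a binary $\Vot_P$, apply a power of $\Dehn{\Vot_P}^{\pm 2}$ to unwind one. Each such move strictly decreases $\sum_{\we\in\dac\x}|\overrightarrow{\cap}(\ws,\we)|$ by at least $2$, so the process terminates at an arc satisfying (2). For uniqueness, if $\ws_1$ and $\ws_2$ both satisfy (2) and lie in the same $\Dfang$-orbit, then $\ws_2=\prod_P \Dehn{\Vot_P}^{2 k_P}\cdot\ws_1$ for integers $k_P$; by the equality case of the intersection-counting computation above, any $k_P\neq 0$ would force $\ws_1$ or $\ws_2$ to have a winding segment at $\Vot_P$, contradicting (2). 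The expected main obstacle is the careful bookkeeping that confirms the winding count increases linearly with $|k|$ at every binary independently, in particular ruling out cancellations coming from other self-intersections of $\ws$ near $\Vot_P$; this is where the hypothesis that $\ws$ is admissible (so that $\shk(\ws)$ does not cut out a once-punctured monogon) must be used.
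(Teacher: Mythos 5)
Your proof is correct and takes essentially the same approach as the paper's: the paper dispatches the lemma with the one-line remark that it ``can be checked locally, as the Dehn twist of a binary only involves a neighborhood of the boundary,'' and your argument is a fleshed-out version of exactly that local check — winding counts at each binary change monotonically under $\Dehn{\Vot_P}^{\pm 2}$, minimality forces the configuration of Figure~\ref{fig:non-per} to be absent, and each orbit has a unique unwound representative. The only point you gesture at but do not fully carry out (independence of the winding count from other self-intersections near $\Vot_P$, using admissibility) is also left implicit in the paper, so you are at least as detailed as the original.
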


\begin{proof}
	This lemma can be checked locally, as the Dehn twist of a binary only involves a neighborhood of the boundary, cf. Figure~\ref{fig:nomaps}.
\end{proof}

\begin{definition}
	We call $\ws\in\wOA(\gmsx)$ \emph{unknotted} if it satisfies (one of) the conditions in Lemma~\ref{lem:minimal}. Denote by $\gUC(\gmsx)$ the set of graded unknotted arcs on $\gmsx$.
\end{definition}

By Lemma~\ref{lem:minimal}, there is a natural identification
$$\gUC(\gmsx)\longrightarrow \wOA(\gmsx)/\Dfang,$$
sending a graded unknotted arc to its $\Dfang$-orbit.

\begin{definition}[Oriented intersection on GMSb]\label{def:int of unknotted}
    Let $\ws$ and $\wt$ be two graded unknotted arcs in $\gUC(\gmsx)$. We define the intersection number from $\ws$ to $\wt$ of index $\rho$ is
    $$\oIntd(\ws,\wt):=\min\{|\overrightarrow{\cap}^\rho(\ws',\wt')|\mid\ws'\in\Dfang\cdot\ws,\ \wt'\in\Dfang\cdot\wt\}.$$
    Here, recall from Definition~\ref{defn:int} that $\overrightarrow{\cap}^\rho(\ws',\wt')$ consists of the clockwise angles at intersections from $\ws'$ to $\wt'$ with index $\rho$. More precisely,
in the left picture of Figure~\ref{fig:001},
the pair of clockwise angles counts one and in the right picture there, the clockwise angle counts one.
\end{definition}

\begin{example}\label{ex:unknotted}
   The arcs $\sigma$ and $\tau$ shown in the left picture of Figure~\ref{fig:unk cur} are unknotted. We take the gradings of $\ws$ and $\wt$ such that the intersection indices of $\ws$ and $\wt$ with the arc $\overline{(1,1)}\in\dac\x$ are $0$. Then we have $$\oIntd(\ws,\wt)=\begin{cases}2&\text{if $\rho=0$,}\\1&\text{if $\rho=-5$}\\0&\text{otherwise.}\end{cases}.$$

\begin{figure}[htpb]
\begin{tikzpicture}[scale=1]
    \draw[ultra thick](0,0) circle (3.5);
    \draw[ultra thick](0,0) circle (1);
    \draw[red,thin](-1,0)to[out=-100,in=160](0,-2)to[out=20,in=-80](1,0);
    \draw[red,thin](0,-2)to[out=0,in=-90](2.8,.8)to[out=90,in=-10](0,3.5);
    \draw[red,thin](0,3.5)to[out=-5,in=90](3.3,.25)to[out=-90,in=0](0,-3)to[out=180,in=-90](-3.3,.25)to[out=90,in=-175](0,3.5);
    \draw[ultra thick](1.35,2.15) circle (.25);
    \draw[ultra thick](-1.35,2.15) circle (.25);
    \draw[red,thin](0,3.5)to[out=-150,in=150](-1.9,2)to[out=-30,in=180](-1.53,1.97)
        (0,3.5)to[out=-120,in=-60](-1.5,1.6)to[out=120,in=-90](-1.53,1.97);
    \draw[red,thin](0,3.5)to[out=-30,in=30](1.9,2)to[out=-150,in=0](1.53,1.97)
        (0,3.5)to[out=-60,in=-120](1.5,1.6)to[out=60,in=-90](1.53,1.97);
    \draw[blue,thick]
        (0,-1)to[out=-10,in=-90](2.2,1)to[out=90,in=-15]
        (1.2,2.7)to[out=165,in=15](-1.2,2.7)to[out=-165,in=90]
        (-1.8,2.15)to[out=-90,in=180]
        (-1.5,1.2)to(1.3,1.2) to[out=0,in=-110]
        (1.9,2)to[out=70,in=-15](1.2,2.8)to[out=165,in=15](-1.2,2.8)to[out=-165,in=90]
        (-2.4,.9)to[out=-90,in=-170](0,-1);
    \draw[cyan,thick]
        (0,-1)to[out=0,in=-90](1.8,1.4)
              to[out=90,in=0](1.35,2.5)
              to[out=180,in=35](0,1.8)
        .. controls +(215:3) and +(120:1) ..(-1.2,2.33);
    \draw(1.8,-.5)node[blue]{$\tau$}(1.4,.2)node[cyan]{$\sigma$};
\draw[blue](-1.2,2.33)\nn(1.2,2.33)\nn;
\draw[red](-1.53,1.97)\ww(1.53,1.97)\ww;
\draw(-1,0)\ww (1,0)\ww (0,-2)\ww (0,3.5)\ww (0,1)[blue]\nn (0,-1)[blue]\nn (0,-3.5)[blue]\nn;	
\end{tikzpicture}
\begin{tikzpicture}[scale=1]
        \draw[ultra thick](0,0) circle (3.5);
        \draw[ultra thick](0,0) circle (1);
        \draw[red,thin](-1,0)to[out=-100,in=160](0,-2)to[out=20,in=-80](1,0);
        \draw[red,thin](0,-2)to[out=0,in=-90](2.8,.8)to[out=90,in=-10](0,3.5); \draw[red,thin](0,3.5)to[out=-5,in=90](3.3,.25)to[out=-90,in=0](0,-3)to[out=180,in=-90](-3.3,.25)to[out=90,in=-175](0,3.5);
        \draw[red,thin](0,3.5)to[out=-160,in=135](-1.6,1.9)to[out=-45,in=-110](0,3.5);
        \draw[red,thin](0,3.5)to[out=-20,in=45](1.6,1.9)to[out=-135,in=-70](0,3.5); \draw[blue,thick](0,-1)to[out=-10,in=-90](2.2,1)to[out=90,in=-15](1.2,2.7)to[out=165,in=15](-1.2,2.7)to[out=-165,in=90](-1.8,2.15)to[out=-90,in=165](-1.2,1.5)to[out=-15,in=-165](1.2,1.5)to[out=15,in=-110](1.8,2)to[out=70,in=-15](1.2,2.8)to[out=165,in=15](-1.2,2.8)to[out=-165,in=90](-2.4,.9)to[out=-90,in=-170](0,-1);
        \draw[cyan,thick](0,-1)to[out=0,in=-90](2,1.2)to[out=90,in=-10](1.1,2.5)to[out=170,in=20](-1.35,2.15);
        \draw(-1.8,-.7)node[blue]{$\tilde{\tau}^\times$}(1.6,.5)node[cyan]{$\tilde{\sigma}^\times$}(-1.2,2.3)node[cyan]{$-$};
        \draw(-1,0)\ww (1,0)\ww (0,-2)\ww (0,3.5)\ww  (1.35,2.15)\nn (-1.35,2.15)\nn (0,1)[blue]\nn (0,-1)[blue]\nn (0,-3.5)[blue]\nn;
\end{tikzpicture}
\caption{Example of unknotted arcs v.s. tagged arcs}
\label{fig:unk cur}
\end{figure}
\end{example}

The rest of the paper is devoted to showing the following main result in Part~\ref{part:2}, which is divided into Theorem~\ref{thm2} and Theorem~\ref{thm:int=dim}. Recall that $\goodind$ is the set of (isoclasses of) \xian objects in $\per\sg$ (see Definition~\ref{def:xian}).

\begin{theorem}\label{thm:part2}
There is a bijection
\begin{equation}\label{eq:bi}
\boxed{    \cpy{?}:\gUC(\gmsx)\to\goodind   }
\end{equation}
such that for any graded unknotted arcs $\ws$ and $\wt$, we have
\begin{equation}\label{eq:=}
\boxed{
\oIntd(\ws,\wt)=\dim\Hom_{\per\sg}(\cpy{\ws},\cpy{\wt}[\rho]).}
\end{equation}
\end{theorem}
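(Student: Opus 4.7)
The plan is to split Theorem~\ref{thm:part2} into its two assertions and prove each by building the relevant dg-module data directly from $\gmsx$ and then matching it against the classification already established in Part~\ref{part:1}.

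For the bijection $\cpy{?}\colon\gUC(\gmsx)\to\goodind$, I would first use the shrinking operation $\shk$ from the GMSb to the GMSp. Given $\ws\in\gUC(\gmsx)$, we have $\shk(\ws)\in\wOA(\gms)$ by definition of admissibility. The $\Dfang$-orbit of $\ws$ encodes exactly which side of each binary $\Vot_P$ the endpoint of $\ws$ sits on, and this datum corresponds precisely to a choice of one of the two simple modules of $\k[x]/(x^2-x)$ at each puncture endpoint in Definition~\ref{def:cwls}. By Lemma~\ref{lem:minimal}, each $\Dfang$-orbit contains exactly one unknotted representative, so $\ws\mapsto (\shk(\ws),N_{\ws})$ gives a well-defined bijection onto the subset of $\care$ consisting of pairs $(\wg,N)$ with $\wg\in\wOA(\gms)$ and $\dim_\k N=1$. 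Composing with $\widetilde{X}$ from \eqref{eq:glue} would then produce $\cpy{?}$; surjectivity onto $\goodind$ is immediate from Definition~\ref{def:xian}. The non-trivial check is that the resulting dg module is actually strictly perfect (so that $\widetilde{X}$ applies, rather than just $\fM\circ\fE$), for which I would give an explicit minimal-strictly-perfect construction of $\cpy{\ws}$: label the arcs of $\dac\x$ crossed by $\ws$ in order, take the indecomposable projective of $\sg$ attached to each such arc shifted by the intersection index, and equip the sum with the strictly upper-triangular differential whose entries are the radical elements $f^H_{x_1,x_2}$ (and the idempotents $\varepsilon_i$ at the binaries) prescribed by the arc segments of $\ws$. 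The compatibility of this construction with Construction~\ref{cons:fw} (Theorem~\ref{thm2}) is then a direct polygon-by-polygon comparison using the dictionary of Section~\ref{subsec:sys-to-alg}.

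For the formula $\oIntd(\ws,\wt)=\dim\Hom_{\per\sg}(\cpy{\ws},\cpy{\wt}[\rho])$, I would build a map
\[
\Phi\colon\bigsqcup_\rho\overrightarrow{\cap}^\rho(\ws,\wt)\longrightarrow\bigsqcup_\rho\Hom_{\per\sg}(\cpy{\ws},\cpy{\wt}[\rho])
\]
that assigns to each oriented intersection $q$ of index $\rho$ an explicit morphism $f_q$ of the corresponding degree. For an interior intersection $q\in\surfi$ between a segment of $\ws$ crossing $\we_a\in\dac\x$ and a segment of $\wt$ crossing $\we_b\in\dac\x$, the morphism $f_q$ is the component-wise map built from the basis element $f^H_{x_a,x_b}$ of Remark~\ref{rmk:fH}, inflated by the appropriate unit/zero entries on the remaining projectives, as in the local analysis of Remark~\ref{rmk:geo}; for an intersection at a shared marked point in $\M$, $f_q$ is the obvious boundary-angle morphism; and for the new case of an intersection at a marked point $m_P\in\MP$ on a binary, $f_q$ is the morphism that threads through the idempotent $\varepsilon_i$, reflecting Figure~\ref{fig:001}. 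One then checks by polygon analysis that $d\circ f_q=\pm f_q\circ d$ up to homotopy. The minimality/unknottedness of $\ws,\wt$ and the $\Dfang$-minimum in Definition~\ref{def:int of unknotted} guarantee that the $f_q$ are distinct and nonzero.

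The main obstacle is the spanning and linear independence of $\{f_q\}$ as a basis. Linear independence should follow from a filtration argument on the Hom complex by the rightmost nonzero entry of the matrix representing a morphism, together with the observation that distinct intersections land in distinct filtration strata. Spanning is more delicate: I would reduce to the classification of morphisms between string/band representations of the bush $S$ in the spirit of Gei\ss\ \cite{G} and then transport back through the commutative diagram \eqref{eq:thmcomm2}, showing that every basis morphism between $\fM(\fE(\cpy{\ws}))$ and $\fM(\fE(\cpy{\wt}))[\rho]$ in $\repb(S)$ lifts to exactly one $f_q$. The three distinctive subtleties will be: (i) the $\varepsilon$-morphisms forced by Figure~\ref{fig:fw}, which produce the ``diagonal'' contribution at a binary and which must be counted with the binary-intersection dictionary of Figure~\ref{fig:001}; (ii) arcs with both endpoints at the same puncture (i.e.\ $\shk(\ws)\in\App$), where self-intersections at $\Vot_P$ must be handled carefully so as not to double-count, ruling out the non-admissible knotted configurations of Figure~\ref{fig:knot}; and (iii) the case in which $\ws$ and $\wt$ are homotopic, where all additional homotopies come from twisted identity maps and must be shown to already be captured by the intersections at the common endpoints. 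The technical core of the argument will be a case-by-case morphism analysis packaged into the technical lemmas relegated to Appendix~\ref{app:pfs}.
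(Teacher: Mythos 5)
Your plan for the bijection $\cpy{?}$ is essentially the one in the paper: pass from $\gUC(\gmsx)$ to tagged arcs via $\ws\mapsto\ws^\times$ (Construction~\ref{cons:tag}, Lemma~\ref{lem:bij1}), identify tagged arcs with one-dimensional local systems (Construction~\ref{cons:tagtoloc}), give the explicit minimal strictly perfect dg module $\cpy{\ws}$ (Construction~\ref{cons:dgm}, Lemma~\ref{lem:msp}), and verify compatibility with $\widetilde{X}$ by comparing $\fM\circ\fE(\cpy{\ws})$ with $R\circ\bione(\wsx)$ inside $\repb(S)$ (Theorem~\ref{thm2}). No objection there.

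For the dimension formula there is a genuine gap. You propose to prove spanning by pushing down to $\repb(S)$ along $\fM\circ\fE$, invoking Gei{\ss}, and lifting back. But $\fM$ is full and dense, \emph{not} faithful (Theorem~\ref{thm:tribu}): any morphism in $\per\sg$ whose matrix entries all lie in $\rad\sg$ is killed by $\fM$, and such morphisms are exactly the elements of $\operatorname{Rad}_{\per\sg}(\cpy{\ws},\cpy{\wt})$. That space is typically nonzero (compare the crossing morphisms in Remark~\ref{rmk:geo}, case~(b)), yet none of it is visible in $\Hom_{\repb(S)}(\fM\fE(\cpy{\ws}),\fM\fE(\cpy{\wt}))$, so your lifts cannot reach it. Concretely, the paper partitions the TOIs from $\wsx$ to $\wtx$ of index $0$ into $I_0\cup I_1\cup I_2$ (Construction~\ref{cons:TOI}). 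The $I_1$ intersections correspond to tagged real $h$-lines in the bi-quiver $\qst$ and, via Gei{\ss}'s argument transported by $\fM\circ\fE$, index a basis of a \emph{complement} of $\operatorname{Rad}_{\per\sg}(\cpy{\ws},\cpy{\wt})$ inside $\Hom_{\per\sg}(\cpy{\ws},\cpy{\wt})$ (Lemma~\ref{lem:G}, Proposition~\ref{prop:rad}). The $I_0$ and $I_2$ intersections index a basis of $\operatorname{Rad}_{\per\sg}(\cpy{\ws},\cpy{\wt})$ itself, and this needs the separate direct cohomology computation of Section~\ref{subsec:mor2}: the decomposition~\eqref{eq:dec3} of the radical Hom complex, the explicit cycles $\psi(\wmu,\wnu)$ of Construction~\ref{cons:rad}, and Lemmas~\ref{lem:rad1}, \ref{lem:rad2}, \ref{lem:homotopy} determining which of these survive modulo coboundaries. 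Without this second analysis your count systematically undercounts $\dim\Hom$ by $|I_0|+|I_2|$.
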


%=========================================================
\section{String model/dg modules for unknotted arcs}\label{sec:DG-a}
%=========================================================
In this section, we will associate dg $\sg$-modules to graded unknotted arcs and prove that this construction is compatible with the construction (of objects) in Part~\ref{part:1}.

%=========================================================
\subsection{Unknotted arcs via tagged arcs}
%=========================================================
To link graded unknotted arcs on $\gmsx$ with certain graded admissible arcs with local system on $\gms$, we need the following notion, which was essentially introduced in \cite{FST}, cf. also \cite{QZ1}.

\begin{definition}
A \emph{tagged arc} on $\gms$ is a pair $(\wg,\kappa)$ of a graded admissible arc $\wg\in\wOA(\gms)$ and a function
$$\kappa:\{t\in \{0,1\}\mid \gamma(t)\in\P\}\to\{+,-\}.$$
Denote by $\careII$ the set of graded tagged arcs on $\gms$.
\end{definition}

\begin{construction}\label{cons:tagtoloc}
We identify each graded tagged arc $(\wg,\kappa)$ with a graded admissible arc with local system $(\wg,N_\kappa)$ with $\dim_\k N_\kappa=1$ as follows. Since $A_{\wg}=\k\langle x_t\mid \gamma(t)\in\P\rangle/(x_t^2-x_t\mid\gamma(t)\in\P)$, we can define $N_{\kappa}$ by $N_{\kappa}(x_{t})=\kappa(t).$ Indeed, all 1-dimensional $A_{\wg}$-modules are obtained in this way (compared with Remark~\ref{rmk:1-dim}).  So there is a bijection
\begin{equation}\label{eq:tag to ls}
    \careII\to\{(\wg,N)\in\care\mid \wg\in\wOA(\gms),\ \dim_\k N=1\},
\end{equation}
sending $(\wg,\kappa)$ to $(\wg, N_{\kappa})$.
\end{construction}

\begin{construction}\label{cons:tag}
To any $\ws\in\gUC(\gmsx)$, we associate a graded tagged arc $\ws^\times$ on $\gms$ given by $$\ws^\times=(\shk(\ws),\kappa_{\ws}),$$
where for any $t\in\{0,1\}$ with $\shk(\ws)(t)\in\P$ (or equivalently, $\ws(t)\in\M_\P$),
$\kappa_{\ws}(t)=\varepsilon$, if the segment of $\ws$ near $t$ crosses $(i,j^\varepsilon)$. Here, $(i,j)$ denotes the edge of the once-punctured monogon enclosing $\ws(t)$.
\end{construction}

\begin{lemma}\label{lem:bij1}
The map
\begin{equation}\label{eq:bij}
\begin{array}{ccc}
\gUC(\gmsx)&\to&\careII\\
\ws&\to&\ws^\times
\end{array}
\end{equation}
is a bijection.
\end{lemma}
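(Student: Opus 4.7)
The plan is to construct an explicit inverse $\careII \to \gUC(\gmsx)$ and verify that the two compositions are the identity, with the unknotted condition and the tagging data corresponding to two complementary pieces of information carried by an arc that meets a binary.

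First I would define the inverse map $(\wg,\kappa)\mapsto\ws_{(\wg,\kappa)}$ as follows. Away from the punctures, $\shk$ is the identity, so one sets $\ws_{(\wg,\kappa)}=\wg$ outside small neighbourhoods of the punctures hit by $\wg$. For each endpoint $t\in\{0,1\}$ with $\gamma(t)=P\in\P$, the arc $\wg$ enters the once-punctured monogon $\Pp_P$ through the loop-edge $(i,j)\in\fc{\OME}$ and then goes to $P$; in $\gmsx$ the puncture $P$ is replaced by the binary $\Vot_P$, and the loop $(i,j)$ is split into the two arcs $(i,j^+)$ and $(i,j^-)$ bounding the binaried triangle $\Pp_P\y$. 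One reroutes the end of $\wg$ so that it enters $\Pp_P\y$ through $(i,j^{\kappa(t)})$ and terminates at the open marked point $m_P\in\Vot_P$, taking the shortest such path (i.e.\ the unique one, up to homotopy rel $\partial\Pp_P\y\setminus\{m_P\}$, that does not wind around $\Vot_P$). The grading of $\ws_{(\wg,\kappa)}$ is inherited from $\wg$ on the complement of these neighbourhoods and is extended in the unique way compatible with the foliation from Figure~\ref{fig:grading} inside $\Pp_P\y$.

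Next I would verify well-definedness: because the local reroute is the minimal one, the resulting graded arc $\ws_{(\wg,\kappa)}$ satisfies the criterion of Lemma~\ref{lem:minimal}, so it lies in $\gUC(\gmsx)$; and by construction $\shk(\ws_{(\wg,\kappa)})=\wg$ since collapsing $\Vot_P$ contracts the reroute back to the original end of $\wg$. The admissibility of $\shk(\ws_{(\wg,\kappa)})=\wg$ is automatic from $\wg\in\wOA(\gms)$. That $\kappa_{\ws_{(\wg,\kappa)}}=\kappa$ is immediate from the definition of $\kappa_{\ws}$ in Construction~\ref{cons:tag}, since $\ws_{(\wg,\kappa)}$ was built to cross precisely $(i,j^{\kappa(t)})$.

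For the two compositions: $(\wg,\kappa)\mapsto\ws_{(\wg,\kappa)}\mapsto\ws_{(\wg,\kappa)}^{\times}=(\wg,\kappa)$ is handled in the previous paragraph. For the other composition, start with $\ws\in\gUC(\gmsx)$ and form $\ws^\times=(\shk(\ws),\kappa_\ws)$; then $\ws_{(\shk(\ws),\kappa_\ws)}$ and $\ws$ differ at most by the local behaviour inside each binaried triangle $\Pp_P\y$. Both are unknotted, both have the same image under $\shk$, and both enter $\Pp_P\y$ through the same edge $(i,j^{\kappa_\ws(t)})$; by the uniqueness clause of Lemma~\ref{lem:minimal} applied to the $\Dfang$-orbit of $\ws$ (noting that two unknotted arcs in the same $\Dfang$-orbit crossing the same edge of $\Pp_P\y$ must coincide near $\Vot_P$), these two graded arcs are homotopic. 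Grading compatibility is preserved on each side because both gradings are inherited from $\shk(\ws)$ via the same local extension.

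The main obstacle is the last step: one has to argue carefully that within a fixed $\Dfang$-orbit, specifying on which side of the binary the arc approaches $m_P$ (i.e.\ through $(i,j^+)$ versus $(i,j^-)$) together with the unknottedness pins down the homotopy class uniquely. This is essentially a local computation in the triangle $\Pp_P\y$: the mapping class group of the disk with one boundary binary and two marked points on the opposite side, modulo $\Dehn{\Vot_P}^2$, is $\ZZ_2$, and this $\ZZ_2$ is detected precisely by the crossing side $\varepsilon\in\{+,-\}$. Once this local fact is established, global bijectivity follows by gluing, completing the proof.
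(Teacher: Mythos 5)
Your argument is correct and follows essentially the same route as the paper's: the paper's proof is a one-paragraph "local problem" reduction backed by Figure~\ref{fig:bij1}, which exhibits the two lifts $\wg_\pm$ at each punctured end and matches them to the two taggings. You construct the same inverse map and fill in the compatibility checks that the paper leaves to the picture.

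One remark on the final step, since you flag it yourself: the crux is that two unknotted arcs with the same $\shk$-image and the same crossing side $(i,j^{\kappa(t)})$ at every binaried end are actually in the same $\Dfang$-orbit. This follows because any two lifts with equal $\shk$-image differ, near each $\Vot_P$, by some power $\Dehn{\Vot_P}^{k_P}$, and the crossing side flips under $\Dehn{\Vot_P}$, so matching sides forces $k_P$ even; then the uniqueness clause of Lemma~\ref{lem:minimal} gives equality. Your "$\ZZ_2$ detected by the crossing side" phrasing captures this, though describing it via a mapping class group of the local triangle is looser than needed — the cleaner statement is just the parity observation about $\Dehn{\Vot_P}$ on a neighbourhood of the end. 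Neither gap is substantial; the sketch is sound.
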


\begin{proof}
This is a local problem. Figure~\ref{fig:bij1} shows that at each punctured endpoint of a graded tagged arc $(\wg,\kappa)$ in $\careII$, there are two natural graded arcs $\wg_{\pm}$ in $\gUC(\gmsx)$, such that their images under $\shk$ intersect $\dac$ in the same way as $\wg$. More precisely, when regarding $(i,j^+)\cup(i,j^-)$ as the arc $(i,j)$, $m_P=P$ and forgetting $\Vot_P$,
these two arcs $\shk(\wg_{\pm})$ are in minimal position with respect to $\dac$ and hence becomes $\wg$.
We will choose the convention of signs as: at $m_P$, the starting arc segments of the two arcs can be ordered anticlockwise as $\wg_+,\wg_-$ and they will map to $\wg\times\{+\}$ and $\wg\times\{-\}$ respectively as in Figure~\ref{fig:bij1}.
\end{proof}

\begin{figure}[htpb]
	\begin{tikzpicture}[scale=.35]
	\draw[ultra thick]plot [smooth,tension=1] coordinates {(3,-4.5) (-90:4) (-3,-4.5)};
	\draw[ultra thick,fill=gray!10] (90:2) ellipse (1.5) node {$\Vot_P$};
    \draw[red] (90:-4) .. controls +(45:5) and +(30:4) .. (90:3.5);
    \draw[red] (90:-4) .. controls +(135:5) and +(150:4) .. (90:3.5);
    \draw[red] (-3.2,-2)node{$(i,j^+)$} (3.2,-2)node{$(i,j^-)$};
    \draw[blue, very thick] (90:8) .. controls +(-45:4) and +(-30:7) .. (90:.5);
    \draw[blue, very thick] (90:8) .. controls +(225:4) and +(-150:7) .. (90:.5)
        (5,2)node{$\wg_-$}(-5,2)node{$\wg_+$};
    \draw[Green] (90:8)\nn;
	\draw[very thick,blue] (90:.5)\nn (90:.25)node[below]{$m_P$} (0,3.5)node[white,above]{$y_P$}\ww;
	\draw[red] (-90:4)\ww ;
    \begin{scope}[shift={(13,0)}]
       	\draw[ultra thick]plot [smooth,tension=1] coordinates {(3,-4.5) (-90:4) (-3,-4.5)};
    	\draw[ultra thick] (90:1) \nn;
        \draw[red] (90:-4) .. controls +(45:5) and +(0:4) .. (0,3.5);
        \draw[red] (90:-4) .. controls +(135:5) and +(180:4) .. (0,3.5);
        \draw[red] (-3.2,-2)node{$(i,j)$};
        \draw[blue, very thick,bend left] (90:8)to(90:1);
        \draw[blue, very thick,bend right] (90:8)to(90:1);
        \draw (90:1)node[below,black]{$P$}
            (90:1.5)node[right]{$-$} (90:1.5)node[left]{$+$}
            (0,5)node{$\wg$};;
        \draw[Green] (90:8)\nn;
    	\draw[red] (-90:4)\ww ;
    \end{scope}	
    \end{tikzpicture}
	\caption{A bijection between $\gUC(\gmsx)$ and $\careII$}\label{fig:bij1}
\end{figure}

\begin{example}\label{ex:tag}
    The tagged arcs $\wsx$ and $\wtx$ corresponding to the unknotted curves $\ws$ and $\wt$ in Example~\ref{ex:unknotted} are shown in the right picture of Figure~\ref{fig:unk cur}, where the underlying admissible arcs of $\wsx$ and $\wtx$ are $\gamma''$ and $\gamma'$ in Example~\ref{ex:ad cur}, and the tagging of $\wsx$ at the puncture $p_2$ is $-$.
\end{example}

We also need to introduce the notion of tagged oriented intersections between tagged arcs to investigate oriented intersections between unknotted arcs. For any admissible arc $\gamma$ and any $t\in\{0,1\}$, we use $\near{\gamma}{t}$ to denote the segment of $\gamma$ near $t$ and starting from $t$, and use $\ori{\gamma}{t}$ to denote the orientation of $\gamma$ from $t$ to $1-t$.

\begin{definition}[Tagged oriented intersection (TOI)]\label{def:int}
Let $(\wg_1,\kappa_1)$ and $(\wg_2,\kappa_2)$ be two tagged arcs on $\gms$ in a minimal position. Define $\overrightarrow{\cap}^\rho((\wg_1,\kappa_1),(\wg_2,\kappa_2))$ to consist of the following six types of oriented intersections:
let $q=\gamma_1(t_1)=\gamma_2(t_2)$ be an intersection from $\gamma_1$ to $\gamma_2$ of index $\rho$ and
we have:
\begin{enumerate}
	\item[(I)] $q\notin\M\cup\P$ and the pair of clockwise angles at $q$ as in picture (I) in Figure~\ref{fig:oriented int} counts one TOI.
	\item[(II)] $q\in\M$ and $\near{\gamma_1}{t_1}$ is to the left of $\near{\gamma_2}{t_2}$.
	Moreover, we need to exclude case (ii) in Figure~\ref{fig:oriented int2}
	(i.e. $\gamma_1|_{t_1\to 1-t_1}\sim\gamma_2|_{t_2\to 1-t_2}$ and $\kappa_1\neq\kappa_2$).
	Then the clockwise angle at $q$ as in (II)  Figure~\ref{fig:oriented int} counts one TOI.
	\item[(III)] $q\in\P$, $\kappa_1(t_1)=\kappa_2(t_2)$, $\near{\gamma_1}{t_1}$ is to the left of $\near{\gamma_2}{t_2}$  (relative to the closed marked point of the $\dac$-polygon containing $q$, same as below) and  $\ori{\gamma_1}{t_1}\nsim\ori{\gamma_2}{t_2}$.
	Then the clockwise angle at $q$ facing away from the nearby closed marked point as in the picture (III) in Figure~\ref{fig:oriented int} counts one TOI.
	\item[(IV)] $q\in\P$, $\kappa_1(t_1)\neq\kappa_2(t_2)$, $\near{\gamma_2}{t_2}$ is to the left of $\near{\gamma_1}{t_1}$ and $\ori{\gamma_1}{t_1}\nsim\ori{\gamma_2}{t_2}$.
	Then the clockwise angle at $q$ facing towards the nearby closed marked point as in picture (IV) in Figure~\ref{fig:oriented int} counts one TOI.
	\item[(V)] $q\in\P$, $\kappa_1   (t_1)=\kappa_2(t_2)$, $\near{\gamma_1}{t_1}$ is to the left of $\near{\gamma_2}{t_2}$, $\gamma_1|_{t_1\to 1-t_1}\sim\gamma_2|_{t_2\to 1-t_2}$, and $\kappa_1(1-t_1)=\kappa_2(1-t_2)$.
	Then the clockwise angle at $q$ facing away from the nearby closed marked point as in the picture (V.1)/(V.2) in Figure~\ref{fig:oriented int} (depending on where the other endpoint of $\gamma_i$ is in $\P$ or in $\M$) counts one TOI.

	\item[(VI)] $q\in\P$, $\kappa_1(t_1)\neq\kappa_2(t_2)$, $\near{\gamma_2}{t_2}$ is to the left of $\near{\gamma_1}{t_1}$, $\gamma_1|_{t_1\to 1-t_1}\sim\gamma_2|_{t_2\to 1-t_2}$,
	$\gamma_1(1-t_1)=\gamma_2(1-t_2)\in\P$ and
	$\kappa_1(1-t_1)\neq\kappa_2(1-t_2)$. Then the clockwise angle at $q$ facing toward the nearby closed marked point as in picture (VI) in Figure~\ref{fig:oriented int} counts one TOI.
	
	\end{enumerate}
We denote
\begin{gather}\label{eq:tag.int.num.}
\oIntd((\wg_1,\kappa_1),(\wg_2,\kappa_2))=|\overrightarrow{\cap}^\rho((\wg_1,\kappa_1),(\wg_2,\kappa_2))|.
\end{gather}
\end{definition}
Note that $\oIntd=\oIntd_{\surfi}+\oIntd_{\partial\surf}$,
where the first term consists of interior intersections (i.e. type (I)) and the second consists of endpoint ones (all other types).

\begin{figure}[htpb]
		\begin{tikzpicture}[scale=.5]
	    \begin{scope}[shift={(0,0)}]
		\draw[thick, Green,->-=.99,>=stealth](-1,1)to[bend left=30](1,1);
		\draw[thick, Green,-<-=.10,>=stealth](-0.90,-1)to[bend left=-30](1,-1);
		\draw[blue,very thick](-2,-2)to(2,2)node[above]{$\gamma_2$}(2,-2)to(-2,2)node[above]{$\gamma_1$};
		\draw (0,0)[above]node{$q$};
		\draw (0,-3)node{(I)};
	   \end{scope}
	%%%%%%%
		\begin{scope}[shift={(7,-2)}]
			\draw[thick, Green,->-=.99,>=stealth](-0.9,1.7)to[bend left=30](0.9,1.7);
			\draw[ultra thick](2,0)to(-2,0);
			\draw[blue,very thick](2,4)node[above]{$\gamma_2$}to(0,0)\nn to(-2,4)node[above]{$\gamma_1$};
				\draw (0,0.2)[above]node{$q$};
				\draw (0,-1)node{(II)};
		\end{scope}
	%%%%%%%
	 \begin{scope}[shift={(14,0)}, scale=.5]
		\draw[ultra thick]plot [smooth,tension=1] coordinates {(3,-4.5) (-90:4) (-3,-4.5)};
		\draw[ultra thick] (90:0.1) \nn;
		\draw[red] (90:-4) .. controls +(45:5) and +(0:4) .. (0,3.5);
		\draw[red] (90:-4) .. controls +(135:5) and +(180:4) .. (0,3.5);
		\draw[blue, very thick] (90:0.1)to(60:6)node[right]{$\gamma_2$};
		\draw[blue, very thick] (90:0.1)to(120:6)node[left]{$\gamma_1$};
		\draw (90:0.1)node[below,black]{$q$}(0,5)node{$\nsim$}(0,1)node{$=$};
		\draw[thick, Green,->-=.99,>=stealth](120:2)to[bend left=30](60:2);
		\draw[red] (-90:4)\ww ;
		\draw (0,-6)node{(III)};
	\end{scope}	
	%%%%%%%
\begin{scope}[shift={(10.5,-10.5)}, scale=.5]
	\draw[ultra thick]plot [smooth,tension=1] coordinates {(3,-3.5) (-90:3) (-3,-3.5)};
	\draw[ultra thick, blue] (90:0.1) \nn (90:8) \nn;
	\draw[red] (90:-3) .. controls +(45:4) and +(0:3) .. (0,2.5);
	\draw[red] (90:-3) .. controls +(135:4) and +(180:3) .. (0,2.5);
	\draw[blue, very thick,bend left] (90:8)to(90:0.1);
	\draw[blue, very thick,bend right] (90:8)to(90:0.1);
	\draw (90:0.1)node[below,black]{$q$}(0,4)node{$\sim$}(75:4)node[right,blue]{$\gamma_2$}(105:4)node[left,blue]{$\gamma_1$}(0,1)node{$=$}(0,6.5)node{$=$};
	\draw[thick, Green,->-=.99,>=stealth](115:2)to[bend left=30](65:2);
	\draw[red] (-90:3)\ww ;
	\begin{scope}[shift={(0,8)}, rotate=180]
    \draw[red] (90:-3) .. controls +(45:4) and +(0:3) .. (0,2.5);
    \draw[red] (90:-3) .. controls +(135:4) and +(180:3) .. (0,2.5);
    	\draw[ultra thick]plot [smooth,tension=1] coordinates {(3,-3.5) (-90:3) (-3,-3.5)};
    	\draw[red] (-90:3)\ww ;	
	\end{scope}
	\draw (0,-5)node{(V.2)};
\end{scope}	
\begin{scope}[shift={(10.5,-6.5)}, scale=-.5]
	\draw[thick,dashed, Green,->-=.99,>=stealth](115:2)to[bend left=30](65:2);
\end{scope}
%%%%%%%%%%
\begin{scope}[shift={(3.5,-10)}, scale=.5]
	\draw[ultra thick]plot [smooth,tension=1] coordinates {(3,-4.5) (-90:4) (-3,-4.5)};
	\draw[ultra thick, blue] (90:0.1) \nn (90:8) \nn;
	\draw[red] (90:-4) .. controls +(45:5) and +(0:4) .. (0,3.5);
	\draw[red] (90:-4) .. controls +(135:5) and +(180:4) .. (0,3.5);
	\draw[blue, very thick,bend left] (90:8)to(90:0.1);
	\draw[blue, very thick,bend right] (90:8)to(90:0.1);
	\draw (90:0.1)node[below,black]{$q$}(0,4)node{$\sim$}(75:4)node[right,blue]{$\gamma_2$}(105:4)node[left,blue]{$\gamma_1$}(0,1)node{$=$};
	\draw[thick, Green,->-=.99,>=stealth](115:2)to[bend left=30](65:2);
	\draw[ultra thick](4,8)to(-4,8);
	\draw[red] (-90:4)\ww ;
	\draw (0,-6)node{(V.1)};
    \draw[Green,dashed,->-=.99, thick,>=stealth](.8,6.3)to[bend left=15](-.8,6.3);
\end{scope}	
	%%%%%%%
	 \begin{scope}[shift={(21,0)}, scale=.5]
	\draw[ultra thick]plot [smooth,tension=1] coordinates {(3,-4.5) (-90:4) (-3,-4.5)};
	\draw[ultra thick] (90:0.1) \nn;
	\draw[red] (90:-4) .. controls +(45:5) and +(0:4) .. (0,3.5);
	\draw[red] (90:-4) .. controls +(135:5) and +(180:4) .. (0,3.5);
	\draw[blue, very thick] (90:0.1)to(60:7)node[above]{$\gamma_1$};
	\draw[blue, very thick] (90:0.1)to(120:7)node[above]{$\gamma_2$};
	\draw(90:0.1)node[below,black]{$q$}(0,5)node{$\nsim$}(0,1)node{$\neq$};
%	\draw[thick, Green,->-=.6,>=stealth](60:2)to[bend right=-35](-240:2);
	\draw[Green,->-=.99, thick,>=stealth]plot [smooth,tension=1] coordinates {(60:1.7) (0:1.7) (-90:1.7) (-180:1.7) (-240:1.7)};
	\draw[red] (-90:4)\ww ;
	\draw (0,-6)node{(IV)};
\end{scope}	
\begin{scope}[shift={(17.5,-6.5)}, scale=-.5]
\draw[Green,dashed,->-=.99, thick,>=stealth]plot [smooth,tension=1] coordinates
    {(60:1.6) (0:1.6) (-90:1.6) (-180:1.6) (-240:1.6)};
\end{scope}
\begin{scope}[shift={(17.5,-10.5)}, scale=.5]
	\draw[ultra thick]plot [smooth,tension=1] coordinates {(3,-3.5) (-90:3) (-3,-3.5)};
	\draw[ultra thick, blue] (90:0.1) \nn (90:8) \nn;
	\draw[red] (90:-3) .. controls +(45:4) and +(0:3) .. (0,2.5);
	\draw[red] (90:-3) .. controls +(135:4) and +(180:3) .. (0,2.5);
	\draw[blue, very thick,bend left] (90:8)to(90:0.1);
	\draw[blue, very thick,bend right] (90:8)to(90:0.1);
	\draw (90:0.1)node[below,black]{$q$}(0,4)node{$\sim$}(75:4)node[right,blue]{$\gamma_1$}(105:4)node[left,blue]{$\gamma_2$}(0,1)node{$\neq$}(0,6.5)node{$\neq$};
\draw[Green,->-=.99, thick,>=stealth]plot [smooth,tension=1] coordinates
    {(60:1.6) (0:1.6) (-90:1.6) (-180:1.6) (-240:1.6)};
	\draw[red] (-90:3)\ww ;
	\begin{scope}[shift={(0,8)}, rotate=180]
		\draw[red] (90:-3) .. controls +(45:4) and +(0:3) .. (0,2.5);
		\draw[red] (90:-3) .. controls +(135:4) and +(180:3) .. (0,2.5);
		\draw[ultra thick]plot [smooth,tension=1] coordinates {(3,-3.5) (-90:3) (-3,-3.5)};
		\draw[red] (-90:3)\ww ;	
	\end{scope}
	\draw (0,-5)node{(VI)};
\end{scope}	
	\end{tikzpicture}
	\caption{Types of TOIs from $(\wg_1,\kappa_1)$ to $(\wg_2,\kappa_2)$}
	\label{fig:oriented int}
\end{figure}

\begin{remark}\label{rmk:int}
Note that if $\gamma_1\nsim\gamma_2$,
the TOIs consist of the usual clockwise angles from $\gamma_1$ to $\gamma_2$ at intersections not at punctures, and at punctures, the clockwise angles will count according to the tagging.
When $\gamma_1\sim\gamma_2$, certain modification is required.
For completeness, here we list all the cases where the usual clockwise angle does not count as TOI in the GMSp.
Let $q=\gamma_1(t_1)=\gamma_2(t_2)$ be an intersection from $(\gamma_1,\kappa_1)$ to $(\gamma_2,\kappa_2)$ and
we have:
\begin{itemize}
\item[(ii)] $q\in\M$, $\gamma_1|_{t_1\to 1-t_1}\sim\gamma_2|_{t_2\to 1-t_2}$,
    $\kappa_1\neq\kappa_2$ with the other endpoints of $\gamma_i$ are in $\P$ and
    $\near{\gamma_1}{t_1}$ is to the left of $\near{\gamma_2}{t_2}$.
	Then the clockwise angle at $q$ as in picture (ii) in Figure~\ref{fig:oriented int2} does not count a TOI.
\item[(iii)/(iv)] In type (III) or (IV) in Definition~\ref{def:int},
the clockwise angle from $\gamma_2$ to $\gamma_1$ at $q$ (cf. the angle that is not drawn in the picture (III) or (IV) in Figure~\ref{fig:oriented int}) does not count.
\item[(v)] In type (V) in Definition~\ref{def:int},
the clockwise angle from $\gamma_2$ to $\gamma_{1}$ at $q$ facing towards the nearby closed marked point (cf. the angle that not drawn in the picture (V.1)/(V.2) in Figure~\ref{fig:oriented int}) do not count.
\item[(vi)] In type (VI) in Definition~\ref{def:int},
the clockwise angle from $\gamma_2$ to $\gamma_1$ at $q$ facing away from the nearby closed marked point (cf. the angle that is not drawn in the picture (VI) in Figure~\ref{fig:oriented int}) do not count.
\item[(vii)] $q\in \P$,
	   $\kappa_1(t_1)=\kappa_2(t_2)$, $\near{\gamma_1}{t_1}$ is to the left of $\near{\gamma_2}{t_2}$, $\gamma_1|_{t_1\to 1-t_1}\sim\gamma_2|_{t_2\to 1-t_2}$, and $\gamma_1(1-t_1)=\gamma_2(1-t_2)\in\P$, $\kappa_1(1-t_1)\neq\kappa_2(1-t_2)$.
	   Then the clockwise angle at $q$ facing away from the nearby closed marked point as in picture (vii) in Figure~\ref{fig:oriented int2} does not count.
	\end{itemize}
\end{remark}

\begin{figure}[htpb]
\begin{tikzpicture}[scale=.5]
\begin{scope}[shift={(0,0)}, scale=.5]
%    \draw[red,->-=.99, thick,>=stealth](.8,6.3)to[bend left=15](-.8,6.3);
	\draw[ultra thick]plot [smooth,tension=1] coordinates {(3,-4.5) (-90:4) (-3,-4.5)};
	\draw[ultra thick, blue] (90:0.1) \nn (90:8) \nn node[above]{$q$};
	\draw[red] (90:-4) .. controls +(45:5) and +(0:4) .. (0,3.5);
	\draw[red] (90:-4) .. controls +(135:5) and +(180:4) .. (0,3.5);
	\draw[blue, very thick,bend left] (90:8)to(90:0.1);
	\draw[blue, very thick,bend right] (90:8)to(90:0.1);
	\draw(0,4)node{$\sim$}(75:4)node[right,blue]{$\gamma_1$}(105:4)node[left,blue]{$\gamma_2$}(0,1.2)node{$\neq$};
	\draw[ultra thick](4,8)to(-4,8);
	\draw[red] (-90:4)\ww ;
	\draw (0,-6)node{(ii)};
	\draw[Green,->-=.99, thick,>=stealth](.8,6.3)to[bend left=15](-.8,6.3);
\end{scope}	
%%%%
\begin{scope}[shift={(8,-0.5)}, scale=.5]
	\draw[ultra thick]plot [smooth,tension=1] coordinates {(3,-3.5) (-90:3) (-3,-3.5)};
	\draw[ultra thick, blue] (90:0.1) \nn (90:8) \nn;
	\draw[red] (90:-3) .. controls +(45:4) and +(0:3) .. (0,2.5);
	\draw[red] (90:-3) .. controls +(135:4) and +(180:3) .. (0,2.5);
	\draw[blue, very thick,bend left] (90:8)to(90:0.1);
	\draw[blue, very thick,bend right] (90:8)to(90:0.1);
	\draw (90:0.1)node[below,black]{$q$}(0,4)node{$\sim$}(75:4)node[right,blue]{$\gamma_2$}(105:4)node[left,blue]{$\gamma_1$}(0,1)node{$=$}(0,6.5)node{$\neq$};
%	\draw[thick, Green,->-=.99,>=stealth](115:2)to[bend left=30](65:2);
	\draw[red] (-90:3)\ww ;
	\begin{scope}[shift={(0,8)}, rotate=180]
		\draw[red] (90:-3) .. controls +(45:4) and +(0:3) .. (0,2.5);
		\draw[red] (90:-3) .. controls +(135:4) and +(180:3) .. (0,2.5);
		\draw[ultra thick]plot [smooth,tension=1] coordinates {(3,-3.5) (-90:3) (-3,-3.5)};
		\draw[red] (-90:3)\ww ;	
	\end{scope}
	\draw (0,-5)node{(vii)};
	\draw[thick, Green,->-=.99,>=stealth](115:2)to[bend left=30](65:2);
\end{scope}	
	\end{tikzpicture}
	\caption{Cases of oriented intersections that do not count TOI}
	\label{fig:oriented int2}
\end{figure}

\begin{lemma}\label{lem:int}
Let $\ws$ and $\wt$ be graded unknotted arcs in $\gUC(\gmsx)$. Then we have
\begin{gather}\label{eq:intb=intx}
\oIntd(\ws,\wt)=\oIntd(\ws^\times,\wt^\times),
\end{gather}
for any $\rho\in\ZZ$.
\end{lemma}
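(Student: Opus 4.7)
The plan is to reduce the equality \eqref{eq:intb=intx} to a local case analysis, decomposing both sides according to where the intersections occur. On the left-hand side we can fix $\ws,\wt$ in minimal position (using Lemma~\ref{lem:minimal}, up to acting by $\Dfang$ on each end independently); on the right-hand side we use the explicit list of six TOI types in Definition~\ref{def:int}. Since the shrinking map $\shk$ is the identity outside small neighborhoods of the binaries, the interior intersections match bijectively: each $q\in\overrightarrow{\cap}^\rho_{\surfix}(\ws,\wt)$ corresponds to a type (I) TOI of $(\ws^\times,\wt^\times)$, and the intersection index is preserved because the grading on $\gmsx$ is inherited from the grading on $\gms$.

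Next I would handle endpoint intersections at open marked points in $\M$ (which are unchanged by the construction of $\gmsx$). For $q\in\M$ not on a binary boundary segment, the comparison is immediate except when both arcs become isotopic on $\gms$ with distinct taggings at the far punctured endpoints; here the tagged definition (Definition~\ref{def:int}(II), excluding case (ii) in Figure~\ref{fig:oriented int2}) must be matched with the fact that on $\gmsx$ the two unknotted representatives can be pushed apart at $\M$, erasing the naive angle contribution. One checks this directly from the picture: discrepant taggings at the far ends force the binary representatives to cross each other once in a way that converts the apparent $\M$-angle into an interior or binary-endpoint intersection, so that the net count is unchanged.

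The heart of the argument is the case $q\in\P$ on the tagged side, matched with intersections at a binary boundary $\Vot_P$ on the GMSb side. For each of the six types (III)-(VI), I would draw the corresponding local picture on $\gmsx$ after applying Construction~\ref{cons:tag}: once the tagging $\kappa_i(t_i)=\pm$ is encoded by whether $\ws_i$ hits $(i,j^+)$ or $(i,j^-)$, each type (III), (IV), (V.1), (V.2), (VI) produces exactly one endpoint intersection at $m_P$ or at the relevant edge of $\Pp_P\y$, and the angle orientation in the picture translates to an oriented intersection on $\gmsx$ with the same index $\rho$ (using \eqref{eq:shift} for the shift). Conversely, the excluded configurations (iii)-(vii) listed in Remark~\ref{rmk:int} are precisely those where the unknotted representatives can be isotoped across the binary to remove the intersection, which is the condition for a non-minimal segment of the shape of Figure~\ref{fig:non-per}.

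The main obstacle is the case $\gamma_1\sim\gamma_2$, where the set $\Dfang\cdot\ws\times\Dfang\cdot\wt$ contains representatives in which $\ws$ and $\wt$ are parallel and have no interior intersection. In this situation one must verify that the Dehn-twist-minimum over the $\Dfang^2$-orbit agrees with the tagged count, which is where the subtle distinctions between types (V.1), (V.2) and (VI) (versus the excluded cases (v), (vi), (vii)) become essential. I would handle this by enumerating the four configurations of tagging pairs $(\kappa_i(t_i),\kappa_i(1-t_i))$ and showing in each case that exactly one of the four combinations of representatives in $\Dfang\cdot\ws\times\Dfang\cdot\wt$ realizes the minimum, and that the resulting number of oriented intersections of index $\rho$ coincides with the tagged count dictated by Definition~\ref{def:int}. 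Putting interior and all endpoint contributions together yields \eqref{eq:intb=intx}.
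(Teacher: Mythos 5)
Your overall strategy is the same as the paper's: use the winding-number-one condition around binaries to match intersection indices automatically, then carry out a local case-by-case comparison of interior intersections, $\M$-endpoint intersections, and puncture/binary intersections against the TOI types of Definition~\ref{def:int} and the exclusions of Remark~\ref{rmk:int}. However, two points are handled more carefully in the paper and are glossed over or wrong in your sketch. First, the bijection between interior intersections is not automatic just because $\shk$ is the identity away from the binaries: the paper first removes what it calls \emph{nearby} interior intersections (transverse crossings of $\ws$ and $\wt$ close to a common binary endpoint, produced by one arc spiraling around the binary), which can be eliminated by choosing an appropriate $\Dfang$-representative. Only after this preprocessing can one assert that the remaining interior intersections persist under $\shk$, because no digon is created. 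Being in minimal position with respect to $\dac\x$ via Lemma~\ref{lem:minimal} does not by itself rule out such crossings, so your assertion that interior intersections match bijectively has a gap.

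Second, your account of case~(ii) is incorrect. In that configuration $\shk(\ws)\sim\shk(\wt)$ with differing taggings at the far puncture, so on $\gmsx$ the arcs $\ws$ and $\wt$ enter $m_P$ from opposite sides of the binary $\Vot_P$. They do \emph{not} cross each other once; rather, together with pieces of $\partial\surf\x$ they bound an annulus containing $\Vot_P$, so there is no interior or binary-endpoint crossing compensating for the ``lost'' $\M$-angle. The paper's Figure~\ref{fig:case(ii)} shows instead that, for a suitable choice of $\Dfang$-representative, the clockwise angle at $m$ from $\ws$ to $\wt$ simply does not contribute an oriented intersection of the relevant index, so the contribution from that endpoint is zero on both sides. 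If carried through literally, your conservation-of-intersections mechanism would leave the count one too high.
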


\begin{proof}
Since the winding number of a puncture/binary is always one,
the intersection index will match in \eqref{eq:intb=intx} when passing from GMSp to GMSb (with any representatives).
For instance, the indexes of the intersections shown in the left and right figures of any of the Figures~\ref{fig:case(1)}, \ref{fig:case(2)}, \ref{fig:case(3)} or \ref{fig:case(4)} are the same. Thus, we only need to prove the equality by counting intersections without mentioning their indexes.

We say an oriented intersection $x\in \overrightarrow{\cap}(\ws,\wt)$ (in GMSb) is a nearby one (with respect to a binary $\Vot_p$) if
$\ws$ and $\wt$ end at $m_p$ and the arc segments from $x$ to $m_p$ in $\ws$ and $\wt$ are isotopy
after passing from GMSb to GMSp.
Then, if $x$ is an interior nearby intersection,
it must look like the top point in the left picture of Figure~\ref{fig:nomaps} (but may with extra self-intersections).
Such an intersection can be eliminated or moved to the near binary by choosing another representative of $\ws$ or $\wt$.
Thus we may assume that there is no interior nearby intersection.
Moreover, when passing from GMSb to GMSp, the non-nearby interior intersections in $\overrightarrow{\cap}(\ws,\wt)$
will remain as interior intersections in the tagged case  (since no digon will be produced).
%, i.e. in $\surfi\cap\big(\overrightarrow{\cap}(\wsx,\wtx)\big)$.
This implies that there is a natural bijection between the interior intersections in GMSb and GMSp, i.e. they contribute the same in \eqref{eq:intb=intx}.
Thus, we only need to do a case-by-case study to match
the endpoint intersections in GMSb with endpoint intersections in GMSp.

First, let $\wsx(t_1)=\wtx(t_2)=m\in\M$ for some $t_1,t_2\in\{0,1\}$.
There are two cases, i.e.
case (ii) in Remark~\ref{rmk:int} (where $m$ does not count in the right hand side of \eqref{eq:intb=intx})
and type (II) in Definition~\ref{def:int}
(where it counts).
It is straightforward to check that
the same holds for GMSb model. More precisely,  Figure~\ref{fig:case(ii)} shows that the oriented intersections at $m$ do not count in case (ii). On the other hand, in type (II), either $\wsx\nsim\wtx$ or $\wsx\sim\wtx$ and their taggings (if there are any) at the other end coincide. Either way, the change of the other end (if there is any) does not affect the local relative position at $m$. Therefore, the intersection at $m$ counts too.
 \begin{figure}[htpb]
	\begin{tikzpicture}[scale=.5]
		\begin{scope}[shift={(0,0)}, scale=.5]
			%    \draw[red,->-=.99, thick,>=stealth](.8,6.3)to[bend left=15](-.8,6.3);
			\draw[ultra thick]plot [smooth,tension=1] coordinates {(3,-4.5) (-90:4) (-3,-4.5)};
			\draw[ultra thick, blue] (90:0.1) \nn (90:8) \nn node[above]{$m$};
			\draw[red] (90:-4) .. controls +(45:5) and +(0:4) .. (0,3.5);
			\draw[red] (90:-4) .. controls +(135:5) and +(180:4) .. (0,3.5);
			\draw[blue, very thick,bend left] (90:8)to(90:0.1);
			\draw[blue, very thick,bend right] (90:8)to(90:0.1);
			\draw(0,4)node{$\sim$}(75:4)node[right,blue]{$\ws^\times$}(105:4)node[left,blue]{$\wt^\times$}(0,1.2)node{$\neq$};
			\draw[ultra thick](3,8)to(-3,8);
			\draw[red] (-90:4)\ww ;
			\draw (0,-6)node{(ii)};
			\draw(8,3)node[thick, black]{$\Longleftrightarrow$};
		\end{scope}	
		%%%%
		\begin{scope}[shift={(8,0)}, scale=.5]
			\draw[ultra thick]plot [smooth,tension=1] coordinates {(3,-4.5) (-90:4) (-3,-4.5)};
			\draw[ultra thick,fill=gray!10] (90:0.5) ellipse (1.5) node {$\Vot_p$};
			\draw[red] (90:-4) .. controls +(45:5) and +(30:4) .. (90:2);
			\draw[red] (90:-4) .. controls +(135:5) and +(150:4) .. (90:2);			
			\draw[very thick,blue] (-90:1)\nn  (0,2)\ww;
			\draw[red] (-90:4)\ww ;
			%			\draw[thick, Green,->-=.6,>=stealth](180:3.5)to[bend left=30](155:1.5);
			\draw(45:5.0)node[thick, cyan]{$\ws$}(135:5.0)node[thick, blue]{$\wt$};		
			\draw[blue, ultra thick] (90:8)
			.. controls +(-140:5) and  +(-150:5) .. (-90:1);
			\draw[cyan, ultra thick](90:8)
			.. controls +(-40:5) and  +(-30:5) .. (-90:1);
			%\draw[thick, Green,->-=.99,>=stealth](-50:1.9)to[bend left=30](-130:1.9);
			\draw[ultra thick](3,8)to(-3,8);
			\draw[ultra thick, blue] (90:8) \nn node[above]{$m$};
			%		    \draw[cyan, ultra thick] (90:9.5)
			%	.. controls +(30:3) and +(0:4) .. (0,6)
			%		.. controls +(180:3) and +(205:4) .. (0,10.5)
			%		.. controls +(30:5) and +(15:6) .. (0,4)
			%		.. controls +(-165:4) and +(-150:3) .. (0,.5);
			\draw(8,3)node[thick, black]{\Huge{$\rightsquigarrow$}};
		\end{scope}
		\begin{scope}[shift={(16,0)}, scale=.5]
		\draw[ultra thick]plot [smooth,tension=1] coordinates {(3,-4.5) (-90:4) (-3,-4.5)};
		\draw[ultra thick,fill=gray!10] (90:0.5) ellipse (1.5) node {$\Vot_p$};
		\draw[red] (90:-4) .. controls +(45:5) and +(30:4) .. (90:2);
		\draw[red] (90:-4) .. controls +(135:5) and +(150:4) .. (90:2);			
		\draw[very thick,blue] (-90:1)\nn  (0,2)\ww;
		\draw[red] (-90:4)\ww ;
		%			\draw[thick, Green,->-=.6,>=stealth](180:3.5)to[bend left=30](155:1.5);
		\draw(45:5.0)node[thick, Green]{$\ws'$}(120:4.8)node[thick, blue,right]{$\wt$};		
		\draw[blue, ultra thick] (90:8)
		.. controls +(-140:5) and  +(-150:5) .. (-90:1);
		\draw[Green, ultra thick] (90:8)
	.. controls +(-160:5) and +(-150:6) .. (0,-2.5)
	.. controls +(30:5) and +(15:4) .. (0,3)
	.. controls +(-165:3.5) and +(-150:2.5) .. (-90:1);
		%\draw[thick, Green,->-=.99,>=stealth](-50:1.9)to[bend left=30](-130:1.9);
		\draw[ultra thick](3,8)to(-3,8);
		\draw[ultra thick, blue] (90:8) \nn node[above]{$m$};
		\end{scope}
	\end{tikzpicture}
	\caption{Case (ii) in Remark~\ref{rmk:int}}
	\label{fig:case(ii)}
\end{figure}

Next, let $\wsx(t_1)=\wtx(t_2)=p\in\P$ for some $t_1,t_2\in\{0,1\}$ and denote by $(i,j)$ the loop in $\dac$ enclosing the puncture $p$.
Suppose that $(\wsx,\wtx,p)=((\wg_1,\kappa_1),(\wg_2,\kappa_2),q)$ as in one of the types (III)$\sim$(VI) of Definition~\ref{def:int}, where we will show that the oriented intersection at $p$ contributes one in \eqref{eq:intb=intx} in both sides/models.
    \begin{enumerate}
        \item[(III)] In the GMSb model, there is also a clockwise angle at $m_p$ from $\ws$ to $\wt$
        when they are in a minimal position.
        See the second and third pictures in Figure~\ref{fig:case(1)} for the two cases depending on the tagging of $\wsx$ and $\wtx$ at $p$.
        \begin{figure}[htpb]
            \begin{tikzpicture}[scale=.5]
            \begin{scope}[shift={(0,0)}, scale=.5]
			\draw[ultra thick]plot [smooth,tension=1] coordinates {(3,-4.5) (-90:4) (-3,-4.5)};
			\draw[ultra thick] (90:0.1) \nn;
			\draw[red] (90:-4) .. controls +(45:5) and +(0:4) .. (0,3.5);
			\draw[red] (90:-4) .. controls +(135:5) and +(180:4) .. (0,3.5);
			\draw[blue, very thick] (90:0.1)to(60:6)node[right]{$\wt^\times$};
			\draw[cyan, very thick] (90:0.1)to(120:6)node[left]{$\ws^\times$};
			\draw (90:0.1)node[below,black]{$p$}(0,5)node{$\nsim$}(0,1)node{$=$};
			\draw[thick, Green,->-=.99,>=stealth](120:2)to[bend left=30](60:2);
			\draw[red] (-90:4)\ww ;
			\draw (0,-6)node{(III)};
			\draw(0:8)node[thick, black]{$\Longleftrightarrow$};
		    \end{scope}	
		    \begin{scope}[shift={(9,0)}, scale=.5]
			\draw[ultra thick]plot [smooth,tension=1] coordinates {(3,-4.5) (-90:4) (-3,-4.5)};
			\draw[ultra thick,fill=gray!10] (90:2) ellipse (1.5) node {$\Vot_p$};
			\draw[red] (90:-4) .. controls +(45:5) and +(30:4) .. (90:3.5);
			\draw[red] (90:-4) .. controls +(135:5) and +(150:4) .. (90:3.5);
			\draw[blue, ultra thick] (90:8) .. controls +(-45:5) and +(-30:7) .. (90:.5);
			\draw[cyan, ultra thick] (90:5)
			.. controls +(-45:4) and +(-30:3) .. (90:.5);
			\draw[thick, Green,->-=.99,>=stealth](15:2)to[bend left=30](0:3.5);
			\draw[very thick,blue] (90:.5)\nn   (90:.5)node[below]{$m_p$} (0,3.5)\ww;
			\draw(90:8)node[left, blue]{$\wt$}(90:5)node[left, cyan]{$\ws$};
			\draw[red] (-90:4)\ww ;
		    \end{scope}
		    \begin{scope}[shift={(16,0)}, scale=.5]
			\draw[ultra thick]plot [smooth,tension=1] coordinates {(3,-4.5) (-90:4) (-3,-4.5)};
			\draw[ultra thick,fill=gray!10] (90:2) ellipse (1.5) node {$\Vot_p$};
			\draw[red] (90:-4) .. controls +(45:5) and +(30:4) .. (90:3.5);
			\draw[red] (90:-4) .. controls +(135:5) and +(150:4) .. (90:3.5);
			\draw[cyan, ultra thick] (90:8) .. controls +(-135:5) and +(-150:7) .. (90:.5);
			\draw[blue, ultra thick] (90:5)
			.. controls +(-135:4) and +(-150:3) .. (90:.5);
			\draw[thick, Green,->-=.99,>=stealth](180:3.5)to[bend left=30](165:2);
			\draw[very thick,blue] (90:.5)\nn    (90:.5)node[below]{$m_p$}  (0,3.5)\ww;
			\draw(90:8)node[right, cyan]{$\ws$}(90:5)node[right, blue]{$\wt$};
			\draw[red] (-90:4)\ww ;
		    \end{scope}
            \end{tikzpicture}
            \caption{Type (III): GMSp v.s. GMSb}
            \label{fig:case(1)}
        \end{figure}
     \item[(IV)] In the GMSb model,
     there is also a clockwise angle at $m_p$ from $\ws$ to $\wt$ when they are in minimal position.
     See the second and third pictures in Figure~\ref{fig:case(2)} for the two cases depending on the tagging of $\wsx$ and $\wtx$ at $p$.
    \begin{figure}[htpb]\qquad
    	\begin{tikzpicture}[scale=.5]
    		\begin{scope}[shift={(0,0)}, scale=.5]
    			\draw[ultra thick]plot [smooth,tension=1] coordinates {(3,-4.5) (-90:4) (-3,-4.5)};
    			\draw[ultra thick] (90:0.1) \nn;
    			\draw[red] (90:-4) .. controls +(45:5) and +(0:4) .. (0,3.5);
    			\draw[red] (90:-4) .. controls +(135:5) and +(180:4) .. (0,3.5);
    			\draw[cyan, very thick] (90:0.1)to(60:7)node[above]{$\ws^\times$};
    			\draw[blue, very thick] (90:0.1)to(120:7)node[above]{$\wt^\times$};
    			\draw(90:0.1)node[below,black]{$p$}(0,5)node{$\nsim$}(0,1)node{$\neq$};
    			%	\draw[thick, Green,->-=.6,>=stealth](60:2)to[bend right=-35](-240:2);
    			\draw[Green,->-=.99, very thick,>=stealth]plot [smooth,tension=1] coordinates {(60:1.7) (0:1.7) (-90:1.7) (-180:1.7) (-240:1.7)};
    			\draw[red] (-90:4)\ww ;
    			\draw (0,-6)node{(IV)};
    			\draw(0:8)node[thick, black]{$\Longleftrightarrow$};
    		\end{scope}		
    		%%%%%%
    		\begin{scope}[shift={(9,0)}, scale=.5]
    			\draw[ultra thick]plot [smooth,tension=1] coordinates {(3,-4.5) (-90:4) (-3,-4.5)};
    			\draw[ultra thick,fill=gray!10] (90:2) ellipse (1.5) node {$\Vot_p$};
    			\draw[red] (90:-4) .. controls +(45:5) and +(30:4) .. (90:3.5);
    			\draw[red] (90:-4) .. controls +(135:5) and +(150:4) .. (90:3.5);
    			\draw[blue, ultra thick] (100:8) .. controls +(-135:5) and +(-150:7) .. (90:.5);
    			\draw[cyan, ultra thick] (80:8)
    			.. controls +(-45:5) and +(-30:7) .. (90:.5);
    			\draw[thick, Green,->-=.99,>=stealth](-15:1.5)to[bend left=30](-165:1.5);
    			\draw[very thick,blue] (90:.5)\nn  (0,3.5)\ww;
    			\draw(125:6)node[left, thick,blue]{$\wt$}(55:6)node[right, thick, cyan]{$\ws$}(0,5)node[black]{$\nsim$};
    			\draw[red] (-90:4)\ww ;
    		\end{scope}		
		\begin{scope}[shift={(16,0)}, scale=.5]
		\draw[ultra thick]plot [smooth,tension=1] coordinates {(3,-4.5) (-90:4) (-3,-4.5)};
		\draw[ultra thick,fill=gray!10] (90:0.5) ellipse (1.5) node {$\Vot_p$};
		\draw[red] (90:-4) .. controls +(45:5) and +(30:4) .. (90:2);
		\draw[red] (90:-4) .. controls +(135:5) and +(150:4) .. (90:2);			
		\draw[very thick,blue] (-90:1)\nn  (0,2)\ww;
		\draw[red] (-90:4)\ww ;
		%			\draw[thick, Green,->-=.6,>=stealth](180:3.5)to[bend left=30](155:1.5);
		\draw(45:5.0)node[thick, blue]{$\wt$}(120:4.8)node[thick, cyan]{$\ws$};
		\draw[blue,dashed, thick] (-2,8)
		.. controls +(-40:5) and  +(-20:7) .. (-90:1);
		\draw[cyan, ultra thick] (2,8)
		.. controls +(-140:5) and  +(-160:7) .. (-90:1);
		\draw[blue, ultra thick] (-2,8)
	.. controls +(-140:5) and +(-150:6) .. (0,-2.5)
	.. controls +(30:5) and +(15:4) .. (0,3)
	.. controls +(-165:3.5) and +(-150:2.5) .. (-90:1);
		%\draw[thick, Green,->-=.99,>=stealth](-50:1.9)to[bend left=30](-130:1.9);
		\end{scope}
    	\end{tikzpicture}
    	\caption{Type (IV): GMSp v.s. GMSb}
    	\label{fig:case(2)}
    \end{figure}
        \item[(V)] In the GMSb model,
        there is also a clockwise angle at $m_p$ from $\ws$ to $\wt$ when they are in minimal position.
        See the second and third pictures in Figure~\ref{fig:case(3)} for the two cases depending on the tagging of $\wsx$ and $\wtx$ at $p$.
        \begin{figure}[htpb]
            \begin{tikzpicture}[scale=.5]
            \begin{scope}[shift={(0,-10)}, scale=.5]
			\draw[ultra thick]plot [smooth,tension=1] coordinates {(3,-4.5) (-90:4) (-3,-4.5)};
			\draw[ultra thick, blue] (90:0.1) \nn (90:8) \nn;
			\draw[red] (90:-4) .. controls +(45:5) and +(0:4) .. (0,3.5);
			\draw[red] (90:-4) .. controls +(135:5) and +(180:4) .. (0,3.5);
			\draw[blue, very thick,bend left] (90:8)to(90:0.1);
			\draw[cyan, very thick,bend right] (90:8)to(90:0.1);
			\draw (90:0.1)node[below,black]{$p$}(0,4)node{$\sim$}(75:4)node[right,blue]{$\wt^\times$}(105:4)node[left,cyan]{$\ws^\times$}(0,1)node{$=$}(90:8)node[above,black]{$M$} ;
			\draw[thick, Green,->-=.99,>=stealth](115:2)to[bend left=30](65:2);
		 	\draw[ultra thick](2,8)to(-2,8);
			\draw[red] (-90:4)\ww ;
			\draw (0,-6)node{(V.1)};
		\end{scope}
	 \begin{scope}[shift={(5,-10.5)}, scale=.5]
	 	\draw[ultra thick]plot [smooth,tension=1] coordinates {(3,-3.5) (-90:3) (-3,-3.5)};
	 	\draw[ultra thick, blue] (90:0.1) \nn (90:6) \nn;
	 	\draw[red] (90:-3) .. controls +(45:4) and +(0:3) .. (0,2.5);
	 	\draw[red] (90:-3) .. controls +(135:4) and +(180:3) .. (0,2.5);
	 	\draw[blue, very thick,bend left] (90:6)to(90:0.1);
	 	\draw[cyan, very thick,bend right] (90:6)to(90:0.1);
	 	\draw (90:0.1)node[below,black]{$p$}(0,3)node{$\sim$}(75:3)node[right,blue]{$\wt^\times$}(105:3)node[left,cyan]{$\ws^\times$}(0,1)node{$=$}(0,5)node{$=$}(90:6)node[above,black]{$p'$};
	 	\draw[thick, Green,->-=.99,>=stealth](115:2)to[bend left=30](65:2);
	 	\draw[red] (-90:3)\ww ;
	 	\begin{scope}[shift={(0,6)}, rotate=180]
	 		\draw[red] (90:-3) .. controls +(45:4) and +(0:3) .. (0,2.5);
	 		\draw[red] (90:-3) .. controls +(135:4) and +(180:3) .. (0,2.5);
	 		\draw[ultra thick]plot [smooth,tension=1] coordinates {(3,-3.5) (-90:3) (-3,-3.5)};
	 		\draw[red] (-90:3)\ww ;	
	 	\end{scope}
	 	\draw (0,-5)node{(V.2)};
	 	\draw(15:8.5)node[thick, black]{$\Longleftrightarrow$};
	 \end{scope}		
		\begin{scope}[shift={(13,-10)},scale=.5]
			\draw[ultra thick]plot [smooth,tension=1] coordinates {(3,-4.5) (-90:4) (-3,-4.5)};
			\draw[ultra thick,fill=gray!10] (90:2) ellipse (1.5) node {$\Vot_p$};
			\draw[red] (90:-4) .. controls +(45:5) and +(30:4) .. (90:3.5);
			\draw[red] (90:-4) .. controls +(135:5) and +(150:4) .. (90:3.5);
			\draw[blue, ultra thick] (90:8) .. controls +(-25:5) and +(-30:7) .. (90:.5);
			\draw[cyan, ultra thick] (90:8)
			.. controls +(-45:4) and +(-30:3) .. (90:.5);
			\draw[thick, Green,->-=.99,>=stealth](25:1.5)to[bend left=30](0:3.5);
			\draw[very thick,blue] (90:.5)\nn  (0,3.5)\ww;
			\draw(60:5.7)node[right, blue]{$\wt$}(90:5)node[right, cyan]{$\ws$}(60:5.2)node[black]{$\sim$}(90:8)node[above,black]{$M \text{ or } \Vot_{p'}$};
		%	\draw[ultra thick](2,8)to(-2,8);
			\draw[thick] (90:8)\nn;
			\draw[red] (-90:4)\ww ;
		\end{scope}
		\begin{scope}[shift={(19,-10)}, scale=.5]
			\draw[ultra thick]plot [smooth,tension=1] coordinates {(3,-4.5) (-90:4) (-3,-4.5)};
			\draw[ultra thick,fill=gray!10] (90:2) ellipse (1.5) node {$\Vot_p$};
			\draw[red] (90:-4) .. controls +(45:5) and +(30:4) .. (90:3.5);
			\draw[red] (90:-4) .. controls +(135:5) and +(150:4) .. (90:3.5);
			\draw[cyan, ultra thick] (90:8) .. controls +(-155:5) and +(-150:7) .. (90:.5);
			\draw[blue, ultra thick] (90:8)
			.. controls +(-135:4) and +(-150:3) .. (90:.5);
			\draw[thick, Green,->-=.99,>=stealth](180:3.5)to[bend left=30](150:2);
			\draw[very thick,blue] (90:.5)\nn  (0,3.5)\ww;
			\draw(120:5.5)node[left, cyan]{$\ws$}(90:4.8)node[left, blue]{$\wt$}(120:5)node[black]{$\sim$}(90:8)node[above,black]{$M \text{ or } \Vot_{p'}$};
		%	\draw[ultra thick](2,8)to(-2,8);
			\draw[thick] (90:8)\nn;
			\draw[red] (-90:4)\ww ;
		\end{scope}
            \end{tikzpicture}
            \caption{Type (V): GMSp v.s. GMSb}
            \label{fig:case(3)}
        \end{figure}
   \item[(VI)] In the GMSb model, $\ws$ and $\wt$ are shown in the middle and right pictures of Figure~\ref{fig:case(4)}, for two possible minimal position.
       Note that one needs to check that the intersection indices do match in these two cases, which also follows from the fact that the winding number around a binary is one.
   %(cf. the mirror pictures in Figure~\ref{fig:case(1)} to \ref{fig:case(3)}).
   \begin{figure}[htpb]\centering	
   	\begin{tikzpicture}[scale=.5]
   		\begin{scope}[shift={(0,-0.5)}, scale=.5]
   			\draw[ultra thick]plot [smooth,tension=1] coordinates {(3,-3.5) (-90:3) (-3,-3.5)};
   			\draw[ultra thick, blue] (90:0.1) \nn (90:8) \nn;
   			\draw[red] (90:-3) .. controls +(45:4) and +(0:3) .. (0,2.5);
   			\draw[red] (90:-3) .. controls +(135:4) and +(180:3) .. (0,2.5);
   			\draw[blue, very thick,bend left] (90:8)to(90:0.1);
   			\draw[blue, very thick,bend right] (90:8)to(90:0.1);
   			\draw (90:0.1)node[below,black]{$p$}(0,4)node{$\sim$}(75:4)node[right,blue]{$\ws^\times$}(105:4)node[left,blue]{$\wt^\times$}(0,1)node{$\neq$}(0,6.5)node{$\neq$};
   			\draw[Green,->-=.99, thick,>=stealth]plot [smooth,tension=1] coordinates
   			{(60:1.6) (0:1.6) (-90:1.6) (-180:1.6) (-240:1.6)};
   			\draw[red] (-90:3)\ww ;
   			\begin{scope}[shift={(0,8)}, rotate=180]
   				\draw[red] (90:-3) .. controls +(45:4) and +(0:3) .. (0,2.5);
   				\draw[red] (90:-3) .. controls +(135:4) and +(180:3) .. (0,2.5);
   				\draw[ultra thick]plot [smooth,tension=1] coordinates {(3,-3.5) (-90:3) (-3,-3.5)};
   				\draw[red] (-90:3)\ww ;	
   			\end{scope}
   			\draw (0,-5)node{(VI)};
   			\draw(7,4)node[thick, black]{$\Longleftrightarrow$};
   		\end{scope}		
   		%%%%%%
   		\begin{scope}[shift={(8,0)}, scale=.5]
   			\draw[ultra thick]plot [smooth,tension=1] coordinates {(3,-4.5) (-90:4) (-3,-4.5)};
   			\draw[ultra thick,fill=gray!10] (90:0.5) ellipse (1.5) node {$\Vot_p$};
   			\draw[red] (90:-4) .. controls +(45:5) and +(30:4) .. (90:2);
   			\draw[red] (90:-4) .. controls +(135:5) and +(150:4) .. (90:2);			
   			%			\draw[thick, Green,->-=.6,>=stealth](180:3.5)to[bend left=30](155:1.5);
   			\draw(45:6.5)node[thick, cyan]{$\ws$}(135:6.5)node[thick, blue]{$\wt$};		
   			\draw[blue, ultra thick] (90:7)
   			.. controls +(150:5) and  +(-150:5) .. (-90:1);
   			\draw[cyan, ultra thick](90:7)
   			.. controls +(30:5) and  +(-30:5) .. (-90:1);
   			\draw[thick, Green,->-=.99,>=stealth](-50:1.9)to[bend left=30](-130:1.9);
   			%		    \draw[cyan, ultra thick] (90:9.5)
   			%	.. controls +(30:3) and +(0:4) .. (0,6)
   			%		.. controls +(180:3) and +(205:4) .. (0,10.5)
   			%		.. controls +(30:5) and +(15:6) .. (0,4)
   			%		.. controls +(-165:4) and +(-150:3) .. (0,.5);
   			\draw[very thick,blue] (-90:1)\nn  (0,2)\ww;
   			\draw[red] (-90:4)\ww ;
   			\begin{scope}[shift={(0,6)}, rotate=180]
   				\draw[ultra thick]plot [smooth,tension=1] coordinates {(3,-4.5) (-90:4) (-3,-4.5)};
   				\draw[ultra thick,fill=gray!10] (90:0.5) ellipse (1.5) node {$\Vot_{p'}$};
   				\draw[red] (90:-4) .. controls +(45:5) and +(30:4) .. (90:2);
   				\draw[red] (90:-4) .. controls +(135:5) and +(150:4) .. (90:2);
   				\draw[very thick,blue] (-90:1)\nn  (0,2)\ww;
   				\draw[red] (-90:4)\ww;	
   			\end{scope}
   		\end{scope}
   			\draw(12,1)node[thick, black]{\Huge{$\rightsquigarrow$}};
   		\begin{scope}[shift={(16,0)}, scale=.5]
   			\draw[ultra thick]plot [smooth,tension=1] coordinates {(3,-4.5) (-90:4) (-3,-4.5)};
   			\draw[ultra thick,fill=gray!10] (90:0.5) ellipse (1.5) node {$\Vot_p$};
   			\draw[red] (90:-4) .. controls +(45:5) and +(30:4) .. (90:2);
   			\draw[red] (90:-4) .. controls +(135:5) and +(150:4) .. (90:2);			
   			%			\draw[thick, Green,->-=.6,>=stealth](180:3.5)to[bend left=30](155:1.5);
   			\draw(135:7.5)node[thick, Green]{$\ws$}(135:5.5)node[thick, blue]{$\wt$};		
   			\draw[blue, ultra thick] (90:7)
   			.. controls +(150:5) and  +(-150:5) .. (-90:1);
\draw[Green,ultra thick](-90:1)arc(270:90:1.8)arc(90:-90:2.3) arc(270:90:5)arc(90:-90:2.3)arc(270:90:1.8);
%   			\draw[thick, Green,->-=.99,>=stealth](-50:1.9)to[bend left=30](-130:1.9);
   			%		    \draw[cyan, ultra thick] (90:9.5)
   			%	.. controls +(30:3) and +(0:4) .. (0,6)
   			%		.. controls +(180:3) and +(205:4) .. (0,10.5)
   			%		.. controls +(30:5) and +(15:6) .. (0,4)
   			%		.. controls +(-165:4) and +(-150:3) .. (0,.5);
   			\draw[very thick,blue] (-90:1)\nn  (0,2)\ww;
   			\draw[red] (-90:4)\ww ;
   			\begin{scope}[shift={(0,6)}, rotate=180]
   				\draw[ultra thick]plot [smooth,tension=1] coordinates {(3,-4.5) (-90:4) (-3,-4.5)};
   				\draw[ultra thick,fill=gray!10] (90:0.5) ellipse (1.5) node {$\Vot_{p'}$};
   				\draw[red] (90:-4) .. controls +(45:5) and +(30:4) .. (90:2);
   				\draw[red] (90:-4) .. controls +(135:5) and +(150:4) .. (90:2);
   				\draw[very thick,blue] (-90:1)\nn  (0,2)\ww;
   				\draw[red] (-90:4)\ww;	
   			\end{scope}
   		\end{scope}
   	\end{tikzpicture}
	\caption{Type (VI): GMSp v.s. GMSb}
    \label{fig:case(4)}
   \end{figure}
  \end{enumerate}

Finally, let $\wsx(t_1)=\wtx(t_2)=p\in\P$ as above
and $(\wsx,\wtx,p)=((\wg_1,\kappa_1),(\wg_2,\kappa_2),q)$ as in one of the cases (iii)$\sim$(vii) of Remark~\ref{rmk:int}, where we will show that $p$ does not contribute in \eqref{eq:intb=intx} in both sides/models.
   \begin{enumerate}
   \item[(vii)] In case (vii) of Remark~\ref{rmk:int},
   we want to show that there is no oriented intersection at $m_p$
   between $\ws$ and $\wt$ in the GMSb model.
   This can be done as shown in Figure~\ref{fig:case(5)}, where we only draw one of similar cases
   (as other cases can be obtained by taking a mirror of the upper/lower half of the pictures).
   More precisely, one fixes $\wt$ without loss of generality first and
   \begin{itemize}
   \item by choosing the representative of $\ws$
   in the middle picture of Figure~\ref{fig:case(5)}, one sees that
   there is no oriented intersection at $m_p$ from $\wt$ to $\ws$;
   \item by choosing the representative of $\ws$
   in the right picture of Figure~\ref{fig:case(5)}, one sees that
   there is no oriented intersection at $m_p$ from $\ws$ to $\wt$.
   \end{itemize}

   \begin{figure}[htpb]
\begin{tikzpicture}[scale=.5]
\begin{scope}[shift={(8,-0.5)}, scale=.5]
	\draw[ultra thick]plot [smooth,tension=1] coordinates {(3,-3.5) (-90:3) (-3,-3.5)};
	\draw[ultra thick, blue] (90:0.1) \nn (90:8) \nn;
	\draw[red] (90:-3) .. controls +(45:4) and +(0:3) .. (0,2.5);
	\draw[red] (90:-3) .. controls +(135:4) and +(180:3) .. (0,2.5);
	\draw[blue, very thick,bend left] (90:8)to(90:0.1);
	\draw[blue, very thick,bend right] (90:8)to(90:0.1);
	\draw (90:0.1)node[below,black]{$p$}(0,4)node{$\sim$}(75:4)node[right,blue]{$\wtx$}(105:4)node[left,blue]{$\wsx$}(0,1)node{$=$}(0,6.5)node{$\neq$};
	\draw[red] (-90:3)\ww ;
	\draw(8,4)node{$\Leftrightarrow$}(12,4)node{ };
	\begin{scope}[shift={(0,8)}, rotate=180]
		\draw[red] (90:-3) .. controls +(45:4) and +(0:3) .. (0,2.5);
		\draw[red] (90:-3) .. controls +(135:4) and +(180:3) .. (0,2.5);
		\draw[ultra thick]plot [smooth,tension=1] coordinates {(3,-3.5) (-90:3) (-3,-3.5)};
		\draw[red] (-90:3)\ww ;	
	\end{scope}
	\draw (0,-5)node{(vii)};
\end{scope}	
	\end{tikzpicture}
   	\begin{tikzpicture}[scale=.5]
   		\begin{scope}[shift={(0,0)}, scale=.5]
   			\draw[ultra thick]plot [smooth,tension=1] coordinates {(3,-4.5) (-90:4) (-3,-4.5)};
   			\draw[ultra thick,fill=gray!10] (90:2) ellipse (1.5) node {$\Vot_p$};
   			\draw[red] (90:-4) .. controls +(45:5) and +(30:4) .. (90:3.5);
   			\draw[red] (90:-4) .. controls +(135:5) and +(150:4) .. (90:3.5);			
   			\draw[very thick,blue] (90:.5)\nn  (90:.5)node[below]{$m_p$}   (0,3.5)\ww;
   			\draw[red] (-90:4)\ww ;
   			\begin{scope}[shift={(0,10)}, rotate=180]
   				\draw[ultra thick]plot [smooth,tension=1] coordinates {(3,-4.5) (-90:4) (-3,-4.5)};
   				\draw[ultra thick,fill=gray!10] (90:2) ellipse (1.5) node {$\Vot_{p'}$};
   				\draw[red] (90:-4) .. controls +(45:5) and +(30:4) .. (90:3.5);
   				\draw[red] (90:-4) .. controls +(135:5) and +(150:4) .. (90:3.5);
   				\draw[very thick,blue] (90:.5)\nn (0,3.5)\ww;
   				\draw[red] (-90:4)\ww;	
   			\end{scope}
   			\draw(120:8)node[thick, left, cyan]{$\ws$}(120:5)node[thick, blue]{$\wt$};		
   			\draw[cyan, ultra thick] (90:9.5)
   			.. controls +(150:7) and  +(-150:7) .. (90:.5);
   			\draw[blue, ultra thick] (90:9.5)
   			.. controls +(30:4) and +(15:5) .. (0,5)
   			.. controls +(-165:5) and +(-150:4) .. (0,.5);
   			\draw(8,5.5)node[thick, black]{\Huge{$\rightsquigarrow$}};
   		\end{scope}
   		\begin{scope}[shift={(8,0)}, scale=.5]
   			\draw[ultra thick]plot [smooth,tension=1] coordinates {(3,-4.5) (-90:4) (-3,-4.5)};
   			\draw[ultra thick,fill=gray!10] (90:2) ellipse (1.5) node {$\Vot_p$};
   			\draw[red] (90:-4) .. controls +(45:5) and +(30:4) .. (90:3.5);
   			\draw[red] (90:-4) .. controls +(135:5) and +(150:4) .. (90:3.5);			
   			\draw[very thick,blue] (90:.5)\nn  (0,3.5)\ww  (90:.5)node[below]{$m_p$} ;
   			\draw[red] (-90:4)\ww ;
   			\begin{scope}[shift={(0,10)}, rotate=180]
   				\draw[ultra thick]plot [smooth,tension=1] coordinates {(3,-4.5) (-90:4) (-3,-4.5)};
   				\draw[ultra thick,fill=gray!10] (90:2) ellipse (1.5) node {$\Vot_{p'}$};
   				\draw[red] (90:-4) .. controls +(45:5) and +(30:4) .. (90:3.5);
   				\draw[red] (90:-4) .. controls +(135:5) and +(150:4) .. (90:3.5);
   				\draw[very thick,blue] (90:.5)\nn  (0,3.5)\ww;
   				\draw[red] (-90:4)\ww;	
   			\end{scope}
   			\draw(60:7.5)node[thick, right, Green]{$\ws'$}(120:5)node[thick, blue]{$\wt$};
   			\draw[blue, ultra thick] (90:9.5)
   			.. controls +(30:4) and +(15:5) .. (0,5)
   			.. controls +(-165:5) and +(-150:4) .. (0,.5);
   			\draw[Green, ultra thick] (90:9.5)
   			.. controls +(30:3) and +(10:3) .. (0,6)
   			.. controls +(195:5) and +(205:4) .. (0,10.5)
   			.. controls +(30:5) and +(15:6) .. (0,4)
   			.. controls +(-165:4) and +(-150:3) .. (0,.5);
   		\end{scope}
   	\end{tikzpicture}
   	\caption{Case (vii): GMSp v.s. GMSb}
   	\label{fig:case(5)}
   \end{figure}
\item[O/W] If $(\wsx,\wtx,p)=((\wg_2,\kappa_2),(\wg_1,\kappa_1),q)$ as in one of the cases (iii)$\sim$(vi) in Remark~\ref{rmk:int}, one can switch $\ws$ and $\wt$ in Figures~\ref{fig:case(1)}$\sim$\ref{fig:case(4)}
respectively.
Then the pictures show that there is no oriented intersection at $q$ from $\ws$ to $\wt$ in GMSb model.

\end{enumerate}
\end{proof}

%=========================================================
\subsection{Morphisms between direct summands of the graded skew-gentle algebra }
%=========================================================

This subsection devotes to giving a basis of the space of morphisms between direct summands of $\sg$, which is compatible with the GMSb model. Note that $\{z\sg\mid z\in\OMEx\}$ is a complete set of non-isomorphic indecomposable direct summands of $\sg$.

Recall from Section~\ref{subsec:clannish2} that we have a basis $\{\lu(x_1,x_2)\mid (x_1,x_2)\in\mathcal{R}\}$ of $\rad\sg$, where $$\mathcal{R}=\{((i,j_1^{\kappa_1}),(i,j_2^{\kappa_2}))\in\OMEpm\times\OMEpm\mid 1\leq i\leq t,1\leq j_1< j_2\leq m_i\}.$$
For any $(x_1,x_2)\in\mathcal{R}$, denote
$$\begin{array}{rccc}
f^{\sg}_{x_1,x_2}:&\ec{x_2}\sg&\to& \ec{x_1}\sg\\
&h&\mapsto&\lu(x_1,x_2)h
\end{array}$$
the morphism in $\huaHom_\sg(\ec{x_2}\sg,\ec{x_1}\sg)$ induced by $\lu(x_1,x_2)$. When there is no confusion arising, we shall drop $\sg$ from the notation $f^{\sg}_{x_1,x_2}$.

For any $z_1,z_2\in\OMEx$, set
$$\mathcal{R}(z_1,z_2)=\{(x_1,x_2)\in\mathcal{R}\mid z_1=\ec{x_1},z_2=\ec{x_2}\}.$$
Then $\huaHom_\sg(z_2\sg,z_1\sg)$ admits a basis $f_{x_1,x_2}$, $(x_1,x_2)\in\mathcal{R}(z_1,z_2)$, together with the identity when $z_1=z_2$.

\begin{lemma}\label{lem:rmkcomp}
For any $(x_1,x_2),(x_3,x_4)\in\mathcal{R}$ with $\overline{x_2}=\overline{x_3}$, we have
$$f_{x_1,x_2}\circ f_{x_3,x_4}=\begin{cases}
f_{x_1,x_4}&\text{if $x_2=x_3$,}\\
0&\text{otherwise.}
\end{cases}$$
\end{lemma}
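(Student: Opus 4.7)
The plan is to reduce the composition question for $f_{x_1,x_2}\circ f_{x_3,x_4}$ to the known multiplication rule \eqref{eq:comp} in $H$ from Remark~\ref{rmk:BR}, since these morphisms are by definition given by left multiplication by the basis elements $\lu(x_1,x_2)$ and $\lu(x_3,x_4)$, both of which lie in $\rad(H)=\rad(\sg)\subseteq \sg$.

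First I would unwind the composition: for any $h\in\overline{x_4}\sg$, we have $f_{x_1,x_2}(f_{x_3,x_4}(h))=\lu(x_1,x_2)\cdot\lu(x_3,x_4)\cdot h$, where the multiplications take place in $\sg$ (equivalently, in $H$). Thus the lemma is equivalent to the product identity
$$\lu(x_1,x_2)\cdot\lu(x_3,x_4)=\begin{cases}\lu(x_1,x_4)&\text{if }x_2=x_3,\\ 0&\text{otherwise,}\end{cases}$$
in $H$, under the standing hypothesis $\overline{x_2}=\overline{x_3}$.

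Next I would split into two subcases. If $x_2=x_3$, the claim is immediate from \eqref{eq:comp} applied to $(x_1,x_2),(x_2,x_4)\in\mathcal{B}$ (both pairs lie in $\mathcal{R}\subset\mathcal{B}$ inside the same factor $H_i$). If instead $x_2\neq x_3$ with $\overline{x_2}=\overline{x_3}$, then by inspection of Definition~\ref{def:sg} and Remark~\ref{rmk:id}, neither $x_2$ nor $x_3$ lies in $\fc{\OMEpm}$ (because $\overline{x}$ is a singleton for $x\in\fc{\OMEpm}$), so $\{x_2,x_3\}=\{(i_1,j_1),(i_2,j_2)\}$ with $(i_1,j_1)\simeq(i_2,j_2)\notin\fc{\OME}$ and $i_1\neq i_2$. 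Consequently $\lu(x_1,x_2)\in H_{i_1}$ and $\lu(x_3,x_4)\in H_{i_2}$ lie in different components of the product algebra $H=H_1\times\cdots\times H_t$, forcing their product to be $0$; alternatively, $e_{x_2}\,\lu(x_3,x_4)=e_{x_2}e_{x_3}\lu(x_3,x_4)=0$ by orthogonality of the primitive idempotents of $H$.

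There is no real obstacle here — the argument is a direct bookkeeping exercise once one observes that the domain/codomain matching condition $\overline{x_2}=\overline{x_3}$ only allows the two cases above, and that the second case is killed by orthogonality of the source/target idempotents in $H$. The only subtlety worth flagging is that, although $f_{x_1,x_2}$ is a morphism of $\sg$-modules (not $H$-modules), the products $\lu(x_1,x_2)\cdot\lu(x_3,x_4)$ may be computed in the larger algebra $H$ because the inclusion $\sg\hookrightarrow H$ is an algebra map and both factors belong to $\rad(\sg)\subset\sg$.
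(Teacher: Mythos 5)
Your proof is essentially the same approach as the paper's (the paper's entire proof is ``This follows directly from \eqref{eq:comp} in Remark~\ref{rmk:BR}''), and your proof does reach the correct conclusion, but one step of your case analysis is wrong in a way worth flagging. When $x_2\neq x_3$ with $\overline{x_2}=\overline{x_3}$, you conclude that $x_2=(i_1,j_1)$ and $x_3=(i_2,j_2)$ with $i_1\neq i_2$, so that $\lu(x_1,x_2)$ and $\lu(x_3,x_4)$ live in \emph{different} factors $H_{i_1}\neq H_{i_2}$. This is false in general: the relation $\simeq$ on $\OME$ may relate two entries in the same row, e.g.\ in Example~\ref{ex:1} one has $(1,2)\simeq(1,6)$, so both products can lie in the same factor $H_1$. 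The ``different components'' argument therefore does not cover all cases.

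Fortunately, your fallback argument (``alternatively, $e_{x_2}\lu(x_3,x_4)=e_{x_2}e_{x_3}\lu(x_3,x_4)=0$ by orthogonality'') is correct and covers the missing case, so your proof is sound overall. But the cleanest version of your argument, and what the paper intends, dispenses with the case split entirely: the rule \eqref{eq:comp},
$$\lu(x_1,x_2)\lu(x_2',x_3)=\begin{cases}\lu(x_1,x_3)&\text{if }x_2'=x_2,\\0&\text{otherwise,}\end{cases}$$
is already stated for arbitrary $(x_1,x_2),(x_2',x_3)\in\mathcal{B}$, with no assumption on whether the two basis elements lie in the same or different factors of $H$; it subsumes both the ``same $H_i$, different entries'' case and the ``different $H_i$'' case. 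Applying it directly to $(x_1,x_2),(x_3,x_4)\in\mathcal{R}\subset\mathcal{B}$ and using $f_{x_1,x_2}\circ f_{x_3,x_4}(h)=\lu(x_1,x_2)\lu(x_3,x_4)h$ gives the lemma in one line, without needing any discussion of whether $x_2,x_3$ lie in $\fc{\OMEpm}$ or of the decomposition $H=H_1\times\cdots\times H_t$.
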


\begin{proof}
	This follows directly from \eqref{eq:comp} in Remark~\ref{rmk:BR}.
\end{proof}

We consider the sets $\OMEepm:=\OME\cup\OMEpm$ and $\OMEsx:=\OMEs\cup\OMEx$. Note that
$$\OMEepm=\OMEpm\cup\fc{\OME}=\OME\cup\fc{\OMEpm}.$$
Then each element in $\OMEepm$ can be presented as $(i,j^\kappa)$, where $\kappa=\emptyset$ if $(i,j)\notin\fc{\OME}$ and $\kappa\in\{+,-,\emptyset\}$ if $(i,j)\in\fc{\OME}$. There is an induced relation $\simeq$ on $\OMEepm$ and we have $\OMEepm/\simeq=\OMEsx$. We set
$$\mathcal{R}'=\{((i,j_1^{\kappa_1}),(i,j_2^{\kappa_2}))\in\OMEepm\times\OMEepm\mid 1\leq i\leq t,1\leq j_1< j_2\leq m_i\}.$$

For any $z\in\OMEsx$, denote
$$z\sg=\begin{cases}
(i,j^+)\sg\oplus(i,j^-)\sg&\text{if $z=(i,j)\in\fc{\OME}\subset\OMEsx$,}\\
z\sg&\text{if $z\in\OMEx$.}
\end{cases}$$

For any $y\in\OMEepm$, define a subset $\OMEpm(y)\subset\OMEpm$ as
$$\OMEpm(y)=\begin{cases}
\{(i,j^+),(i,j^-)\}&\text{if $y=(i,j)\in\fc{\OME}\subset\OMEepm$,}\\
\{y\}&\text{if $y\in\OMEpm$.}
\end{cases}$$
By definition, for any $y\in\OMEepm$, we have $\ec{y}\sg=\bigoplus_{x\in\OMEpm(y)}\overline{x}\sg$, and for any $(y_1,y_2)\in\mathcal{R}'$, we have $\OMEpm(y_1)\times\OMEpm(y_2)\subset\mathcal{R}$.

\begin{construction}\label{cons:fij}
For any $(y_1,y_2)\in\mathcal{R}'$, define $f_{y_1,y_2}\in\huaHom_{\sg}(\overline{y_2}\sg,\overline{y_1}\sg)$ as the matrix
$$f_{y_1,y_2}=\left(f_{x_1,x_2}\right)_{x_1\in\OMEpm(y_1),x_2\in\OMEpm(y_2)}.$$
\end{construction}
Explicitly, there are the following four cases for Construction~\ref{cons:fij}.
\begin{enumerate}
	\item For $y_1,y_2\in\OMEpm$ (i.e. $(y_1,y_2)\in\mathcal{R}$), we have $$f_{y_1,y_2}=\left(f_{y_1,y_2}\right):\overline{y_2}\sg\to \overline{y_1}\sg.$$
	\item For $y_1=(i,j)\in\fc{\OME},y_2\in\OMEpm$, we have
	$$f_{y_1,y_2}=\begin{pmatrix}f_{(i,j^+),y_2}\\ f_{(i,j^-),y_2}\end{pmatrix}:\overline{y_2}\sg\to\overline{y_1}\sg=\overline{(i,j^+)}\sg\oplus\overline{(i,j^-)}\sg.$$
	\item For $y_1\in\OMEpm,y_2=(i,j)\in\fc{\OME}$, we have
	$$f_{y_1,y_2}=\begin{pmatrix}f_{y_1,(i,j^+)}& f_{y_1,(i,j^-)}\end{pmatrix}:\overline{y_2}\sg=\overline{(i,j^+)}\sg\oplus\overline{(i,j^-)}\sg\to\overline{y_1}\sg.$$
	\item For $y_1=(i,j_1),y_2=(i,j_2)\in\fc{\OME}$, we have
	$$f_{y_1,y_2}=\begin{pmatrix} f_{(i,j_1^+),(i,j_2^+)}& f_{(i,j_1^+),(i,j_2^-)}\\ f_{(i,j_1^-),(i,j_2^+)}& f_{(i,j_1^-),(i,j_2^-)}\end{pmatrix}:\overline{(i,j_2^+)}\sg\oplus\overline{(i,j_2^-)}\sg\to \overline{(i,j_1^+)}\sg\oplus\overline{(i,j_1^-)}\sg.$$
\end{enumerate}

For any $z_1,z_2\in\OMEsx$, let
$$\mathcal{R}'(z_1,z_2)=\{(y_1,y_2)\in\mathcal{R}'\mid \ec{y_1}=z_1,\ec{y_2}=z_2\}.$$
and denote by $\mathcal{R}ad_{\sg}(z_2\sg,z_1\sg)$ the radical of $\huaHom_{\sg}(z_2\sg,z_1\sg)$.

\begin{remark}\label{lem:edge2}
	For any $z_1,z_2\in\OMEsx$, we have the direct sum decomposition:
	$$\mathcal{R}ad_{\sg}(z_2\sg,z_1\sg)=\bigoplus_{(y_1,y_2)\in\mathcal{R}'(z_1,z_2)}\mathcal{M}(y_1,y_2),$$
	where
    $$\mathcal{M}(y_1,y_2)=\{\left(\lambda_{x_1,x_2}f_{x_1,x_2}\right)_{x_1\in\OMEpm(y_1),x_2\in\OMEpm(y_2)}\mid\lambda_{x_1,x_2}\in\k \}.$$
    There is a natural basis of $\mathcal{M}(y_1,y_2)$:
    $$g_{x_1',x_2'}:=\left(\delta_{x_1,x_1'}\delta_{x_2,x_2'}f_{x_1,x_2}\right)_{x_1\in\OMEpm(y_1),x_2\in\OMEpm(y_2)},\ x'_1\in\OMEpm(y_1),x'_2\in\OMEpm(y_2),$$
    such that $$f_{y_1,y_2}=\sum_{x_1'\in\OMEpm(y_1),x_2'\in\OMEpm(y_2)}g_{x_1',x_2'},$$
\end{remark}

To give a basis of $\mathcal{M}(x_1,x_2)$ which is compatible with the GMSb model, we introduce the following notations, which will be used frequently later.

\begin{notations}\label{not:mo}
    For any $z\in\OMEsx$, we define a set
    $$\mdf{z}:=\begin{cases}\{\fen{(i,j^+)}{(i,j^-)},\fen{(i,j^-)}{(i,j^+)}\}&\text{if $z=(i,j)\in\fc{\OME}$,}\\
    \{\fen{z}{\emptyset}\}&\text{otherwise.}
    \end{cases}$$
	For any $z\in\OMEsx$ and any $\xi\in\mdf{z}$, we define $\fout{\xi},\fin{\xi}\in \huaHom_{\sg}(z\sg,z\sg)$ as follows.
	\begin{itemize}
	\item For the case $z=(i,j)\in\fc{\OME}$, define
	$$\fout{\fen{(i,j^+)}{(i,j^-)}}=\begin{pmatrix}
	1&0\\0&-1
	\end{pmatrix}:(i,j^+)\sg\oplus(i,j^-)\sg\to(i,j^+)\sg\oplus(i,j^-)\sg,$$
	$$\fout{\fen{(i,j^-)}{(i,j^+)}}=\begin{pmatrix}
	0&0\\0&1
	\end{pmatrix}:(i,j^+)\sg\oplus(i,j^-)\sg\to(i,j^+)\sg\oplus(i,j^-)\sg,$$
	$$\fin{\fen{(i,j^+)}{(i,j^-)}}=\begin{pmatrix}
	1&0\\0&0
	\end{pmatrix}:(i,j^+)\sg\oplus(i,j^-)\sg\to(i,j^+)\sg\oplus(i,j^-)\sg,$$
	$$\fin{\fen{(i,j^-)}{(i,j^+)}}=\begin{pmatrix}
	1&0\\0&1
	\end{pmatrix}:(i,j^+)\sg\oplus(i,j^-)\sg\to(i,j^+)\sg\oplus(i,j^-)\sg.$$
	\item Otherwise, define
	$$\fout{\fen{z}{\emptyset}}=\fin{\fen{z}{\emptyset}}=\id_{z\sg}.$$
	\end{itemize}
\end{notations}

\begin{lemma}\label{lem:dec}
	For any $z_1,z_2\in\OMEsx$ and any $(y_1,y_2)\in\mathcal{R}'(z_1,z_2)$, the space $\mathcal{M}(y_1,y_2)$ has a basis
	$$\fin{\xi_1}\circ f_{y_1,y_2}\circ \fout{\xi_2},\ \xi_1\in\mdf{z_1},\ \xi_2\in\mdf{z_2}.$$
\end{lemma}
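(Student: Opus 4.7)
The plan is to reduce the lemma to a direct dimension-count plus linear-independence check, proceeding case-by-case according to whether $y_1$ and $y_2$ lie in $\OMEpm$ or in $\fc{\OME}$. The first observation is that the proposed list of morphisms has exactly $|\mdf{z_1}|\cdot|\mdf{z_2}|=|\OMEpm(y_1)|\cdot|\OMEpm(y_2)|$ elements, which matches $\dim_\k\mathcal{M}(y_1,y_2)$ (the natural basis $\{g_{x_1',x_2'}\}$ from Remark~\ref{lem:edge2}). Thus it suffices to show that the $\fin{\xi_1}\circ f_{y_1,y_2}\circ \fout{\xi_2}$ are linearly independent, which I will do by expressing each of them in the basis $\{g_{x_1',x_2'}\}$.

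The four cases to check correspond to the four cases of Construction~\ref{cons:fij}. If both $y_1,y_2\in\OMEpm$, then $\mdf{z_1}$ and $\mdf{z_2}$ are singletons and $\fout{\cdot}=\fin{\cdot}=\id$, so the claim is trivial with a single element $f_{y_1,y_2}=g_{y_1,y_2}$. If only $y_1=(i,j)\in\fc{\OME}$ (case 2), the two elements are
\[\fin{\tfrac{(i,j^+)}{(i,j^-)}}\circ f_{y_1,y_2}=g_{(i,j^+),y_2},\qquad \fin{\tfrac{(i,j^-)}{(i,j^+)}}\circ f_{y_1,y_2}=g_{(i,j^+),y_2}+g_{(i,j^-),y_2},\]
which are clearly linearly independent; case 3 (only $y_2\in\fc{\OME}$) is entirely analogous using $\fout{\cdot}$ on the right.

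The only real computation is case 4, $y_1=(i,j_1),\,y_2=(i,j_2)\in\fc{\OME}$. Writing $f_{y_1,y_2}$ as the $2\times 2$ matrix $\bigl(f_{(i,j_1^{\kappa_1}),(i,j_2^{\kappa_2})}\bigr)_{\kappa_1,\kappa_2\in\{+,-\}}$, one multiplies out the four products of the $2\times 2$ matrices $\fin{\xi_1}$ and $\fout{\xi_2}$ defined in Notations~\ref{not:mo}. Denoting the four basis elements by $g_{\kappa_1,\kappa_2}:=g_{(i,j_1^{\kappa_1}),(i,j_2^{\kappa_2})}$, the four products expand respectively as
\[g_{+,+}-g_{+,-},\quad g_{+,-},\quad g_{+,+}-g_{+,-}+g_{-,+}-g_{-,-},\quad g_{+,-}+g_{-,-}.\]
The coefficient matrix in the ordered basis $(g_{+,+},g_{+,-},g_{-,+},g_{-,-})$ is upper-triangularizable by elementary row operations and its determinant is $\pm 1$, so these four elements are linearly independent and form a basis of $\mathcal{M}(y_1,y_2)$.

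The only delicate point, and thus the main place to be careful, is the sign bookkeeping in case 4: the choice of $\pm 1$ entries in $\fout{\fen{(i,j^+)}{(i,j^-)}}$ versus $\fin{\fen{(i,j^+)}{(i,j^-)}}$ is precisely what makes the linear independence work, and one must verify that the resulting four elements indeed span a full $4$-dimensional subspace. Given that, all four cases together establish the lemma.
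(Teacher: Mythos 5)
Your proof follows essentially the same route as the paper's: both reduce the claim to the case $y_1,y_2\in\fc{\OME}$, multiply out the same four $2\times2$ matrix products from Notations~\ref{not:mo}, and then observe that the resulting coordinate vectors in the natural basis $\{g_{x_1',x_2'}\}$ are linearly independent. The only differences are cosmetic: you make the dimension count explicit up front and spell out the row-reduction, whereas the paper simply records the four matrices and says they form a basis ``compared with the natural basis.''
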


\begin{proof}
	We only deal the most complicated case that $y_l=(i,j_l)\in\fc{\OME},l=1,2$. Then $\mdf{z_l}=\{\fen{(i,j_l^+)}{(i,j_l^-)},\fen{(i,j_l^-)}{(i,j_l^+)}\}$ and we have
	$$\fin{\fen{(i,j_1^+)}{(i,j_1^-)}}\circ f_{y_1,y_2}\circ \fout{\fen{(i,j_2^+)}{(i,j_2^-)}}=\begin{pmatrix} f_{(i,j_1^+),(i,j_2^+)}& -f_{(i,j_1^+),(i,j_2^-)}\\ 0& 0\end{pmatrix},$$
	$$\fin{\fen{(i,j_1^+)}{(i,j_1^-)}}\circ f_{y_1,y_2}\circ \fout{\fen{(i,j_2^-)}{(i,j_2^+)}}=\begin{pmatrix} 0& f_{(i,j_1^+),(i,j_2^-)}\\ 0& 0\end{pmatrix},$$
	$$\fin{\fen{(i,j_1^-)}{(i,j_1^+)}}\circ f_{y_1,y_2}\circ \fout{\fen{(i,j_2^+)}{(i,j_2^-)}}=\begin{pmatrix} f_{(i,j_1^+),(i,j_2^+)}& -f_{(i,j_1^+),(i,j_2^-)}\\ f_{(i,j_1^-),(i,j_2^+)}& -f_{(i,j_1^-),(i,j_2^-)}\end{pmatrix},$$
	$$\fin{\fen{(i,j_1^-)}{(i,j_1^+)}}\circ f_{y_1,y_2}\circ \fout{\fen{(i,j_2^-)}{(i,j_2^+)}}=\begin{pmatrix} 0& f_{(i,j_1^+),(i,j_2^-)}\\ 0& f_{(i,j_1^-),(i,j_2^-)}\end{pmatrix}.$$
	Compared with the natural basis given in Remark~\ref{lem:edge2}, these four form a basis of $\mathcal{M}(y_1,y_2)$.
\end{proof}

This basis has the following nice property.

\begin{lemma}\label{lem:comp1}
    For any $(y_1,y_2),(y_2,y_3)\in\mathcal{R}'$ and for any $\xi,\xi'\in\mdf{\overline{y_2}}$, we have
	$$f_{y_1,y_2}\circ \fout{\xi'}\circ \fin{\xi}\circ f_{y_2,y_3}=\begin{cases}f_{y_1,y_3}&\text{if $\xi=\xi'$,}\\0&\text{if $\xi\neq\xi'$.}\end{cases}$$
\end{lemma}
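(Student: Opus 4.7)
The plan is a direct computation using the explicit matrix descriptions of $f_{y_1,y_2}$, $\fout{\xi}$, and $\fin{\xi}$, together with the composition rule of Lemma~\ref{lem:rmkcomp}. The natural split is according to whether $\overline{y_2}\in\fc{\OME}$, as this dictates whether $\mdf{\overline{y_2}}$ is a singleton or has two elements.

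First I would dispose of the case $\overline{y_2}\notin\fc{\OME}$. Here $\mdf{\overline{y_2}}$ is a singleton, so necessarily $\xi=\xi'$, and both $\fout{\xi'}=\fin{\xi}=\id_{\overline{y_2}\sg}$ by Notations~\ref{not:mo}. The claim reduces to $f_{y_1,y_2}\circ f_{y_2,y_3}=f_{y_1,y_3}$, which follows by multiplying the block matrices from Construction~\ref{cons:fij} entry by entry: Lemma~\ref{lem:rmkcomp} implies that $f_{x_1,x_2}\circ f_{x_3,x_4}=f_{x_1,x_4}$ when $x_2=x_3$ and vanishes otherwise, so each entry of the product matrix collapses to the corresponding entry of $f_{y_1,y_3}$.

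The bulk of the work is the case $\overline{y_2}=(i,j)\in\fc{\OME}$, with four subcases for $(\xi,\xi')$. For each I would compute $\fout{\xi'}\circ\fin{\xi}$ as a $2\times 2$ matrix acting on $(i,j^+)\sg\oplus(i,j^-)\sg$ directly from Notations~\ref{not:mo}. When $\xi=\xi'=\fen{(i,j^+)}{(i,j^-)}$, it equals $\bigl(\begin{smallmatrix}1&0\\0&0\end{smallmatrix}\bigr)$; when $\xi=\xi'=\fen{(i,j^-)}{(i,j^+)}$, it equals $\bigl(\begin{smallmatrix}0&0\\0&1\end{smallmatrix}\bigr)$; when $\xi=\fen{(i,j^+)}{(i,j^-)}\neq\xi'=\fen{(i,j^-)}{(i,j^+)}$, it vanishes; and when $\xi=\fen{(i,j^-)}{(i,j^+)}\neq\xi'=\fen{(i,j^+)}{(i,j^-)}$, it equals $\bigl(\begin{smallmatrix}1&0\\0&-1\end{smallmatrix}\bigr)$. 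In the first two (diagonal) subcases, sandwiching the idempotent projection between $f_{y_1,y_2}$ and $f_{y_2,y_3}$ selects exactly one of the two factorizations $f_{y_1,y_3}=f_{y_1,(i,j^\pm)}\circ f_{(i,j^\pm),y_3}$ provided by Lemma~\ref{lem:rmkcomp}, yielding $f_{y_1,y_3}$; the third subcase is trivially zero.

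The main obstacle is the fourth subcase, where $\fout{\xi'}\circ\fin{\xi}$ is nonzero yet the full composition must nonetheless vanish. The plan there is to exploit the identity $f_{y_1,(i,j^+)}\circ f_{(i,j^+),y_3}=f_{y_1,(i,j^-)}\circ f_{(i,j^-),y_3}=f_{y_1,y_3}$ coming from \eqref{eq:comp}: every entry of $f_{y_1,y_2}\,\bigl(\begin{smallmatrix}1&0\\0&-1\end{smallmatrix}\bigr)\,f_{y_2,y_3}$ reduces to a difference of the form $f_{x_1,x_3}-f_{x_1,x_3}=0$. This cancellation is engineered precisely by the sign in $\fout{\fen{(i,j^+)}{(i,j^-)}}$ and is the algebraic incarnation of the $\ZZ_2$-symmetry at punctures; recognizing and using it is the only conceptually nontrivial step of the proof.
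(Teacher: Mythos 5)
Your proof is correct and follows essentially the same route as the paper: you reduce to the case $\overline{y_2}\in\fc{\OME}$, compute the same four matrices $\fout{\xi'}\circ\fin{\xi}$, and then verify the formula entrywise via Lemma~\ref{lem:rmkcomp}. The paper leaves the final case-by-case check to the reader, whereas you spell out the key cancellation in the off-diagonal subcase, which is a useful clarification but not a different argument.
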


\begin{proof}
	The case $|\mdf{\overline{y_2}}|=1$ is trivial. So we only consider the case $|\mdf{\overline{y_2}}|=2$. Let $y_2=(i,j)\in\fc{\OME}$. Then we have $\mdf{\overline{y_2}}=\{\fen{(i,j^+)}{(i,j^-)},\fen{(i,j^-)}{(i,j^+)}\}$. By definition, we have
	$$\fout{\fen{(i,j^+)}{(i,j^-)}}\circ \fin{\fen{(i,j^+)}{(i,j^-)}}=\begin{pmatrix}
		1&0\\0&0
	\end{pmatrix},\ \fout{\fen{(i,j^-)}{(i,j^+)}}\circ \fin{\fen{(i,j^+)}{(i,j^-)}}=\begin{pmatrix}
		0&0\\0&0
	\end{pmatrix},\ $$
	$$\fout{\fen{(i,j^-)}{(i,j^+)}}\circ \fin{\fen{(i,j^-)}{(i,j^+)}}=\begin{pmatrix}
		0&0\\0&1
	\end{pmatrix},\ \fout{\fen{(i,j^+)}{(i,j^-)}}\circ \fin{\fen{(i,j^-)}{(i,j^+)}}=\begin{pmatrix}
		1&0\\0&-1
	\end{pmatrix}.$$
	Then one can check the formula case by case, using Lemma~\ref{lem:rmkcomp}.
\end{proof}

We remark that if we replace $f_{y_2,y_3}$ with the identity of $\ec{y_2}\sg$ in Lemma~\ref{lem:comp1}, the formula may fail.

%=========================================================
\subsection{Dg modules associated to graded unknotted arcs}\label{subsec:cons}
%=========================================================

We fix some notations for a graded unknotted arc.

\begin{notations}\label{not:seg}
	Let $\ws$ be a graded unknotted arc in $\gUC(\gmsx)$ with a fixed orientation. We denote by ${\wks_1},\cdots,{\wks_{p_{\ws}}}$ the arcs in $\OMEx=\dac\x$ which $\ws$ crosses in order, at the points $\Vs_1,\cdots,\Vs_{p_{\ws}}$ with intersection indices $\wsi_1,\cdots,\wsi_{p_{\ws}}$, respectively. Denote by $\Vs_0$ and $\Vs_{p_{\ws}+1}$ respectively the starting point and the ending point of $\ws$. Denote by $\asu{\ws}{l}{l+1}$ the segment of $\ws$ between $\Vs_l$ and $\Vs_{l+1}$, for any $0\leq l\leq p_{\ws}$, and call them the \emph{arc segments of $\ws$}. The orientation of $\asu{\ws}{l}{l+1}$ from $\Vs_l$ to $\Vs_{l+1}$ is denoted by $\aso{\ws}{l}{l+1}$ and the opposite orientation is denoted by $\aso{\ws}{l+1}{l}$. Denote by $\as(\ws)=\{\asu{\ws}{l}{l+1}\mid 0\leq l\leq p_{\ws}\}$ the set of arc segments of $\ws$. Similarly as before, denote by $\upas(\ws)$ (resp. $\cias(\ws)$, $\inter{\as(\wsx)}$, $\tou{\as(\wsx)}$) the set of unbinaried (resp. binaried, interior, non-interior) arc segments of $\ws$. For any unbinaried $\aso{\ws}{l}{l+1}\in \upas(\ws)$, denote it by $\was_l\to \wbs_{l+1}$ (see Notation~\ref{not:as} for the meaning of this notation) with $\was_l, \wbs_{l+1}\in\OMEpm$, i.e.  See Figure~\ref{fig:ws1}.
	
	Passing to the GMSp model, the corresponding graded tagged arc $\ws^\times$ crosses the arcs ${\hks_1},\cdots,{\hks_{p_{\ws}}}\in\OMEs=\dac$ in order, at the (same) points $\Vs_1,\cdots,\Vs_{p_{\ws}}$ with (same) intersection indices $\wsi_1,\cdots,\wsi_{p_{\ws}}$, respectively. Identifying $m_P$ with $P$ for any $P\in\P$, the starting point and the ending point of $\ws$ are also $\Vs_0$ and $\Vs_{p_{\ws}+1}$ respectively. The arc segment of $\ws^\times$ between $\Vs_l$ and $\Vs_{l+1}$ is $\asu{\wsx}{l}{l+1}=\shk(\asu{\ws}{l}{l+1})$. The unpunctured $\aso{\wsx}{l}{l+1}$ is denoted by $\has_l\to \hbs_{l+1}$ with $\has_l, \hbs_{l+1}\in\OME$. See Figure~\ref{fig:ws2}. Note that we have
	$$\widehat{?}^{\ws}_l=\begin{cases}
	(i,j)&\text{if $\widetilde{?}^{\ws}_l=(i,j^+)$ or $(i,j^-)$,}\\
	\widetilde{?}^{\ws}_l&\text{otherwise,}\end{cases}$$
	where $?=k,a,b$.

	Denote by $\as(\wsx)=\upas(\wsx)\cup\cias(\wsx)=\inter{\as(\wsx)}\cup\tou{\as(\wsx)}$ the set of arc segments of $\wsx$, where $\upas(\wsx)$ (resp. $\cias(\wsx)$, $\inter{\as(\wsx)}$, $\tou{\as(\wsx)}$) denotes the set of unpunctured (resp. punctured, interior, non-interior) arc segments of $\wsx$.
\end{notations}

\begin{figure}[htpb]\centering
	\begin{tikzpicture}[scale=1.6]
	\draw[blue,thick,->-=.65,>=stealth](0,0)node[blue,left]{$\ws\colon\quad \Vs_{0}$} to (7,0)node[blue,right]{$\Vs_{p_{\ws}+1}$};
	\draw[blue] (.5,.7)node{$\asu{\ws}{0}{1}$} (2,.7)node{$\asu{\ws}{1}{2}$} (6.5,.7)node{$\asu{\ws}{p_{\ws}}{p_{\ws}+1}$};
	\draw[red, thick](1,1)to(1,-1)node[below]{$\wks_{1}$} (1,0)\nn node[blue!50,above right]{$\Vs_1$};
	\draw[red, thick](3,1)to(3,-1)node[below]{$\wks_{2}$} (3,0)\nn node[blue!50,above right]{$\Vs_2$};
	\draw[red, thick](6,1)to(6,-1)node[below]{$\wks_{p_{\ws}}$} (6,0)\nn node[blue!50,above right]{$\Vs_{p_{\ws}}$} (4.5,.5)node{$\cdots$}(4.5,-.5)node{$\cdots$};
	\draw[Green,thick,-<-=.6,>=stealth](1,-.3)to[bend left=-45] (1+.3,0);
	\draw[Green](1+.4,-.4)node{\small{$\wsi_1$}};
	\draw[Green,thick,-<-=.6,>=stealth](3,-.3)to[bend left=-45] (3+.3,0);
	\draw[Green](3+.4,-.4)node{\small{$\wsi_2$}};
	\draw[Green,thick,-<-=.6,>=stealth](6,-.3)to[bend left=-45] (6+.3,0);
	\draw[Green](6+.4,-.4)node{\small{$\wsi_{p_{\ws}}$}};
	%\draw[white](0,0)\nn(7,0)\nn;
	\draw[blue](0,0)\nn(7,0)\nn;
	\draw[blue] (1,-.9)node[left]{\tiny $\wbs_1$} (1,-.9)node[right]{\tiny $\was_1$} (3,-.9)node[left]{\tiny $\wbs_2$} (3,-.9)node[right]{\tiny $\was_2$} (6,-.9)node[left]{\tiny $\wbs_{p_{\ws}}$} (6,-.9)node[right]{\tiny $\was_{p_{\ws}}$};
	\end{tikzpicture}
	\caption{Arc segments of a graded unknotted arc $\ws$, cut out by $\dac\x$}\label{fig:ws1}
\end{figure}

\begin{figure}[htpb]\centering
	\begin{tikzpicture}[scale=1.6]
	\draw[blue,thick,->-=.65,>=stealth](0,0)node[blue,left]{$\ws^\times\colon\quad \Vs_{0}$} to (7,0)node[blue,right]{$\Vs_{p_{\ws}+1}$};
	\draw[blue] (.5,.7)node{$\asu{\wsx}{0}{1}$} (2,.7)node{$\asu{\wsx}{1}{2}$} (6.5,.7)node{$\asu{\wsx}{p_{\ws}}{p_{\ws}+1}$};
	\draw[red, thick](1,1)to(1,-1)node[below]{$\hks_{1}$} (1,0)\nn node[blue!50,above right]{$\Vs_1$};
	\draw[red, thick](3,1)to(3,-1)node[below]{$\hks_{2}$} (3,0)\nn node[blue!50,above right]{$\Vs_2$};
	\draw[red, thick](6,1)to(6,-1)node[below]{$\hks_{p_{\ws}}$} (6,0)\nn node[blue!50,above right]{$\Vs_{p_{\ws}}$} (4.5,.5)node{$\cdots$}(4.5,-.5)node{$\cdots$};
	\draw[Green,thick,-<-=.6,>=stealth](1,-.3)to[bend left=-45] (1+.3,0);
	\draw[Green](1+.4,-.4)node{\small{$\wsi_1$}};
	\draw[Green,thick,-<-=.6,>=stealth](3,-.3)to[bend left=-45] (3+.3,0);
	\draw[Green](3+.4,-.4)node{\small{$\wsi_2$}};
	\draw[Green,thick,-<-=.6,>=stealth](6,-.3)to[bend left=-45] (6+.3,0);
	\draw[Green](6+.4,-.4)node{\small{$\wsi_{p_{\ws}}$}};
	%\draw[white](0,0)\nn(7,0)\nn;
	\draw[blue](0,0)\nn(7,0)\nn;
	\draw[blue] (1,-.9)node[left]{\tiny $\hbs_1$} (1,-.9)node[right]{\tiny $\has_1$} (3,-.9)node[left]{\tiny $\hbs_2$} (3,-.9)node[right]{\tiny $\has_2$} (6,-.9)node[left]{\tiny $\hbs_{p_{\ws}}$} (6,-.9)node[right]{\tiny $\has_{p_{\ws}}$};
	\end{tikzpicture}
	\caption{Arc segments of a graded tagged arc $\ws^\times$, cut out by $\dac$}\label{fig:ws2}
\end{figure}

Throughout the rest of the paper, we will use Notations~\ref{not:seg} for any graded unknotted arc in $\gUC(\gmsx)$.

\begin{construction}\label{cons:qui}
Let $\ws\in\gUC(\gmsx)$ be a graded unknotted arc. We define a quiver $Q^{\ws}=(Q^{\ws}_0,Q^{\ws}_1)$ of type $A_{p_{\ws}}$ as follows.
\begin{itemize}
\item The vertex set $Q^{\ws}_0=\{1,2,\cdots,p_{\ws}\}$.
\item The arrow set $Q^{\ws}_1=\inter{\as(\wsx)}$ with $s(\asu{\wsx}{l}{l+1})=l$ and $t(\asu{\wsx}{l}{l+1})=l+1$ if $\aso{\ws}{l}{l+1}$ is positive, or $s(\asu{\wsx}{l}{l+1})=l+1$ and $t(\asu{\wsx}{l}{l+1})=l$ if $\aso{\ws}{l+1}{l}$ is positive (see Definition~\ref{def:seg} for the notion of positive orientation).
\end{itemize}
We define an equivalence relation on $Q^{\ws}_0$ induced by $l\simeq l+1$ if $\asu{\ws}{l}{l+1}$ is binaried, see Figure~\ref{fig:equiv}. Denote by $\ec{Q_0^{\ws}}$ the corresponding quotient set of $Q^{\ws}_0$.
\end{construction}

For any $l\simeq l+1\in Q^{\ws}_0$, we have (cf. the pictures in the second row of Figure~\ref{fig:equiv})
$$\hks_{l}=\hks_{l+1},\ \wsi_{l}=\wsi_{l+1}\ \text{and}\ \hbs_{l}=\has_{l+1}.$$
Hence we can define the following notations.

\begin{notations}\label{not:uni}
For any $\ell\in \ec{Q_0^{\ws}}$, we define $\wsi_{\ell}\in\ZZ$, $\ks_{\ell}\in\OMEsx$ and $\uas_{\ell},\ubs_{\ell}\in\OMEepm$ to be
$$\wsi_{\ell}=\wsi_l,\ ?^{\ws}_{\ell}=\begin{cases}
\widehat{?}^{\ws}_{l}&\text{if $|\ell|=2$,}\\
\widetilde{?}^{\ws}_{l}&\text{if $|\ell|=1$,}
\end{cases}$$
where $?=k,a,b$ and $l\in\ell$.
\end{notations}

Note that although $\has_{l}$ and $\was_{l}$ (resp. $\hbs_{l}$ and $\wbs_{l}$) are not defined for some $l$, $\uas_\ell$ (resp. $\ubs_\ell$) is defined for every $\ell\in \ec{Q_0^{\ws}}$ unless $\ell=\{p_{\ws}\}$ (resp. $\ell=\{1\}$) and $\ws$ ends (resp. starts) at a binary. Note also that we have $\{\uas_\ell,\ubs_{\ell}\}=\ks_{\ell}$, where $\uas_{\ell}=\ubs_{\ell}$ if and only if $|\ell|=2$.

\begin{figure}[htpb]\centering\makebox[\textwidth][c]{
    \begin{tikzpicture}[scale=.45,font=\scriptsize]
        \draw[ultra thick]plot [smooth,tension=1] coordinates {(3,-4.5) (-90:4) (-3,-4.5)};
		\draw[ultra thick,fill=gray!10] (90:2) ellipse (1.5) node{$\Vot_p$};
		\draw[red,thick] (90:-4) .. controls +(45:5) and +(30:4) .. (90:3.5);
		\draw[red,thick] (3,4)node{$\wks_{l+1}$} (-3,4)node{$\wks_l$};
		\draw[red,thick] (90:-4) .. controls +(135:5) and +(150:4) .. (90:3.5);
		\draw[blue, thick,->-=.5,>=stealth] (-4,-2)node[left]{$\ws$}tonode[above]{$\asu{\ws}{l}{l+1}$}(4,-2);
		\draw[blue] (-2,-2)node[below]{$\wbs_l$} (2,-2)node[below]{$\was_{l+1}$};
		\draw[very thick,blue] (90:.5)\nn  (0,3.5)\ww;
		\draw[red] (-90:4)\ww;
    \end{tikzpicture}\quad
    \begin{tikzpicture}[scale=.45,font=\scriptsize]
        \draw[ultra thick]plot [smooth,tension=1] coordinates {(3,-4.5) (-90:4) (-3,-4.5)};
		\draw[ultra thick,fill=gray!10] (90:2) ellipse (1.5) node{$\Vot_p$};
		\draw[red,thick] (90:-4) .. controls +(45:5) and +(30:4) .. (90:3.5);
		\draw[red,thick] (3,4)node{$\wks_{l}$} (-3,4)node{$\wks_{l+1}$};
		\draw[red,thick] (90:-4) .. controls +(135:5) and +(150:4) .. (90:3.5);
		\draw[blue, thick,->-=.5,>=stealth] (4,-2)node[right]{$\ws$}tonode[above]{$\asu{\ws}{l}{l+1}$}(-4,-2);
		\draw[blue] (-2,-2)node[below]{$\uas_{l+1}$} (2,-2)node[below]{$\ubs_l$};
		\draw[very thick,blue] (90:.5)\nn  (0,3.5)\ww;
		\draw[red] (-90:4)\ww;
    \end{tikzpicture}\quad
    \begin{tikzpicture}[scale=.45,font=\scriptsize]
        \draw[ultra thick]plot [smooth,tension=1] coordinates {(3,-4.5) (-90:4) (-3,-4.5)};
		%\draw[ultra thick,fill=gray!10] (90:2) ellipse (1.5) node{$\Vot_p$};
		\draw[red,thick] (90:-4) .. controls +(45:5) and +(0:4) .. (90:3.5);
		\draw[red,thick] (0,4)node{$\hks_l=\hks_{l+1}$};
		\draw[red,thick] (90:-4) .. controls +(135:5) and +(180:4) .. (90:3.5);
		\draw[blue, thick,->-=.5,>=stealth] (-4,-2)node[left]{$\wsx$}tonode[above]{$\asu{\wsx}{l}{l+1}$}(4,-2);
		\draw[blue] (-2,-2)node[below]{$\hbs_l$} (2,-2)node[below]{$\has_{l+1}$};
		\draw[very thick,blue] (90:.5)\nn (90:.5)node[above]{$p$};
		\draw[red] (-90:4)\ww;
    \end{tikzpicture}\quad
    \begin{tikzpicture}[scale=.45,font=\scriptsize]
        \draw[ultra thick]plot [smooth,tension=1] coordinates {(3,-4.5) (-90:4) (-3,-4.5)};
		%\draw[ultra thick,fill=gray!10] (90:2) ellipse (1.5) node{$\Vot_p$};
		\draw[red,thick] (90:-4) .. controls +(45:5) and +(0:4) .. (90:3.5);
		\draw[red,thick] (0,4)node{$\hks_l=\hks_{l+1}$};
		\draw[red,thick] (90:-4) .. controls +(135:5) and +(180:4) .. (90:3.5);
		\draw[blue, thick,->-=.5,>=stealth] (4,-2)node[right]{$\wsx$}tonode[above]{$\asu{\wsx}{l}{l+1}$}(-4,-2);
		\draw[blue] (-2,-2)node[below]{$\has_{l+1}$} (2,-2)node[below]{$\hbs_l$};
		\draw[very thick,blue] (90:.5)\nn (90:.5)node[above]{$p$};
		\draw[red] (-90:4)\ww;
    \end{tikzpicture}}
    \caption{Equivalence relation on $Q_0^{\ws}$}\label{fig:equiv}
\end{figure}

\begin{notations}\label{not:mdf}
    For any $\ell\in \ec{Q^{\ws}_0}$ and any $l\in\ell$, we define $\xis_l\in\mdf{\ks_{\ell}}$ (see Notations~\ref{not:mo} for the notation $\mdf{\ks_{\ell}}$) to be
    $$\xis_l=\begin{cases}
    \fen{(i,j^+)}{(i,j^-)}&\text{if $|\ell|=2$ and $\wks_l=(i,j^+)$,} \vspace{1mm}\\
    \fen{(i,j^-)}{(i,j^+)}&\text{if $|\ell|=2$ and $\wks_l=(i,j^-)$,} \vspace{1mm}\\
    \fen{\wks_l}{\emptyset}&\text{if $|\ell|=1$.}
    \end{cases}$$
\end{notations}

\begin{construction}\label{cons:dgm}
	Let $\ws\in\gUC(\gmsx)$ be a graded unknotted arc. We construct $\cpy{\ws}=(|\cpy{\ws}|,d_{\ws})$ as follows.
	\begin{enumerate}
	\item The underlying graded $\sg$-module
	$$|\cpy{\ws}|=\bigoplus_{1\leq l\leq p_{\ws}}\wks_l\sg[\wsi_l]=\bigoplus_{\ell\in \ec{Q^{\ws}_0}}\cply{\ws}{\ell},$$
	where $\cply{\ws}{\ell}:=\ks_{\ell}\sg[\wsi_{\ell}]=\bigoplus_{l\in\ell}\wks_l\sg[\wsi_{l}].$
	\item Each unbinaried arc segment $\asu{\ws}{l}{l+1}$ of $\ws$ contributes a component $d_{\asu{\ws}{l}{l+1}}$ of $d_{\ws}$:
	$$d_{\asu{\ws}{l}{l+1}}=\begin{cases}
	f^{in}_{\xis_{l}}\circ f_{\uas_{\overline{l}},\ubs_{\overline{l+1}}}\circ f^{out}_{\xis_{l+1}}:\cply{\ws}{\overline{l+1}}\to \cply{\ws}{\overline{l}}&\text{if $\aso{\ws}{l}{l+1}$ is positive,}\\
	f^{in}_{\xis_{l+1}}\circ f_{\ubs_{\overline{l+1}},\uas_{\overline{l}}}\circ f^{out}_{\xis_{l}}:\cply{\ws}{\overline{l}}\to \cply{\ws}{\overline{l+1}}&\text{if $\aso{\ws}{l+1}{l}$ is positive.}
	\end{cases}$$
	\end{enumerate}
\end{construction}

\begin{lemma}\label{lem:msp}
	For any $\ws\in\gUC(\gmsx)$,  $\cpy{\ws}=(|\cpy{\ws}|,d_{\ws})$ is a minimal strictly perfect dg $\sg$-module.
\end{lemma}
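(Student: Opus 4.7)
The plan is to verify in order the five conditions in the definition of a minimal strictly perfect dg $\sg$-module: (i) $|\cpy{\ws}|$ is a finite direct sum of shifts of indecomposable summands of $\sg$; (ii) $d_\ws$ is homogeneous of degree $1$; (iii) $d_\ws^2=0$; (iv) every entry of $d_\ws$ lies in $\rad\sg$; and (v) there is an ordering of the summands in which $d_\ws$ is strictly upper triangular. Conditions (i) and (iv) are immediate from Construction~\ref{cons:dgm}: each $\cply{\ws}{\ell}=\ks_\ell\sg[\wsi_\ell]$ splits into one or two indecomposable shifts according to whether $|\ell|=1$ or $|\ell|=2$, and every $f_{x_1,x_2}$ that appears is left multiplication by $\lu(x_1,x_2)\in\rad\sg$ (since $(x_1,x_2)\in\mathcal{R}$ has $j_1<j_2$). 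For (ii), I would combine Lemma~\ref{lem:r1r2}, which gives $\wsi_l-\wsi_{l+1}=\sum_{j=j_1}^{j_2-1}\chi^{(i)}_{j,j+1}-1$ for an unbinaried positive segment with $\was_l=(i,j_1^{\kappa_1}),\wbs_{l+1}=(i,j_2^{\kappa_2})$, with the fact that $\lu(\was_l,\wbs_{l+1})$ has degree $\sum_{j=j_1}^{j_2-1}\chi^{(i)}_{j,j+1}$ and that $\fin{\xi},\fout{\xi}$ are degree-$0$ scalar matrices; the negative-orientation case is symmetric.

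The hard part is (iii). Since any nonzero composition requires two contributing segments sharing an interior crossing $\Vs_l$, the check is local and falls into two cases. \emph{Case A}: both $\asu{\ws}{l-1}{l}$ and $\asu{\ws}{l}{l+1}$ are unbinaried and $|\overline l|=1$. Then $\fin{\xis_l}=\fout{\xis_l}=\id_{\ks_{\overline l}\sg}$, and the composition reduces to a scalar-matrix sandwich of $f_{\uas_{\overline{l-1}},\ubs_{\overline l}}\circ f_{\uas_{\overline l},\ubs_{\overline{l+1}}}$. I would argue that the constraint $|\overline l|=1$ forces $\wks_l$ to be a two-element class in $\OMEx$ (if instead $\wks_l$ were one of the singleton classes in $\fc{\OMEpm}$, the polygon on one side of $\wks_l$ would be binaried, making an adjacent segment binaried), so $\ks_{\overline l}$ has two distinct lifts in $\OMEpm$ corresponding to the two adjacent polygons; these are $\uas_{\overline l}$ and $\ubs_{\overline l}$, hence $\uas_{\overline l}\neq\ubs_{\overline l}$, and the product rule~\eqref{eq:comp} annihilates the middle composition. \emph{Case B}: an intermediate binaried segment $\asu{\ws}{l}{l+1}$ joins two unbinaried segments $\asu{\ws}{l-1}{l},\asu{\ws}{l+1}{l+2}$, so $\overline l=\overline{l+1}=\{l,l+1\}$ and $\ks_{\overline l}=\hks_l\in\fc{\OME}$. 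Then $\wks_l,\wks_{l+1}$ are the two opposite edges $(i,j^+),(i,j^-)$ of the binaried polygon, hence $\xis_l\neq\xis_{l+1}$ in $\mdf{\ks_{\overline l}}$, and Lemma~\ref{lem:comp1} applied to the middle $\fout{\xis_l}\circ\fin{\xis_{l+1}}$ gives zero. The four sign combinations of segment orientations are handled identically, and unknottedness of $\ws$ (Lemma~\ref{lem:minimal}) excludes two consecutive binaried segments, so no other configuration arises.

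Finally, (v) follows because $Q^\ws$ is a quiver of type $A_{p_\ws}$, hence acyclic: any topological sort of its vertex set, refined to a total order on $\ec{Q^\ws_0}$, arranges the summands $\cply{\ws}{\ell}$ so that every differential component $d_{\asu{\ws}{l}{l+1}}$ maps from a later summand to an earlier one, which is strict upper triangularity.
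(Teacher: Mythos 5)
Your proof is correct and takes essentially the same route as the paper's: the published proof is devoted entirely to the $d_\ws^2=0$ check with your exact two-case split (your Case~A, $|\ell|=1$, handled via $\uas_{\overline l}\neq\ubs_{\overline l}$ and Lemma~\ref{lem:rmkcomp}; your Case~B, $|\ell|=2$, handled via $\xis_l\neq\xis_{l+1}$ and Lemma~\ref{lem:comp1}), and dismisses the remaining conditions of minimal strict perfectness as following directly from the construction, which you have simply spelled out (degree one via Lemma~\ref{lem:r1r2}, radical entries, acyclicity of $Q^{\ws}$). You also supply the geometric reason why $\uas_{\overline l}\neq\ubs_{\overline l}$ in Case~A --- the singleton case $\wks_l\in\fc{\OMEpm}$ would force a binaried adjacent segment --- a point the paper's proof leaves tacit. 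One small imprecision worth tightening in Case~B: the middle product $\fout{\xis_l}\circ\fin{\xis_{l+1}}$ is not itself always the zero matrix (the proof of Lemma~\ref{lem:comp1} shows one of the two orderings of $\xi,\xi'$ yields $\operatorname{diag}(1,-1)$); what the lemma kills is the full sandwich $f_{y_1,y_2}\circ\fout{\xi'}\circ\fin{\xi}\circ f_{y_2,y_3}$, and that is what you should invoke.
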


\begin{proof}
To show $\cpy{\ws}$ is a dg module, we need to show that for any $\ell\in \ec{Q_0^{\ws}}$, if there is a component $d_{\asu{\ws}{l_1}{l_1+1}}$ of $d_{\ws}$ to $\cply{\ws}{\ell}$ and a component $d_{\asu{\ws}{l_2}{l_2+1}}$ of $d_{\ws}$ from $\cply{\ws}{\ell}$, we have $d_{\asu{\ws}{l_2}{l_2+1}}\circ d_{\asu{\ws}{l_1}{l_1+1}}=0$. We only prove this for the case $l_1<l_2$, since the case $l_1>l_2$ is similar. There are two subcases.
\begin{enumerate}
\item $\ell=\{l\}$: then $l_1=l-1$ and $l_2=l$. So $\xis_{l}=\fen{\wks_l}{\emptyset}$, which implies $f^{in}_{\xis_{l}}=\id_{\cply{\ws}{l}}=f^{out}_{\xis_{l}}$. Since $\ubs_{\overline{l}}\neq\uas_{\overline{l}}$, by Lemma~\ref{lem:rmkcomp}, we have $f_{\ubs_{\overline{l+1}},\uas_{\overline{l}}}\circ f_{\ubs_{\overline{l}},\uas_{\overline{l-1}}}=0$. Then
$$\begin{array}{rcl}
    d_{\asu{\ws}{l}{l+1}}\circ d_{\asu{\ws}{l-1}{l}} & = & f^{in}_{\xis_{l+1}}\circ f_{\ubs_{\overline{l+1}},\uas_{\overline{l}}}\circ f^{out}_{\xis_{l}}\circ f^{in}_{\xis_{l}}\circ f_{\ubs_{\overline{l}},\uas_{\overline{l-1}}}\circ f^{out}_{\xis_{l-1}}  \\
    & = & f^{in}_{\xis_{l+1}}\circ f_{\ubs_{\overline{l+1}},\uas_{\overline{l}}}\circ f_{\ubs_{\overline{l}},\uas_{\overline{l-1}}}\circ f^{out}_{\xis_{l-1}} \\
    & = & 0.
\end{array}$$
\item $\ell=\{l,l+1\}$: then $l_1=l-1$ and $l_2=l+1$. So we have $\uas_{\overline{l+1}}=\ubs_{\overline{l}}=\ks_{\ell}$. Then by Lemma~\ref{lem:comp1}, we have
$$d_{\asu{\ws}{l+1}{l+2}}\circ d_{\asu{\ws}{l-1}{l}}=f^{in}_{\xis_{l+2}}\circ f_{\ubs_{\overline{l+2}},\uas_{\overline{l+1}}}\circ f^{out}_{\xis_{l+1}}\circ f^{in}_{\xis_{l}}\circ f_{\ubs_{\overline{l}},\uas_{\overline{l-1}}}\circ f^{out}_{\xis_{l-1}}=0.$$
\end{enumerate}
The property of minimal strictly perfectness follows directly from the construction.
\end{proof}

\begin{remark}\label{rmk:shift}
    By the construction, for any $\ws\in\gUC(\gmsx)$ and any $\rho\in\ZZ$, we have
    $$\cpy{\ws[\rho]}\cong\cpy{\ws}[\rho].$$
\end{remark}

\begin{example}\label{ex:dg}
    Let $\ws$ and $\wt$ be the graded unknotted arcs in Example~\ref{ex:unknotted}, whose tagged versions are shown in Example~\ref{ex:tag}. The dg modules $\cpy{\ws}$ and $\cpy{\wt}$ are respectively given by
    $$|\cpy{\ws}|=\ec{(1,1)}\sg\oplus(1,3^+)\sg[2]\oplus(1,3^-)\sg[2]\oplus{(1,4^-)}\sg[3],$$ $$d_{\ws}=\begin{pmatrix}
    0&f_{(1,1),(1,3^+)}&-f_{(1,1),(1,3^-)}&0\\
    0&0&0&f_{(1,3^+),(1,4^-)}\\
    0&0&0&f_{(1,3^-),(1,4^-)}\\
    0&0&0&0
    \end{pmatrix}$$
    and
    $$\begin{array}{rl}
        |\cpy{\wt}|= & \ec{(1,1)}\sg\oplus(1,3^+)\sg[2]\oplus(1,3^-)\sg[2]\oplus(1,4^+)\sg[3]\oplus(1,4^-)\sg[3] \\
         & \oplus(1,3^+)\sg[2]\oplus(1,3^-)\sg[2]\oplus(1,4^+)\sg[3]\oplus(1,4^-)\sg[3]\oplus\ec{(1,7)}\sg[5],
    \end{array}$$
    $$d_{\wt}=\left(\begin{smallmatrix}
    0&f_{1,3^+}&-f_{1,3^-}&0&0&0&0&0&0&0\\
    0&0&0&f_{3^+,4^+}&-f_{3^+,4^-}&0&0&0&0&0\\
    0&0&0&f_{3^-,4^+}&-f_{3^-,4^-}&0&0&0&0&0\\
    0&0&0&0&0&0&0&0&0&0\\
    0&0&0&0&0&0&0&0&0&0\\
    0&0&0&0&f_{3^+,4^-}&0&0&f_{3^+,4^+}&-f_{3^+,4^-}&0\\
    0&0&0&0&0&0&0&f_{3^-,4^+}&-f_{3^-,4^-}&0\\
    0&0&0&0&0&0&0&0&0&f_{4^+,7}\\
    0&0&0&0&0&0&0&0&0&f_{4^-,)}\\
    0&0&0&0&0&0&0&0&0&0
    \end{smallmatrix}\right),$$
    where $f_{j_1^{\kappa_1},j_2^{\kappa_2}}$ stands for $f_{(1,j_1^{\kappa_1}),(1,j_2^{\kappa_2})}$. Note that although $d_{\wt}$ is not a strictly upper triangular matrix, after reordering the direct summands of $|\cpy{\wt}|$, the resulting matrix of $d_{\wt}$ is.
\end{example}

%=========================================================
\subsection{Compatibility}
%=========================================================

This subsection is devoted to proving the compatibility between Theorem~\ref{thm1} and Construction~\ref{cons:dgm}.

\begin{remark}\label{rmk:Rmx}
    Combining the following maps
    $$\careII\xrightarrow[\eqref{eq:tag to ls}]{}\care\xrightarrow[\eqref{eq:l.s.}]{\bione}\OO\xrightarrow[\eqref{eq:rep}]{R} \repb(S)$$
    and using the notations in Notations~\ref{not:seg}, for any $\wsx\in\careII$, the representation $R(\bione(\wsx))=(X_{\wsx},f_{\wsx})$ is formed by an object
    $$X_{\wsx}=\bigoplus_{1\leq l\leq p_{\ws}}(\wks_l,\wsi_l)\oplus\bigoplus_{\mu\in\upas(\wsx)}\mu\in\add\bush,$$
    and for any $\asu{\wsx}{l}{l+1}\in\upas(\wsx)$ (cf. Figure~\ref{fig:fw}),
    $$f_{\wsx}(\underline{\aso{\wsx}{l}{l+1}})=\begin{cases}\underline{\was_l}+\underline{\wbs_{l-1}}&\text{if $\aso{\wsx}{l-1}{l}$ is positive, interior and punctured,}\\\underline{\was_l}&\text{otherwise,}\end{cases}$$
    $$f_{\wsx}(\underline{\aso{\wsx}{l+1}{l}})=\begin{cases}\underline{\wbs_{l+1}}+\underline{\was_{l+2}}&\text{if $\aso{\wsx}{l+2}{l+1}$ is positive, interior and punctured,}\\\underline{\wbs_{l+1}}&\text{otherwise.}\end{cases}$$
\end{remark}

\begin{theorem}\label{thm2}
We have the following commutative diagram
$$\xymatrix{
\gUC(\gmsx)\ar[rr]^{(-)^\times}_{\eqref{eq:bij}}\ar[rd]_{\cpy{?}}&&\careII\ar[dl]^{\widetilde{X}}\\
&\tw\sg}.$$
%For any $\ws\in\gUC(\gmsx)$, we have $\cpy{\ws}\cong\widetilde{X}(\wsx)$.
In particular, we obtain a bijection
$$\boxed{
\cpy{?}\colon\gUC(\gmsx)\to\goodind.
}$$
\end{theorem}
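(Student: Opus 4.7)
The plan is to chase $\cpy{\ws}$ through the factorisation $\widetilde{X}=\fE^{-1}\circ\fM^{-1}\circ R\circ\bione$ recorded in~\eqref{eq:thmcomm2} and match the output with $R(\bione(\wsx))$. Because $\fE$ and $\fM$ detect and preserve isoclasses of indecomposables (Theorems~\ref{thm:tribu} and~\ref{thm:BD}) on the relevant subcategories, it suffices to show that
\[
\fM\bigl(\fE(\cpy{\ws})\bigr)\cong R(\bione(\wsx))\quad\text{in $\repb(S)$.}
\]
Once this is established, the commutativity of the diagram and the bijection $(-)^\times$ of Lemma~\ref{lem:bij1} together with~\eqref{eq:tag to ls} will identify $\cpy{?}$ with the restriction of $\widetilde{X}$ to the sub-family where $\wg\in\wOA(\gms)$ and $\dim_\k N=1$, whose image is by definition $\goodind$.

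\textbf{Step 1 (Compute $\fE(\cpy{\ws})$).} Write $\fE(\cpy{\ws})=(Y^\bullet,V^\bullet,\theta)$. Since $\overline{\sg}$ annihilates $\rad(\sg)$, the differential of $\cpy{\ws}$ vanishes after tensoring, so $V^\bullet=\bigoplus_{\ell\in\ec{Q_0^{\ws}}}\ks_\ell\overline{\sg}[\wsi_\ell]$ with zero differential. Tensoring with $H$ further decomposes each $\ks_\ell\sg$ into the direct sum of indecomposable $H$-summands indexed by $\OMEpm(\uas_\ell)\cup\OMEpm(\ubs_\ell)$, and each component $d_{\asu{\ws}{l}{l+1}}$ of the differential contributed by an \emph{unbinaried} segment becomes, via Lemma~\ref{lem:rmkcomp} and Notations~\ref{not:mo}, the single-entry map $f^H_{\was_l,\wbs_{l+1}}$ (up to shift). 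Hence $Y^\bullet$ is a direct sum of the 2-term complexes $W^\bullet(\wmu)$ of Construction~\ref{cons:W}, one for each $\wmu\in\upas(\ws)$.

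\textbf{Step 2 (Match the $S^-$- and $S^+$-parts).} By Lemma~\ref{lem:full} and the diagram~\eqref{eq:zm}, $\fF(Y^\bullet)=\bigoplus_{\wmu\in\upas(\ws)}\wmu$ in $\add\k\overline{S^-}$; the bijection~\eqref{eq:bij} identifies $\upas(\ws)$ with $\upas(\wsx)$, which is precisely the $S^-$-part of $X_{\wsx}$ described in Remark~\ref{rmk:Rmx}. Similarly, by Lemma~\ref{lem:equiv} and~\eqref{eq:fm}, $\fG(V^\bullet)=\bigoplus_{\ell}(\ks_\ell,\wsi_\ell)$ in $\add\k\overline{S^+}$, which agrees with the $S^+$-part $\bigoplus_{l}(\wks_l,\wsi_l)$ of $X_{\wsx}$ after collapsing along the equivalence $\ec{Q_0^{\ws}}$ that encodes binaried segments (the collapse corresponds exactly to passing from $\OMEpm$ to $\OMEx$, i.e.\ to the identification $(i,j^\pm)\mapsto(i,j)$ occurring inside a punctured $\dac$-polygon).

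\textbf{Step 3 (Match the gluing map).} The isomorphism $\theta$ is the canonical one, and its image under $\fMM$ is the block map whose entries are determined by the endpoint data of each arc segment of $\ws$. On an unbinaried segment $\asu{\ws}{l}{l+1}$ the canonical projection sends $\underline{\aso{\wsx}{l}{l+1}}$ to $\underline{\was_l}$, exactly as in Construction~\ref{cons:fw}(1.a). On a binaried segment the key point is that the idempotents $\fin{\xi_l^{\ws}}$ and $\fout{\xi_l^{\ws}}$ of Notations~\ref{not:mo} were tailored so that, after reducing modulo $\rad(\sg)$, the composition
\[
\fin{\xi_l^{\ws}}\circ f_{\uas_{\overline{l}},\ubs_{\overline{l+1}}}\circ\fout{\xi_{l+1}^{\ws}}
\]
projects onto the $\overline{H}$-basis vectors with the same $\pm$-sign pattern as Construction~\ref{cons:fw}(1.b)/(2.b), yielding $f_{\wsx}(\underline{\aso{\wsx}{l-1}{l}})=\underline{\was_l}+\underline{\wbs_{l-1}}$ (resp.\ $=\underline{\was_l}$) in the $(i,j^-)$ (resp.\ $(i,j^+)$) case. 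A case-by-case verification over the four possibilities for $\xi_l^{\ws}$, using Lemma~\ref{lem:comp1}, concludes the identification.

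\textbf{Step 4 (Bijectivity onto $\goodind$).} Combining Steps~1--3, $\cpy{?}$ is the pullback of $\widetilde{X}$ along $(-)^\times$ composed with~\eqref{eq:tag to ls}, restricted to the subset $\careII\hookrightarrow\care$. By Definition~\ref{def:xian}, the image of this subset under $\widetilde{X}$ is exactly $\goodind$, and bijectivity follows from that of $\widetilde{X}$ in Theorem~\ref{thm1} together with the bijections~\eqref{eq:bij} and~\eqref{eq:tag to ls}.

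\textbf{Main obstacle.} The hard part is Step~3: the sign conventions in Notations~\ref{not:mo} and the ordering rules in Construction~\ref{cons:fw} for symbols $((i,j^\pm),r)$ must be shown to agree exactly at each binaried crossing. This requires a careful enumeration of the four local configurations (strand entering on $(i,j^+)$ vs $(i,j^-)$, crossing positively vs negatively) and verifying that the resulting $2\times 2$ block of $\theta$ matches the prescription in Construction~\ref{cons:fw} for the alternating summand $\underline{\was_l}\pm\underline{\wbs_{l-1}}$; this is where the asymmetry in the definitions of $\fout{\xi}$ and $\fin{\xi}$ (in particular the $-1$ entry) plays an essential role.
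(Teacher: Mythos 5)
Your overall strategy coincides with the paper's: reduce to showing $\fM\circ\fE(\cpy{\ws})\cong R\circ\bione(\wsx)$ in $\repb(S)$, compute both sides via the object-level identifications, and then conclude via Theorems~\ref{thm:tribu} and~\ref{thm:BD} together with Lemma~\ref{lem:bij1} and~\eqref{eq:tag to ls}. Steps~1, 2, and 4 are essentially the paper's argument.

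However, Step~3 contains a genuine conceptual gap. You write that "the isomorphism $\theta$ is the canonical one, and its image under $\fMM$ is the block map whose entries are determined by the endpoint data," and you try to recover the pattern of Construction~\ref{cons:fw} by reducing the differential component $\fin{\xis_l}\circ f_{\uas_{\ec{l}},\ubs_{\ec{l+1}}}\circ\fout{\xis_{l+1}}$ modulo $\rad(\sg)$. This does not work: that component lies in the radical and hence vanishes after tensoring with $\overline{\sg}$ or $\overline{H}$, and the canonical gluing map $\theta'$ of $\fE(\cpy{\ws})$ is literally the identity on $\bigoplus_\ell \overline{H}_{\uas_\ell}[\wsi_\ell]\oplus\overline{H}_{\ubs_\ell}[\wsi_\ell]$, so $\fMM(\theta')=\mathrm{id}$. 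In contrast, the representation $R(\bione(\wsx))=(X_{\wsx},f_{\wsx})$ has a nontrivial gluing map $f_{\wsx}$ (Construction~\ref{cons:fw}, Remark~\ref{rmk:Rmx}); equivalently, the preimage $(Y^\bullet,V^\bullet,\theta)=\fM^{-1}(R(\bione(\wsx)))$ has $\theta$ with blocks $\theta_\ell=\bigl(\begin{smallmatrix}1&1\\0&1\end{smallmatrix}\bigr)$ whenever $|\ell|=2$, as illustrated by Figure~\ref{fig:fw3}. Thus $\fE(\cpy{\ws})$ and $\fM^{-1}(R(\bione(\wsx)))$ are two \emph{different} triples with the same underlying objects but different gluing maps, and one must construct an explicit isomorphism between them. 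The paper does exactly this: it sets $h=\mathrm{id}$ and defines $g$ to be block-diagonal with blocks $g_\ell=\theta_\ell$ (the nontrivial $\bigl(\begin{smallmatrix}1&1\\0&1\end{smallmatrix}\bigr)$ absorbing the discrepancy), and then verifies that $g$ commutes with the differentials $d$ of $Y^\bullet$ and $d^H_{\ws}$ of $Y'^\bullet$. This verification is where the asymmetric definitions of $\fout{\xi}$ and $\fin{\xi}$ (in particular the $-1$ entry you mention) actually enter — not in a direct computation of $\fMM(\theta')$. Without this explicit isomorphism of triples, your Step~3 does not establish $\fE(\cpy{\ws})\cong\fM^{-1}(R(\bione(\wsx)))$, and the argument is incomplete.
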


\begin{proof}
    We first show that it suffices to show $ R\circ\bione(\wsx)\cong \fM\circ\fE(\cpy{\ws})$ in $\repb(S)$ for any $\ws\in\gUC(\gmsx)$, where $\fM$ is the functor in Theorem~\ref{thm:tribu} and $\fE$ is the functor in Theorem~\ref{thm:BD}. This is because, then by definition, we have $\careII\subseteq\cares$, and hence the bijection~\eqref{eq:glue} can restrict to $\careII$ and by the commutative diagram~\eqref{eq:thmcomm2}, we have $\fM\circ\fE(\widetilde{X}(\wsx))\cong  R\circ\bione(\wsx)$. Therefore, by Theorems~\ref{thm:tribu} and \ref{thm:BD}, we have $\widetilde{X}(\wsx)\cong\cpy{\ws}$, i.e., we get the required commutative diagram.

    Now we prove $R\circ\bione(\wsx)\cong \fM\circ\fE(\cpy{\ws})$. Since by Theorem~\ref{thm:tribu}, $\fM$ is dense, there is an object $(Y^\bullet,V^\bullet,\theta)$ in $\Tri\sg$ such that $R\circ\bione(\wsx)=\fM(Y^\bullet,V^\bullet,\theta)$. Then by Theorem~\ref{thm:tribu}~(1), we only need to show $(Y^\bullet,V^\bullet,\theta)\cong \fE(\cpy{\ws})$.  Denote
    $\fE(\cpy{\ws})=({Y'}^\bullet,{V'}^\bullet,\theta').$

    By Remark~\ref{rmk:Rmx} and the construction of $\fM$, we have
\begin{itemize}
\item $Y^\bullet=\bigoplus_{\wmu\in\upas(\wsx)}W^\bullet(\wmu)=\left(\bigoplus_{\ell\in \ec{Q_0^{\ws}}}H_{\uas_{\ell}}[\wsi_{\ell}]\oplus H_{\ubs_{\ell}}[\wsi_{\ell}],d\right)$ (see Construction~\ref{cons:W} for the structure of $W^\bullet(\wmu)$);
\item $V^\bullet=\bigoplus_{\ell\in \ec{Q_0^{\ws}}}\ks_{\ell}\overline{\sg}[\wsi_{\ell}]$;
\item using the identifications $$Y^\bullet\otimes_{H}\overline{H}=\bigoplus_{\ell\in \ec{Q_0^{\ws}}}\overline{H}_{\uas_{\ell}}[\wsi_{\ell}]\oplus\overline{H}_{\ubs_{\ell}}[\wsi_{\ell}]=V^\bullet\otimes_{\overline{\sg}} \overline{H},$$
we have the map
$$\theta=\operatorname{diag}(\theta_\ell:\overline{H}_{\uas_{\ell}}[\wsi_{\ell}]\oplus\overline{H}_{\ubs_{\ell}}[\wsi_{\ell}]\to\overline{H}_{\uas_{\ell}}[\wsi_{\ell}]\oplus\overline{H}_{\ubs_{\ell}}[\wsi_{\ell}],\ell\in \ec{Q_0^{\ws}}),$$
where
$$\theta_\ell=\begin{cases}\begin{pmatrix}1&0\\0&1\end{pmatrix}&\text{if $|\ell|=1$,}\\
\begin{pmatrix}1&1\\0&1\end{pmatrix}&\text{if $|\ell|=2$,}\end{cases}$$
as shown in Figures~\ref{fig:fw2} and \ref{fig:fw3} (compared with Figure~\ref{fig:fw}), respectively. Here, the rows/columns of the matrix for case $|\ell|=2$ are indexed by the order first $\uas_{\ell}$ then $\ubs_{\ell}$ if $\ell=\{l,l+1\}$, $\was_{l+1}=(i,j^+)$ and $\wbs_{l}=(i,j^-)$, or else by the opposite order.
\end{itemize}

\begin{figure}[htpb]
	\begin{tikzpicture}[yscale=1.5,xscale=2]
	\draw[red,very thick] (0,-.7)to(0,1.8)node[above]{$\ks_{\overline{l}}$} (0,.2)node[right]{$\uas_{\overline{l}}$} (0,.2)node[left]{$\ubs_{\overline{l}}$};
	%\draw[blue,thick] (-2,0) node[left]{$\uas_{\overline{l-1}}$}to(0,0) to(2,0)node[right]{$\ubs_{\overline{l+1}}$};
	\draw[blue,thick,->-=.5,>=stealth] (-2,0)to(2,0);
	\draw[blue] (-1,0)node[above]{$\asu{\wsx}{l-1}{l}$} (1,0)node[above]{$\asu{\wsx}{l}{l+1}$};
	\draw[blue] (-1,1.5)node[above]{$W^\bullet(\asu{\wsx}{l-1}{l})\otimes_H\overline{H}$}
	(-.95,1.35)node{$||$} (.95,1.35)node{$||$} (1,1.5)node[above]{$W^\bullet(\asu{\wsx}{l}{l+1})\otimes_H\overline{H}$};
	\draw[blue] (-.1,1)node[left]{$\overline{H}_{\ubs_{\overline{l}}}[\wsi_l]$}
	(-2,1)node[right]{$\overline{H}_{\uas_{\overline{l-1}}}[\wsi_{l-1}]$}
	(-.95,1)node{$\oplus$} (.1,1)node[right]{$\overline{H}_{\uas_{\overline{l}}}[\wsi_l]$}
	(2,1)node[left]{$\overline{H}_{\ubs_{\overline{l+1}}}[\wsi_{l+1}]$}
	(.95,1)node{$\oplus$};
	\draw[blue](0,-1.7)node{$\ks_\ell\overline{\sg}[\wsi_l]\otimes_{\overline{\sg}}\overline{H}$} (0,-1.35)node{$||$}
	(0,-1)node{$\oplus$} (-.1,-1)node[left]{$\overline{H}_{\ubs_{\overline{l}}}[\wsi_l]$}
	(.1,-1)node[right]{$\overline{H}_{\uas_{\overline{l}}}[\wsi_l]$};
	\draw[Green,thick,->-=1,>=stealth] (-.35,.8)to(-.35,-.8);
	\draw[Green,thick,->-=1,>=stealth] (.35,.8)to(.35,-.8);
	\end{tikzpicture}
	\caption{Triple from arc, $\ell=\{l\}$ case}\label{fig:fw2}
\end{figure}
	
\begin{figure}[htpb]
	\begin{tikzpicture}[xscale=2,yscale=1.6]
	\draw[thick] (0,-1)node{$\bullet$};
	\draw[red, thick]plot [smooth, tension=1] coordinates {(0,0) (-.6,-1.2) (0,-1.7) (.6,-1.2) (0,0)};
	\draw[blue,thick,->-=.5,>=stealth]plot [smooth, tension=1] coordinates {(-2.2,-1) (0,-.3) (2.2,-1)};
	\draw[blue] (-1.3,-.85)node{$\asu{\wsx}{l-1}{l}$} (1.3,-.85)node{$\asu{\wsx}{l+1}{l+2}$}
	(0,-.6)node{$\asu{\wsx}{l}{l+1}$};
	\draw[blue] (-1.1,.5)node[above]{$W^\bullet(\asu{\wsx}{l-1}{l})\otimes_H\overline{H}$}
	(-1.1,.35)node{$||$} (1.1,.35)node{$||$} (1.1,.5)node[above]{$W^\bullet(\asu{\wsx}{l+1}{l+2})\otimes_H\overline{H}$};
	\draw[blue] (-.2,0)node[left]{$\overline{H}_{\ubs_{\ell}}[\wsi_\ell]$}
	(-2.2,0)node[right]{$\overline{H}_{\uas_{\overline{l-1}}}[\wsi_{\overline{l-1}}]$}
	(-1.1,0)node{$\oplus$} (.2,0)node[right]{$\overline{H}_{\uas_{\ell}}[\wsi_{\ell}]$}
	(2.2,0)node[left]{$\overline{H}_{\ubs_{\overline{l+2}}}[\wsi_{\overline{l+2}}]$}
	(1.1,0)node{$\oplus$};
	\draw[blue](0,-2.7)node{$\ks_{\ell}\overline{\sg}[\wsi_{\ell}]\otimes_{\overline{\sg}}\overline{H}$} (0,-2.35)node{$||$} (0,-2)node{$\oplus$}  (-.55,-2)node{$\overline{H}_{\ubs_{\ell}}[\wsi_\ell]$}
	(.55,-2)node{$\overline{H}_{\uas_{\ell}}[\wsi_\ell]$};
	\draw[Green,thick,->-=.99,>=stealth, bend right] (-.65,-.2)to(-.5,-1.8);
	\draw[Green,thick,->-=.99,>=stealth,bend left] (.65,-.2)to(.55,-1.8);
	\draw[Green,thick,->-=.99,>=stealth,bend right=30] (-.55,-.2)to(.4,-1.8);
	\draw[red] (0,0)\ww ;
	\end{tikzpicture}
	\caption{Triple from arc, $\ell=\{l,l+1\}$ ($\was_{l+1}=(i,j^+)$, $\wbs_{l}=(i,j^-)$) case}\label{fig:fw3}
\end{figure}

On the other hand, since $\cpy{\ws}=(|\cpy{\ws}|,d_{\ws})$ is minimal strictly perfect, by the constructions of $\cpy{\ws}$ and $\fE$, we have
\begin{itemize}
\item ${Y'}^\bullet=\cpy{\ws}\otimes_\sg H=\left(\bigoplus_{\ell\in \ec{Q_0^{\ws}}}H_{\uas_{\ell}}[\wsi_{\ell}]\oplus H_{\ubs_{\ell}}[\wsi_{\ell}],d^H_{\ws}\right)$;
\item ${V'}^\bullet=\cpy{\ws}\otimes_\sg\overline{\sg}=\bigoplus_{\ell\in \ec{Q_0^{\ws}}}\wks_{\ell}\overline{\sg}[\wsi_{\ell}]$;
\item using the identifications $${Y'}^\bullet\otimes_{H}\overline{H}=\bigoplus_{\ell\in \ec{Q_0^{\ws}}}\overline{H}_{\uas_{\ell}}[\wsi_{\ell}]\oplus\overline{H}_{\ubs_{\ell}}[\wsi_{\ell}]={V'}^\bullet\otimes_{\overline{\sg}} \overline{H},$$
we have that the canonical map $\theta'=\theta_{\cpy{\ws}}:(\cpy{\ws}\otimes_\sg H)\otimes_{H}\overline{H}\to \overline{H}\otimes_{\overline{\sg}}(\overline{\sg}\otimes_\sg\cpy{\ws})$ is the identity of $\bigoplus_{\ell\in \ec{Q_0^{\ws}}}\overline{H}_{\uas_{\ell}}[\wsi_{\ell}]\oplus\overline{H}_{\ubs_{\ell}}[\wsi_{\ell}]$.
\end{itemize}
We shall construct an isomorphism
$$(g,h):(Y^\bullet,V^\bullet,\theta)\to ({Y'}^\bullet,{V'}^\bullet,\theta').$$
Since $V^\bullet={V'}^\bullet$, we can take $h$ to be the identity. To construct $g:Y^\bullet\to {Y'}^\bullet$, since $$|Y^\bullet|=|{Y'}^\bullet|=\bigoplus_{\ell\in \ec{Q_0^{\ws}}}H_{\uas_{\ell}}[\wsi_{\ell}]\oplus H_{\ubs_{\ell}}[\wsi_{\ell}],$$
we can take $$g=\operatorname{diag}(g_\ell:H_{\uas_{\ell}}[\wsi_{\ell}]\oplus H_{\ubs_{\ell}}[\wsi_{\ell}]\to H_{\uas_{\ell}}[\wsi_{\ell}]\oplus H_{\ubs_{\ell}}[\wsi_{\ell}],\ \ell\in \ec{Q_0^{\ws}}),$$
where $g_\ell$ is given by the same matrix as $\theta_\ell$. By this construction, if $g$ is a morphism in $\per H$, then $(g,h)$ is a morphism in $\Tri\sg$ and it is moreover an isomorphism.
	
To show $g$ is a morphism, we need to show that it commutes with the differentials $d$ of $Y^\bullet$ and $d^H_{\ws}$ of ${Y'}^\bullet$. By the construction, $d^H_{\ws}=d_{\ws}\otimes_\sg H$ is formed by the components:
$$d^H_{\asu{\ws}{l}{l+1}}=\begin{cases}
f^{in}_{\xis_{l}}\circ f^H_{\uas_{\overline{l}},\ubs_{\overline{l+1}}}\circ f^{out}_{\xis_{l+1}}:\mathcal{H}_{\ws}^{\overline{l+1}}\to \mathcal{H}_{\ws}^{\overline{l}}&\text{if $\aso{\ws}{l}{l+1}$ is positive,}\\
f^{in}_{\xis_{l+1}}\circ f^H_{\ubs_{\overline{l+1}},\uas_{\overline{l}}}\circ f^{out}_{\xis_{l}}:\mathcal{H}_{\ws}^{\overline{l}}\to \mathcal{H}_{\ws}^{\overline{l+1}}&\text{if $\aso{\ws}{l+1}{l}$ is positive,}
\end{cases}$$
for any unpunctured arc segment $\asu{\ws}{l}{l+1}$ of $\ws$, where $\mathcal{H}_{\ws}^{\ell}=H_{\uas_{\ell}}[\wsi_{\ell}]\oplus H_{\ubs_{\ell}}[\wsi_{\ell}]$ for any $\ell\in \ec{Q_0^{\ws}}$. On the other hand, $d$ is the same as $d_{\ws}$, except replacing $\fout{\fen{X}{Y}}$ and $\fin{\fen{X}{Y}}$ by $\mathfrak{f}^{out}_{\fen{X}{Y}}$ and $\mathfrak{f}^{in}_{\fen{X}{Y}}$, respectively, where
$$\mathfrak{f}^{out}_{\fen{z}{\emptyset}}=\mathfrak{f}^{in}_{\fen{z}{\emptyset}}=1,\ \mathfrak{f}^{out}_{\fen{(i,j^+)}{(i,j^-)}}=\mathfrak{f}^{in}_{\fen{(i,j^+)}{(i,j^-)}}=\begin{pmatrix}
1&0\\0&0
\end{pmatrix},\ \mathfrak{f}^{out}_{\fen{(i,j^-)}{(i,j^+)}}=\mathfrak{f}^{in}_{\fen{(i,j^-)}{(i,j^+)}}=\begin{pmatrix}
0&0\\0&1
\end{pmatrix}$$
are idempotent matrices. It is straightforward to show that
$$d\circ g=g\circ d^H_{\ws},$$
which completes the proof.
\end{proof}

%=========================================================
\section{Morphisms in the perfect category}\label{sec:mor}
%=========================================================

This section is devoted to showing the following result.

\begin{theorem}\label{thm:int=dim}
For any graded unknotted arc $\ws,\wt\in\gUC(\gmsx)$, we have
$$\boxed{
\oIntd(\ws,\wt)=\dim\Hom_{\per\sg}(\cpy{\ws},\cpy{\wt}[\rho])
}$$
for any $\rho\in\ZZ$.
\end{theorem}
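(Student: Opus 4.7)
The plan is to construct, for each oriented intersection $q$ from $\ws$ to $\wt$ of index $\rho$, an explicit morphism $\mor{q}\in \Hom_{\per\sg}(\cpy{\ws},\cpy{\wt}[\rho])$, and then show that these morphisms form a basis. By Lemma~\ref{lem:int}, we may freely pass between the GMSb count $\oIntd(\ws,\wt)$ and the TOI count $\oIntd(\ws^\times,\wt^\times)$ in the GMSp model, whichever is more convenient for each type of intersection.

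The first step is to set up the Hom complex. Since $\cpy{\ws}$ and $\cpy{\wt}[\rho]$ are minimal strictly perfect by Lemma~\ref{lem:msp}, the decompositions $\cpy{\ws}=\bigoplus_{\ell\in\ec{Q^{\ws}_0}}\cply{\ws}{\ell}$ and $\cpy{\wt}[\rho]=\bigoplus_{\ell'\in\ec{Q^{\wt}_0}}\cply{\wt}{\ell'}[\rho]$ give
\[
    \huaHom_\sg(\cpy{\ws},\cpy{\wt}[\rho])=\bigoplus_{\ell,\ell'} \huaHom_\sg(\cply{\ws}{\ell},\cply{\wt}{\ell'}[\rho]),
\]
with differential $d(\varphi)=d_{\wt}\circ\varphi-(-1)^{|\varphi|}\varphi\circ d_{\ws}$. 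Each summand is controlled by the results of \S\ref{sec:DG-a}: the nontrivial radical contribution decomposes as in Remark~\ref{lem:edge2}, and Lemma~\ref{lem:dec} gives a basis of the form $\fin{\xi_1}\circ f_{y_1,y_2}\circ\fout{\xi_2}$, supplemented by the identity-type maps when $\ks_\ell=\ks_{\ell'}$ (up to the appropriate shift).

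The second step is the construction of $\mor{q}$. For each interior oriented intersection (type I in Definition~\ref{def:int}), $q$ lies on a common arc $\overline{?}\in\dac\x$ which appears as some $\wks_{l_1}$ and $\wkt_{l_2}$; the morphism $\mor{q}$ is concentrated on the corresponding summands and is built from one of the basis elements $\fin{\xi_{l_2}^{\wt}}\circ f_{\uat_{\ell_2},\ubs_{\ell_1}}\circ\fout{\xi_{l_1}^{\ws}}$ (or an identity-type map when $\wks_{l_1}=\wkt_{l_2}$), with the orientation recorded by the clockwise angle. For each endpoint oriented intersection at some $m\in\M$ (type II), $\mor{q}$ is an identity-type morphism between the terminal summands of $\cpy{\ws}$ and $\cpy{\wt}$. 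For an endpoint intersection at a binary $\Vot_P$ (coming from the GMSp types III--VI), we use the idempotents $\fin{\xi}$, $\fout{\xi}$ of Notations~\ref{not:mo} to select the $+/-$-component matching the tagging, and compose with an identity/$f_{x_1,x_2}$ through the sequence of common segments of $\ws$ and $\wt$. Each $\mor{q}$ is then verified to satisfy $d(\mor{q})=0$, using Lemmas~\ref{lem:rmkcomp} and \ref{lem:comp1} to kill the potential obstructions from the adjacent components of $d_{\ws}$ and $d_{\wt}$.

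The third step is to prove that the $\mor{q}$'s form a basis of $H^0\huaHom_\sg(\cpy{\ws},\cpy{\wt}[\rho])$. We equip the basis of $|\huaHom_\sg(\cpy{\ws},\cpy{\wt}[\rho])|$ with a total order coming from traversing $\ws$ and $\wt$ (so that each basis element is labelled by a pair of positions $(l_1,l_2)$), and show that each $\mor{q}$ has a unique leading basis element tied to the location of $q$. Linear independence modulo $B^0$ then follows by inspecting leading terms, while spanning is handled by a Gaussian-elimination argument: given any cocycle $\varphi$, pick its smallest nonzero component, match it with a unique intersection (producing the appropriate $\mor{q}$), subtract, and iterate. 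The main obstacle will be the cases when $\ws^\times\sim\wt^\times$ (types V, VI and the exclusions (ii), (v)--(vii) in Remark~\ref{rmk:int}): here, long chains of components of $\huaHom_\sg$ can telescope, and one must show that the telescoping exactly reproduces the exclusion rules of Definition~\ref{def:int}. This reduction uses the subtle interaction of the idempotents $\fin{\xi}\circ\fout{\xi'}$ from Lemma~\ref{lem:comp1}, together with the fact that the winding number around each binary is $1$, which aligns the indices on both sides of \eqref{eq:intb=intx}. The fine bookkeeping for these cases is the content of Appendix~\ref{app:pfs}.
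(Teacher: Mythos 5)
Your high-level plan --- attach a morphism $\mor{q}$ to each oriented intersection $q$, verify it is a cocycle, and then show the collection forms a basis --- is the right shape of argument, and the preliminary reductions you mention (using Remark~\ref{rmk:shift}, \eqref{eq:shift} and Lemma~\ref{lem:int} to pass to $\rho=0$ and to the GMSp model) match the paper. But the proposal as written glosses over the two structural moves that the paper actually relies on, and without them the ``Gaussian elimination on leading terms'' step is not going to close.

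First, the paper does not treat all intersections uniformly. It splits the TOIs into $I_0\cup I_1\cup I_2$ (Construction~\ref{cons:TOI}) and, correspondingly, splits $\Hom_{\per\sg}(\cpy{\ws},\cpy{\wt})$ as $\k\{f_{\widetilde{L}}\}\oplus\operatorname{Rad}_{\per\sg}(\cpy{\ws},\cpy{\wt})$ (Proposition~\ref{prop:rad}). The non-radical piece, indexed by $I_1$, is not checked by hand at all: the morphisms $f_{\widetilde{L}}$ come from extensions of tagged real $h$-lines in the bi-quiver $\qst$ (Constructions~\ref{cons:biq}--\ref{cons:fL}), and their linear independence and span are obtained by pushing through the functor $\fM\circ\fE$ to $\repb(S)$, where one quotes Gei{\ss}'s basis theorem (Lemma~\ref{lem:G}). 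Your proposal has no counterpart for this step; directly proving that the $\mor{q}$ for $q\in I_1$ are independent modulo $B^0$ in $\huaHom_\sg$ is substantially harder than the corresponding radical computation, precisely because such a morphism is spread across many summands with cancelling coefficients $c^E_\exi$.

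Second, for the radical piece, your plan to order the basis of $|\huaHom_\sg|$ and peel off leading terms will run into exactly the telescoping you flag, and the paper's way out is a finer decomposition that your proposal does not name: $|\huaRad_{\sg}(\cpy{\ws},\cpy{\wt})|$ is broken up by \emph{pairs} of unpunctured arc segments $(\wmu,\wnu)\in\upas(\wsx;\wtx)$ (equations~\eqref{eq:dec1}--\eqref{eq:dec3}), and crucially each summand $\huaRad_{\sg}(\cpy{\ws},\cpy{\wt})_{\wmu,\wnu}$ is closed under the differential, so the cohomology computes piecewise (Lemmas~\ref{lem:rad1}--\ref{lem:rad2}). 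The homotopies $\iota(u,v)$ with their signs $c(u,v)$ (Lemma~\ref{lem:homotopy}) then account exactly for the exclusion rules at punctures, and the bookkeeping across punctures is organized by the connected components $\mathcal{C}(\ws,\wt[-1])$ of the bi-quiver and the notion of tagged $r$-lines. Without identifying these closed subcomplexes you would have to argue that your total order is compatible with $d$ in a way that the telescoping does not spoil, which is not obvious and is where I expect the proposed proof to stall. In short: the outline is right, but the actual content of the proof is the bi-quiver/line machinery and the $(\wmu,\wnu)$-indexed subcomplexes, and your proposal currently defers both to ``fine bookkeeping.''
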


By $\cpy{\wt[\rho]}\cong\cpy{\wt}[\rho]$ (Remark~\ref{rmk:shift}),  $\oIntd(\ws,\wt)=\oInt^0(\ws,\wt[\rho])$ (due to \eqref{eq:shift}) and $\oIntd(\ws,\wt)=\oIntd(\ws^\times,\wt^\times)$ \eqref{eq:intb=intx}, it suffices to show
\begin{equation}\label{eq:aim}
    \oInt^0(\wsx,\wtx)=\dim\Hom_{\per\sg}(\cpy{\ws},\cpy{\wt}).
\end{equation}

We refer to Notations~\ref{not:seg}, Construction~\ref{cons:qui}, Notations~\ref{not:uni} and Notations~\ref{not:mdf} for certain notations concerning graded unknotted arcs and the corresponding graded tagged arcs, and refer to Definition~\ref{def:seg} for the notations about arc segments.

By Construction~\ref{cons:dgm} of $\cpy{?}$, we have
\begin{equation}\label{eq:decomp}
    |\huaHom_{\sg}(\cpy{\ws},\cpy{\wt})|=\bigoplus_{\ell_1\in \ec{Q^{\wt}_0},\ell_2\in \ec{Q^{\ws}_0}}|\huaHom_{\sg}(\cply{\ws}{\ell_2},\cply{\wt}{\ell_1})|.
\end{equation}
For any morphism $\varphi\in|\huaHom_{\sg}(\cpy{\ws},\cpy{\wt})|$, denote by $\varphi_{\ell_1,\ell_2}\colon\cply{\ws}{\ell_2}\to \cply{\wt}{\ell_1}$ its component in $|\huaHom_{\sg}(\cply{\ws}{\ell_2},\cply{\wt}{\ell_1})|$ with respect to the decomposition~\eqref{eq:decomp}.

Let $|\huaRad_{\sg}(\cpy{\ws},\cpy{\wt})|$ be the radical of $
|\huaHom_{\sg}(\cpy{\ws},\cpy{\wt})|$. Then for any $\varphi\in|\huaHom_{\sg}(\cpy{\ws},\cpy{\wt})|$, it is in $|\huaRad_{\sg}(\cpy{\ws},\cpy{\wt})|$ if and only if each of its component $\varphi_{\ell_1,\ell_2}\colon\cply{\ws}{\ell_2}\to \cply{\wt}{\ell_1}$ is in the radical, i.e., we have
\begin{equation}\label{eq:rad}
    |\huaRad_{\sg}(\cpy{\ws},\cpy{\wt})|=\bigoplus_{\ell_1\in \ec{Q^{\wt}_0},\ell_2\in \ec{Q^{\ws}_0}}|\huaRad_{\sg}(\cply{\ws}{\ell_2},\cply{\wt}{\ell_1})|.
\end{equation}
Since $\cpy{\ws}$ and $\cpy{\wt}$ are minimal strictly perfect (Lemma~\ref{lem:msp}), we have $$B\huaHom_{\sg}(\cpy{\ws},\cpy{\wt})\subset\huaRad_{\sg}(\cpy{\ws},\cpy{\wt}).$$ So $|\huaRad_{\sg}(\cpy{\ws},\cpy{\wt})|$, together with the restriction of the differential of $
\huaHom_{\sg}(\cpy{\ws},\cpy{\wt})$, becomes a differential graded $\k$-subspace $\huaRad_{\sg}(\cpy{\ws},\cpy{\wt})$ of $
\huaHom_{\sg}(\cpy{\ws},\cpy{\wt})$. Set $$\operatorname{Rad}_{\per\sg}(\cpy{\ws},\cpy{\wt})=\frac{Z^0\huaRad_{\sg}(\cpy{\ws},\cpy{\wt})}{B^0\huaHom_{\sg}(\cpy{\ws},\cpy{\wt})}\subset H^0\huaHom_{\sg}(\cpy{\ws},\cpy{\wt})= \Hom_{\per\sg}(\cpy{\ws},\cpy{\wt}).$$

To show \eqref{eq:aim}, we shall establish a bijection from the set of TOIs from $\wsx$ to $\wtx$ of index 0 to a basis of $\Hom_{\per\sg}(\cpy{\ws},\cpy{\wt})$. By taking suitable representatives in the homotopy classes, we may assume the following.
\begin{enumerate}[label=(T\arabic*),ref=(T\arabic*)]
\item\label{item:T1} The tagged arcs $\wsx$ and $\wtx$ are in a minimal position.
\item\label{item:T2} All intersections near which $\wsx$ and $\wtx$ share at least one arc in $\dac$ are in $\M\cup\P\cup\dac$. That is, except for the intersections which is given by an arc segment $\wmu$ of $\wsx$ and an arc segment $\wnu$ of $\wtx$ such that any two of the four edges that $\wmu$ and $\wnu$ touch are different, any other intersection is in $\M\cup\P\cup\dac$.
\end{enumerate}

\begin{construction}\label{cons:TOI}
    We divide the set of TOIs from $\wsx$ to $\wtx$ of index 0 into the following three subsets.
\begin{itemize}
    \item $I_0$ consists of the TOIs $q$ from $\wsx$ to $\wtx$, such that $\wsx$ and $\wtx$ do not share any common arc in $\dac$ near $q$.
    \item $I_1$ consists of the TOIs $q$ from $\wsx$ to $\wtx$, such that $\wsx$ and $\wtx$ share at least one common arc in $\dac$ near $q$, for each of whose sides, if separating, $\wsx$ turns to the left while $\wtx$ turns to the right in an $\dac$-polygon, and if not separating and ending at a puncture, $\wsx$ and $\wtx$ have the same tagging there, see Figure~\ref{fig:real-h}.
    \item $I_2$ consists of the TOIs $q$ from $\wsx$ to $\wtx$, such that $\wsx$ and $\wtx$ share at least one common arc in $\dac$ near $q$, for each of whose sides, if separating, $\wsx$ turns to the right while $\wtx$ turns to the left in an $\dac$-polygon, and if not separating, $\wsx$ and $\wtx$ end at a puncture with different taggings, see Figure~\ref{fig:real-h} after switching $\wsx$ and $\wtx$ there.
\end{itemize}
\end{construction}

Recall from Definition~\ref{def:int} the types of TOIs (see also Figure~\ref{fig:oriented int}). The TOIs of type (I) are in $I_0\cup I_1\cup I_2$. The TOIs of type (II) are in $I_0\cup I_1$, those of type (III)/(V) are in $I_1$, and those of type (IV)/(VI) are in $I_2$.

In Section~\ref{subsec:mor1}, we give a basis of a completion of $\operatorname{Rad}_{\per\sg}(\cpy{\ws},\cpy{\wt})$ indexed by $I_1$, and in Section~\ref{subsec:mor2}, we give a basis of  $\operatorname{Rad}_{\per\sg}(\cpy{\ws},\cpy{\wt})$ indexed by $I_0\cup I_2$.

%========================================
\subsection{Bi-quivers and lines}\label{subsec:biquiver}
%========================================

The technique introduced in this subsection is essentially from \cite{G}.

For any $\wmu\in\inter{\as(\wsx)}$, denote by $s(\wmu)$ and $t(\wmu)$ its start and terminal in $Q^{\ws}$, respectively. For any $\wmu\in\tou{\as(\wsx)}=\{\asu{\wsx}{0}{1}, \asu{\wsx}{p_{\ws}}{p_{\ws}+1}\}$, we define the vertex $e(\wmu)$ of $Q^{\ws}$ to be
$$e(\wmu)=\begin{cases}
1&\text{if $\wmu=\asu{\wsx}{0}{1}$,}\\
p_{\ws}&\text{if $\wmu=\asu{\wsx}{p_{\ws}}{p_{\ws}+1}$.}
\end{cases}$$

\begin{construction}\label{cons:biq}
    The bi-quiver $\qst=(\qst_0,\qst_1,\qst_2=\qst_{2,o}\cup \qst_{2,-})$ is constructed as follows, where arrows in $\qst_1$ are solid while arrows in $\qst_2$ are dashed.
\begin{enumerate}
    \item $\qst_0=\{(u,v)\mid 1\leq u\leq p_{\ws},1\leq v\leq p_{\wt},\hks_u=\hkt_v,\wsi_u=\wti_v\}\subset Q^{\ws}_0\times Q^{\wt}_0$.
	\item Each pair of $\wmu\in\inter{\as(\wsx)}$ and $\wnu\in\inter{\as(\wtx)}$ with $\wmu=\wnu$, gives rise to an arrow $$(\wmu,\wnu):(s(\wmu),s(\wnu))\to(t(\wmu),t(\wnu))$$ in $\qst_1$.
	\item Each pair of $\wmu\in\tou{\cias(\wsx)}$ and $\wnu\in\tou{\cias(\wtx)}$ with $\wmu=\wnu$, gives rise to a loop
	$$(\wmu,\wnu)_o:(e(\wmu),e(\wnu))\to(e(\wmu),e(\wnu))$$
	at $(e(\wmu),e(\wnu))$ in $\qst_1$.
	\item Each pair $\wmu\in\inter{\cias(\wsx)}$ and $\wnu\in\inter{\cias(\wtx)}$ with $\wmu$ and $\wnu$ in the same punctured $\dac$-polygon, gives rise to two arrows in $\qst_{2,-}$:  $$(\wmu,\wnu)_+:(t(\wmu),s(\wnu))\dashrightarrow(s(\wmu),s(\wnu)),\ (\wmu,\wnu)_-:(t(\wmu),s(\wnu))\dashrightarrow(t(\wmu),t(\wnu)),$$
	which are called the \emph{twin} of each other. See the cases in the first row of Figure~\ref{fig:Q2}.
	\item Each pair $\wmu\in\tou{\cias(\wsx)}$ and $\wnu\in\inter{\cias(\wtx)}$, with $\wmu$ and $\wnu$ in the same punctured $\dac$-polygon, gives rise to an arrow $$(\wmu,\wnu)_{\ominus}:(e(\wmu),s(\wnu))\dashrightarrow(e(\wmu),t(\wnu))$$ in $\qst_{2,o}$, see the cases in the second row of Figure~\ref{fig:Q2}.
	\item Each pair $\wmu\in\inter{\cias(\wsx)}$ and $\wnu\in\tou{\cias(\wsx)}$, with $\wmu$ and $\wnu$ in the same punctured $\dac$-polygon, gives rise to an arrow $$(\wmu,\wnu)_{\oplus}:(t(\wmu),e(\wnu))\dashrightarrow(s(\wmu),e(\wnu))$$ in $\qst_{2,o}$, see the cases in the third row of Figure~\ref{fig:Q2}.
\end{enumerate}

\begin{figure}[htpb]
	\begin{tikzpicture}[scale=.45]
	\draw[red,thick] (0,-3)..controls +(130:6) and +(180:1) .. (0,2) .. controls +(0:1) and +(50:6) .. (0,-3);
	\draw[blue,thick] (-3,3)node[above]{$\wtx$}..controls +(-75:4) and +(-180:3) .. (2.5,-2.5);
	\draw[blue,thick] (3,3)node[above]{$\wsx$}..controls +(-105:4) and +(0:3) .. (-2.5,-2.5);
	\draw (0,0)\nn;
	\draw (1.3,-2)node{$v'$} (-1.3,-2)node{$u'$} (1.5,0)node{$u$} (-1.5,0)node{$v$};
	\draw (5,2)node{$(u,v)$} (9,2)node{$(u',v)$} (5,-2)node{$(u,v')$} (9,-2)node{$(u',v')$};
	\draw[<-] (5.5,-1.5)tonode[below,right]{\tiny$(\wmu,\wnu)$}(8.5,1.5);
	\draw[dashed,->] (6,2)tonode[above]{\tiny $(\wmu,\wnu)_+$}(8,2);
	\draw[dashed,->] (5,1.5)tonode[left]{\tiny $(\wmu,\wnu)_-$}(5,-1.5);
	\end{tikzpicture}\quad
	\begin{tikzpicture}[scale=.45]
	\draw[red,thick] (0,-3)..controls +(130:6) and +(180:1) .. (0,2) .. controls +(0:1) and +(50:6) .. (0,-3);
	\draw[blue,thick] (-3,3)node[above]{$\wsx$}..controls +(-75:4) and +(-180:3) .. (2.5,-2.5);
	\draw[blue,thick] (3,3)node[above]{$\wtx$}..controls +(-105:4) and +(0:3) .. (-2.5,-2.5);
	\draw (0,0)\nn;
	\draw (1.3,-2)node{$u$} (-1.3,-2)node{$v$} (1.5,0)node{$v'$} (-1.4,0)node{$u'$};
	\draw (5,2)node{$(u',v')$} (9,2)node{$(u,v')$} (5,-2)node{$(u',v)$} (9,-2)node{$(u,v)$};
	\draw[->] (5.5,-1.5)tonode[above,left]{\tiny$(\wmu,\wnu)$}(8.5,1.5);
	\draw[dashed,<-] (6,-2)tonode[below]{\tiny $(\wmu,\wnu)_+$}(8,-2);
	\draw[dashed,<-] (9,1.5)tonode[right]{\tiny $(\wmu,\wnu)_-$}(9,-1.5);
	\end{tikzpicture}
	
	\begin{tikzpicture}[scale=.45]
	\draw[red,thick] (0,-3)..controls +(130:6) and +(180:1) .. (0,2) .. controls +(0:1) and +(50:6) .. (0,-3);
	\draw[blue,thick] (-3,3)node[above]{$\wtx$}..controls +(-75:4) and +(-180:3) .. (2.5,-2.5);
	\draw[blue,thick] (3,3)node[above]{$\wsx$}..controls +(-105:3) and +(0:1) .. (0,0);
	\draw (0,0)\nn;
	\draw (1.3,-2)node{$v'$} (1.5,0)node{$u$} (-1.5,0)node{$v$};
	\draw (5,2)node{$(u,v)$} (9,2)node[white]{$(u',v)$} (5,-2)node{$(u,v')$} (9,-2)node[white]{$(u',v')$};
	\draw[dashed,->] (5,1.5)tonode[left]{\tiny $(\wmu,\wnu)_{\ominus}$}(5,-1.5);
	\end{tikzpicture}\quad
	\begin{tikzpicture}[scale=.45]
	\draw[red,thick] (0,-3)..controls +(130:6) and +(180:1) .. (0,2) .. controls +(0:1) and +(50:6) .. (0,-3);
	\draw[blue,thick] (-3,3)node[above]{$\wsx$}..controls +(-75:3) and +(-180:1) .. (0,0);
	\draw[blue,thick] (3,3)node[above]{$\wtx$}..controls +(-105:4) and +(0:3) .. (-2.5,-2.5);
	\draw (0,0)\nn;
	\draw (-1.3,-2)node{$v$} (1.5,0)node{$v'$} (-1.5,0)node{$u$};
	\draw (5,2)node[white]{$(u',v')$} (9,2)node{$(u,v')$} (5,-2)node[white]{$(u',v)$} (9,-2)node{$(u,v)$};
	\draw[dashed,<-] (9,1.5)tonode[right]{\tiny $(\wmu,\wnu)_{\ominus}$}(9,-1.5);
	\end{tikzpicture}
	
	\begin{tikzpicture}[scale=.45]
	\draw[red,thick] (0,-3)..controls +(130:6) and +(180:1) .. (0,2) .. controls +(0:1) and +(50:6) .. (0,-3);
	\draw[blue,thick] (-3,3)node[above]{$\wtx$}..controls +(-75:3) and +(-180:1) .. (0,0);
	\draw[blue,thick] (3,3)node[above]{$\wsx$}..controls +(-105:4) and +(0:3) .. (-2.5,-2.5);
	\draw (0,0)\nn;
	\draw (-1.3,-2)node{$u'$} (1.5,0)node{$u$} (-1.5,0)node{$v$};
	\draw (5,2)node{$(u,v)$} (9,2)node{$(u',v)$} (5,-2)node[white]{$(u,v')$} (9,-2)node[white]{$(u',v')$};
	\draw[dashed,->] (6,2)tonode[above]{\tiny $(\wmu,\wnu)_{\oplus}$}(8,2);
	\end{tikzpicture}\quad
	\begin{tikzpicture}[scale=.45]
	\draw[red,thick] (0,-3)..controls +(130:6) and +(180:1) .. (0,2) .. controls +(0:1) and +(50:6) .. (0,-3);
	\draw[blue,thick] (-3,3)node[above]{$\wsx$}..controls +(-75:4) and +(-180:3) .. (2.5,-2.5);
	\draw[blue,thick] (3,3)node[above]{$\wtx$}..controls +(-105:3) and +(0:1) .. (0,0);
	\draw (0,0)\nn;
	\draw (1.3,-2)node{$u$} (1.5,0)node{$v$} (-1.4,0)node{$u'$};
	\draw (5,2)node[white]{$(u',v')$} (9,2)node[white]{$(u,v')$} (5,-2)node{$(u',v)$} (9,-2)node{$(u,v)$};
	\draw[dashed,<-] (6,-2)tonode[above]{\tiny $(\wmu,\wnu)_{\oplus}$}(8,-2);
	\end{tikzpicture}
	\caption{Cases for arrows in $Q_2$}\label{fig:Q2}
\end{figure}
\end{construction}

Components of $(\qst_0,\qst_1)$ are called \emph{lines} in $\qst$. Let $\mathcal{L}(\ws,\wt)$ be the set of lines in $\qst$.

\begin{definition}
    For any vertex $(u,v)$ in $\qst_0$, a pair $(\wmu,\wnu)$ of $\wmu\in\as(\wsx)$ and $\wnu\in\as(\wtx)$ is called a \emph{side} of $(u,v)$ provided that $\Vs_u$ and $\Vt_v$ are endpoints of $\wmu$ and $\wnu$ respectively, $\wmu$ and $\wnu$ are in the same side of the arc $\hks_u=\hkt_v\in\dac$. Moreover, this side is called
    \begin{itemize}
        \item \emph{unpunctured} (resp. \emph{punctured}) provided that $\wmu$ and $\wnu$ are in an unpunctured (resp. punctured) $\dac$-polygon,
        \item \emph{splitting} provided that when taking the orientations starting from $\Vs_u$ and $\Vt_v$ respectively, $\wmu\nsim\wnu$, and
        \item \emph{ending} provided that when taking the orientations starting from $\Vs_u$ and $\Vt_v$ respectively, either $\wmu\nsim\wnu$, or $\wmu\sim\wnu$ and they end at the same point in $\M\cup\P$.
    \end{itemize}
    A vertex admitting a splitting (resp. ending) side is called an \emph{outlet} (\emph{endpoint}).
\end{definition}

By definition, any splitting side is ending, so any outlet is an endpoint. Each vertex admits exactly two sides, at least one of which is unpunctured.

\begin{remark}\label{rmk:types}
Each line in $\mathcal{L}(\ws,\wt)$ is one of the types $A,D,\widetilde{D}$ (cf. \cite[Remark~3.5~(1)]{G}), see Figure~\ref{fig:types}. Each line has exactly two ending sides. A line of type $A$ (resp. $D$, $\widetilde{D}$) has at most two (resp. one, zero) outlets. If an outlet has two different splitting sides, then it is the vertex of a line of type $A$ consisting of a single vertex; if an outlet admits a loop, then it is the vertex of a line of type $D$ consisting of a single vertex with a single loop.
\end{remark}

\begin{figure}[htpb]
	\begin{tikzpicture}[scale=.8]
	\draw[thick] (-1,2)node{Type $A:$} (1,2)\nn (3,2)\nn (5,2)node{$\cdots$} (7,2)\nn;
	\draw (1.3,2)to(2.7,2) (3.3,2)to(4.7,2) (5.3,2)to(6.7,2);
	\draw[thick] (-1,0)node{Type $D:$} (1,0)\nn (3,0)\nn (5,0)node{$\cdots$} (7,0)\nn;
	\draw (1.3,0)to(2.7,0) (3.3,0)to(4.7,0) (5.3,0)to(6.7,0);
	\draw (7.2,-.3) .. controls +(-45:1) and +(45:1)..(7.2,.3);
	\draw[thick] (-1,-2)node{Type $\widetilde{D}:$} (1,-2)\nn (3,-2)\nn (5,-2)node{$\cdots$} (7,-2)\nn;
	\draw (1.3,-2)to(2.7,-2) (3.3,-2)to(4.7,-2) (5.3,-2)to(6.7,-2);
	\draw (7.2,-2.3) .. controls +(-45:1) and +(45:1)..(7.2,-1.7);
	\draw (.8,-2.3) .. controls +(-135:1) and +(135:1)..(.8,-1.7);
	\end{tikzpicture}
	\caption{Types of lines}\label{fig:types}
\end{figure}

\begin{remark}\label{rmk:Q22}
	By the definition of $\qst_2=\qst_{2,o}\cup \qst_{2,-}$, cf. Figure~\ref{fig:Q2}, both the start and the terminal of any arrow in $\qst_{2,o}$ are outlets of lines in $\mathcal{L}(\ws,\wt)$, while the start of any arrow in $\qst_{2,-}$ is an outlet of a line in $\mathcal{L}(\ws,\wt)$ although the terminal may not be.

    Any vertex in $\qst_0$ is incident to at most one arrow in $\qst_2$, except the start of an arrow $\alpha$ in $\qst_{2,-}$, which is also the start of the twin of $\alpha$ in $\qst_{2,-}$, see the cases in the first row of Figure~\ref{fig:Q2}.
\end{remark}

\begin{example}\label{ex:biq}
    Let $\ws$ and $\wt$ be the graded unknotted arcs in Example~\ref{ex:unknotted}, whose tagged versions are shown in Example~\ref{ex:tag}. Denote by $\nu_1,\nu_2$ the interior punctured arc segment in $E_{p_1}$ and $E_{p_2}$, respectively. Then the biquiver $\qst$ is the following.
    $$\xymatrix{
    &&&Q^{\ws}\\
    &&1\ar[r]^{\mu_2}&2\ar[r]^{\nu_1}&3\ar[r]^{\mu_3}&4\\
    &1\ar[d]_{\mu_2}&(1,1)\ar[dr]\\
    &2\ar[d]_{\nu_1}&&(2,2)\ar[dr]&(3,2)\ar@{-->}[l]\ar@{-->}[d]\\
    &3\ar[d]_{\mu_3}&&(2,3)&(3,3)\ar[dr]\\
    &4\ar[d]_{\nu_2}&&&&(4,4)\ar@{-->}[d]\\
    Q^{\wt}&5&&&&(4,5)\\
    &6\ar[u]^{\mu_3}\ar[d]_{\nu_1}&&(2,6)\ar[rd]&(3,6)\ar[ru]\ar@{-->}[l]\ar@{-->}[d]\\
    &7\ar[d]_{\mu_3}&&(2,7)&(3,7)\ar[rd]\\
    &8\ar[d]_{\nu_2}&&&&(4,8)\ar@{-->}[d]\\
    &9\ar[d]_{\mu_4}&&&&(4,9)\\
    &10
    }$$
    There are seven lines in $\mathcal{L}(\ws,\wt)$, which contain the vertices $(1,1)$, $(3,2)$, $(2,3)$, $(4,5)$, $(2,6)$, $(2,7)$ and $(4,9)$, respectively. All of them are of type $A$. The line containing $(1,1)$ has exactly one outlet $(4,4)$ and this outlet has exactly one splitting side $(\asu{\wsx}{4}{5},\asu{\wtx}{4}{5})$.
\end{example}

\begin{definition}\label{def:0-pt}
A splitting side $(\wmu,\wnu)$ is called a \emph{0-side} if neither $\wmu$ nor $\wnu$ is punctured non-interior, and $\wmu$ is to the right of $\wnu$. An outlet is called a \emph{0-point} if it admits a 0-side. That is, a 0-point is a vertex $(u',v')$ arising from the case in Figure~\ref{fig:0} or the cases in the first row of Figure~\ref{fig:Q2}.

\begin{figure}[htpb]
	\begin{tikzpicture}[xscale=1.2,scale=.8]
	\draw[ultra thick] (-2,2)to(2,2);
	\draw[red,dashed,bend left=50,thick] (-1,-2)to(-1,2);
	\draw[red,dashed,bend right=50,thick] (1,-2)to(1,2);
	\draw[blue,thick] (-.5,-2.2)..controls +(90:.7) and +(-60:.7)..(-1,-.5);
	\draw[blue,dashed,thick] (-1,-.5)..controls +(120:.7) and +(-60:.7)..(-1.3,0);
	\draw[blue,thick] (.5,-2.2)..controls +(90:.7) and +(-120:.7)..(1,-.5);
	\draw[blue,dashed,thick] (1,-.5)..controls +(60:.7) and +(-120:.7)..(1.3,0);
	\draw[blue] (-.8,-2.3)node{$\Vt_{v'}$} (.8,-2.3)node{$\Vs_{u'}$} (0,-1)node{$<$};
	\draw[blue] (1,-1)node{$\wsx$} (-1,-1)node{$\wtx$};
		\draw[red,thick] (-1,-2)\ww to(1,-2)\ww;
	\draw[red,thick] (-1,2)\ww (1,2)\ww;
	\end{tikzpicture}
	\caption{A 0-point}\label{fig:0}
\end{figure}

An \emph{$h$-line} is a line in $\mathcal{L}(\ws,\wt)$ whose connected component in $(\qst_0,\qst_1\cup\qst_{2,o})$ does not contain any 0-point. A \emph{real $h$-line} is a line in $\mathcal{L}(\ws,\wt)$ which does not contain any 0-point and the terminal of any arrow in $\qst_{2,o}$. A real $h$-line is called \emph{tagged} provided that for any of its loops $(\wmu,\wnu)$ from $\qst_1$, the taggings of $\wsx$ and $\wtx$ at $\wmu$ and $\wnu$ coincide.
\end{definition}

\begin{remark}\label{rmk:hline}
	A vertex in $\qst_0$ is a 0-point or the terminal of any arrow in $\qst_{2,o}$ if and only if it admits a splitting side $(\wmu,\wnu)$ with $\wmu$ to the right of $\wnu$. Hence there is a natural bijection from the set of tagged real $h$-line $L$ to the set $I_1$ (see Construction~\ref{cons:TOI}). Moreover, a real $h$-line $L$ is of type $D$ (resp. $\widetilde{D}$) if and only if for the corresponding TOI $q$ and the common part of $\wsx$ and $\wtx$ near $q$, exactly on one side (resp. on each side), $\wsx$ and $\wtx$ do not separate and end at a puncture, see the second (resp. third) picture in Figure~\ref{fig:real-h} with each common endpoint a puncture.
\end{remark}

\begin{figure}[htpb]
	\begin{tikzpicture}
	\draw[red,dashed,thick] (-1,1)to(1,1) (-1,-1)to(1,-1);
	\draw[blue,thick] (2,.7)node[above]{$\wsx$}to(-2,-.7);
	\draw[blue,thick] (2,-.7)node[below]{$\wtx$}to(-2,.7);
	\draw[blue] (0,0)node[above]{$q$};
	\draw[orange,bend left,->-=.6,>=stealth] (22:-.5)to(-22:-.5);
	\draw[orange,bend left,->-=.6,>=stealth] (22:.5)to(-22:.5);
	\draw[red,thick] (-1,1)\ww to (-1,-1)\ww (1,1)\ww to (1,-1)\ww;
	\end{tikzpicture}\quad
	\begin{tikzpicture}
	\draw[red,dashed,thick] (-1,1)to(1,1) (-1,-1)to(1,-1);
	\draw[blue,thick] (2,.7)node[above]{$\wsx$}to(-2,0);
	\draw[blue,thick] (2,-.7)node[below]{$\wtx$}to(-2,0)\nn;
	\draw[blue] (-2,0)node[above]{$q$};
	\draw[orange,bend left,->-=.6,>=stealth] (180-8:1.2)to(180+8:1.2);
	\draw[red,thick] (-1,1)\ww to (-1,-1)\ww (1,1)\ww to (1,-1)\ww;
	\end{tikzpicture}\qquad
	\begin{tikzpicture}
	\draw[red,dashed,thick] (-1,1)to(1,1) (-1,-1)to(1,-1);
	\draw[blue,thick,bend right] (2,0)tonode[below]{$\wsx$}(-2,0);
	\draw[blue,thick,bend left] (2,0)\nn tonode[above]{$\wtx$}(-2,0)\nn;
	\draw[blue] (-2,0)node[above]{$q$};
	\draw[orange,bend left,->-=.6,>=stealth] (180-10:1.5)to(180+10:1.5);
	\draw[red,thick] (-1,1)\ww to (-1,-1)\ww (1,1)\ww to (1,-1)\ww;
	\end{tikzpicture}
	\caption{Intersections corresponding to real $h$-lines}
	\label{fig:real-h}
\end{figure}

Recall from \cite{G} the following results.

\begin{proposition}[{\cite[Propositions~2 and 3]{G}}]\label{prop:G}
    There is a partial order $>$ on the set $\mathcal{L}(\ws,\wt)$, satisfying that if there is an arrow in $Q_2$ from a vertex in $L'$ to a vertex in $L$, then $L'>L$. Moreover, the following hold.
    \begin{itemize}
    \item[(1)] Let $L'>L\in\mathcal{L}(\ws,\wt)$. If $L'$ is an $h$-line, then $L$ is an $h$-line of type $A$ or $D$.
    \item[(2)] If $L$ is an $h$-line, then there is at most one arrow in $Q_{2,o}$ whose terminal is a vertex of $L$; if $L$ is additionally of type $D$, there is no such arrow.
\end{itemize}
\end{proposition}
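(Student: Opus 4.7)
The plan is to define $>$ as the transitive closure of the relation $L' \succ L$ that holds whenever there is an arrow in $\qst_2$ from a vertex of $L'$ to a vertex of $L$, so that the required compatibility with arrows in $\qst_2$ is built in. First I would check that this transitive closure is actually a partial order, i.e.\ that no cyclic chain $L_0 \succ L_1 \succ \cdots \succ L_n = L_0$ can occur. This reduces to a local analysis inside each punctured $\dac$-polygon, which is a once-punctured monogon whose unique edge $(i,j) \in \fc{\OME}$ is split into $(i,j^+)$ and $(i,j^-)$. Inside such a polygon, the interior punctured arc segments of $\wsx$ and $\wtx$ can be totally ordered by their ``depth'' of winding around the puncture, and the six configurations of Figure~\ref{fig:Q2} show that this depth strictly decreases along any arrow of $\qst_2$ incident to the polygon. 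Since every arrow of $\qst_2$ arises from exactly one such polygon, assembling these local orderings gives an acyclic relation on $\mathcal{L}(\ws,\wt)$, so the transitive closure is a partial order.

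For part (1), suppose $L' > L$ is witnessed by an arrow $\alpha \in \qst_2$, with $L'$ an $h$-line. Using Remark~\ref{rmk:hline}, it is enough to rule out the possibility that $L$ either contains a $0$-point or is of type $\widetilde D$. In each of the six pictures of Figure~\ref{fig:Q2} the source and target of $\alpha$ sit inside the same punctured polygon, and the geometric picture on the target side determines the picture on the source side. I would check, case-by-case, that any $0$-point on the $L$-side of $\alpha$ (or a loop forcing type $\widetilde D$) would produce a corresponding $0$-point on the $L'$-side of $\alpha$, contradicting the assumption that $L'$ is an $h$-line; tracing this through the definitions yields the dichotomy that $L$ is of type $A$ or $D$.

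For part (2), suppose $L$ is an $h$-line and $\alpha_1, \alpha_2 \in \qst_{2,o}$ both terminate at vertices of $L$. By Remark~\ref{rmk:Q22}, the terminal of any arrow in $\qst_{2,o}$ is an outlet, and by Remark~\ref{rmk:types} a line of type $A$ has at most two outlets (at its two extremities) while a line of type $D$ has exactly one, which already absorbs a loop. In the type-$A$ case, the splitting side that forces a vertex to be the terminal of an $\alpha_i \in \qst_{2,o}$ is, by inspection of the second and third rows of Figure~\ref{fig:Q2}, precisely the side in which $\wsx$ lies to the right of $\wtx$; having two such outlets on $L$ would force $L$ to contain a $0$-point, contradicting the $h$-line hypothesis. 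In the type-$D$ case, the unique outlet is already occupied by the loop, so no $\qst_{2,o}$-arrow can even terminate there without creating a $0$-point.

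The main obstacle is the local bookkeeping in the first paragraph: one must verify, polygon by polygon, that the twin pairs in $\qst_{2,-}$ together with the $\qst_{2,o}$-arrows leaving the polygon assemble into a single acyclic depth function, and that patching across different punctured polygons does not reintroduce cycles through lines that traverse several of them. Once this is done, parts (1) and (2) follow from the geometric case analysis of Figure~\ref{fig:Q2} together with the outlet count of Remark~\ref{rmk:types}, and the whole argument is a faithful adaptation of \cite[Propositions~2 and~3]{G} to the graded punctured setting.
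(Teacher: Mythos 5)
The paper does not actually prove this proposition---it is stated with the citation \cite[Propositions~2 and~3]{G} and no independent argument is given, so there is no ``paper's proof'' for your sketch to match. Your attempt is therefore a reconstruction from scratch, and as such it has genuine gaps, some of which you acknowledge and some you do not.

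The acyclicity step is the one you flag yourself, and your proposed fix does not obviously work as stated. The twin arrows in $\qst_{2,-}$ start at a common vertex $(u',v)$ and go to $(u,v)$ and $(u',v')$, where $u'$ and $v'$ are ``deeper'' than $u$ and $v$ respectively; so one target is shallower and the other deeper in the natural nesting order. Neither the maximum nor the minimum nor the sum of the two depths of a vertex is strictly monotone along all of $\qst_2$, so ``a single acyclic depth function'' is not simply the nesting depth, and you would need to say what it actually is before the transitive closure is guaranteed to be a partial order. You are also silent on how lines running through several punctured polygons are handled.

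For part (1) you write that ``it is enough to rule out the possibility that $L$ either contains a $0$-point or is of type $\widetilde D$,'' citing Remark~\ref{rmk:hline}. This misreads the definitions: Remark~\ref{rmk:hline} characterizes \emph{real} $h$-lines, whereas Definition~\ref{def:0-pt} defines an $h$-line as a line whose entire connected component in $(\qst_0,\qst_1\cup\qst_{2,o})$ is free of $0$-points, not merely the line itself. When the witnessing arrow $\alpha$ is in $\qst_{2,-}$, the lines $L'$ and $L$ may lie in different $(\qst_0,\qst_1\cup\qst_{2,o})$-components, so you cannot conclude $L$ is an $h$-line simply by pushing a hypothetical $0$-point on $L$ back across $\alpha$; you must track the whole component of $L$. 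Part (2) has a similar problem: the claim that two $\qst_{2,o}$-terminal outlets on a type-$A$ $h$-line would ``force $L$ to contain a $0$-point'' is asserted, not argued, and the type-$D$ subcase confuses the loop vertex with the outlet (for a type-$D$ line with more than one vertex these are distinct). Geiß's argument in \cite{G} handles all of this with a careful comparison of words, which is what the present paper implicitly relies on by citation; your sketch would need that machinery, or a genuinely new argument, to close these holes.
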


\begin{example}
    In Example~\ref{ex:biq}, the 0-points are $(2,3)$, $(3,2)$ and $(2,7)$, so there are four $h$-lines and three non-$h$-lines. The poset $\mathcal{L}(\ws,\wt)$ has three connected components: one consists of the line $(2,3)$, one consists of the line $(2,7)$, and the third one is a linearly ordered subset consisting of the other five lines. There are two real $h$-lines: the one containing $(1,1)$ and the one containing $(2,6)$.
\end{example}

%===========================================
\subsection{Morphisms from extensions of lines}\label{subsec:mor1}
%===========================================

In this subsection, we find a basis of a complement of  $\operatorname{Rad}_{\per\sg}(\cpy{\ws},\cpy{\wt})$ in $\Hom_{\per\sg}(\cpy{\ws},\cpy{\wt})$, whose images under the functor $\fM\circ\fE$ (see Theorems~\ref{thm:tribu} and \ref{thm:BD} for these two functors) are the basis of morphisms in $\repb(S)$ described in \cite{G}.

Let $(u,v)$ be an outlet of a real $h$-line $L$ with side $(\wmu,\wnu)$. An \emph{extension} $E$ of $L$ from $(u,v)$ in the side $(\wmu,\wnu)$ is a full subquiver of type $A$ of $\qst$
\begin{equation}\label{eq:ex}
    \xymatrix{
    (\uex{E}{0},\vex{E}{0})=(u,v)\ar@{-}[r]^{\qquad\aex{E}{1}}&(u^E_1,v^E_1)\ar@{-}[r]^{\quad\aex{E}{2}}&\cdots\ar@{-}[r]^{\aex{E}{s_E}\qquad\quad}&(u^E_{s_E},v^E_{s_E})=:(u_E,v_E),
    }
\end{equation}
with $(u_E,v_E)$ an outlet, and a splitting unpunctured side $\alpha_{s_E+1}^E=(\wmu_E,\wnu_E)$ of $(u_E,v_E)$, such that the following hold.
\begin{enumerate}[label=(E\arabic*),ref=(E\arabic*)]
    \item\label{itm:E1} The vertex $(u,v)$ is the only common vertex of $E$ and $L$.
    \item\label{itm:E0} The arrow (when $s_E\neq 0$) or side (when $s_E=0$) $\alpha^E_{1}$ is given by $(\wmu,\wnu)$.
    \item\label{itm:E4} For any $1\leq \exi\leq s_E$, if $\aex{E}{\exi}\in \qst_2$, then $\aex{E}{\exi}$ is from $(\uex{E}{\exi-1},\vex{E}{\exi-1})$ to $(\uex{E}{\exi},\vex{E}{\exi})$.
    \item\label{itm:E5} For any $1\leq\exi\leq s_E$, if $\aex{E}{\exi}\in \qst_{2,-}$, then the twin of $\aex{E}{\exi}$ is not in $E$ (or equivalently the terminal of the twin of $\aex{E}{\exi}$ is not a vertex of $E$).
    \item\label{itm:E2}
    The arrow $\aex{E}{s_E}$ is not given by $(\wmu_E,\wnu_E)$ when $s_E\neq 0$.
\end{enumerate}

By Proposition~\ref{prop:G}, an extension of any real $h$-line from any outlet on any side exists although is not necessarily unique.

\begin{notations}\label{Not:c}
    In the form \eqref{eq:ex}, for any $1\leq \exi\leq s_E+1$, we define a number $c^E_\exi\in\ZZ$ (with $c^E_0=1$) as follows.
    \begin{itemize}
        \item If $\aex{}{\exi}\in \qst_1$, take $c^E_{\exi}=c^E_{\exi-1}$.
        \item If $\aex{}{\exi}\in \qst_{2,-}$ (see the cases in the first row of Figure~\ref{fig:Q2}), take
        $$c^E_\exi=\begin{cases}-c^E_{\exi-1}&\text{if $\aex{}{l}=(\asu{\wsx}{\uex{}{\exi-1}}{\uex{}{\exi}},\asu{\wtx}{\vex{}{\exi-1}}{\vex{}{\exi}})_{-}$}\\
        c^E_{\exi-1}&\text{if $\aex{}{l}=(\asu{\wsx}{\uex{}{\exi-1}}{\uex{}{\exi}},\asu{\wtx}{\vex{}{\exi-1}}{\vex{}{\exi}})_{+}$.}\end{cases}$$
        \item If $\aex{}{\exi}=(\asu{\wsx}{\uex{}{\exi-1}}{u'},\asu{\wtx}{\vex{}{\exi-1}}{\vex{}{\exi}})_{\ominus}$ with $\Vs_{u'}\in\P$ (see the cases in the second row of Figure~\ref{fig:Q2}), take
        $$c^E_\exi=\begin{cases}0&\text{if the tagging at $\asu{\wsx}{\uex{}{\exi}}{u}$ is $+$,}\\
        -c^E_{\exi-1}&\text{if the tagging at $\asu{\wsx}{\uex{}{\exi}}{u}$ is $-$.}\end{cases}$$
        \item If $\aex{}{\exi}=(\asu{\wsx}{\uex{}{\exi-1}}{\uex{}{\exi}},\asu{\wtx}{\vex{}{\exi-1}}{v'})_{\oplus}$ with $\Vt_{v'}\in\P$ (see the cases in the third row of Figure~\ref{fig:Q2}), take
        $$c^E_l=\begin{cases}c^E_{\exi-1}&\text{if the tagging at $\asu{\wtx}{\uex{}{\exi}}{u}$ is $+$,}\\
        0&\text{if the tagging at $\asu{\wtx}{\uex{}{\exi}}{u}$ is $-$.}\end{cases}$$
        \item For $l=s_E+1$, take $c^E_{s_E+1}=c^E_{s_E}.$
    \end{itemize}
\end{notations}

\begin{example}\label{ex:line}
    Let $L$ be the real $h$-line in Example~\ref{ex:biq} which contains $(1,1)$. There are two extensions of $L$ from $(4,4)$ in the side $(\asu{\wsx}{4}{5},\asu{\wtx}{4}{5})$:
    $$E'\colon\xymatrix{
    (4,4)\ar@{-->}[r]&(4,5)&(3,6)\ar[l]\ar@{-->}[r]&(2,6),
    }$$
    and
    $$E''\colon\xymatrix{
    (4,4)\ar@{-->}[r]&(4,5)&(3,6)\ar[l]\ar@{-->}[r]&(3,7)\ar[r]&(4,8)\ar@{-->}[r]&(4,9).
    }$$
    For $E'$, we have $c_l^{E'}=-1$ for any $1\leq l\leq 4$. For $E''$, we have $c_1^{E''}=c_2^{E''}=-1$, $c_3^{E''}=c_4^{E''}=1$, $c_5^{E''}=c_6^{E''}=-1$.

    Note that if we replace the tagging of $\wsx$ (at its unique punctured endpoint) from $-$ to $+$ to get another tagged arc $(\widetilde{\sigma}')^\times$, we have the same biquiver $Q^{(\widetilde{\sigma}')^\times,\wtx}=Q^{\wsx,\wtx}$. So we have the same real $h$-line $L$ and the same extension $E'$. However, we have $c_l^{E'}=0$ for any $1\leq l\leq 4$ in this case.
\end{example}

An \emph{extension} $\widetilde{L}$ of a real $h$-line $L$ is a set consisting of an extension of $L$ from each side of each outlet of $L$.

\begin{construction}\label{cons:fL}
    Let $\widetilde{L}$ be an extension of a tagged real $h$-line $L\in\mathcal{L}(\wsx,\wtx)$. We define a morphism $f_{\widetilde{L}}:\cpy{\ws}\to\cpy{\wt}$ to be
    $$f_{\widetilde{L}}=\sum_{(u,v)\in L_0}f^L_{(u,v)}+\sum_{E\in\widetilde{L}}\sum_{1\leq \exi\leq s_E+1}c_\exi^Ef^E_{(\uex{E}{\exi},\vex{E}{\exi})},$$
    where
    $$f^L_{(u,v)}=\fin{\xit_v}\circ\iota_{(u,v)}\circ\fout{\xis_u}:\cply{\ws}{\overline{u}}\to\cply{\wt}{\overline{v}},$$
    $$f^E_{(\uex{E}{i},\vex{E}{i})}=\fin{\xit_{v}}\circ\iota_i\circ\fout{\xis_{u}}:\cply{\ws}{\overline{u}}\to\cply{\wt}{\overline{v}}.$$
    Here, $\iota_{(u,v)}$, $(u,v)\in L_0$, and $\iota_i$, $1\leq i\leq s_E$, are canonical embeddings, projections or identities, and  $\iota_{s_E+1}=f_{(i,j_1),(i,j_2)}$ for $\wsx_{E,s_E+1}=(i,j)\to (i,j_2)$ and $\wtx_{E,s_E+1}=(i,j)\to (i,j_1)$, where if $j_1>j_2$ or $j_1=0$, we take $f_{(i,j_1),(i,j_2)}=0$.
\end{construction}

\begin{example}
    We continue Example~\ref{ex:line}. Then there are two extensions of $L$: $\widetilde{L}'=\{(1,1),E'\}$ and $\widetilde{L}''=\{(1,1),E''\}$. Then the morphisms $f_{\widetilde{L}'},f_{\widetilde{L}''}:\cpy{\ws}\to\cpy{\wt}$ are given respectively by
    $$f_{\widetilde{L}'}=\begin{pmatrix}
    1&0&0&0\\
    0&1&0&0\\
    0&0&1&0\\
    0&0&0&0\\
    0&0&0&-1\\
    0&-1&0&0\\
    0&0&0&0\\
    0&0&0&0\\
    0&0&0&0\\
    0&0&0&0\\
    \end{pmatrix},\ f_{\widetilde{L}''}=\begin{pmatrix}
    1&0&0&0\\
    0&1&0&0\\
    0&0&1&0\\
    0&0&0&0\\
    0&0&0&-1\\
    0&0&0&0\\
    0&0&1&0\\
    0&0&0&0\\
    0&0&0&-1\\
    0&0&0&0\\
    \end{pmatrix}$$
    where $\cpy{\ws}$ and $\cpy{\wt}$ are given in Example~\ref{ex:dg}. Note that  $\iota_{s_{E'}+1}=f_{(1,4),(1,1)}=0$ and $\iota_{s_{E''}+1}=f_{(1,7),(1,4)}=0$. A direct calculation shows that $f_{\widetilde{L}''}-f_{\widetilde{L}'}=f_{\widetilde{L}'''}$, where $\widetilde{L}'''$ is the unique extension of the tagged real $h$-line containing $(2,6)$ as a vertex.
\end{example}

\begin{lemma}\label{lem:fL}
	Let $\widetilde{L}$ be an extension of a tagged real $h$-line $L$. Then $$f_{\widetilde{L}}\in Z^0\huaHom_{\sg}(\cpy{\ws},\cpy{\wt}).$$
\end{lemma}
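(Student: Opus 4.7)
The plan is to verify the two conditions defining a $0$-cycle in $\huaHom_{\sg}(\cpy{\ws},\cpy{\wt})$: that $f_{\widetilde L}$ has internal degree $0$, and that it commutes with the differentials, i.e.\ $d_{\wt}\circ f_{\widetilde L}=f_{\widetilde L}\circ d_{\ws}$. The degree condition is immediate: every vertex $(u,v)\in\qst_0$ satisfies $\wsi_u=\wti_v$ by construction of $\qst_0$, so each of the summands $f^L_{(u,v)}$ and $f^E_{(\uex{E}{\exi},\vex{E}{\exi})}$ maps $\cply{\ws}{\overline u}=\ks_{\overline u}\sg[\wsi_u]$ into $\cply{\wt}{\overline v}=\kt_{\overline v}\sg[\wti_v]$ without any shift. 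The real work is the cocycle identity.

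First I would decompose both $d_{\wt}\circ f_{\widetilde L}$ and $f_{\widetilde L}\circ d_{\ws}$ into components indexed by pairs $(\ell_1,\ell_2)\in \ec{Q_0^{\wt}}\times\ec{Q_0^{\ws}}$ and show that they agree component by component. Using Construction~\ref{cons:dgm} the nonzero contributions to either side are controlled by triples $(\asu{\wsx}{l}{l+1},(u,v),\asu{\wtx}{v'}{v''})$, where $(u,v)$ runs over the vertices appearing in $L\cup\widetilde L$ and the unbinaried arc segments are adjacent to $\Vs_u$ or $\Vt_v$. By Lemma~\ref{lem:rmkcomp} and Lemma~\ref{lem:comp1}, a composition of the form $f_{y_1,y_2}\circ\fout{\xi'}\circ\fin{\xi}\circ f_{y_2,y_3}$ vanishes unless $\xi=\xi'$, so the matching of modifiers $\xis_{u},\xit_{v}$ dictated by Notations~\ref{not:mdf} forces the nonvanishing terms to occur exactly in compatible pairs. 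I would then show that at each interior vertex of the line $L$, the two solid edges of $\qst_1$ on either side produce two terms that cancel (this is essentially the same argument that made $d_{\ws}^2=0$ in Lemma~\ref{lem:msp}); and at each interior vertex of each extension $E$, cancellation follows from the definition of the coefficients $c^E_\exi$ in Notations~\ref{Not:c}, which is calibrated precisely so that solid edges preserve signs, while dashed edges in $\qst_{2,-}$ flip (or preserve) signs according to which of the two idempotents $\fin{\fen{(i,j^+)}{(i,j^-)}}$ or $\fin{\fen{(i,j^-)}{(i,j^+)}}$ is picked out by the adjacent arc segments.

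Next I would handle the boundary cases. At an outlet $(u,v)$ of $L$, the terms coming from $f^L_{(u,v)}$ on the $L$-side and from $f^E_{(u,v)}=f^E_{(\uex{E}{0},\vex{E}{0})}$ on the extension side must match up against the adjacent arc segments \ref{itm:E0} and \ref{itm:E2}; property \ref{itm:E1} ensures no overcounting. At the terminal outlet $(u_E,v_E)$ of an extension, the splitting unpunctured side $\alpha_{s_E+1}^E=(\wmu_E,\wnu_E)$ contributes $c^E_{s_E+1}f^E_{(\uex{E}{s_E+1},\vex{E}{s_E+1})}$, where $\iota_{s_E+1}=f_{(i,j_1),(i,j_2)}$; the non-crossing/crossing dichotomy of Remark~\ref{rmk:geo} (extended to the skew-gentle setting) guarantees that this term absorbs the dangling differential coming off $d_{\wt}$ or $d_{\ws}$ at the end of the extension. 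For tagged loops on a real $h$-line of type $D$ or $\widetilde D$, the tagging-compatibility requirement in Definition~\ref{def:0-pt} ensures that $\xis_u$ and $\xit_v$ agree at the punctured end, so Lemma~\ref{lem:comp1} applies to force the relevant composites to cancel.

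The main obstacle I expect is bookkeeping the signs on punctured segments: the operators $\fout{\xi}$ and $\fin{\xi}$ are $2\times 2$ matrices (one of which involves a $-1$), so the cancellation at a dashed arrow in $\qst_{2,-}$ relies on a careful match between the sign $c^E_\exi$ and the specific entries of $\fout{\xit_{v}}\circ\fin{\xis_u}$ picked out by Notations~\ref{not:mdf}. I would organize this by splitting into the six local pictures of Figure~\ref{fig:Q2} and checking each case against the formulas in Notations~\ref{Not:c}; the key identity to invoke throughout is Lemma~\ref{lem:comp1}, together with the observation (Remark~\ref{rmk:Q22}) that twin arrows in $\qst_{2,-}$ both emanate from the same vertex, so \ref{itm:E5} guarantees that the sign-carrying factor $c^E_\exi$ appears with the correct multiplicity.
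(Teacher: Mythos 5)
Your outline is correct and follows essentially the same route the paper intends. The degree-zero observation from $\wsi_u=\wti_v$ on $\qst_0$ is exactly right, and the cocycle identity $d_{\wt}\circ f_{\widetilde L}=f_{\widetilde L}\circ d_{\ws}$ reduces, by the component decomposition, to the matching of terms across each solid arrow of $L$ and across each arrow of every extension $E$, with Lemma~\ref{lem:rmkcomp} and Lemma~\ref{lem:comp1} doing the heavy lifting (forcing vanishing when modifiers disagree, and giving exact agreement when they do), and with the coefficients $c^E_\exi$ of Notations~\ref{Not:c} engineered precisely to make the dashed-arrow cases of Figure~\ref{fig:Q2} come out with the right sign. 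The paper's own proof is a one-liner that outsources the case analysis to Lemma~\ref{lem:homotopy}: there, $d(\iota(u,v))$ is computed in closed form for the degree-$(-1)$ identity $\iota(u,v)$, and the computations in Appendix~\ref{app:pfs4} are the very local verifications you sketch — so the two approaches are the same calculation packaged differently, yours done from scratch and the paper's borrowed from the radical-morphism section with a sign tweak for degree $0$. If you were to write this out in full, the one thing worth being explicit about, which you correctly flag as the "main obstacle," is that at dashed arrows in $\qst_{2,-}$ and $\qst_{2,o}$ the factors $\fout{\xi}$ and $\fin{\xi}$ are non-scalar $2\times 2$ matrices whose product picks out a single entry (cf.\ the identities just before Lemma~\ref{lem:comp1}), and the sign recorded in $c^E_\exi$ is exactly the signed entry produced by $\fout{\fen{(i,j^+)}{(i,j^-)}}$; once that bookkeeping is set up, the twin-arrow constraint \ref{itm:E5} and Remark~\ref{rmk:Q22} ensure no double-counting, completing the argument.
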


\begin{proof}
    A direct calculation (or Lemma~\ref{lem:homotopy} in the next subsection, with a little modification of signs due to the degree of the identities) shows this lemma.
\end{proof}

Due to Lemma~\ref{lem:fL}, for any extension $\widetilde{L}$ of a tagged real $h$-line $L$, we obtain a morphism $$f_{\widetilde{L}}\in\Hom_{\per\sg}(\cpy{\ws},\cpy{\wt}).$$

\begin{example}\label{exm:tildeD}
	A line $L\in\mathcal{L}(\ws,\wt)$ is of type $\widetilde{D}$ if and only if $\wsx$ and $\wtx$ have the same underlying arcs, their endpoints are in $\P$, and $L$ is the diagonal of $\qst$. In this case, $L$ is a real $h$-line (cf. the third case in Figure~\ref{fig:real-h}), and since $L$ has no outlet by Remark~\ref{rmk:hline}, $\widetilde{L}=\emptyset$ is the unique extension of $L$. Moreover, $L$ is tagged if and only if $\wsx$ and $\wtx$ have the same taggings at their common start and end, respectively. In this case, the morphism $f_{L}$ is the identity.
\end{example}

A set $\widetilde{\mathcal{L}}$ of extensions of tagged real $h$-lines is called \emph{complete} provided that for any tagged real $h$-line $L$, exactly one of its extensions is contained in $\widetilde{\mathcal{L}}$. Recall from Theorems~\ref{thm:tribu} and \ref{thm:BD} the functors $\fM$ and $\fE$.

\begin{lemma}\label{lem:G}
    Let $\widetilde{\mathcal{L}}$ be a complete set of extensions of tagged real $h$-lines. Then $\{\fM\circ\fE(f_{\widetilde{L}})\mid \widetilde{L}\in\widetilde{\mathcal{L}}\}$ form a basis of $\Hom_{\repb(S)}(\fM\circ\fE(\cpy{\ws}),\fM\circ\fE(\cpy{\wt}))$.
\end{lemma}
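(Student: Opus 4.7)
The strategy is to transport the assertion to the representation category $\repb(S)$ via Theorem~\ref{thm2} and then compare our combinatorial data (tagged real $h$-lines with extensions in the bi-quiver $\qst$) with the ``graph map / admissible triple'' basis of morphisms that Gei{\ss} established in \cite{G} for representations of clannish algebras. More precisely, by Theorem~\ref{thm2} and the compatibility diagram~\eqref{eq:thmcomm2}, we have a canonical isomorphism $\fM\circ\fE(\cpy{\ws})\cong R(\bione(\wsx))$ and similarly for $\wt$, so that $\Hom_{\repb(S)}(\fM\circ\fE(\cpy{\ws}),\fM\circ\fE(\cpy{\wt}))$ coincides with the morphism space described by the two finite/periodic words associated to $\wsx$ and $\wtx$. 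Under these identifications, it suffices to show that the images $\fM\circ\fE(f_{\widetilde L})$ for $\widetilde L\in\widetilde{\mathcal L}$ coincide (up to a change of basis whose triangularity is controlled by the partial order of Proposition~\ref{prop:G}) with Gei{\ss}' explicit basis.

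The proof will proceed in the following steps. First, I will unpack what $\fM\circ\fE(f_{\widetilde L})$ is: by Construction~\ref{cons:dgm} together with the descriptions of $\fF$ and $\fG$ in Lemmas~\ref{lem:full} and~\ref{lem:equiv}, the morphism $f_{\widetilde L}$ is built out of the elementary bricks $\fin{\xi}\circ f_{x,y}\circ\fout{\xi'}$, and the functor $\fM\circ\fE$ maps each such brick to a specific map between the $\bvs^{\pm}$-parts of the representations; the sign coefficients $c^E_\exi$ from Notations~\ref{Not:c} match exactly the signs that appear in the explicit description of the $\repb(S)$-morphism associated to a tagged real $h$-line. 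Second, I will invoke Gei{\ss}' classification: real $h$-lines are (by Remark~\ref{rmk:hline} and Proposition~\ref{prop:G}~(1)) exactly the combinatorial data enumerating the ``graph maps'' in Gei{\ss}' sense between the two words, and the tagged condition picks out precisely those graph maps that survive the identifications forced by the idempotents $\varepsilon_i^2=\varepsilon_i$ at the special vertices --- this matches Definition~\ref{def:0-pt}, where 0-points (and terminals of $\qst_{2,o}$-arrows) are exactly the positions where the two strings split with $\wsx$ on the \emph{right}, which in Gei{\ss}' language are the positions that force the morphism into the radical or make it zero after the relations.

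Third, I will prove linear independence and spanning. For linear independence, the partial order $>$ on $\mathcal L(\ws,\wt)$ from Proposition~\ref{prop:G} induces a filtration on the source and target complexes such that the ``diagonal'' restriction of $f_{\widetilde L}$ to the summands indexed by $L$ itself is a fixed nonzero elementary map, while its off-diagonal terms (coming from the extension $E$) only involve strictly smaller lines; thus the family $\{f_{\widetilde L}\}$ is triangular with respect to this filtration and hence independent. For spanning, I will observe (using Proposition~\ref{prop:G}~(2)) that any morphism in $\Hom_{\repb(S)}(\fM\circ\fE(\cpy{\ws}),\fM\circ\fE(\cpy{\wt}))$ decomposes into Gei{\ss}' basis, and that each basis element is represented by some $\fM\circ\fE(f_{\widetilde L})$ for a choice of extension — the independence of the choice of extension (i.e.\ well-definedness of the basis once the complete set $\widetilde{\mathcal L}$ is fixed) follows because any two extensions of the same tagged real $h$-line differ by a sum of $\fM\circ\fE(f_{\widetilde{L'}})$ with $L'<L$. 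The special case analysed in Example~\ref{exm:tildeD} shows that the $\widetilde D$-type real $h$-lines contribute the identity, which is also consistent with Gei{\ss}' basis.

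The main obstacle I anticipate is bookkeeping: matching the signs $c^E_\exi$ from Notations~\ref{Not:c} with the signs arising from $\fin{\xi},\fout{\xi'}$ under $\fM\circ\fE$, and checking that when an extension passes through an arrow in $\qst_{2,-}$ the contribution of the \emph{twin} (excluded by condition~\ref{itm:E5}) is accounted for by another extension (so that modulo the lower-order part of the filtration the resulting class is well-defined). Once this sign/twin bookkeeping is carried out — most of which can be reduced to the local pictures in Figure~\ref{fig:Q2} and the four cases of Lemma~\ref{lem:dec} — the lemma follows by combining the triangular-basis argument above with Gei{\ss}' theorem.
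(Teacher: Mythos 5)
Your proposal follows essentially the same route as the paper: transport the computation to $\repb(S)$ via Theorems~\ref{thm1} and~\ref{thm2}, identify $\fM\circ\fE(f_{\widetilde L})$ with Gei{\ss}' explicitly constructed basis morphisms (matching the coefficients $c^E_\exi$ with Gei{\ss}' $U$-quantities and the tagged real $h$-lines with his graph maps), and invoke Gei{\ss}' theorem from~\cite{G}. The paper's proof is somewhat more direct: because the coefficient matching is exact, each $\fM\circ\fE(f_{\widetilde L})$ (conjugated by the isomorphisms from Theorem~\ref{thm2}) literally equals the corresponding Gei{\ss} basis element, so the filtration/triangularity argument you propose for linear independence is unnecessary once the identification is in place — Gei{\ss}' theorem already supplies both independence and spanning.
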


\begin{proof}
    See Appendix~\ref{app:pf2}.
\end{proof}

\begin{proposition}\label{prop:rad}
    Let $\widetilde{\mathcal{L}}$ be a complete set of extensions of tagged real $h$-lines. Then the set of morphisms $\{f_{\widetilde{L}}\mid\widetilde{L}\in\widetilde{\mathcal{L}}\}$ is linearly independent in $\Hom_{\per\sg}(\cpy{\ws},\cpy{\wt})$. Moreover, we have
    $$\Hom_{\per\sg}(\cpy{\ws},\cpy{\wt})=\k\{f_{\widetilde{L}}\mid\widetilde{L}\in\widetilde{\mathcal{L}}\}\oplus \operatorname{Rad}_{\per\sg}(\cpy{\ws},\cpy{\wt}),$$
    where $\k\{f_{\widetilde{L}}\mid\widetilde{L}\in\widetilde{\mathcal{L}}\}$ is the subspace of $\Hom_{\per\sg}(\cpy{\ws},\cpy{\wt})$ linearly generated by the set $\{f_{\widetilde{L}}\mid\widetilde{L}\in\widetilde{\mathcal{L}}\}$.
\end{proposition}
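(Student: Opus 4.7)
The overall plan is to transport the statement to $\Hom_{\repb(S)}$ through the composed functor $\fM\circ\fE$ of Theorems~\ref{thm:tribu} and~\ref{thm:BD}, where Lemma~\ref{lem:G} has already identified the correct basis on the $\repb(S)$ side. For linear independence this is immediate: if $\sum_{\widetilde{L}}c_{\widetilde{L}}f_{\widetilde{L}}=0$ in $\Hom_{\per\sg}(\cpy{\ws},\cpy{\wt})$, then applying the $\k$-linear functor $\fM\circ\fE$ yields $\sum c_{\widetilde{L}}\fM\circ\fE(f_{\widetilde{L}})=0$ in $\Hom_{\repb(S)}$, and Lemma~\ref{lem:G} forces every $c_{\widetilde{L}}$ to vanish.

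To verify $\k\{f_{\widetilde{L}}\}\cap\operatorname{Rad}_{\per\sg}(\cpy{\ws},\cpy{\wt})=0$, I would argue at the level of components. By Construction~\ref{cons:fL}, each $f_{\widetilde{L}}$ contributes, at every vertex $(u,v)$ of its main line $L_0$, a non-radical identity-like morphism $\fin{\xit_v}\circ\iota_{(u,v)}\circ\fout{\xis_u}\in\huaHom_\sg(\cply{\ws}{\overline{u}},\cply{\wt}{\overline{v}})$, while at vertices lying strictly on extensions all contributions factor through some $f_{(i,j_1),(i,j_2)}\in\rad(\sg)$. Since different main lines of tagged real $h$-lines have disjoint vertex sets, these non-radical identity-blocks coming from different $\widetilde{L}$ are located in disjoint positions and cannot cancel each other. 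Moreover, $\cpy{\ws}$ and $\cpy{\wt}$ being minimal strictly perfect (Lemma~\ref{lem:msp}) means that their differentials already have entries in $\rad(\sg)$, so every coboundary $d\circ g\pm g\circ d$ automatically lies in $Z^0\huaRad_\sg$. Consequently, no nontrivial combination $\sum c_{\widetilde{L}}f_{\widetilde{L}}$ can be cohomologous to a cycle in $\huaRad_\sg$, and all $c_{\widetilde{L}}$ must vanish.

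Finally, for the spanning claim, given $\varphi\in\Hom_{\per\sg}(\cpy{\ws},\cpy{\wt})$, Lemma~\ref{lem:G} lets us write $\fM\circ\fE(\varphi)=\sum_{\widetilde{L}}c_{\widetilde{L}}\fM\circ\fE(f_{\widetilde{L}})$ uniquely, and setting $r:=\varphi-\sum c_{\widetilde{L}}f_{\widetilde{L}}$ gives $\fM\circ\fE(r)=0$. The explicit description of $\ker\fF$ in the proof of Lemma~\ref{lem:full} (the morphisms in case~(b) of Remark~\ref{rmk:geo}, whose generators all have entries in $\rad(H)$) together with the fact that $\ker\fG=0$ and that $\fE$ preserves the radical structure show that $r$ admits a cycle representative in $\huaRad_\sg$, hence $r\in\operatorname{Rad}_{\per\sg}(\cpy{\ws},\cpy{\wt})$. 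The main obstacle is precisely this last compatibility, namely promoting $\ker(\fM\circ\fE)\subseteq\operatorname{Rad}_{\per\sg}$ from the triangulated level of $\Tri\sg$ to the dg level of $\per\sg$, which requires choosing cycle representatives carefully and tracking how the identity-like contributions at outlets are absorbed into boundaries — a case analysis using Lemmas~\ref{lem:dec} and~\ref{lem:comp1} to handle the idempotent decorations $\fin{\xi}$ and $\fout{\xi}$.
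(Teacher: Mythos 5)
Your overall plan matches the paper's: reduce linear independence and the complementarity to $\operatorname{Rad}_{\per\sg}$ to Lemma~\ref{lem:G} via $\fM\circ\fE$, then verify that the non-radical components cannot be cancelled. Your first and last paragraphs are fine and essentially coincide with the published argument (the paper says more crisply that any isomorphism component of a cycle representative $\varphi$ of $f-f'$ survives under $\fM\circ\fE$, so $\fM\circ\fE(\varphi)=0$ forces $\varphi\in\huaRad_\sg$; this is the precise version of your $\ker\fF$/$\ker\fG$ discussion).

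However, your middle paragraph, the proof that $\k\{f_{\widetilde{L}}\}\cap\operatorname{Rad}_{\per\sg}(\cpy{\ws},\cpy{\wt})=0$, contains a genuine gap. You claim that at vertices lying strictly on extensions, all contributions of $f_{\widetilde{L}}$ factor through $\rad(\sg)$, and that identity-blocks coming from different $\widetilde{L}$ sit in disjoint positions. Both claims are false. In Construction~\ref{cons:fL}, $\iota_i$ for $1\le i\le s_E$ is a canonical embedding, projection or identity (non-radical); only $\iota_{s_E+1}=f_{(i,j_1),(i,j_2)}$ is radical. Moreover, the block position is governed by $(\ec{u},\ec{v})$, and arrows in $\qst_{2,-}$ always connect vertices with the same $(\ec{u},\ec{v})$ (since $s(\wmu)\simeq t(\wmu)$ for a punctured interior $\wmu$). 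So when the starting vertex $(u,v)$ of a line $L_1$ is the source of an arrow in $\qst_{2,-}$, another line $L_i$ containing the target vertex also contributes an identity to the same block $g_{(u,v)}$, i.e. different lines do interfere. The paper must therefore pick a maximal $L_1$ in $\{L_1,\dots,L_s\}$ (to guarantee no vertex of $L_1$ lies on an extension of another $L_i$) and then carry out an explicit $2\times 2$ matrix case analysis — whether $\alpha\in\qst_{2,-}$ or $\qst_{2,o}$, and which tagging appears — to verify that $g_{(u,v)}\neq 0$ despite this interference. Your argument, which relies on disjointness, skips exactly the step where all the nontrivial bookkeeping happens. Without it, you cannot conclude that a nonzero $g\in\k\{f_{\widetilde{L}}\}$ has a non-radical component.
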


\begin{proof}
    See Appendix~\ref{app:pfs1}.
\end{proof}

%=========================================================
\subsection{Morphisms in the radical}\label{subsec:mor2}\
%=========================================================

In this subsection, we continue to investigate $\huaRad_{\sg}(\cpy{\ws},\cpy{\wt})$. By Remark~\ref{lem:edge2} and Lemma~\ref{lem:dec}, the direct sum \eqref{eq:rad} becomes
\begin{equation}\label{eq:dec1}
|\huaRad_{\sg}(\cpy{\ws},\cpy{\wt})|=\bigoplus_{\begin{smallmatrix}\ell_1\in \ec{Q^{\wt}_0}\\\ell_2\in \ec{Q^{\ws}_0}\end{smallmatrix}
}\bigoplus_{(y_1,y_2)\in\mathcal{R}'(\kt_{\ell_1},\ks_{\ell_2})}\bigoplus_{\begin{smallmatrix}\xi_1\in\mdf{\kt_{\ell_1}}\\ \xi_2\in\mdf{\ks_{\ell_2}}\end{smallmatrix}}\k\ \fin{\xi_1}\circ f_{(y_1,y_2)}\circ\fout{\xi_2}.
\end{equation}

To replace the index of the direct sum decomposition~\eqref{eq:dec1}, we introduce the following notations. For any $\wmu\in\upas(\wsx)$, denote
$$\ec{Q_0^{\ws}}(\wmu)=\{\ec{s(\wmu)},\ec{t(\wmu)}\}\subset\ec{Q_0^{\ws}}.$$
For any $\wmu=\asu{\wsx}{l}{l+1}\in\upas(\wsx)$ and any $\ell\in\ec{Q_0^{\ws}}(\wmu)$, define $\omo{\ell}{\wmu}\in\ks_{\ell}$ and $\pmo{\ell}{\wmu}\in\mdf{\ks_{\ell}}$ as follows:
$$(\omo{\ell}{\wmu},\pmo{\ell}{\wmu})=
\begin{cases}(\uas_{\ell},\xis_l)&\text{if $\ell=\ec{l}$,}\\(\ubs_{\ell},\xis_{l+1})&\text{if $\ell=\ec{l+1}$.}\end{cases}$$
Here, we refer to Notations~\ref{not:uni} and \ref{not:mdf} for the notations used in the above formula.

\begin{lemma}\label{lem:bi}
    For any $\ell\in\ec{Q_0^{\ws}}$, the map $(\omo{\ell}{-},\pmo{\ell}{-})$ gives a bijection from the set $\{\wmu\in\upas(\wsx)\mid \ell\in\ec{Q_0^{\ws}}(\wmu)\}$ to the set $\ks_{\ell}\times\mdf{\ks_{\ell}}$.
\end{lemma}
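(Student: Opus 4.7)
The plan is to verify the claim by a case analysis on $\ell\in\ec{Q_0^{\ws}}$, exhibiting the inverse explicitly in each case. Recall that $|\ell|\in\{1,2\}$, since the equivalence is generated by $l\simeq l+1$ when $\asu{\ws}{l}{l+1}$ is binaried, and unknottedness (Lemma~\ref{lem:minimal}) rules out consecutive binaried segments. Moreover, $\omo{\ell}{\wmu}\in\ks_\ell$ and $\pmo{\ell}{\wmu}\in\mdf{\ks_\ell}$ by Notations~\ref{not:uni} and \ref{not:mdf}, so the map is well-defined; by matching cardinalities it then suffices to check injectivity.

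When $|\ell|=2$, write $\ell=\{l,l+1\}$. Then the binaried segment $\asu{\ws}{l}{l+1}$ lies in a binary polygon, whose two $\dac\x$-edges are $(i,j^+),(i,j^-)$, so $\{\wks_l,\wks_{l+1}\}=\{(i,j^+),(i,j^-)\}$ and $\ks_\ell=(i,j)\in\fc{\OME}$. The adjacent unpunctured segments of $\wsx$ are precisely $\asu{\wsx}{l-1}{l}$ and $\asu{\wsx}{l+1}{l+2}$, giving two elements on the source side. On the target side, $\uas_\ell=\ubs_\ell=(i,j)$ forces $|\ks_\ell|=1$, while $|\mdf{\ks_\ell}|=2$. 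The two segments are sent to $((i,j),\xis_l)$ and $((i,j),\xis_{l+1})$; since $\wks_l\neq\wks_{l+1}$, Notations~\ref{not:mdf} give $\xis_l\neq\xis_{l+1}$, so the map is injective.

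When $|\ell|=1$, I would split further based on $\wks_l$. If $\wks_l$ is a two-element equivalence class in $\OMEx$, then the crossing $V^{\ws}_l$ lies on a $\dac\x$-arc between two unbinaried polygons, so both adjacent segments of $\ws$ at $l$ are unbinaried; the target has cardinality $2\cdot 1=2$, and the map distinguishes the two source segments by their first coordinates $\uas_\ell=\was_l$ and $\ubs_\ell=\wbs_l$, which are the two distinct elements of the class $\wks_l$. If instead $\wks_l\in\{(i,j^+),(i,j^-)\}$, then one side of $V^{\ws}_l$ is a binary polygon and the corresponding adjacent ``segment'' would have to be binaried---forcing $|\ell|\geq 2$---unless that segment does not exist, i.e., $l\in\{1,p_{\ws}\}$ and $\ws$ starts or ends at the relevant binary marked point. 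In that configuration both source and target are singletons, and the bijection is immediate.

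The main obstacle, which is more bookkeeping than conceptual, is the uniform handling of the endpoint configurations where one of $\uas_\ell,\ubs_\ell$ may be undefined, together with the verification that these three subcases exhaust every possibility for $\ell$.
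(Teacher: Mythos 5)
Your proof is correct and takes essentially the same case-by-case approach as the paper: split on $|\ell|\in\{1,2\}$ and, within $|\ell|=1$, on whether $\ks_\ell$ is a two-element class or a singleton (the paper organizes these same subcases by whether $l\in\{1,p_{\ws}\}$ with the relevant endpoint in $\P$, which is equivalent). One small wording slip in the singleton subcase: the binaried segment $\asu{\ws}{0}{1}$ (resp.\ $\asu{\ws}{p_{\ws}}{p_{\ws}+1}$) does exist; it simply fails to merge $\ell$ with a neighbour because $0$ (resp.\ $p_{\ws}+1$) is not a vertex of $Q^{\ws}$.
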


\begin{proof}
    For the case $|\ell|=1$, say $\ell=\{l\}$, there are the following subcases:
    \begin{itemize}
        \item for $l=1$ and $\Vs_0\in\P$, we have $\{\wmu\in\upas(\wsx)\mid \ell\in\ec{Q_0^{\ws}}(\wmu)\}=\{\asu{\wsx}{1}{2}\}$ and $\ks_{\ell}\times\mdf{\ks_{\ell}}=\{(\was_{1},\xis_1)\}$;
        \item for $l=p_{\ws}$ and $\Vs_{p_{\ws}+1}\in\P$, we have $\{\wmu\in\upas(\wsx)\mid \ell\in\ec{Q_0^{\ws}}(\wmu)\}=\{\asu{\wsx}{p_{\ws}-1}{p_{\ws}}\}$ and $\ks_{\ell}\times\mdf{\ks_{\ell}}=\{(\wbs_{p_{\ws}},\xis_{p_{\ws}})\}$;
        \item otherwise, we have $\{\wmu\in\upas(\wsx)\mid \ell\in\ec{Q_0^{\ws}}(\wmu)\}=\{\asu{\wsx}{l-1}{l},\asu{\wsx}{l}{l+1}\}$ and $\ks_{\ell}\times\mdf{\ks_{\ell}}=\{(\wbs_{l},\xis_l),(\was_{l},\xis_l)\}$.
    \end{itemize}
    For the case $|\ell|=2$, say $\ell=\{l,l+1\}$, we have $\{\wmu\in\upas(\wsx)\mid \ell\in\ec{Q_0^{\ws}}(\wmu)\}=\{\asu{\wsx}{l-1}{l},\asu{\wsx}{l+1}{l+2}\}$ and $\ks_{\ell}\times\mdf{\ks_{\ell}}=\{(\hbs_{l},\xis_l),(\has_{l+1},\xis_{l+1})\}$. In each case, it is straightforward to check that the map is a bijection.
\end{proof}
	
The bijection in Lemma~\ref{lem:bi} induces a bijection
$$(\wmu,\wnu,\ell_1,\ell_2)\mapsto(\ell_1,\ell_2,\omo{\wnu}{\ell_1},\omo{\wmu}{\ell_2},\pmo{\wnu}{\ell_1},\pmo{\wmu}{\ell_2})$$
from the set $$\{(\wmu,\wnu,\ell_1,\ell_2)\mid \wmu\in\upas(\wsx), \wnu\in\upas(\wtx), \ell_1\in \ec{Q_0^{\wt}}(\wnu), \ell_2\in \ec{Q_0^{\ws}}(\wmu), (\omo{\ell_1}{\wnu},\omo{\ell_2}{\wmu})\in\mathcal{R}'\}$$
to the set
$$\{(\ell_1,\ell_2,y_1,y_2,\xi_1,\xi_2\mid \ell_1\in \ec{Q^{\wt}_0},\ell_2\in \ec{Q^{\ws}_0},(y_1,y_2)\in\mathcal{R}'(\kt_{\ell_1},\ks_{\ell_2}),\xi_1\in\mdf{\kt_{\ell_1}}, \xi_2\in\mdf{\ks_{\ell_2}}\}.$$
	
Then the direct sum decomposition~\eqref{eq:dec1} can be rewritten as
\begin{equation}\label{eq:dec2}
    |\huaRad_{\sg}(\cpy{\ws},\cpy{\wt})|=\bigoplus_{\wmu\in\upas(\wsx), \wnu\in\upas(\wtx)}|\huaRad_{\sg}(\cpy{\ws},\cpy{\wt})_{\wmu,\wnu}|,
\end{equation}
where
\begin{equation}\label{eq:dec4}
    |\huaRad_{\sg}(\cpy{\ws},\cpy{\wt})_{\wmu,\wnu}|=\bigoplus_{\begin{smallmatrix}\ell_1\in \ec{Q_0^{\wt}}(\wnu)\\\ell_2\in \ec{Q_0^{\ws}}(\wmu)\\(\omo{\ell_1}{\wnu},\omo{\ell_2}{\wmu})\in\mathcal{R}'\end{smallmatrix}}\k\ \fin{\pmo{\ell_1}{\wnu}}\circ f_{\omo{\ell_1}{\wnu},\omo{\ell_2}{\wmu}}\circ\fout{\pmo{\ell_2}{\wmu}}.
\end{equation}

Denote $\upas(\wsx;\wtx)$ the set of pairs $(\wmu,\wnu)$ of $\wmu\in\upas(\wsx)$ and $\wnu\in\upas(\wtx)$ such that $\wmu$ and $\wnu$ are in the same unpunctured $\dac$-polygon. Note that for any $\wmu\in\upas(\wsx)$ and $\wnu\in\upas(\wtx)$, if $(\omo{\ell_1}{\wnu},\omo{\ell_2}{\wmu})\in\mathcal{R}'$ for some $\ell_1\in \ec{Q_0^{\wt}}(\wnu)$ and $\ell_2\in \ec{Q_0^{\ws}}(\wmu)$, then $(\wmu,\wnu)\in\upas(\wsx;\wtx)$. So the decomposition~\eqref{eq:dec2} can be simplified as
\begin{equation}\label{eq:dec3}
    |\huaRad_{\sg}(\cpy{\ws},\cpy{\wt})|=\bigoplus_{(\wmu,\wnu)\in \upas(\wsx;\wtx)}|\huaRad_{\sg}(\cpy{\ws},\cpy{\wt})_{\wmu,\wnu}|.
\end{equation}

\begin{notations}\label{not:munu}
    For any $\varphi\in|\huaRad_{\sg}(\cpy{\ws},\cpy{\wt})|$, denote by $\varphi_{\wmu,\wnu}$ the component of $\varphi$ in $|\huaRad_{\sg}(\cpy{\ws},\cpy{\wt})_{\wmu,\wnu}|$ with respect to the decomposition~\eqref{eq:dec3}.
\end{notations}

Let $(\wmu,\wnu)\in\upas(\wsx;\wtx)$ and $f\in|\huaRad_{\sg}(\cpy{\ws},\cpy{\wt})_{\wmu,\wnu}|$. By Construction~\ref{cons:dgm}, we have $d_{\ws}=\sum_{\wmu'\in\upas(\wsx)}d_{\wmu'}$. For any $\ell_1\in \ec{Q_0^{\wt}}(\wnu)$ and $\ell_2\in\ec{Q_0^{\ws}}(\wmu)$ such that $(\omo{\ell_1}{\wnu},\omo{\ell_2}{\wmu})\in\mathcal{R}'$ and any $\wmu'\in\upas(\wsx)$ such that $d_{\wmu'}$ is to $\cply{\ws}{\ell_2}$, by Lemma~\ref{lem:comp1}, we have
$$\fin{\pmo{\ell_1}{\wnu}}\circ f_{\omo{\ell_1}{\wnu},\omo{\ell_2}{\wmu}}\circ\fout{\pmo{\ell_2}{\wmu}}\circ d_{\wmu'}=\begin{cases}
\fin{\pmo{\ell_1}{\wnu}}\circ f_{\omo{\ell_1}{\wnu},\omo{\ell_3}{\wmu}}\circ\fout{\pmo{\ell_3}{\wmu}}&\text{if $\wmu'=\wmu$,}\\
0&\text{otherwise,}
\end{cases}$$
where $\ell_3$ is the other element in $\ec{Q_0^{\ws}}(\wmu)$ from $\ell_2$. Hence we have
$$f\circ d_{\ws}=f\circ d_{\wmu}=(f\circ d_{\ws})_{\wmu,\wnu}.$$
Similarly, we have $$d_{\wt}\circ f=d_{\wnu}\circ f=(d_{\wt}\circ f)_{\wmu,\wnu}.$$
So $|\huaRad_{\sg}(\cpy{\ws},\cpy{\wt})_{\wmu,\wnu}|$ is closed under taking differential and hence it, together with the restriction of differential, becomes a dg $\k$-subspace $\huaRad_{\sg}(\cpy{\ws},\cpy{\wt})_{\wmu,\wnu}$ of $\huaRad_{\sg}(\cpy{\ws},\cpy{\wt})$. In particular, we have
\[
Z\huaRad_{\sg}(\cpy{\ws},\cpy{\wt})=\bigoplus_{(\wmu,\wnu)\in\upas(\wsx;\wtx)}Z\huaRad_{\sg}(\cpy{\ws},\cpy{\wt})_{\wmu,\wnu},
\]
and
\[
B\huaRad_{\sg}(\cpy{\ws},\cpy{\wt})=\bigoplus_{(\wmu,\wnu)\in\upas(\wsx;\wtx)}B\huaRad_{\sg}(\cpy{\ws},\cpy{\wt})_{\wmu,\wnu}.
\]

We divide the set $\upas(\wsx;\wtx)$ into the following disjoint subsets.
\begin{itemize}
    \item $\upas(\wsx;\wtx;\times)$ consists of the $(\wmu,\wnu)$ in $\upas(\wsx;\wtx)$ such that $\wmu$ crosses $\wnu$ in their interiors.
    \item $\upas(\wsx;\wtx;\parallel)$ consists of the $(\wmu,\wnu)$ in $\upas(\wsx;\wtx)$ such that $\wmu$ does not cross $\wnu$ and any endpoint of $\wmu$ is not in the same edge as any endpoint of $\wnu$.
    \item $\upas(\wsx;\wtx;\overset{\M}{\wedge})$ consists of the $(\wmu,\wnu)$ in $\upas(\wsx;\wtx)$ such that with certain orientations, $\wmu$ and $\wnu$ start at the same marked point in $\M$ and $\wmu\nsim\wnu$.
    \item $\upas(\wsx;\wtx;\prod)$ consists of the $(\wmu,\wnu)$ in $\upas(\wsx;\wtx)$ such that with certain orientations, $\wmu$ and $\wnu$ start at the same edge which is not a boundary segment.
\end{itemize}

\begin{lemma}\label{lem:rad1}
	Let $(\wmu,\wnu)\in\upas(\wsx;\wtx)$. Assume that one of the following holds.
	\begin{itemize}
	    \item[(1)] $(\wmu,\wnu)\in\upas(\wsx;\wtx;\parallel)$.
	    \item[(2)] $(\wmu,\wnu)\in\upas(\wsx;\wtx;\prod)$ and with the orientations such that $\wmu$ and $\wnu$ start at the same edge which is not a boundary segment, $\wmu\nsim\wnu$ and $\wmu$ is to the left of $\wnu$.
	    \item[(3)] $(\wmu,\wnu)\in\upas(\wsx;\wtx;\overset{\M}{\wedge})$ and with the orientations such that $\wmu$ and $\wnu$ start at the same point in $\M$, $\wmu$ is to the right of $\wnu$.
	\end{itemize}
	Then $Z\huaRad_{\sg}(\cpy{\ws},\cpy{\wt})_{\wmu,\wnu}=B\huaRad_{\sg}(\cpy{\ws},\cpy{\wt})_{\wmu,\wnu}$.
\end{lemma}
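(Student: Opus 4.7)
The plan is to analyze cocycles in each graded component $\huaRad_{\sg}(\cpy{\ws},\cpy{\wt})_{\wmu,\wnu}$ and produce explicit boundary preimages. By the decomposition~\eqref{eq:dec4}, each such component is spanned by at most four basis elements
$$g_{\ell_1,\ell_2} = \fin{\pmo{\ell_1}{\wnu}} \circ f_{\omo{\ell_1}{\wnu},\omo{\ell_2}{\wmu}} \circ \fout{\pmo{\ell_2}{\wmu}},$$
indexed by pairs $(\ell_1,\ell_2) \in \ec{Q_0^{\wt}}(\wnu) \times \ec{Q_0^{\ws}}(\wmu)$ with $(\omo{\ell_1}{\wnu},\omo{\ell_2}{\wmu}) \in \mathcal{R}'$. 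The key observation is that the differential acts block-diagonally only via $d_{\wmu}$ and $d_{\wnu}$; the remaining components $d_{\wmu'}$ (for $\wmu' \in \upas(\wsx)$ sharing an endpoint with $\wmu$) and $d_{\wnu'}$ (similarly on the $\wtx$-side) move a class out of the block $\huaRad_{\sg}(\cpy{\ws},\cpy{\wt})_{\wmu,\wnu}$ into a neighboring one, and conversely produce elements of this block when applied to morphisms in an adjacent block. Thus, to show a given $g_{\ell_1,\ell_2}$ is a boundary, the strategy is to exhibit a morphism $h$ in a neighboring block whose differential has $g_{\ell_1,\ell_2}$ as its $(\wmu,\wnu)$-component.

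For each of the three listed cases, I would construct such a preimage as $h = \fin{\xi'} \circ f_{y',y''} \circ \fout{\xi''}$, where $y',y''$ extend the path $(\omo{\ell_1}{\wnu},\omo{\ell_2}{\wmu})$ by one step along either $\wsx$ or $\wtx$ past a shared edge. Lemma~\ref{lem:comp1} (specifically the telescoping identity $\fout{\xi} \circ \fin{\xi} \circ f_{y_2,y_3} = f_{y_2,y_3}$ and the orthogonality $\fout{\xi'}\circ\fin{\xi}=0$ for $\xi\neq\xi'$ in the binaried case) then guarantees that $d(h)$ equals $g_{\ell_1,\ell_2}$ modulo contributions in blocks other than $(\wmu,\wnu)$. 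Geometrically: in case~(1) the endpoints of $\wmu,\wnu$ sit on distinct edges, so one has free choice of which adjacent arc segment to use for the extension. In case~(2) the ``$\wmu$ to the left of $\wnu$'' condition means the neighboring $\wmu'$ of $\wsx$ crossing past the shared edge provides the extension whose $d_{\wmu'}$-action contracts correctly; this matches the fact that this configuration corresponds to no intersection in the Gei{\ss}-type basis. Case~(3) is symmetric, using instead a neighboring segment of $\wtx$ at the shared marked point in $\M$.

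The main obstacle is the sign and orientation bookkeeping in the punctured (binaried) situation $|\ell|=2$, where $\mdf{\ks_{\ell}}$ has two elements and each $g_{\ell_1,\ell_2}$ must be produced as a boundary while its ``companion'' $g_{\ell_1',\ell_2'}$ (with a different $\xi$-decoration) is unaffected. The delicate calculation uses the explicit matrix formulas in Notations~\ref{not:mo} together with Lemma~\ref{lem:dec} to ensure that the preimages for the four possible basis elements can be chosen independently. I expect the proof to split according to whether the shared edge at which $\wmu$ and $\wnu$ meet is binaried or not, and whether each of $\wmu,\wnu$ is interior or boundary-touching, mirroring the case analysis in Figure~\ref{fig:Q2}. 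Finite-dimensionality of $\huaRad_{\sg}(\cpy{\ws},\cpy{\wt})_{\wmu,\wnu}$ then upgrades this basis-level statement to the conclusion $Z\huaRad_{\sg}(\cpy{\ws},\cpy{\wt})_{\wmu,\wnu}=B\huaRad_{\sg}(\cpy{\ws},\cpy{\wt})_{\wmu,\wnu}$ of Lemma~\ref{lem:rad1}.
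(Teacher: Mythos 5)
Your opening move is right: the decomposition \eqref{eq:dec4} reduces everything to understanding finite-dimensional blocks $|\huaRad_{\sg}(\cpy{\ws},\cpy{\wt})_{\wmu,\wnu}|$ of dimension at most four. But the central ``key observation'' that drives your strategy is backwards, and this breaks the rest of the argument.

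You claim that the neighboring components $d_{\wmu'}$, $d_{\wnu'}$ ``move a class out of the block $\huaRad_{\sg}(\cpy{\ws},\cpy{\wt})_{\wmu,\wnu}$ into a neighboring one,'' and you then propose to realize boundaries by exhibiting preimages in an \emph{adjacent} block. In fact the opposite is true, and it is essential to the whole setup. As the paper verifies using Lemma~\ref{lem:comp1}, for any $\wmu'\neq\wmu$ (resp.\ $\wnu'\neq\wnu$) the composition $\varphi_{\wmu,\wnu}\circ d_{\wmu'}$ (resp.\ $d_{\wnu'}\circ\varphi_{\wmu,\wnu}$) vanishes, while $\varphi_{\wmu,\wnu}\circ d_{\wmu}$ and $d_{\wnu}\circ\varphi_{\wmu,\wnu}$ land back in the \emph{same} block $(\wmu,\wnu)$. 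That is exactly why each $\huaRad_{\sg}(\cpy{\ws},\cpy{\wt})_{\wmu,\wnu}$ is a dg subspace and why the claim $Z_{\wmu,\wnu}=B_{\wmu,\wnu}$ is even well-formed as a statement about a single block. Consequently, a boundary lying in the $(\wmu,\wnu)$-block must be the image of an element of the $(\wmu,\wnu)$-block itself; you cannot produce it from a neighboring block, and the entire ``extend by one step along $\wsx$ or $\wtx$ past a shared edge'' mechanism does not do what you want it to.

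There is a second gap, independent of the first. You propose to show each individual basis element $g_{\ell_1,\ell_2}$ is a boundary, implicitly assuming the cocycle space is spanned by single $g_{\ell_1,\ell_2}$'s. That is false in the four-dimensional subcase of $(\wmu,\wnu)\in\upas(\wsx;\wtx;\parallel)$ (the third picture of Figure~\ref{fig:rad101}, with $j_3>0$): there the cocycle space is two-dimensional, spanned by one pure $g_{\ell_1,\ell_2}$ together with a nontrivial linear combination $(-1)^{\wsi_1-\wti_3}g_{\ell_1,\ell_2}+g_{\ell_1',\ell_2'}$. In particular not every individual $g_{\ell_1,\ell_2}$ in your proposed basis is a cocycle, let alone a coboundary. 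The correct (and actually simpler) argument, which the paper carries out, is a direct finite case analysis: in each subcase determined by the relative ordering of the endpoints $j_1,j_2,j_3,j_4$ in $\PP_i$ one writes out $|\huaRad_{\sg}(\cpy{\ws},\cpy{\wt})_{\wmu,\wnu}|$ explicitly as a span of at most four elements, computes which linear combinations the within-block differential annihilates, computes the image of the within-block differential, and observes the two subspaces coincide — with no need for preimages from outside, no finite-dimensionality cleanup step, and with the key point being precisely the local position of $\wmu$ relative to $\wnu$ stipulated in the hypotheses (1)--(3).
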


\begin{proof}
    See Appendix~\ref{app:pfs2}.
\end{proof}

Let $D^\circ(\ws,\wt)$ be the set of unpunctured sides of vertices in $Q_0^{\ws,\wt}$. Then we have that $\upas(\wsx;\wtx;\prod)$ is the disjoint union of $D^\circ(\ws,\wt[\rho])$, $\rho\in\ZZ$.

We also consider the following subsets.
\begin{itemize}
    \item $\upas(\wsx;\wtx;\times;0)$ is the subset of $\upas(\wsx;\wtx;\times)$ consisting of those $(\wmu,\wnu)$ such that the intersection index from $\wmu$ to $\wnu$ is 0.
    \item $\upas(\wsx;\wtx;\overset{\M}{\wedge};0)$ is the subset of $\upas(\wsx;\wtx;\overset{\M}{\wedge})$ consisting of those $(\wmu,\wnu)$ such that with the orientations such that $\wmu$ and $\wnu$ start at the same point in $\M$, $\wmu$ is to the left of $\wnu$, and the intersection index from $\wmu$ to $\wnu$ is 0.
\end{itemize}
We set
$$\upas(\wsx;\wtx;0):=\upas(\wsx;\wtx;\times;0)\cup\upas(\wsx;\wtx;\overset{\M}{\wedge};0)\cup D^\circ(\ws,\wt[-1]).$$

\begin{construction}\label{cons:rad}
    For any $(\wmu,\wnu)\in\upas(\wsx;\wtx;0)$, we construct a morphism $\psi(\wmu,\wnu)\in\huaRad_{\sg}(\cpy{\ws},\cpy{\wt})_{\wmu,\wnu}$ as follows.
    \begin{enumerate}
        \item In the case that $(\wmu,\wnu)\in\upas(\wsx;\wtx;\times;0)$, write $\wmu=\asu{\wsx}{u}{u'}=(i,j_1)-(i,j_2)$ and $\wnu=\asu{\wtx}{v}{v'}=(i,j_3)-(i,j_4)$ with $j_1<j_2$ and $j_3<j_4$. By Assumption~\ref{item:T2}, any two of $j_1,j_2,j_3,j_4$ are not the same. So there are the following subcases.
        \begin{enumerate}
            \item $j_1<j_3<j_2<j_4$, see the first picture in Figure~\ref{fig:rad2}. Define
            $$\psi(\wmu,\wnu)=\fin{\xit_{v}}\circ f_{\omo{\wnu}{v},\omo{\wmu}{u'}}\circ\fout{\xis_{u'}}.$$
            \item $0=j_3<j_1<j_4<j_2$, see the second picture in Figure~\ref{fig:rad2}. Define
            $$\psi(\wmu,\wnu)=\fin{\xit_{v'}}\circ f_{\omo{\wnu}{v'},\omo{\wmu}{u'}}\circ\fout{\xis_{u'}}.$$
            \item $0<j_3<j_1<j_4<j_2$, see the third picture in Figure~\ref{fig:rad2}. Define
            $$\psi(\wmu,\wnu)=\fin{\xit_{v'}}\circ f_{\omo{\wnu}{v'},\omo{\wmu}{u'}}\circ\fout{\xis_{u'}}+\fin{\xit_{v}}\circ f_{\omo{\wnu}{v},\omo{\wmu}{u}}\circ\fout{\xis_{u}}.$$
        \end{enumerate}
        \item In the case that $(\wmu,\wnu)\in\upas(\wsx;\wtx;\overset{\M}{\wedge};0)$, write $\wmu=\asu{\wsx}{u}{u'}=(i,0)-(i,j_1)$ and $\wnu=\asu{\wtx}{v}{v'}=(i,0)-(i,j_2)$, with $j_2<j_1$. Define
        $$\psi(\wmu,\wnu)=\fin{\xit_{v'}}\circ f_{\omo{\wnu}{v'},\omo{\wmu}{u'}}\circ\fout{\xis_{u'}}.$$
        \item In the case that $(\wmu,\wnu)\in D^\circ(\ws,\wt[-1])$, write $\wmu=\asu{\wsx}{u}{u'}=(i,j)-(i,j_1)$ and $\wnu=\asu{\wtx}{v}{v'}=(i,j)-(i,j_2)$ with $j\neq 0$. Define
        $$\psi(\wmu,\wnu)=\fin{\xit_{v'}}\circ f_{\omo{\wnu}{v'},\omo{\wmu}{u}}\circ\fout{\xis_{u}}+\fin{\xit_{v}}\circ f_{\omo{\wnu}{v},\omo{\wmu}{u'}}\circ\fout{\xis_{u'}},$$
        where $f_{y_1,y_2}$ is taken to be zero if $y_1=(i,j_1^{\kappa_1})$ and $y_2=(i,j_2^{\kappa_2})$ with $j_1\geq j_2$. In particular, $\psi(\wmu,\wnu)$ is possibly zero, e.g. when $j_1=j_2=0$.
    \end{enumerate}
\end{construction}

For any $(\wmu,\wnu)\in\upas(\wsx;\wtx;0)$, we denote
$$\overline{\psi}(\wmu,\wnu)=\psi(\wmu,\wnu)+B^0\huaRad_{\sg}(\cpy{\ws},\cpy{\wt})_{\wmu,\wnu}\in\frac{Z^0\huaRad_{\sg}(\cpy{\ws},\cpy{\wt})_{\wmu,\wnu}}{B^0\huaRad_{\sg}(\cpy{\ws},\cpy{\wt})_{\wmu,\wnu}}.$$

\begin{lemma}\label{lem:rad2}
	For any $(\wmu,\wnu)\in\upas(\wsx;\wtx)$, the space
	$$\frac{Z^0\huaRad_{\sg}(\cpy{\ws},\cpy{\wt})_{\wmu,\wnu}}{B^0\huaRad_{\sg}(\cpy{\ws},\cpy{\wt})_{\wmu,\wnu}}=\begin{cases}
	\k\overline{\psi}(\wmu,\wnu) & \text{if $(\wmu,\wnu)\in\upas(\wsx;\wtx;0)$,}\\
	0 & \text{otherwise.}
	\end{cases}$$
	Moreover, in case $(\wmu,\wnu)\in\upas(\wsx;\wtx;0)$, $\overline{\psi}(\wmu,\wnu)=0$ if and only if $(\wmu,\wnu)\in D^\circ(\ws,\wt[-1])$ such that the condition (2) in Lemma~\ref{lem:rad1} holds.
\end{lemma}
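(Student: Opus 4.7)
The plan is to argue case-by-case, according to the partition
\[
\upas(\wsx;\wtx)=\upas(\wsx;\wtx;\times)\cup\upas(\wsx;\wtx;\parallel)\cup\upas(\wsx;\wtx;\overset{\M}{\wedge})\cup\upas(\wsx;\wtx;\prod),
\]
and to use the fact that the direct summand $\huaRad_{\sg}(\cpy{\ws},\cpy{\wt})_{\wmu,\wnu}$ is already a dg subcomplex (established in the paragraph preceding the lemma), so that its cohomology can be computed locally. Lemma~\ref{lem:rad1} already disposes of three sub-cases by showing $Z=B$: the parallel case, the left-to-right $\prod$-case, and the right-to-left $\overset{\M}{\wedge}$-case. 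Each of these is disjoint from $\upas(\wsx;\wtx;0)$, so the ``otherwise'' half of the lemma holds on these sub-cases. It remains to handle $\upas(\wsx;\wtx;\times)$, the ``left-of'' sub-case of $\upas(\wsx;\wtx;\overset{\M}{\wedge})$, and the ``right-of (or $\sim$)'' sub-case of $\upas(\wsx;\wtx;\prod)$, corresponding to $D^\circ(\ws,\wt[-1])$ (where the shift by $[-1]$ comes in via the intersection-index formula of Construction~\ref{cons:datum}).

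First I would verify that each candidate cycle $\psi(\wmu,\wnu)$ of Construction~\ref{cons:rad} actually lies in $Z^0$. This is a bookkeeping computation using Lemma~\ref{lem:comp1} (which governs the composition of $f^{\mathrm{in}}_{\xi'}\circ f^{\mathrm{out}}_{\xi}$ sandwiched between two $f_{y_1,y_2}$'s) together with the local description of $d_{\ws}$ and $d_{\wt}$ from Construction~\ref{cons:dgm}. In each of the three crossing sub-cases (a)--(c) and in the $\overset{\M}{\wedge};0$ case, the terms in $\psi\circ d_{\ws}$ and $d_{\wt}\circ\psi$ cancel in pairs thanks to the choice of signs built into $\pmo{\ell}{-}$ and to the formula in Remark~\ref{rmk:geo}, which was already used in Lemma~\ref{lem:full} to describe morphisms of the form \ref{item:b}. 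In the $D^\circ(\ws,\wt[-1])$ case, the index-shift condition together with Lemma~\ref{lem:r1r2} forces $\psi$ to be closed.

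Next I would show that the cohomology of each of the remaining summands is at most one-dimensional by exhibiting an explicit contracting homotopy on its kernel modulo $\k\psi(\wmu,\wnu)$. Since the underlying graded vector space of $\huaRad_{\sg}(\cpy{\ws},\cpy{\wt})_{\wmu,\wnu}$ is spanned by the basis vectors $f^{\mathrm{in}}_{\pmo{\ell_1}{\wnu}}\circ f_{\omo{\ell_1}{\wnu},\omo{\ell_2}{\wmu}}\circ f^{\mathrm{out}}_{\pmo{\ell_2}{\wmu}}$ for $\ell_1\in\ec{Q_0^{\wt}}(\wnu)$ and $\ell_2\in\ec{Q_0^{\ws}}(\wmu)$ (with compatibility condition $(\omo{\ell_1}{\wnu},\omo{\ell_2}{\wmu})\in\mathcal R'$), one can write down this complex as a small two- or four-term diagram of $1$-dimensional pieces indexed by the corners of the rectangle $\ec{Q_0^{\wt}}(\wnu)\times\ec{Q_0^{\ws}}(\wmu)$. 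For the crossing case with $0<j_3<j_1<j_4<j_2$, for instance, the cycle condition becomes a single equation relating two components, with all four corners contributing, and only the specific combination given by $\psi(\wmu,\wnu)$ in (c) survives modulo boundaries; the other crossing sub-cases are simpler and similar. The $\overset{\M}{\wedge};0$ case reduces to a two-term diagram because one vertex of the $\M$-endpoint is absent. I expect the main obstacle here to be the careful tracking of signs: the definitions of $f^{\mathrm{in}}_{\xi}, f^{\mathrm{out}}_{\xi}$ introduce sign factors depending on $\pm$-tagging (cf. Notations~\ref{not:mo}), and one has to check that these sign factors match the orientation conventions built into $\omo{\ell}{-}$ and $\pmo{\ell}{-}$.

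Finally, for the last assertion I would analyse when $\psi(\wmu,\wnu)$ is itself a boundary. In the crossing and $\overset{\M}{\wedge};0$ cases, non-vanishing of $\psi$ follows from the fact that it involves a nontrivial $f_{y_1,y_2}$ with $y_1\neq y_2$, which cannot arise from applying $d_{\ws}$ or $d_{\wt}$ to a radical morphism, because those differentials always strictly decrease the $j$-index along the path, so one cannot ``cancel'' $\psi$ by a coboundary. For $(\wmu,\wnu)\in D^\circ(\ws,\wt[-1])$, the morphism $\psi(\wmu,\wnu)$ involves the two elements $f_{\omo{\wnu}{v'},\omo{\wmu}{u}}$ and $f_{\omo{\wnu}{v},\omo{\wmu}{u'}}$, each of which can degenerate to zero precisely when the corresponding $(i,j_1^{\kappa_1}),(i,j_2^{\kappa_2})$ satisfy $j_1\geq j_2$ (by the convention that $f_{y_1,y_2}=0$ then). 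Comparing with the three geometric situations of Lemma~\ref{lem:rad1}(2), both terms vanish exactly when $\wmu$ is to the left of $\wnu$ and $\wmu\nsim\wnu$, at which point $\overline\psi(\wmu,\wnu)=0$; in the other geometric sub-configurations at least one term survives, so the class is nonzero. Collecting these three steps proves the lemma.
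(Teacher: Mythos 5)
Your overall strategy matches the paper's: reduce to the partition of $\upas(\wsx;\wtx)$ and, after invoking Lemma~\ref{lem:rad1}, compute the small complexes $\huaRad_{\sg}(\cpy{\ws},\cpy{\wt})_{\wmu,\wnu}$ directly. The paper does this by explicitly writing out $Z^0$ and $B^0$ in each sub-case rather than constructing a contracting homotopy, but that is a cosmetic difference.

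However, there is a genuine gap in the middle and end of your argument. You assert that the three sub-cases handled by Lemma~\ref{lem:rad1} ``are each disjoint from $\upas(\wsx;\wtx;0)$,'' and on the strength of this you exclude the left-of $\prod$-case from the remaining analysis. This is false. The set $\upas(\wsx;\wtx;0)$ contains \emph{all} of $D^\circ(\ws,\wt[-1])$, regardless of the left/right configuration, and the condition in Lemma~\ref{lem:rad1}(2) (same non-boundary starting edge, $\wmu\nsim\wnu$, $\wmu$ to the left of $\wnu$) is imposed without reference to the index shift $\rho$. So pairs in $D^\circ(\ws,\wt[-1])$ satisfying Lemma~\ref{lem:rad1}(2) give a nonempty overlap with $\upas(\wsx;\wtx;0)$. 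That overlap is exactly what the ``moreover'' clause of the lemma addresses, and the cases you must actually handle are larger than you claimed.

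Your attempt to repair this at the end is then also incorrect. You argue that $\overline\psi(\wmu,\wnu)=0$ for such pairs because \emph{both} terms of $\psi$ (as in Construction~\ref{cons:rad}(3)) vanish literally. But with $\wmu=(i,j)-(i,j_1)$ and $\wnu=(i,j)-(i,j_2)$, ``$\wmu$ to the left of $\wnu$'' covers three orderings: $j_2<j_1<j$, $j<j_2<j_1$, and $j_1<j<j_2$. Only in the last one do both summands of $\psi$ vanish; in the first two, exactly one summand of $\psi$ is nonzero (the other is killed by the convention $f_{y_1,y_2}=0$ for $j_1\geq j_2$). What makes $\overline\psi=0$ there is that the surviving generator is a boundary, precisely the content of the proof of Lemma~\ref{lem:rad1}(2), which shows $Z=B$ directly. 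The correct way to get the ``moreover'' direction is to invoke Lemma~\ref{lem:rad1}(2) for the vanishing of the whole quotient (and hence of $\overline\psi$, whether or not $\psi$ itself is zero), then verify separately, by the explicit $Z^0$/$B^0$ computation as in Appendix~\ref{app:pfs3} case~(c), that $B^0=0$ and $\psi\neq 0$ in the complementary $D^\circ(\ws,\wt[-1])$ configurations (right of, or $\wmu\sim\wnu$).
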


\begin{proof}
    See Appendix~\ref{app:pfs3}.
\end{proof}

There is a natural basis of a complement of  $|\huaRad^{-1}_{\sg}(\cpy{\ws},\cpy{\wt})|$ in $|\huaHom^{-1}_{\sg}(\cpy{\ws},\cpy{\wt})|$:
$$\iota(u,v):=\id:\wks_u\sg[\wsi_u]\to\wkt_u\sg[\wsi_v]$$
for all $(u,v)\in Q_0^{\ws,\wt[-1]}$ satisfying $\wks_u=\wkt_v$. For any such $(u,v)$, we define a number $c(u,v)$ (compared with Notation~\ref{Not:c}) as follows.
\begin{itemize}
    \item If $\hks_u=\hkt_v=(i,j^+)$, let $\wmu$ and $\wnu$ be the punctured arc segments of $\wsx$ and $\wtx$ with $\Vs_{u}$ and $\Vt_{v}$ as endpoints respectively. Then
    $$c(u,v)=\begin{cases}
    0 & \text{if $\wmu$ is non-interior and $\wnu$ is interior,}\\
    1 & \text{otherwise.}
    \end{cases}$$
    \item If $\hks_u=\hkt_v=(i,j^-)$, let $\wmu$ and $\wnu$ be the punctured arc segments of $\wsx$ and $\wtx$ with $\Vs_{u}$ and $\Vt_{v}$ as endpoints respectively. Then
    $$c(u,v)=\begin{cases}
    0 & \text{if $\wmu$ is interior and $\wnu$ is non-interior,}\\
    -1 & \text{otherwise.}
    \end{cases}$$
    \item Otherwise, $c(u,v)=1$.
\end{itemize}

\begin{lemma}\label{lem:homotopy}
    Let $(u,v)\in Q_0^{\ws,\wt[-1]}$ satisfying $\wks_u=\wkt_v$. Then
    $$d(\iota(u,v))=\psi(\wmu,\wnu)+c(u,v)\psi(\wmu',\wnu'),$$
    where $(\wmu,\wnu)$ is the unpunctured side of $(u,v)$, and
    \begin{itemize}
        \item $(\wmu',\wnu')$ is the unpunctured side of the starting of the arrow in $\qst_{2,-}$ whose terminal is $(u,v)$, if $\hks_u=\hkt_v$ is an edge of a once-punctured monogon and any arc segment in the punctured side of $(u,v)$ is interior (which implies that $(u,v)$ is the terminal of an arrow in $\qst_{2,-}$), or
        \item $(\wmu',\wnu')$ is the other unpunctured side (if exists) of a vertex in $\ec{u}\times\ec{v}$ otherwise.
    \end{itemize}
\end{lemma}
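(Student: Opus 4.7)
The plan is to compute $d(\iota(u,v))$ directly and match it termwise with the stated right-hand side. Since $\iota(u,v)\colon \wks_u\sg[\wsi_u]\to\wkt_v\sg[\wti_v]$ has degree $\wti_v-\wsi_u=-1$ (using $\wsi_u=\wti_v+1$ from $(u,v)\in Q_0^{\ws,\wt[-1]}$ and \eqref{eq:shift}), the dg-Hom differential gives
\[
  d(\iota(u,v)) \;=\; \iota(u,v)\circ d_{\ws} \;+\; d_{\wt}\circ \iota(u,v).
\]
Writing $d_{\ws}=\sum_{\wmu''\in\upas(\wsx)} d_{\wmu''}$ (Construction~\ref{cons:dgm}), only those unbinaried arc segments $\wmu''$ adjacent to some element of $\ec{u}$ can contribute to the first term, and symmetrically for $d_{\wt}$. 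Each nonzero composition, after invoking the selection rule in Lemma~\ref{lem:comp1}, takes precisely the form $\fin{\pmo{\ell_1}{\wnu''}}\circ f_{\omo{\ell_1}{\wnu''},\,\omo{\ell_2}{\wmu''}}\circ\fout{\pmo{\ell_2}{\wmu''}}$ appearing in the formula for $\psi$ in Construction~\ref{cons:rad}.

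I would then perform a case analysis. In \emph{Case~1}, $\hks_u\notin\fc{\OME}$, so $|\ec{u}|=|\ec{v}|=1$, both $\xis_u,\xit_v$ are $\fen{\cdot}{\emptyset}$ (giving identity $\fin$ and $\fout$), and both sides of $(u,v)$ are unpunctured. A direct expansion yields $d(\iota(u,v))=\psi(\wmu,\wnu)+\psi(\wmu',\wnu')$, where $(\wmu',\wnu')$ is the other unpunctured side of $(u,v)$ if it exists, matching the second bullet with $c(u,v)=1$.

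In \emph{Case~2}, $\hks_u=(i,j)\in\fc{\OME}$, so $\wks_u=(i,j^\kappa)$. The unpunctured side of $(u,v)$ is $(\wmu,\wnu)$, while the punctured side involves arc segments $\wmu_0\in\cias(\wsx)$ and $\wnu_0\in\cias(\wtx)$. I would subdivide by the interiority of $\wmu_0,\wnu_0$: \emph{(2a) both non-interior}, where $|\ec{u}|=|\ec{v}|=1$ and the punctured segments are binaried in $\gmsx$ (hence absent from $d_{\ws},d_{\wt}$), leaving only $\psi(\wmu,\wnu)$ with no other vertex in $\ec{u}\times\ec{v}$; \emph{(2b) both interior}, where $|\ec{u}|=|\ec{v}|=2$ and $(u,v)$ is the terminal of a $\qst_{2,-}$-arrow from some $(u'',v'')$ whose unpunctured side $(\wmu',\wnu')$ produces $c(u,v)\psi(\wmu',\wnu')$ with $c(u,v)=\pm 1$ according to $\kappa=\pm$; \emph{(2c) exactly one of $\wmu_0,\wnu_0$ interior}, where the mismatched $\fin/\fout$-indices force the contribution coming from the other vertex in $\ec{u}\times\ec{v}$ to vanish via Lemma~\ref{lem:comp1}, consistent with $c(u,v)=0$.

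The principal obstacle will be the sign bookkeeping in \emph{Case~2b}. The $2\times 2$ matrices $\fin{\fen{(i,j^\pm)}{(i,j^\mp)}}$ and $\fout{\fen{(i,j^\pm)}{(i,j^\mp)}}$ from Notations~\ref{not:mo} contain specific $\pm 1$ entries; combining these with the sign conventions built into Construction~\ref{cons:rad} of $\psi$ and the matrix-entry conventions of $f_{y_1,y_2}$ in Construction~\ref{cons:fij} must produce the stated $c(u,v)$. This reduces to products of the form $\fout{\xi}\circ\fin{\xi}$ already written out in the proof of Lemma~\ref{lem:comp1}, combined with the composition formula of Lemma~\ref{lem:rmkcomp}. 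One must also verify that the twin $\qst_{2,-}$-arrow (terminating at a different vertex) does not contribute spuriously, which follows from the selection rule since its $\fin/\fout$-indices mismatch those producing $\psi(\wmu,\wnu)$. The remaining cases are routine matrix-entry verifications.
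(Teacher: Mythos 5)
Your overall strategy (expand the Hom-complex differential $d(\iota(u,v))=\iota(u,v)\circ d_{\ws}+d_{\wt}\circ\iota(u,v)$, reduce via Lemma~\ref{lem:comp1}, and case-split by whether $\hks_u$ is an edge of a once-punctured monogon and by the interiority of the punctured arc segments) is exactly the route the paper takes, and your Cases~1, (2a) and (2b) match the paper's case structure.

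However, Case~(2c) contains a real error. You assert that whenever exactly one of the punctured arc segments $\wmu_0,\wnu_0$ at $(u,v)$ is interior, ``the mismatched $\fin/\fout$-indices force the contribution coming from the other vertex in $\ec{u}\times\ec{v}$ to vanish, consistent with $c(u,v)=0$.'' This is false: the definition of $c(u,v)$ given just before the lemma gives $c(u,v)=0$ only when either $\wks_u=(i,j^+)$ \emph{and} the $\wsx$-segment is the non-interior one, or $\wks_u=(i,j^-)$ \emph{and} the $\wtx$-segment is the non-interior one. In the other two combinations (tagging $+$ with non-interior $\wtx$-segment, tagging $-$ with non-interior $\wsx$-segment) one has $c(u,v)=\pm1$, and the direct computation produces a genuine second term $\pm\psi(\wmu',\wnu')$ coming from the other vertex in $\ec{u}\times\ec{v}$. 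Concretely, with $\ec{u}=\{u\}$, $\ec{v}=\{v,v'\}$ and $\wks_u=\wkt_v=(i,j^-)$, the $2\times2$ matrices $\fin{\fen{(i,j^-)}{(i,j^+)}}=\begin{psmallmatrix}1&0\\0&1\end{psmallmatrix}$ and the column/row inclusions $\begin{psmallmatrix}0\\1\end{psmallmatrix}$ interact so that $f_{\omo{\wnu}{v''},(i,j)}\circ\fout{\xit_v}$ picks up \emph{both} the $(i,j^-)$ and a signed $(i,j^+)$ contribution; the latter does not vanish against the selection rule and recovers $-\psi(\asu{\wsx}{u''}{u},\asu{\wtx}{v'}{v'''})$. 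So the claim that the indices mismatch and force vanishing is not a routine matrix verification that ``reduces to'' Lemma~\ref{lem:comp1}; the surviving cross-term is precisely the delicate content of the lemma, and determining when it survives requires tracking the interaction between the tagging sign and which of $\ws,\wt$ carries the non-interior segment. You need to split (2c) into the four subcases (two choices of which curve is non-interior $\times$ two taggings) and compute each, as in the appendix.

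A secondary, much smaller point: you state $\iota(u,v)$ has degree $-1$ using $\wsi_u=\wti_v+1$; with the paper's conventions this is indeed a degree-$(-1)$ element of $\huaHom$, but be careful to keep the shift convention $M[r]^n$ consistent throughout, since a sign flip here would silently propagate into the claimed matching with $\psi$ in every case.
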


\begin{proof}
    See Appendix~\ref{app:pfs4}.
\end{proof}

By Lemma~\ref{lem:homotopy}, for any $(u,v)\in Q_0^{\ws,\wt[-1]}$ such that $\wks_u=\wkt_v$, we have $$d(\iota(u,v))\in\bigoplus_{(\wmu,\wnu)\in D^\circ(\ws,\wt[-1])}\huaRad_{\sg}(\cpy{\ws},\cpy{\wt})_{(\wmu,\wnu)}.$$
Hence we have
$$
   \operatorname{Rad}_{\per\sg}(\cpy{\ws},\cpy{\wt})=\frac{Z^0\huaRad_{\sg}(\cpy{\ws},\cpy{\wt})}{B^0\huaHom_{\sg}(\cpy{\ws},\cpy{\wt})}=W_1\oplus W_2,$$
where
$$W_1=\frac{\bigoplus_{(\wmu,\wnu)\in D^\circ(\ws,\wt[-1])}Z^0\huaRad_{\sg}(\cpy{\ws},\cpy{\wt})_{(\wmu,\wnu)}}{\k\{d(\iota(u,v))\mid (u,v)\in Q_0^{\ws,\wt[-1]},\wks_u=\wkt_v\}+\bigoplus_{(\wmu,\wnu)\in D^\circ(\ws,\wt[-1])}B^0\huaRad_{\sg}(\cpy{\ws},\cpy{\wt})_{(\wmu,\wnu)}}$$
and
$$W_2=\bigoplus_{(\wmu,\wnu)\in\upas(\wsx;\wtx)\setminus D^\circ(\ws,\wt[-1])}\frac{Z^0\huaRad_{\sg}(\cpy{\ws},\cpy{\wt})_{\wmu,\wnu}}{B^0\huaRad_{\sg}(\cpy{\ws},\cpy{\wt})_{\wmu,\wnu}}.$$

\begin{lemma}\label{lem:W2}
    The space $W_2$ admits a basis $$\{\psi(\wmu,\wnu)\mid   (\wmu,\wnu)\in\upas(\wsx;\wtx;\times;0)\cup\upas(\wsx;\wtx;\overset{\M}{\wedge};0)\}.$$
\end{lemma}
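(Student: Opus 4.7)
The proposal is that Lemma~\ref{lem:W2} is almost a direct corollary of Lemma~\ref{lem:rad2} together with the bookkeeping of which summand indices survive after quotienting. The plan has three short steps.

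First, I would use the direct sum decomposition
\[
W_2 = \bigoplus_{(\wmu,\wnu) \in \upas(\wsx;\wtx) \setminus D^\circ(\ws,\wt[-1])} \frac{Z^0\huaRad_{\sg}(\cpy{\ws},\cpy{\wt})_{\wmu,\wnu}}{B^0\huaRad_{\sg}(\cpy{\ws},\cpy{\wt})_{\wmu,\wnu}},
\]
which is immediate from the decomposition~\eqref{eq:dec3} (both $Z^0$ and $B^0$ respect the $(\wmu,\wnu)$-grading, as noted just after \eqref{eq:dec3}). Because the sum is direct, the lemma reduces to identifying a basis of each individual summand.

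Next, I would invoke Lemma~\ref{lem:rad2} on each summand: it says that the $(\wmu,\wnu)$-summand is either $0$ or one-dimensional spanned by $\overline{\psi}(\wmu,\wnu)$, being nonzero precisely when $(\wmu,\wnu) \in \upas(\wsx;\wtx;0)$. Recalling the partition
\[
\upas(\wsx;\wtx;0) = \upas(\wsx;\wtx;\times;0) \cup \upas(\wsx;\wtx;\overset{\M}{\wedge};0) \cup D^\circ(\ws,\wt[-1]),
\]
the pairs contributing nonzero summands to $W_2$ (which excludes $D^\circ(\ws,\wt[-1])$ from the index set) are exactly those in $\upas(\wsx;\wtx;\times;0) \cup \upas(\wsx;\wtx;\overset{\M}{\wedge};0)$.

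Finally, I would check that for $(\wmu,\wnu)$ in either of these two pieces the class $\overline{\psi}(\wmu,\wnu)$ is nonzero: by the ``moreover'' part of Lemma~\ref{lem:rad2}, $\overline{\psi}(\wmu,\wnu) = 0$ can only happen inside $D^\circ(\ws,\wt[-1])$, which has been excluded. Combining these facts, $\{\psi(\wmu,\wnu)\}$ indexed by $\upas(\wsx;\wtx;\times;0) \cup \upas(\wsx;\wtx;\overset{\M}{\wedge};0)$ is a basis of $W_2$. No genuine obstacle is expected, since all the work has been pushed into Lemmas~\ref{lem:rad1} and \ref{lem:rad2}; the only mild care needed is to keep the index sets aligned with the partition of $\upas(\wsx;\wtx)$.
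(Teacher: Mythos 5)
Your proposal is correct and takes essentially the same approach as the paper, which simply cites Lemma~\ref{lem:rad2}; the only thing you have added is to spell out the index bookkeeping (using the $(\wmu,\wnu)$-grading of $W_2$, the partition of $\upas(\wsx;\wtx;0)$, and the ``moreover'' clause to rule out vanishing classes) that the paper leaves implicit.
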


\begin{proof}
    This follows directly from Lemma~\ref{lem:rad2}.
\end{proof}

Since there is a bijection between $I_0$ (see Construction~\ref{cons:TOI}) and $\upas(\wsx;\wtx;\times;0)\cup\upas(\wsx;\wtx;\overset{\M}{\wedge};0)$, by Lemma~\ref{lem:W2}, we get a basis of $W_2$ indexed by $I_0$. The left what we need to give is a basis of $W_1$ indexed by $I_2$.

Let $\mathcal{C}(\ws,\wt[-1])$ be the set of connected components of the bi-quiver $Q^{\ws,\wt[-1]}$. For any $C\in\mathcal{C}(\ws,\wt[-1])$, denote by $D^\circ(C)$ the set of unpunctured sides of vertices in $C$. Then $D^\circ(\ws,\wt[-1])$ is the disjoint union of $D^\circ(C)$, $C\in\mathcal{C}(\ws,\wt[-1])$.

For any $C\in\mathcal{C}(\ws,\wt[-1])$, by Lemma~\ref{lem:homotopy}, for any vertex $(u,v)$ of $C$ such that $\wks_u=\wkt_v$, we have $$d(\iota(u,v))\in\bigoplus_{(\wmu,\wnu)\in D^\circ(C)}\huaRad_{\sg}(\cpy{\ws},\cpy{\wt})_{(\wmu,\wnu)}.$$
Hence we have $W_1=\bigoplus_{C\in\mathcal{C}(\ws,\wt[-1])}W_1(C)$, where
$$W_1(C)=\frac{\bigoplus_{(\wmu,\wnu)\in D^\circ(C)}Z^0\huaRad_{\sg}(\cpy{\ws},\cpy{\wt})_{(\wmu,\wnu)}}{\k\{d(\iota(u,v))\mid (u,v)\in C_0,\wks_u=\wkt_v\}+\bigoplus_{(\wmu,\wnu)\in D^\circ(C)}B^0\huaRad_{\sg}(\cpy{\ws},\cpy{\wt})_{(\wmu,\wnu)}}.$$

Recall from the construction (Construction~\ref{cons:biq}) of $Q^{\ws,\wt[-1]}$ that each vertex $(u,v)$ has at most one punctured side, and if it has one punctured splitting side, then it is the start or terminal of an arrow in $Q^{\ws,\wt[-1]}_2$. Hence by Lemma~\ref{lem:homotopy}, we have the following description.

\begin{lemma}\label{lem:description}
    For any $C\in\mathcal{C}(\ws,\wt[-1])$, the set $\{d(\iota(u,v))\mid (u,v)\in C_0,\wks_u=\wkt_v\}$ consists of the following.
    \begin{itemize}
        \item $\psi(\wmu,\wnu)+\psi(\wmu',\wnu')$, for any vertex $(u,v)$ of $C$ which has two unpunctured sides $(\wmu,\wnu)$ and $(\wmu',\wnu')$,
        \item $\psi(\wmu,\wnu)$, for any vertex $(u,v)$ of $C$  which has one unpunctured side $(\wmu,\wnu)$ and one non-splitting punctured side $(\wmu''',\wnu''')$ such that $\wmu'''$ and $\wnu'''$ are non-interior and with the same tagging,
        \item $\psi(\wmu',\wnu')-\psi(\wmu,\wnu)$ and $\psi(\wmu'',\wnu'')+\psi(\wmu,\wnu)$, for any twin arrows $\alpha=(\wmu''',\wnu''')_{-},\beta=(\wmu''',\wnu''')_{+}\in Q^{\ws,\wt[-1]}_{2,-}$, where $(\wmu,\wnu)$ is the unpunctured side of the common start of $\alpha$ and $\beta$, and $(\wmu',\wnu')$ (resp. $(\wmu'',\wnu'')$) is the unpunctured side of the terminal of $\alpha$ (resp. $\beta$),
        \item $\psi(\wmu,\wnu)$, for any arrow $\alpha=(\wmu''',\wnu''')_{\ominus}\in Q^{\ws,\wt[-1]}_{2,o}$ with the tagging at $\wmu'''$ being $+$, where $(\wmu,\wnu)$ is the unpunctured side of the start of $\alpha$,
        \item $\psi(\wmu,\wnu)+\psi(\wmu',\wnu')$, for any arrow $\alpha=(\wmu''',\wnu''')_{\oplus}\in Q^{\ws,\wt[-1]}_{2,o}$ with the tagging at $\wnu'''$ being $+$, where $(\wmu,\wnu)$ and $(\wmu',\wnu')$ are the unpunctured sides of the start and the terminal of $\alpha$, respectively,
        \item $-\psi(\wmu,\wnu)+\psi(\wmu',\wnu')$, for any arrow $\alpha=(\wmu''',\wnu''')_{\ominus}\in Q^{\ws,\wt[-1]}_{2,o}$ with the tagging at $\wmu'''$ being $-$, where $(\wmu,\wnu)$ and $(\wmu',\wnu')$ are the unpunctured sides of the start and the terminal of $\alpha$, respectively,
        \item $\psi(\wmu,\wnu)$, for any arrow $\alpha=(\wmu''',\wnu''')_{\oplus}\in Q^{\ws,\wt[-1]}_{2,o}$ with the tagging at $\wnu'''$ being $-$, where $(\wmu,\wnu)$ is the unpunctured side of the start of $\alpha$.
    \end{itemize}
\end{lemma}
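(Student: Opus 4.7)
The plan is to apply Lemma~\ref{lem:homotopy} vertex-by-vertex and to collect the resulting elements $d(\iota(u,v))$, indexed over those $(u,v)\in C_0$ with $\wks_u=\wkt_v$, into the seven items listed in the statement. Each item will correspond to a specific local configuration of the two sides of $(u,v)$ in the bi-quiver $Q^{\ws,\wt[-1]}$, so the proof is essentially a translation of Lemma~\ref{lem:homotopy} under the classification of vertices provided by Construction~\ref{cons:biq}.

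I would first dispose of the two ``generic'' items. If both sides of $(u,v)$ are unpunctured, then $\hks_u=\hkt_v$ is not an edge of a once-punctured monogon, the definition of $c(u,v)$ immediately gives $c(u,v)=1$, and the ``otherwise'' clause of Lemma~\ref{lem:homotopy} selects $(\wmu',\wnu')$ as the other unpunctured side of a vertex of $\ec{u}\times\ec{v}$; this produces the first item. The second item is the variant where the non-unpunctured side is a non-splitting punctured side with both arc segments non-interior and sharing their taggings: here the existence clause ``(if exists)'' in Lemma~\ref{lem:homotopy} fails and one reads off $d(\iota(u,v))=\psi(\wmu,\wnu)$.

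The remaining five items come from vertices incident to an arrow in $Q_2$. For the third item one looks at the two terminals of a twin pair $\alpha=(\wmu''',\wnu''')_{-}$, $\beta=(\wmu''',\wnu''')_{+}\in Q^{\ws,\wt[-1]}_{2,-}$: each terminal has a punctured side consisting of an interior arc segment, so the first clause of Lemma~\ref{lem:homotopy} applies and $(\wmu',\wnu')$ is forced to be the unpunctured side $(\wmu,\wnu)$ at the common start. Combining this with the $+/-$ labelling conventions on the twin arrows in Construction~\ref{cons:biq} and the signs of the corresponding $c$-values yields the two elements displayed. The last four items (the two subcases of $\ominus$-arrows and the two subcases of $\oplus$-arrows in $Q_{2,o}$) are handled analogously: $(u,v)$ is the start or terminal of a unique $Q_{2,o}$-arrow, and $c(u,v)\in\{0,\pm1\}$ depending on whether the relevant tagging is $+$ or $-$ and on whether the non-interior segment lies on $\wsx$ or $\wtx$; the corresponding value of $c(u,v)$ is exactly what produces the stated coefficient of the second $\psi$-term.

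The main obstacle is the sign bookkeeping. Three sources of signs must be tracked simultaneously: the constant $c(u,v)\in\{0,\pm 1\}$ read off the definition preceding Lemma~\ref{lem:homotopy}; the $\pm 1$ entries in the matrices $\fout{\xi}$ and $\fin{\xi}$ of Notations~\ref{not:mo}; and the $+/-$ labels on twin arrows in $Q_{2,-}$ and on the taggings attached to $\ominus,\oplus$-arrows. These signs interact to produce the minus signs visible in the third and sixth items and the vanishing of one summand in the fourth and seventh items. I would pin them down once and for all by working through a single representative of each of the six local pictures in Figure~\ref{fig:Q2}; once the signs are correctly fixed there, all remaining vertices follow by direct substitution into Lemma~\ref{lem:homotopy}, and the lemma is obtained by enumerating $(u,v)\in C_0$.
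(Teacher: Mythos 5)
Your plan — apply Lemma~\ref{lem:homotopy} vertex-by-vertex, classifying each vertex $(u,v)$ with $\wks_u=\wkt_v$ by the types of its two sides (two unpunctured; one unpunctured plus a non-interior punctured side with matching tagging; one unpunctured plus one interior/mixed punctured side giving a $Q_{2,-}$ or $Q_{2,o}$ arrow), and then read off the constant $c(u,v)$ and the second $\psi$-term from the two clauses of Lemma~\ref{lem:homotopy} — is exactly what the paper intends, which is why Lemma~\ref{lem:description} is stated with only ``Hence by Lemma~\ref{lem:homotopy}'' and no separate proof. Your identification of the sign bookkeeping (via $c(u,v)$, the $\fin{}$/$\fout{}$ matrices, and the $+/-$ labels on the $Q_2$-arrows) as the content to verify against the six pictures of Figure~\ref{fig:Q2} is the right place to spend effort, and the case analysis you outline is exhaustive and consistent with Appendix~\ref{app:pfs4}.
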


For any line $L\in\mathcal{L}(\ws,\wt[-1])$, we denote by $D^\circ(L)$ the set of unpunctured sides of vertices of $L$. We shall need the following dual notion of real $h$-line.

\begin{definition}
    A line $L$ in $\mathcal{L}(\ws,\wt[-1])$ is called an $r$-line provided that for each splitting side $(\wmu,\wnu)\in D^\circ(L)$, $\wmu$ is to the right of $\wnu$. In other words, each splitting side of an outlet $(u,v)$ is either a 0-side, or gives rise to an arrow in $Q^{\ws,\wt[-1]}_{2,o}$ whose terminal is $(u,v)$ (cf. Remark~\ref{rmk:hline}). An $r$-line is called \emph{tagged} provided that for each ending but not splitting side $(\wmu,\wnu)$, $(\wmu,\wnu)$ is a loop in $Q^{\ws,\wt[-1]}_1$ and the taggings of $\wsx$ and $\wtx$ at $\wmu$ and $\wnu$ respectively are different.
\end{definition}

\begin{remark}\label{rmk:rline}
The quiver $(Q_0^{\ws,\wt[-1]},Q_1^{\ws,\wt[-1]})$ is the same as the quiver $(Q_0^{\wt[-1],\ws},Q_1^{\wt[-1],\ws})$. So $\mathcal{L}(\ws,\wt[-1])=\mathcal{L}(\wt[-1],\ws)$. Then the $r$-lines in $\mathcal{L}(\ws,\wt[-1])$ are exactly the real $h$-lines in $\mathcal{L}(\wt[-1],\ws)$. So by Remark~\ref{rmk:hline}, there is a natural bijection between the set of tagged $r$-lines and the set $I_2$ (see Construction~\ref{cons:TOI}).
\end{remark}

\begin{remark}\label{rmk:noth}
    Let $L$ be a tagged $r$-line in $\mathcal{L}(\ws,\wt[-1])$. Then $L$ is not an $h$-line unless it is of type $\widetilde{D}$. This is because if $L$ is of type $A$ (resp. $D$), then each ending side of $L$ which does not give rise to a loop is splitting. Thus, there are two (resp. one) splitting sides of $L$. By Proposition~\ref{prop:G}~(2), there is at most one (zero) arrow in $Q^{\ws,\wt[-1]}_{2,o}$ whose terminal is an outlet of $L$. So there is a splitting side of $L$ which is a $0$-side. This implies that $L$ is not an $h$-line.

    In particular, for any tagged $r$-line $L$ in $\mathcal{L}(\ws,\wt[-1])$ and any line $L\in\mathcal{L}(\ws,\wt[-1])$ such that $L'>L$, by Proposition~\ref{prop:G}, $L'$ is not an $h$-line.
\end{remark}

In a partially ordered set, an element is called \emph{minimal} provided that any element is not smaller than it, or called \emph{least} if it is smaller than any other element.

By definition, there is no arrow in $Q^{\ws,\wt[-1]}_2$ starting from a vertex in an $r$-line. So any $r$-line is a minimal element in $\mathcal{L}(\ws,\wt[-1])$. Let $\mathcal{L}(C)$ be the set of lines in $C$.

\begin{lemma}\label{lem:r-line}
    For any $C\in\mathcal{C}(\ws,\wt[-1])$, if there is a tagged $r$-line $L$ in $\mathcal{L}(C)$, then $L$ is the least element in $\mathcal{L}(C)$. In particular, there is at most one tagged $r$-line in $\mathcal{L}(C)$.
\end{lemma}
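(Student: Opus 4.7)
The plan is to leverage the partial order structure from Proposition~\ref{prop:G} and the combinatorics of the biquiver $Q^{\ws,\wt[-1]}$ to show that the tagged $r$-line $L$ is in fact the unique sink of the component $C$ under the directed graph on lines induced by $Q_2^{\ws,\wt[-1]}$-arrows, and that every other line in $\mathcal{L}(C)$ admits a directed path of $Q_2$-arrows to $L$.

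First, I would record the already-established fact (stated immediately before the lemma) that no arrow in $Q_2^{\ws,\wt[-1]}$ starts from a vertex of an $r$-line, so $L$ is a minimal element of the poset $(\mathcal{L}(\ws,\wt[-1]),>)$. Combining this with Proposition~\ref{prop:G}, any line connected to $L$ by a $Q_2$-arrow necessarily lies above $L$ in the partial order.

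Next, to promote ``minimal'' to ``least in $\mathcal{L}(C)$'', I would set $U=\{L'\in\mathcal{L}(C):L'\geq L\}$ and argue $U=\mathcal{L}(C)$ by induction on the number of $Q_2$-arrows traversed in a shortest path (inside $C$) from a vertex of $L$ to a vertex of the given line. At the inductive step, the key observation is that for any $L'\in\mathcal{L}(C)\setminus U$, any $Q_2$-arrow between a vertex of $L'$ and a vertex of a line already in $U$ must have its tail in $L'$ and head in $U$: otherwise its head would lie in $L'$, which (via the generating relation of the partial order in Proposition~\ref{prop:G}) would place $L'$ above that element of $U$, hence in $U$ by transitivity, a contradiction. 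Combining this with a case analysis on the six configurations for $Q_2$-arrows depicted in Figure~\ref{fig:Q2}, and the fact (via the duality between tagged $r$-lines in $\mathcal{L}(\ws,\wt[-1])$ and tagged real $h$-lines in $\mathcal{L}(\wt[-1],\ws)$ from Remark~\ref{rmk:rline}) that the tagging condition on $L$ produces a globally coherent ``descent direction'', one shows that such an outgoing $Q_2$-arrow from $L'$ into $U$ must always exist, giving $L'>L''\geq L$ and hence $L'\in U$. The uniqueness assertion is then immediate: if $L_1$ and $L_2$ are both tagged $r$-lines in $\mathcal{L}(C)$, the first part applied to each gives $L_1\leq L_2$ and $L_2\leq L_1$, whence $L_1=L_2$ by antisymmetry.

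The main obstacle will be justifying, in the inductive step, that the descent from an arbitrary non-$r$-line in $C$ is always toward $L$ rather than toward some competing sink. This amounts to excluding the coexistence of two $r$-type sinks in a single component, and is the point at which the tagging hypothesis (which rules out the spurious ``twin'' configurations in $Q_{2,-}$ and the mismatched tagging at loops) is essential. I expect the argument to proceed by contradiction: assume two such sinks $L\ne M$ in $C$, take a shortest undirected path of $Q_2$-arrows between their vertices, and analyze the unique ``peak'' line along the path using Proposition~\ref{prop:G}(2) together with the Remark~\ref{rmk:Q22} constraints on terminals of arrows in $\qst_{2,o}$ and $\qst_{2,-}$, producing a configuration incompatible with the tagging of both $L$ and $M$.
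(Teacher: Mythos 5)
The proposal has a genuine gap and a concrete error. First, you misstate the direction of the partial order of Proposition~\ref{prop:G}: an arrow in $Q^{\ws,\wt[-1]}_2$ from a vertex of $L''$ to a vertex of $L'$ forces $L''>L'$, so if the head lies in $L'\notin U$ and the tail in $L''\in U$, this puts $L'$ \emph{below} $L''$, not above, and does not place $L'$ in $U$. Consequently your claim that every boundary $Q_2$-arrow ``must have its tail in $L'$ and head in $U$'' is not merely unjustified but false: the entire content of the paper's argument lies in handling exactly the opposite case, an arrow $\alpha$ from some $L''\in T$ (the paper's name for your $U$) to some $L'\notin T$, which connectivity forces to exist if $T\neq\mathcal{L}(C)$.

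Second, you never produce the mechanism that resolves this case, and it is missing from your sketch. The paper observes that $L''>L$ (since $L''=L$ would give an outgoing $Q_2$-arrow from the $r$-line $L$), so $L''$ has a second outgoing arrow $\beta$ witnessing $L''>L$; that $\alpha$ and $\beta$ cannot share a start; hence $L''$ has two distinct outlets and is therefore a real $h$-line (Remark~\ref{rmk:hline}, Remark~\ref{rmk:Q22}, Figure~\ref{fig:Q2}); and that this contradicts Remark~\ref{rmk:noth}, which, via Proposition~\ref{prop:G}(1), shows no line strictly above a tagged $r$-line can be an $h$-line. Your proposal instead gestures at Proposition~\ref{prop:G}(2), a ``peak'' analysis along a path between two putative sinks, and the tagging hypothesis entering through ``twin'' exclusion — none of which is how the contradiction actually arises, and none of which you carry out. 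The tagging of $L$ enters only indirectly, through Remark~\ref{rmk:noth}; it plays no direct role in the boundary-arrow analysis. As written, the inductive step cannot be closed.
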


\begin{proof}
    Let $L$ be a tagged $r$-line. Let $T$ be the subset of $\mathcal{L}(C)$ consisting of the lines equal to or bigger than $L$. Suppose conversely that $T\neq \mathcal{L}(C)$. Since $C$ is connected, there are $L'\in\mathcal{L}(C)\setminus T$ and $L''\in T$ such that there is an arrow $\alpha$ in $Q^{\ws,\wt[-1]}_2$ from $L''$ to $L'$. But there is another arrow $\beta$ in $Q^{\ws,\wt[-1]}_2$ from $L''$ to a line bigger than $L$. Since $L'\neq L$, we have that $\alpha$ and $\beta$ have different starts. Thus, $L''$ has two endpoints, each of which is the start of an arrow in $Q^{\ws,\wt[-1]}_2$, and hence it is a real $h$-line (see Remark~\ref{rmk:hline} and cf. Figure~\ref{fig:Q2}), a contradiction with Remark~\ref{rmk:noth}.
\end{proof}

\begin{lemma}\label{lem:rline2}
    Let $C\in\mathcal{C}(\ws,\wt[-1])$ containing a tagged $r$-line $L$ and $L'\in \mathcal{L}(C)$. Then $L'$ contains exactly one endpoint which is the start of an arrow in $Q^{\ws,\wt[-1]}_2$, unless $L'=L$. Moreover, for any endpoint of $L'$ which is neither the start nor the terminal of an arrow in $Q^{\ws,\wt[-1]}_2$, its ending side $(\wmu,\wnu)$ is either an unpunctured splitting side with $\wmu$ to the right of $\wnu$, or a punctured non-splitting side with $\wmu,\wnu$ non-interior and having different taggings.
\end{lemma}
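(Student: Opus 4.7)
I will prove Lemma~\ref{lem:rline2} in three stages, using the poset structure on $\mathcal{L}(C)$ from Lemma~\ref{lem:r-line} and the duality between $\mathcal{L}(\ws,\wt[-1])$ and $\mathcal{L}(\wt[-1],\ws)$ from Remark~\ref{rmk:rline}.

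\textbf{Stage 1 (existence of a $Q_2$-source endpoint).} By Lemma~\ref{lem:r-line}, $L$ is the least element of $\mathcal{L}(C)$, so $L'>L$. The partial order is generated by single $Q_2$-arrows, hence there is a chain $L'=L_0>L_1>\cdots>L_n=L$, and the first arrow starts at a vertex of $L'$. By Remark~\ref{rmk:Q22}, the source of any $Q_2$-arrow is an outlet; since such a source carries a punctured splitting side (cf. Construction~\ref{cons:biq} and Figure~\ref{fig:Q2})  which is an ending side, this source is an endpoint of $L'$.

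\textbf{Stage 2 (uniqueness of the source endpoint).} Suppose toward contradiction that both endpoints $P_1,P_2$ of $L'$ source $Q_2$-arrows $\alpha_i\colon P_i\to L_i$. Since $L$ is least, $L_i\geq L$, and we may trace chains $L_i=L_i^{(0)}>L_i^{(1)}>\cdots>L_i^{(k_i)}=L$. I will exploit the following facts:
\begin{itemize}
\item $L$ has no outgoing $Q_2$-arrow (otherwise $L$ would dominate a line in $\mathcal{L}(C)$, contradicting minimality), so $L$ has no punctured splitting side;
\item the tagged $r$-line condition forces every ending side of $L$ to be either an unpunctured splitting side with $\wmu$ to the right of $\wnu$, or a $Q_1$-loop with opposite taggings;
\item via Remark~\ref{rmk:rline}, $L$ is a real $h$-line in the dual biquiver $Q^{\wt[-1],\ws}$, so Proposition~\ref{prop:G}(2) limits the $Q_{2,o}$-arrows terminating at vertices of $L$ to at most one;
\item twin $Q_{2,-}$-arrows share a common source but have distinct targets (Figure~\ref{fig:Q2}, row 1), so two independent descending chains landing on $L$ produce incompatible incidence data on $L$'s ending sides.
\end{itemize}
Combining these, the two chains cannot both terminate on $L$ in a compatible way, yielding a contradiction.

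\textbf{Stage 3 (classification of the other endpoints).} Let $P$ be an endpoint of $L'$ which is neither a source nor a terminal of a $Q_2$-arrow, with ending side $(\wmu,\wnu)$. A punctured splitting side would make $P$ a source (Figure~\ref{fig:Q2}, rows 1--3), so $(\wmu,\wnu)$ is either unpunctured splitting or a punctured non-splitting side $(\wmu \sim \wnu$ at a puncture). In the latter case, a non-interior/interior mismatch would make $P$ either a source or a terminal of a $Q_{2,o}$-arrow; hence both $\wmu,\wnu$ must be non-interior, yielding a $Q_1$-loop. For the unpunctured-splitting case, I will propagate the $r$-line property of $L$ along the $Q_2$-chain from $L$ to $L'$: a case check of Figure~\ref{fig:Q2} shows that, at each $Q_2$-arrow, the relative left/right position of $\wmu$ versus $\wnu$ at the non-source vertex is inherited from the source, so by induction $\wmu$ lies to the right of $\wnu$ at $P$. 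For the loop case, a parallel propagation of the tagged property of $L$ along the chain forces the taggings at $\wmu$ and $\wnu$ to differ.

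\textbf{Main obstacle.} The principal difficulty lies in Stage 2. The partial order on $\mathcal{L}(C)$ is not obviously a tree-order, and the twin pairs of $Q_{2,-}$-arrows allow, in principle, two independent descending chains from $L'$ to $L$. Ruling this out requires a careful bookkeeping of both the orientation data (left/right position) and the tagging data along the chain, and ultimately rests on the full strength of $L$ being a \emph{tagged} $r$-line, together with the real $h$-line incidence bound from Proposition~\ref{prop:G}(2) in the dual biquiver. The propagation arguments in Stage 3, while conceptually straightforward, also demand a case analysis over all arrow types in Figure~\ref{fig:Q2}, and so form a secondary technical obstacle.
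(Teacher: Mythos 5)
Your three-stage plan matches the natural shape of the argument, but Stages 2 and 3 both contain genuine gaps, and in both cases you have overlooked a much shorter line of reasoning.

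In Stage 2 you list four observations and then assert that ``the two chains cannot both terminate on $L$ in a compatible way''; this is not a derivation, and you yourself flag it as the main obstacle. No chain-tracing or dual-biquiver bookkeeping is needed here. The argument is local to $L'$: by Figure~\ref{fig:Q2} and Remark~\ref{rmk:hline}, a vertex that is the source of a $Q_2$-arrow has a punctured splitting side with $\wmu$ to the left of $\wnu$, and interior vertices of a line carry no splitting side at all. So if both endpoints of $L'$ were $Q_2$-sources, no vertex of $L'$ would admit a splitting side with $\wmu$ to the right of $\wnu$, i.e.\ $L'$ would be a real $h$-line. This contradicts Remark~\ref{rmk:noth}, which (using that $L$ is a tagged $r$-line and $L'>L$) says that $L'$ is not even an $h$-line. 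The twin-arrow bookkeeping, the properties of $L$'s ending sides, and the invocation of Proposition~\ref{prop:G}(2) in the dual biquiver are all superfluous for this step.

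Stage 3's propagation argument is the more serious gap. You propose to carry the left/right orientation of $L$ (and similarly the taggings) up the descending $Q_2$-chain from $L$ to the non-source endpoint $P$ of $L'$. But $P$ is, by hypothesis, neither the source nor the terminal of any $Q_2$-arrow; the chain from $L'$ down to $L$ exits $L'$ through the \emph{other} endpoint, the $Q_2$-source you found in Stage 1. There is therefore no edge incident to $P$ along which any orientation or tagging could be inherited, so the induction you describe has nothing to induct on. The usable fact is again the dichotomy from Stage 2: $L'$ is not a real $h$-line, hence some endpoint of $L'$ carries a splitting side with $\wmu$ to the right of $\wnu$; since the $Q_2$-source's splitting side has the opposite orientation, that endpoint is $P$ (when $P$'s ending side is splitting), and the side must be unpunctured because a punctured splitting side at $P$ would make $P$ the source or the terminal of a $Q_2$-arrow, contrary to assumption.
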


\begin{proof}
    For the case $L'=L$, the lemma follows from Lemma~\ref{lem:r-line} and Remark~\ref{rmk:rline}.

    For the case $L'>L$, by Remark~\ref{rmk:noth}, $L'$ is not an $h$-line. By the definition of order, $L'$ has an endpoint which is the start of an arrow in $Q^{\ws,\wt[-1]}_2$. If $L'$ contains two endpoints which are the start of arrows in $Q^{\ws,\wt[-1]}_2$, by Remark~\ref{rmk:hline} (cf. also Figure~\ref{fig:Q2}), $L'$ is a real $h$-line, a contradiction. Let $(u,v)$ be an endpoint of $L'$ different from the one which is the start of an arrow in $Q^{\ws,\wt[-1]}_2$. Assume $(u,v)$ is not the terminal of an arrow in $Q^{\ws,\wt[-1]}_2$. Let $(\wmu,\wnu)$ be the ending side of $(u,v)$. If $(\wmu,\wnu)$ is non-splitting, since $L'$ is a tagged $r$-line, $\wmu$ and $\wnu$ are non-interior punctured and have different taggings. If $(\wmu,\wnu)$ is splitting, since $L'$ is not a real $h$-line, $\wmu$ is to the right of $\wnu$. Then $(\wmu,\wnu)$ is unpunctured, because otherwise, it gives rise to an arrow in $Q^{\ws,\wt[-1]}_2$, which makes $(u,v)$ be the start or the terminal of that arrow, a contradiction.
\end{proof}

\begin{proposition}
    Let $C\in\mathcal{C}(\ws,\wt[-1])$ and $(\wmu_0,\wnu_0)\in D^\circ(C)$. If $C$ contains a tagged $r$-line, then $\psi(\wmu_0,\wnu_0)$ is a basis of $W_1(C)$; otherwise, $\dim W_1(C)=0$.
\end{proposition}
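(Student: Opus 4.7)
My plan is to realize $W_1(C)$ as the quotient of the $\k$-span of the nonzero generators $\overline{\psi}(\wmu,\wnu)$ for $(\wmu,\wnu) \in D^\circ(C)$ by the explicit relations enumerated in Lemma~\ref{lem:description}, and then analyze these relations inductively along the poset $\mathcal{L}(C)$ of lines in $C$. For each line $L' \in \mathcal{L}(C)$ I will introduce a scalar $\lambda_{L'}$ representing the common value (up to sign) of $\overline{\psi}(\wmu,\wnu)$ over the unpunctured sides incident to vertices of $L'$. The two-unpunctured-sides relation at a single vertex of $L'$ (the first case of Lemma~\ref{lem:description}) makes $\lambda_{L'}$ well-defined modulo an overall sign; the remaining relations split into transport relations that identify $\lambda_{L'} = \pm\lambda_{L''}$ for adjacent lines, and vanishing relations (the non-splitting punctured-loop case with matching taggings, as well as the $(\ominus,+)$ and $(\oplus,-)$ cases) that force the involved $\lambda$ to be zero.

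For the case where a tagged $r$-line $L$ exists, Lemma~\ref{lem:r-line} makes $L$ the least element of $\mathcal{L}(C)$, and Lemma~\ref{lem:rline2} guarantees that each $L' \neq L$ has a unique $Q_2$-arrow starting at one of its endpoints, while its other endpoints provide only $0$-sides or non-splitting punctured sides with disagreeing taggings, precisely the situations that do not trigger a vanishing relation. A downward induction through the poset then transports every $\lambda_{L'}$ to $\pm\lambda_L$, giving $W_1(C) \subseteq \k\,\overline{\psi}(\wmu_0,\wnu_0)$. For the reverse containment, I will exhibit a linear functional on $\bigoplus_{(\wmu,\wnu)\in D^\circ(C)} \k\,\overline{\psi}(\wmu,\wnu)$ sending $\overline{\psi}(\wmu_0,\wnu_0)$ to $1$ and vanishing on every relation of Lemma~\ref{lem:description}; its existence reduces to a consistency check at $L$, which holds exactly because $L$ is tagged (no loop of $L$ triggers the matching-tagging vanishing relation).

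For the case where $C$ contains no tagged $r$-line, either $C$ has no $r$-line at all (in which case every minimal line carries an endpoint forcing $\lambda = 0$ via a $(\ominus,+)$ or $(\oplus,-)$ arrow, or via a matching-tagging loop), or $C$ contains a non-tagged $r$-line $L$ whose loop has agreeing $\wsx$- and $\wtx$-taggings so that the matching-tagging-loop relation directly yields $\lambda_L = 0$. In either scenario, the same downward transport as in the previous case propagates zero through all of $\mathcal{L}(C)$, yielding $W_1(C) = 0$. The main obstacle will be careful sign-bookkeeping through the $Q^{\ws,\wt[-1]}_{2,-}$ twin pairs (which couple three vertices simultaneously) and verifying the consistency of the proposed functional when $L$ is of type $D$ or $\widetilde{D}$, where additional loop constraints at the non-splitting endpoints of $L$ must be reconciled with the tagged-ness hypothesis.
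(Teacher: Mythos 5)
Your overall strategy mirrors the paper's: reduce to minimal elements of the poset $\mathcal{L}(C)$ via Lemma~\ref{lem:description}, use Lemma~\ref{lem:r-line} to place the tagged $r$-line $L$ at the bottom, and invoke Lemma~\ref{lem:rline2} to control the ending sides of the other lines. However, two points keep the proposal short of a proof.

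First, your case analysis for the ``no tagged $r$-line'' branch does not match the actual vanishing mechanisms. If $C$ contains no $r$-line, a minimal line has an unpunctured splitting side with $\wmu$ to the left of $\wnu$; this generator dies because it satisfies condition (2) of Lemma~\ref{lem:rad1} (so it already lies in the boundary), not because of a $(\ominus,+)$ or $(\oplus,-)$ arrow. If $C$ contains a non-tagged $r$-line, tagged-ness can fail in two ways: a loop with matching taggings (which you cover), or a non-loop ending non-splitting side with both arc segments non-interior unpunctured (both ends at a boundary marked point), in which case $\psi(\wmu,\wnu)=0$ directly from Construction~\ref{cons:rad}. You omit the latter. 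These omissions do not change the conclusion but leave genuine holes in the argument.

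Second, and more seriously, the claim that the existence of the linear functional ``reduces to a consistency check at $L$'' understates the issue you yourself flag as the main obstacle. The single-term relations in Lemma~\ref{lem:description} (the $(\ominus,+)$ and $(\oplus,-)$ arrows) can occur at lines strictly above $L$, and they force $\lambda_{L'}=0$ there; you then have to show this cannot propagate back down to $\lambda_L$. The paper's proof does precisely this work by introducing the set $B$ of such arrows and the sub-poset $C'$ of lines whose downward chains to $L$ avoid $B$, showing that all relations internal to $D^\circ(C')$ are two-term $\psi\pm\psi'$ relations and there are strictly fewer of them than generators, and separately handling $D^\circ(C\setminus C')$. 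Without some version of that decomposition, the consistency of your functional is not established, and the twin-pair relations (which tie three vertices at once, as you note) make the naive bookkeeping nontrivial. You have correctly identified the obstacle but have not overcome it.
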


\begin{proof}
    By Lemma~\ref{lem:rad2}, $W_1(C)$ is linearly generated by $\overline{\psi}(\wmu,\wnu)$, $(\wmu,\wnu)\in D^\circ(C)$. By Lemma~\ref{lem:description}, for any lines $L>L'\in\mathcal{L}(\ws,\wt[-1])$ and for any $(\wmu,\wnu)\in D^\circ(L)$ and $(\wmu',\wnu')\in D^\circ(L')$, we have $\overline{\psi}(\wmu',\wnu')=c\overline{\psi}(\wmu,\wnu)$ in $W_1$ for some $c\in\k$, where $c\neq0$ if $L=L'$. So we only need to consider the minimal elements in $\mathcal{L}(C)$.

    If $C$ does not contain any tagged $r$-line, then for any minimal element $L$ in $\mathcal{L}(C)$, at least one of the following occurs.
    \begin{enumerate}
        \item $L$ has an endpoint $(u,v)$ with an ending side whose arc segments are non-interior punctured with the same tagging. In this case, by Lemma~\ref{lem:homotopy}, we have $\psi(\wmu,\wnu)=d(\iota(u,v))$, where $(\wmu,\wnu)$ is the unpunctured side of $(u,v)$.
        \item $L$ has an endpoint $(u,v)$ with an ending side $(\wmu,\wnu)$ with $\wmu,\wnu$ non-interior unpunctured. In this case, by Construction~\ref{cons:rad}, we have $\psi(\wmu,\wnu)=0$.
        \item If $L$ is not an $r$-line, then it has a splitting side $(\wmu,\wnu)$ with $\wmu$ left of $\wnu$. In this case, by Lemma~\ref{lem:rad1}~(2), $\psi(\wmu,\wnu)\in B\huaRad_{\sg}(\cpy{\ws},\cpy{\wt})_{\wmu,\wnu}$.
    \end{enumerate}
    Hence, $\dim W_1(C)=0$.

    If $C$ contains a tagged $r$-line, say $L$, then by Lemma~\ref{lem:r-line}, any other line in $\mathcal{L}(C)$ is bigger than $L$. Let $B$ be the subset of $Q^{\ws,\wt[-1]}_{2,o}$ consisting of arrows $(\wmu',\wnu')$ with either $\wmu'$ is non-interior with tagging $+$ or $\wnu'$ is non-interior with tagging $-$. Let $C'$ be the subset of $\mathcal{L}(C)$ consisting of the lines $L'$ such that there is no sequence in $\mathcal{L}(C)$:
    $$L'>L_1>L_2>\cdots>L_s>L,$$
    satisfying that there exists $i$ such that $L_i>L_{i+1}$ is given by an arrow in $B$. By Lemma~\ref{lem:rline2}, for any $L'\in C'$ and any sequence $L'>L_1>L_2>\cdots>L_s>L$ with $L_i>L_{i+1}$ given by an arrow $\alpha_i$ in  $Q^{\ws,\wt[-1]}_{2}$ for any $i$, we have $\alpha_i\notin B$. Thus, we have
    $$M=\left(M\cap\k\{\psi(\wmu,\wnu)\mid (\wmu,\wnu)\in D^\circ(C')\}\right)\cup\left(M\cap\k\{\psi(\wmu,\wnu)\mid (\wmu,\wnu)\in D^\circ(C\setminus C')\}\right)$$
    where $M=\{d(\iota(u,v))\mid (u,v)\in C_0,\wks_u=\wkt_v\}$. By Lemma~\ref{lem:rline2}, for each non-splitting punctured side $(\wmu,\wnu)$ of $C'$ with $\wmu,\wnu$ non-interior, $\wmu$ and $\wnu$ have different taggings. So by Lemma~\ref{lem:description}, each term in $M\cap\k\{\psi(\wmu,\wnu)\mid (\wmu,\wnu)\in D^\circ(C')\}$ is of the form $\psi(\wmu,\wnu)\pm\psi(\wmu',\wnu')$. It follows that the number of elements in $M\cap\{\psi(\wmu,\wnu)\mid (\wmu,\wnu)\in D^\circ(C')\}$ is less than that of $\{(\wmu,\wnu)\in D^\circ(C')\}$. By Lemma~\ref{lem:rline2} again, each unpunctured ending side of $C'$ is splitting and does not satisfy the condition (2) in Lemma~\ref{lem:rad1}. So by Lemma~\ref{lem:rad2}, each $\psi(\wmu,\wnu)$ with $(\wmu,\wnu)\in D^\circ(C\setminus C')$ is not zero in $\frac{Z^0\huaRad_{\sg}(\cpy{\ws},\cpy{\wt})_{\wmu,\wnu}}{B^0\huaRad_{\sg}(\cpy{\ws},\cpy{\wt})_{\wmu,\wnu}}$. Hence $\psi(\wmu_0,\wnu_0)$ is a basis of $W_1(C)$.
\end{proof}

\subsection{Cones of morphisms}

In this subsection, we describe the cones of certain morphisms.

\begin{definition}\label{def:ext}
Let $\theta$ be a clockwise angle at a TOI $\ints$ from $\wsx$ to $\wtx$ of type I or II (see Definition~\ref{def:int} for the definition of types of TOI) and of index 1. Define the \emph{extension} $\wtx\wedge_\theta\wsx$ of $\wsx$ and $\wtx$ \emph{along} $\theta$ as in Figure~\ref{fig:ex}, whose grading and tagging inherit from those of $\wsx$ and $\wtx$. When $\ints$ is of type II (i.e. $\ints\in\M$), since there is exactly one oriented angle at $\ints$, we also denote $\wtx\wedge_\ints\wsx=\wtx\wedge_{\theta}\wsx$.
\begin{figure}[htpb]
	\begin{tikzpicture}[scale=1.5]
	\draw[blue,thick,dashed] (-.5,.5)to (0,0) to (.5,.5);
	\draw[blue,thick] (0,0)node[above]{$\ints$}..controls +(-145:1) and +(0:1) ..(-2,-1);
	\draw[blue,thick] (0,0)..controls +(-35:1) and +(180:1) ..(2,-1);
	\draw[blue] (-1,-.5)node{$\wtx$} (1,-.5)node{$\wsx$};
	\draw[orange,bend right,-<-=.5,>=stealth] (-.4,-.3)tonode[above]{$\theta$}(.4,-.3);
	\draw[blue,thick] (2,-1.1)..controls +(180:1) and +(0:.5)..(0,-.5)..controls +(180:.5) and +(0:1)..(-2,-1.1);
	\draw(0,-0.6)node[below,blue, thick]{$\wtx\wedge_\theta\wsx$};
	\end{tikzpicture}
	\caption{Extension of two tagged arcs along a clockwise angle at a TOI}\label{fig:ex}
\end{figure}
\end{definition}

\begin{example}\label{exm:typeII2}
    A TOI $\ints\in\M$ from $\wsx$ to $\wtx$ (i.e. of type II, cf. Definition~\ref{def:int}) of index $\rho$ can be realized, by choosing certain orientations, as a common starting point $\sigma(0)=\tau(0)=\ints$ such that
	\begin{itemize}
	    \item $\aso{\wsx}{0}{1}$ is to the left of $\aso{\wtx}{0}{1}$, and
	    \item if $\wsx\sim\wtx$ and end at a puncture, then $\wsx$ and $\wtx$ has the same tagging there.
	\end{itemize}
    Assume $\aso{\wsx}{0}{1}\nsim\aso{\wtx}{0}{1}$. We construct $f_{q}\in\huaRad_{\sg}(\cpy{\ws},\cpy{\wt})$ of degree $\rho$ as
	$$(f_q)_{\ell_1,\ell_2}=\begin{cases}
	\fin{\xit_1}\circ f_{\ubt_{\ec{1}},\ubs_{\ec{1}}}\circ \fout{\xis_1}&\text{if $\ell_1=\ec{1}$ and $\ell_2=\ec{1}$,}\\
	0&\text{otherwise.}
	\end{cases}.$$
    If $\rho=1$ and the extension $\wtx\wedge_\ints\wsx$ is admissible, then by the construction $\cpy{-}$ (Construction~\ref{cons:dgm}), there is a triangle
	\begin{equation}\label{eq:tri1}
	    \cpy{\wt}\xrightarrow{g}\cpy{\wtx\wedge_\ints\wsx}\xrightarrow{h}\cpy{\ws}\xrightarrow{f_{q}}\cpy{\wt}[1],
	\end{equation}
	where $g$ and $h$ are given by the canonical embedding and projection on the underlying graded modules, respectively. In particular, we have $f_q\neq 0$ in $\Hom_{\per\sg}(\cpy{\ws},\cpy{\wt})$.
\end{example}

\begin{example}\label{exm:typeII}
	We continue Example~\ref{exm:typeII2} that $\wsx$ and $\wtx$ share a common starting point $q$ of intersection index $\rho$, but consider another case $\aso{\wsx}{0}{1}\sim\aso{\wtx}{0}{1}$. By Remark~\ref{rmk:hline}, there is an associated real $h$-line $L$ in $\mathcal{L}(\ws,\wt)$ containing $(1,1)$ as a vertex. Assume moreover that after a common part from $q$, if separate, $\wsx$ and $\wtx$ separate in an unpunctured $\dac$-polygon. Then $L$ is the unique maximal extension of itself (cf. also Figure~\ref{fig:Q2}). In this case, we denote $f_q=f_L$. If $\wsx\sim\wtx$ and $\rho=0$, we have that $f_q=\id$ and its cone is zero. If $\wsx\nsim\wtx$, $\rho=1$, and $\wtx\wedge_\ints\wsx$ is admissible, we have a triangle of the same form as \eqref{eq:tri1}. We also note that the morphisms $g$ and $h$ in the triangle~\eqref{eq:tri1} in Example~\ref{exm:typeII2} are of this form.
\end{example}

\begin{example}\label{ex:2}
    We consider a simple case that the $L$ in Example~\ref{exm:typeII} is not a maximal extension of itself: $\wsx$ and $\wtx$ separate as in the first picture in Figure~\ref{fig:Q2}. Moreover, we assume that $(u,v')$ and $(u',v)$ are the endpoints of the line where they live. Then there are exactly two maximal extensions $\widetilde{L}_1$ and $\widetilde{L}_2$ of $L$ such that one contains $(u,v')$ while the other contains $(u',v)$. The cones of $f_{\widetilde{L}_1}$ and $f_{\widetilde{L}_2}$ are not isomorphic to each other, since one is the direct sum $\cpy{\wax_+}\oplus\cpy{\wbx_-}$ and the other is $\cpy{\wax_-}\oplus\cpy{\wbx_+}$, where $\wax_+$ denotes the tagged arc whose underlying arc is $\wa$ in the first picture in Figure~\ref{fig:sp} with tagging at the middle puncture being $+$.
	
    In this case, the cones of linear combinations of $f_{\widetilde{L}_1}$ and $f_{\widetilde{L}_2}$ are also of interesting. The cone of $f_{\widetilde{L}_1}+f_{\widetilde{L}_2}$ is the direct sum $\cpy{\wt\wedge_{\theta_1}\ws}\oplus\cpy{\wt\wedge_{\theta_2}\ws}$, see the second picture in Figure~\ref{fig:sp}.
	
    The cone of $\lambda_1f_{\widetilde{L}_1}+\lambda_2f_{\widetilde{L}_2}$ with $\lambda_1,\lambda_2\neq0$ and $\lambda_1-\lambda_2\neq 0$ is $\cpy{\we_{ad}}$, where $\wex_{ad}$ is the ``admissible version" of the (non-admissible) extension $\wex=\wtx\wedge_\ints\wsx$, see the third picture in Figure~\ref{fig:sp}.

	\begin{figure}[htpb]
		\begin{tikzpicture}[scale=.6]
			\draw (0,-1) coordinate (v);
			\draw($(v)+(90:3)$) coordinate (A) ($(v)+(210:3)$) coordinate (B)($(v)+(-30:3)$) coordinate (C)($(v)+(-90:.5)$) coordinate (O);
			\draw (v)\nn;
			\draw[ultra thick] (-1,2)to(1,2);
			\draw[orange,thick] (B) to (v) to (C);
			\draw[orange] (-1.8,-1.45)node{$\wa$} (1.8,-1.45)node{$\wb$};
			\draw[blue,thick] (A)node[above]{$M$}.. controls +(-50:4) and +(-10:4) .. (B);
			\draw[blue,thick] (C).. controls +(190:4) and +(-130:4) .. (A);
			\draw[blue] (-1,1.2)node{$\wtx$} (1.2,1.2)node{$\wsx$};
			\draw[blue] (A)\nn(B)\nn(C)\nn;
		\end{tikzpicture}\quad
		\begin{tikzpicture}[scale=.6]
			\draw (0,-1) coordinate (v);
			\draw($(v)+(90:3)$) coordinate (A) ($(v)+(210:3)$) coordinate (B)($(v)+(-30:3)$) coordinate (C)($(v)+(-90:.5)$) coordinate (O);
			\draw (v)\nn;
			\draw[ultra thick] (-1,2)to(1,2);
			\draw[orange,thick,bend left=50] (B) tonode[left]{$\wt\wedge_{\theta_1}\ws$} (A);
			\draw[orange,thick,bend right=50] (C) tonode[right]{$\wt\wedge_{\theta_2}\ws$} (A);
			\draw[orange,bend left,->] (-1,-2.6)tonode[left]{$\theta_1$}(-1,-1.9);
			\draw[orange,bend right,<-] (1,-2.6)tonode[right]{$\theta_2$}(1,-1.9);
			\draw[blue,thick] (A)node[above]{$M$}.. controls +(-50:4) and +(-10:4) .. (B);
			\draw[blue,thick] (C).. controls +(190:4) and +(-130:4) .. (A);
			\draw[blue] (-1.1,1.0)node{$\wtx$} (1.3,1.0)node{$\wsx$};
			\draw[blue] (A)\nn(B)\nn(C)\nn;
		\end{tikzpicture}\quad
		\begin{tikzpicture}[scale=.6]
			\draw (0,-1) coordinate (v);
			\draw($(v)+(90:3)$) coordinate (A) ($(v)+(210:3)$) coordinate (B)($(v)+(-30:3)$) coordinate (C)($(v)+(-90:.5)$) coordinate (O);
			\draw (v)\nn;
			\draw[ultra thick] (-1,2)to(1,2);
			\draw[green,thick] (C) .. controls +(170:2) and +(-90:1) .. (-1,-1) .. controls +(90:.5) and +(180:.5) .. (0,0) .. controls +(0:.5) and +(90:.5) .. (1,-1) .. controls +(-90:1) and +(10:2) .. (B);
			\draw[blue,thick] (A)node[above]{$M$}.. controls +(-50:4) and +(-10:4) .. (B);
			\draw[orange,thick] (B).. controls +(70:4) and +(110:4) .. (C);
			\draw[blue,thick] (C).. controls +(190:4) and +(-130:4) .. (A);
			\draw[blue] (-1,1.2)node{$\wtx$} (1.2,1.2)node{$\wsx$} (-2.55,-1)node[orange]{$\we^\times_{ad}$}(1.6,-1.9)node[green]{$\we^\times$};
			\draw[blue] (A)\nn(B)\nn(C)\nn;
		\end{tikzpicture}
		\caption{Cones of (combinations of) morphisms associated to different maximal extensions of a tagged real $h$-line}\label{fig:sp}
	\end{figure}
\end{example}

\begin{example}\label{ex:d4}
    See Figure~\ref{fig:AR-D4} for the AR-quiver of type $D_4$ using the binary model, which contains the phenomenon in Example~\ref{ex:2}.
\end{example}

%==================================
%AR quiver of D_4
%==================================
\begin{figure}
\centering\makebox[\textwidth][c]{
  \includegraphics[width=17cm]{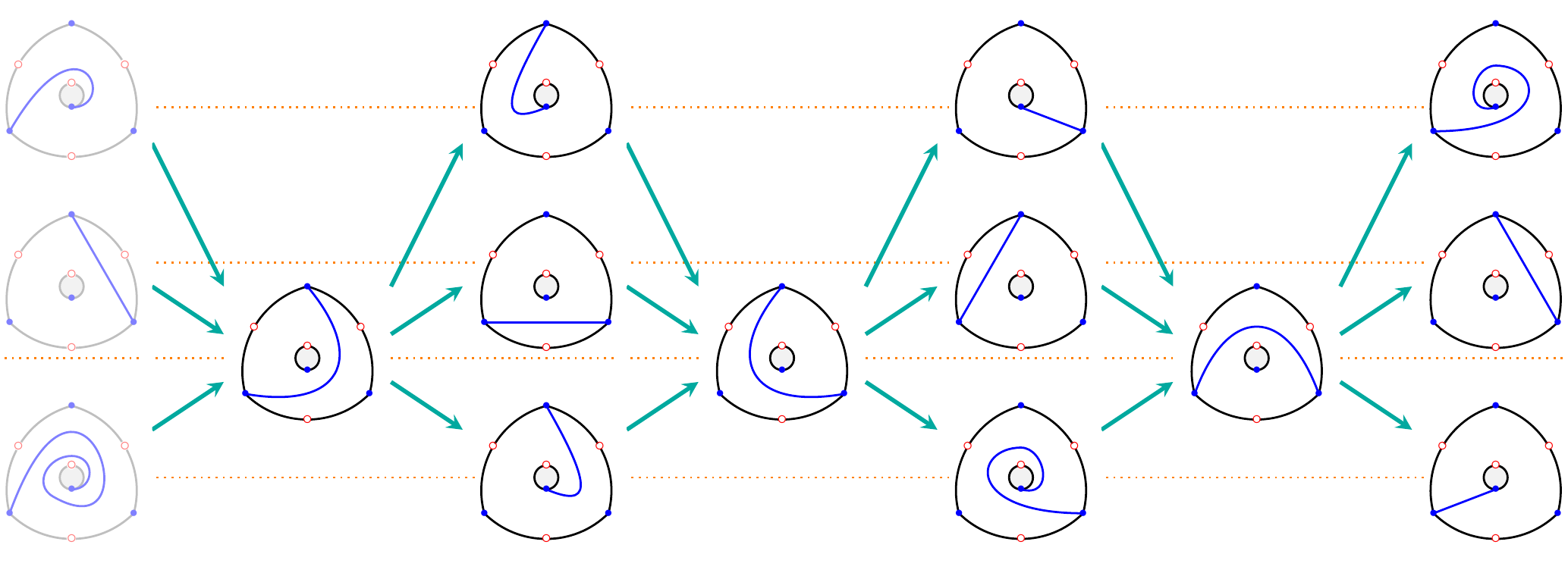}
}\caption{The Auslander-Reiten quiver of type $D_4$}
	\label{fig:AR-D4}
\end{figure}

%=========================================================
%=========================================================
\appendix
%=========================================================
%=========================================================

%=========================================================
\section{Representations associated to period words}\label{app:period}
%=========================================================

In this appendix, we recall from \cite[Sections~2.5 and 2.6]{Deng} the explicit construction of the representation $R(w,N)$ associated to a pair $(w,N)$ of a period word $w$ and an indecomposable $A_w$-module $N$.

%=========================================================
\subsection{Asymmetric period words}\label{subapp:apw}
%=========================================================

By the theory of algebraic representation, the isoclasses of indecomposable $\k[x,x^{-1}]$-modules $N$ are parameterized by the matrices
$$P(N)=\begin{pmatrix}
0&0&\cdots&0&a_{d}\\
1&0&\cdots&0&a_{d-1}\\
\vdots&\ddots&\ddots&\vdots&\vdots\\
0&0&\ddots&0&a_2\\
0&0&\cdots&1&a_1
\end{pmatrix}$$
with $x^d-a_1x^{d-1}-a_2x^{d-2}-\cdots-a_{d}\in\{p(x)^h\mid p(x)\in\k[x] \text{ irreducible unitary},p(x)\neq x,h\in\NN\}$, where $d=\dim_\k N$.

The representation $R(w,N)=(X_{w,N};f_{w,N})$ of $S$ as follows. Set
$$X_{w,N}=\widehat{w_1}^d\oplus\cdots \oplus\widehat{w_\period}^d.$$
Then each term $x$ of the sequence $w_1w_1^\ast w_2w_2^\ast\cdots w_\period w_\period^\ast$, which is in $S_{(i,j),r}^\varepsilon$, contributes a $d$-dimensional direct summand $\k\underline{x}\otimes_{\k}\k^d$ to the space $\bvs_{(i,j),r}^\varepsilon (X_{w,N})$. Up to rotation, we may assume $w_1\in S^+$.

For any $\underline{x}\in\{\underline{w_1},\underline{w_1^\ast},\underline{w_2},\underline{w_2^\ast},\cdots,\underline{w_\period},\underline{w_\period^\ast}\}$ with $x\in S^-$ and for any $v\in\k^d$, define $f_{w,N}(\underline{x}\otimes v)$ as follows.
\begin{enumerate}
\item Case $x=w_{k+1},1\leq k< \period$:
\begin{enumerate}
\item if $w_{k}^\ast\in S_{(i,j),r}^+$ for some $(i,j)\notin\fc{\OME}$ and $r\in\ZZ$,
$$f_{w,N}(\underline{x}\otimes v)=\underline{w_{k}^\ast}\otimes v,$$
\item if $w_{k}^\ast\in S_{(i,j),r}^+$ for some $(i,j)\in\fc{\OME}$ and $r\in\ZZ$,
$$f_{w,N}(\underline{x}\otimes v)=\begin{cases}
\underline{w_{k}^\ast}\otimes v&\text{if $w_k^\ast=((i,j^+),r)$,}\\
(\underline{w_{k}^\ast}+\underline{w_{k}})\otimes v&\text{if $w_k^\ast=((i,j^-),r)$.}
\end{cases}$$
\end{enumerate}
\item Case $x=w_{k-1}^\ast,1< k\leq \period$:
\begin{enumerate}
\item if $w_{k}=\in S_{(i,j),r}^+$ for some $(i,j)\notin\fc{\OME}$ and $r\in\ZZ$,
$$f_{w,N}(\underline{x}\otimes v)=\underline{w_{k}}\otimes v;$$
\item if $w_{k}\in S_{(i,j),r}^+$ for some $(i,j)\in\fc{\OME}$ and $r\in\ZZ$, $$f_{w,N}(\underline{x}\otimes v)=\begin{cases}
\underline{w_{k}}\otimes v&\text{if $w_k=((i,j^+),r)$,}\\
(\underline{w_{k}}+\underline{w_{k}^\ast})\otimes v&\text{if $w_k=((i,j^-),r)$.}
\end{cases}$$
\end{enumerate}
\item Case $x=w_{\period}^\ast$:
\begin{enumerate}
\item if $w_{1}\in S_{(i,j),r}^+$ for some $(i,j)\notin\fc{\OME}$ and $r\in\ZZ$,
$$f_{w,N}(\underline{x}\otimes v)=\underline{w_{1}}\otimes P(N)v;$$
\item if $w_{1}\in S_{(i,j),r}^+$ for some $(i,j)\in\fc{\OME}$ and $r\in\ZZ$,
$$f_{w,N}(\underline{x}\otimes v)=\begin{cases}
\underline{w_{1}}\otimes P(N)v&\text{if $w_1=((i,j^+),r)$,}\\
(\underline{w_{1}}+\underline{w_{1}^\ast})\otimes P(N)v&\text{if $w_1=((i,j^-),r)$.}
\end{cases}$$
\end{enumerate}
\end{enumerate}

%=========================================================
\subsection{Symmetric period words}\label{subapp:spw}
%=========================================================

By the theory of algebraic representation, the isoclasses of indecomposable $\k\langle x,y\rangle/(x^2-x,y^2-y)$-modules $N$ are parameterized by the following $\dim_\k N\times\dim_\k N$ block matrices $P(N)=\left(\begin{array}{@{}c|c@{}} A & B \\ \hline C & D \end{array}\right)$:
\begin{enumerate}
	\item $\left(\begin{array}{@{}c|c@{}}
	  \begin{matrix}
	  0 \\
	 I_q
	  \end{matrix}
	  & I_{q+1} \\
	\hline
	  I_q &
	  \begin{matrix}
	  I_q & 0
	  \end{matrix}
	\end{array}\right)$, $\left(\begin{array}{@{}c|c@{}}
	  \begin{matrix}
	I_q&0
	\end{matrix}
	  & I_{q} \\
	\hline
	  I_{q+1} &
	  \begin{matrix}
	  0\\I_q
	  \end{matrix}
	\end{array}\right)$, $\left(\begin{array}{@{}c|c@{}}
	I_{q+1} & I_{q+1}\\ \hline I_{q+1} & J_{q+1}
	\end{array}\right)$, $\left(\begin{array}{@{}c|c@{}}
	J_{q+1} & I_{q+1}\\ \hline I_{q+1} & I_{q+1}
	\end{array}\right)$,
	\item $\left(\begin{array}{@{}c|c@{}}
	I_{q+1}&\begin{matrix}
	0\\I_q
	\end{matrix}\\\hline\begin{smallmatrix}
	I_q&0
	\end{smallmatrix}&I_q
	\end{array}\right)$,
	$\left(\begin{array}{@{}c|c@{}}
	I_{q}&\begin{matrix}
	I_q&0
	\end{matrix}\\\hline\begin{smallmatrix}
	0\\I_q
	\end{smallmatrix}&I_{q+1}
	\end{array}\right)$,
	$\left(\begin{array}{@{}c|c@{}}
	I_{q+1}&I_{q+1}\\\hline J_{q+1}&I_{q+1}
	\end{array}\right)$,
	$\left(\begin{array}{@{}c|c@{}}
	I_{q+1}&J_{q+1}\\\hline I_{q+1}&I_{q+1}
	\end{array}\right)$,
	\item $\left(\begin{array}{@{}c|c@{}}
		K&I_{q+1}\\\hline I_{q+1}&I_{q+1}
	\end{array}\right)$,
\end{enumerate}
where $q\geq 0$,
$$J_{q+1}=\begin{pmatrix}0&1&0&\cdot&0&0\\
0&0&1&\cdot&0&0\\0&0&0&\cdot&0&0\\
\cdot&\cdot&\cdot&\cdot&\cdot&\cdot\\0&0&0&\cdot&0&1\\0&0&0&\cdot&0&0\end{pmatrix},\ K=\begin{pmatrix}
0&0&\cdots&0&a_{q+1}\\
1&0&\cdots&0&a_{q}\\
\vdots&\ddots&\ddots&\vdots&\vdots\\
0&0&\ddots&0&a_2\\
0&0&\cdots&1&a_1
\end{pmatrix}$$
with $x^{q+1}-a_1x^{q}-a_2x^{q-1}-\cdots-a_{q+1}\in\{p(x)^h\mid p(x)\in\k[x] \text{ irreducible unitary},p(x)\neq x,x-1,h\in\NN\}$.

The representation $R(w,N)=(X_{w,N};f_{w,N})$ of $S$ as follows. Up to rotation, we may assume
\begin{itemize}
    \item $w_1^\ast\cdots w_{\frac{\period}{2}-1}^\ast=w_{\frac{\period}{2}+1}\cdots w_{\period-1}$,
	\item $w_{\frac{\period}{2}}=((i,j^+),r)$ for some $(i,j)\in\fc{\OME}$ and $r\in\ZZ$, and
	\item $w_{\period}=((i',{j'}^+),r')$ for some $(i',j')\in\fc{\OME}$ and $r'\in\ZZ$.
\end{itemize}

Set
$$X_{w,K}=\widehat{w_1}^d\oplus\cdots\oplus \widehat{w_{\frac{\period}{2}-1}}^d\oplus \{w_{\frac{\period}{2}}\}^d\oplus\{w_{\frac{\period}{2}}^\ast\}^{d'} \oplus \widehat{w_{\frac{\period}{2}+1}}^{d'}\oplus\cdots\oplus \widehat{w_{\period-1}}^{d'}\oplus \{w_{\period}\}^{c'}\oplus\{w_{\period}^\ast\}^{c},$$
where $c$ (resp. $c'$) is the number of rows of $A$ (resp. $C$), and $d$ (resp. $d'$) is the number of columns of $A$ (resp. $B$). Then each term $x$ of the sequence $w_1w_1^\ast w_2w_2^\ast\cdots w_\period w_\period^\ast$ which is in $S^\varepsilon_{(i,j),r}$ contributes a $d(\underline{x})$-dimensional direct summand $\k\underline{x}\otimes_{\k}\k^{d(\underline{x})}$ to the space $\bvs_{(i,j),r}^\varepsilon (X_{w,N})$, where
$$d(\underline{x})=\begin{cases}
d&\text{if $\underline{x}\in\{\underline{w_1},\underline{w_1^\ast},\cdots, \underline{w_{\frac{\period}{2}-1}},\underline{w_{\frac{\period}{2}-1}^\ast}, \underline{w_{\frac{\period}{2}}\}}$,}\\
d'&\text{if $\underline{x}\in\{\underline{w_{\frac{\period}{2}}^\ast}, \underline{w_{\frac{\period}{2}+1}},\underline{w_{\frac{\period}{2}+1}^\ast},\cdots, \underline{w_{\period-1}},\underline{w_{\period-1}^\ast}\}$,}\\
c'&\text{if $\underline{x}=\underline{w_\period}$,}\\
c&\text{if $\underline{x}=\underline{w_\period^\ast}$.}
\end{cases}$$

For any $\underline{x}\in\{\underline{w_1},\underline{w_1^\ast},\underline{w_2},\underline{w_2^\ast},\cdots,\underline{w_\period},\underline{w_\period^\ast}\}$ with $x\in S^-$ and for any $v\in\k^{d(\underline{x})}$, define $f_{w,K}(\underline{x}\otimes v)$ as follows.
\begin{enumerate}
\item Case $x=w_{k+1},k\neq 0,\frac{\period}{2}$:
\begin{enumerate}
\item if $w_{k}^\ast\in S_{(i,j),r}^+$ for some $(i,j)\notin\fc{\OME}$ and $r\in\ZZ$,
$$f_{w,K}(\underline{x}\otimes v)=\underline{w_{k}^\ast}\otimes v,$$
\item if $w_{k}^\ast\in S_{(i,j),r}^+$ for some $(i,j)\in\fc{\OME}$ and $r\in\ZZ$,
$$f_{w,K}(\underline{x}\otimes v)=\begin{cases}
\underline{w_{k}^\ast}\otimes v, &\text{if $w_k^\ast=((i,j^+),r)$,}\\
(\underline{w_{k}^\ast}+\underline{w_{k}})\otimes v, &\text{if $w_k^\ast=((i,j^-),r)$.}
\end{cases}$$
\end{enumerate}
\item Case $x=w_{\frac{\period}{2}+1}$:
$$f_{w,K}(\underline{x}\otimes v)=\underline{w_{\frac{\period}{2}}^\ast}\otimes v.$$
\item Case $x=w_1$:
$$f_{w,K}(\underline{x}\otimes v)=\underline{w_{\period}^\ast}\otimes Av+\underline{w_{\period}}\otimes Cv.$$
\item Case $x=w_{k-1}^\ast,k\neq \frac{\period}{2},\period$:
\begin{enumerate}
\item if if $w_{k}\in S_{(i,j),r}^+$ for some $(i,j)\notin\fc{\OME}$ and $r\in\ZZ$, $$f_{w,K}(\underline{x}\otimes v)=\underline{w_{k}}\otimes v;$$
\item if if $w_{k}\in S_{(i,j),r}^+$ for some $(i,j)\in\fc{\OME}$ and $r\in\ZZ$, $$f_{w,K}(\underline{x}\otimes v)=\begin{cases}
\underline{w_{k}}\otimes v, &\text{if $w_k=((i,j^+),r)$,}\\
(\underline{w_{k}}+\underline{w_{k}^\ast})\otimes v, &\text{if $w_k=((i,j^-),r)$.}
\end{cases}$$
\end{enumerate}
\item Case $x=w_{\frac{\period}{2}-1}^\ast$:
$$f_{w,K}(\underline{x}\otimes v)=\underline{w_{\frac{\period}{2}}}\otimes v.$$
\item Case $x=w_{\period-1}^\ast$:
$$f_{w,K}(\underline{x}\otimes v)=\underline{w_{\period}^\ast}\otimes Bv+\underline{w_{\period}}\otimes Dv.$$
\end{enumerate}

\section{Proof of Theorem~\ref{thm:BD}}\label{app:pfBD}

%========================================%changshi

We first show that $\fK$ can be defined on the class of isoclasses of objects. Let
$$(g,h):(Y_1^\bullet,V_1^\bullet,\theta_1)\to(Y_2^\bullet,V_2^\bullet,\theta_2)$$ be an isomorphism in $\glue\sg$. Since the differentials of $V_i^\bullet$, $i=1,2$, are zero, $h$ is indeed an isomorphism of graded $\overline{\sg}$-modules. Let $\widetilde{g}: Y_1^\bullet \to Y_2^\bullet$ be a morphism in $\C(H)$ whose image in $\per H$ is $g$. Since the differentials of $Y_i^\bullet$, $i=1,2$, are in the radical, $\widetilde{g}$ is an isomorphism in $\C(H)$. Since $Y_i^\bullet$ and $V_i^\bullet$, $i=1,2$, are minimal strictly perfect, we have $Y^\bullet_i\otimes^{\mathbf{L}}_H\overline{H}=Y^\bullet_i\otimes_H\overline{H}$, $V^\bullet_i\otimes^{\mathbf{L}}_{\overline{\sg}}\overline{H}=V^\bullet_i\otimes_{\overline{\sg}}\overline{H}$, $i=1,2$, $g\otimes^{\mathbf{L}}_H {\overline{H}}=\widetilde{g}\otimes_{H}\id_{\overline{H}}$ and $h\otimes^{\mathbf{L}}_{\overline{\sg}} {\overline{H}}=h\otimes_{\overline{\sg}}\id_{\overline{H}}$. Hence, the commutative diagram \eqref{eq:comm} becomes the following commutative diagram in $\C(\overline{H})$ (and hence in $\C(\sg)$)
$$\xymatrix{
	Y^\bullet_1\otimes_H\overline{H}\ar[r]^{\theta_1}\ar[d]_{\widetilde{g}\otimes_H {\overline{H}}}&V^\bullet_1\otimes_{\overline{\sg}}\overline{H}\ar[d]^{h\otimes_{\overline{\sg}}{\overline{H}}}\\
	Y_2^\bullet\otimes_H \overline{H}\ar[r]_{\theta_2}&V^\bullet_2\otimes_{\overline{\sg}}\overline{H}
    }$$
This, together with the following two commutative diagrams in $\C(\sg)$
$$
\xymatrix{
	Y_1^\bullet\ar[r]^{\iota_{H}^{\overline{H}}}\ar[d]_{\widetilde{g}}&Y_1^\bullet\otimes_{H} \overline{H}\ar[d]^{\widetilde{g}\otimes_H {\overline{H}}}\\
		Y_2^\bullet\ar[r]^{\iota_{H}^{\overline{H}}}&Y_2^\bullet\otimes_H \overline{H}
}\quad
\xymatrix{
	V_1^\bullet\ar[r]^{\iota_{\overline{\sg}}^{\overline{H}}}\ar[d]_{h}&V_1^\bullet\otimes_{\overline{\sg}} \overline{H}\ar[d]^{h\otimes_{\overline{\sg}} {\overline{H}}}\\
		V_2^\bullet\ar[r]^{\iota_{\overline{\sg}}^{\overline{H}}}&V_2^\bullet\otimes_{\overline{\sg}} \overline{H}
}$$
gives rise to the following commutative diagram
$$
    \xymatrix{
    Y_1^\bullet\oplus V_1^\bullet \ar[d]_{\left(\begin{smallmatrix}
        \widetilde{g}\\&h
    \end{smallmatrix}\right)}\ar[rr]^{\left(\begin{smallmatrix}
\iota_H^{\overline{H}}&-\widetilde{\theta_1}
\end{smallmatrix}\right)}&& Y_1^\bullet\otimes_H \overline{H}\ar[d]^{\widetilde{g}\otimes_H {\overline{H}}}\\
Y_2^\bullet\oplus V_2^\bullet\ar[rr]^{\left(\begin{smallmatrix}
\iota_H^{\overline{H}}&-\widetilde{\theta_2}
\end{smallmatrix}\right)}&& Y_2^\bullet\otimes_H \overline{H}
    }
$$
Using the universal property of kernel, this commutative diagram induces an isomorphism from $\fK(Y_1^\bullet,V_1^\bullet,\theta_1)$ to $\fK(Y_2^\bullet,V_2^\bullet,\theta_2)$.

Next, we show that for any object $(Y^\bullet,V^\bullet,\theta)$ in $\glue\sg$, we have
\begin{equation}\label{eq:fEfK2} \fE\circ\fK(Y^\bullet,V^\bullet,\theta)\cong(Y^\bullet,V^\bullet,\theta).
\end{equation}
By Remark~\ref{rmk:id}, the set $\OMEx$ is a complete set of pairwise orthogonal primitive idempotents of $\sg$ (and hence of $\overline{\sg}$). So, up to isomorphism, there are $z_1,\cdots,z_s\in\OMEx$ and $d_1\leq\cdots\leq d_s\in\ZZ$ such that $|V^\bullet|=\bigoplus_{1\leq l\leq s}z_l\overline{\sg}[d_l]$. Then $V^\bullet\otimes_{\overline{\sg}}\overline{H}=\bigoplus_{1\leq l\leq s}\bigoplus_{x\in z_l}x\overline{H}[d_l]$. Since $\overline{H}$ is semisimple, up to isomorphism, we have $Y^\bullet\otimes_{H}\overline{H}=V^\bullet\otimes_{\overline{\sg}}\overline{H}$ and $\theta$ is the identity. Thus, we have $|Y^\bullet|=\bigoplus_{1\leq l\leq s}\bigoplus_{x\in z_l}xH[d_l]$ and $\widetilde{\theta}$ is the canonical embedding induced by the inclusion $\overline{\sg}\subseteq\overline{H}$. It is straightforward to check that the following sequence of graded $\sg$-modules is exact.
\begin{equation}\label{eq:sesfenjie}
    0\to \bigoplus_{1\leq l\leq s}z_l\sg[d_l]\xrightarrow{\left(\begin{smallmatrix}
    \iota_{\sg}^{H}\\ \iota_{\sg}^{\overline{\sg}}
    \end{smallmatrix}\right)} \bigoplus_{1\leq l\leq s}\bigoplus_{x\in z_l}xH[d_l] \oplus \bigoplus_{1\leq l\leq s}z_l\overline{\sg}[d_l]\xrightarrow{\left(\begin{smallmatrix}
   \iota_H^{\overline{H}}&-\widetilde{\theta}
    \end{smallmatrix}\right)}\bigoplus_{1\leq l\leq s}\bigoplus_{x\in z_l}x\overline{H}[d_l]\to 0.
\end{equation}
Thus, the underlying graded module of $\fK(Y^\bullet,V^\bullet,\theta)$
\begin{equation}\label{eq:sp}
    |\fK(Y^\bullet,V^\bullet,\theta)|=\bigoplus_{1\leq l\leq s}z_l\sg[d_l],
\end{equation}
and we have
$$|V^\bullet|=|\fK(Y^\bullet,V^\bullet,\theta)|\otimes_{\sg}\overline{\sg},\  |Y^\bullet|=|\fK(Y^\bullet,V^\bullet,\theta)|\otimes_{\sg}H.$$
To complete the proof of \eqref{eq:fEfK2}, we only need to show these two equalities also hold on the dg module level.
Write the differentials
$$d_{\fK(Y^\bullet,V^\bullet,\theta)}=(f_{l,l'}:z_{l'}\sg[d_{l'}]\to z_l\sg[d_l])_{1\leq l',l\leq s},$$
and
$$d_{Y^\bullet}=(f'_{l,l'}:\bigoplus_{x'\in z_{l'}}x'H[d_{l'}]\to \bigoplus_{x\in z_{l}}xH[d_l])_{1\leq l',l\leq s}.$$
Since the differential of $V^\bullet$ is zero, each $f_{l',l}$ is in the radical. So $\fK(Y^\bullet,V^\bullet,\theta)$ is minimal strictly perfect. It follows that $V^\bullet= \fK(Y^\bullet,V^\bullet,\theta)\otimes_{\sg}\overline{\sg}=\fK(Y^\bullet,V^\bullet,\theta)\otimes_{\sg}^{\mathbf{L}}\overline{\sg}$. For any $1\leq l, l'\leq s$ and any $x\in z_l, x'\in z_{l'}$, we have
$$\begin{array}{rcl}
    f'_{l,l'}(x')|_{xH[d_l]} & = & d_{Y^\bullet}(x')|_{xH[d_l]} \\
    & = & x\cdot d_{Y^\bullet}(x')|_{\bigoplus_{y\in z_l}yH[d_l]} \\
     & = & x \cdot d_{Y^\bullet}(\sum_{y'\in z_{l'}}y')|_{\bigoplus_{y\in z_l}yH[d_l]}\cdot x'\\
     & = & x\cdot d_{X^\bullet}(z_{l'})|_{z_l\sg[d_l]}\cdot x'\\
     & = & x\cdot f_{l,l'}(z_{l'})\cdot x'
\end{array}$$
Thus, $d_{Y^\bullet}$ is determined by $d_{\fK(Y^\bullet,V^\bullet,\theta)}$ as the differential of $\fK(Y^\bullet,V^\bullet,\theta)\otimes_{\sg}H$. So $Y^\bullet=\fK(Y^\bullet,V^\bullet,\theta)\otimes_{\sg}H=\fK(Y^\bullet,V^\bullet,\theta)\otimes^{\mathbf{L}}_{\sg}H$ as required.

Thirdly, let $X^\bullet$ be a minimal strictly perfect $\sg$-module. Then $X^\bullet\otimes_\sg H$ and $X^\bullet\otimes_\sg \overline{\sg}$ are also minimal strictly perfect. Consider the following sequence in $\C(\sg)$
\begin{equation}\label{eq:ese2}
    0\to X^\bullet\xrightarrow{\left(\begin{smallmatrix}
    \iota_{\sg}^{H}\\ \iota_{\sg}^{\overline{\sg}}
    \end{smallmatrix}\right)} (X^\bullet\otimes_\sg H)\oplus(X^\bullet\otimes_\sg \overline{\sg})\xrightarrow{\left(\begin{smallmatrix}
   \iota_H^{\overline{H}}&-\iota_{\overline{\sg}}^{\overline{H}}
    \end{smallmatrix}\right)}X^\bullet\otimes_{\sg}\overline{H}\to 0.
\end{equation}
By Remark~\ref{rmk:id}, the set $\OMEx$ is a complete set of pairwise orthogonal primitive idempotents of $\sg$. So, up to isomorphism, there are $z_1,\cdots,z_s\in\OMEx$ and $d_1\leq\cdots\leq d_s\in\ZZ$ such that $|X^\bullet|=\bigoplus_{1\leq l\leq s}z_l\sg[d_l]$. Then, by taking the underlying graded module of each term in \eqref{eq:ese2}, we get the sequence \eqref{eq:sesfenjie}, which is exact. Hence the sequence \eqref{eq:ese2} is also exact. Thus, we have
\begin{equation}\label{eq:fKfE2}
\fK\circ\fE(X^\bullet)\cong X^\bullet.
\end{equation}
Now, the properties (1), (2) and (3) follow directly from \eqref{eq:fEfK2} and \eqref{eq:fKfE2}.

In the case that $\sg$ is non-positive, on the one hand, by Lemma~\ref{lem:KY}, any object in $\per\sg$ is isomorphic to a minimal strictly perfect $\sg$-module, and on the other hand, any object in $\Tri\sg$ is isomorphic to an object in $\glue\sg$, because \eqref{eq:sp} implies that $\fK(Y^\bullet,V^\bullet,\theta)$is strictly perfect due to that $\sg$ is non-positive. Thus, in this case, the functor $\fE$ is from $\per\sg$ to $\Tri\sg$.

%========================================%changshi

%=========================================================
\section{Technical proofs for Section~\ref{sec:mor}}\label{app:pfs}
%=========================================================
\subsection{Proof of Lemma~\ref{lem:G}}\label{app:pf2}
%=========================================================

By Theorem~\ref{thm2}, there is an isomorphism $$(g,h):\fE(\widetilde{X}(\wsx))\to \fE(\cpy{\ws}),$$
and hence we have an isomorphism $$\fM((g,h)):\fM\circ\fE(\widetilde{X}(\wsx))\to \fM\circ\fE(\cpy{\ws}).$$
By Theorem~\ref{thm1}, $\fM\circ\fE(\cpy{\ws})=\widetilde{X}(\wsx)=R(\bione(\wsx))$, where the representation $R(\bione(\wsx))$ is given in Remark~\ref{rmk:Rmx}. Similarly, we have an isomorphism $$\fM((g',h')):\fM\circ\fE(\widetilde{X}(\wtx))\to R(\bione(\wtx)).$$
By Remark~\ref{rmk:rep and mor}, a morphism $\alpha:R(\bione(\wsx))\to R(\bione(\wtx))$ is given by a pair $(\alpha_-,\alpha_+)$ which determine each other, where $\alpha_+$ is from $\bigoplus_{1\leq l\leq p_{\ws}}(\wks_l,\wsi_l)$ to $\bigoplus_{1\leq l\leq p_{\wt}}(\wkt_l,\wti_l)$. For any $1\leq l\leq p_{\ws}$, we define
$$Y^{\ws}_l=\begin{cases}
    (\wks_l,\wsi_l)+(\wks_{l-1},\wsi_{l-1}) & \text{if $\aso{\wsx}{l-1}{l}$ is positive, interior and punctured,}\\
    (\wks_l,\wsi_l)+(\wks_{l+1},\wsi_{l+1})&\text{if $\aso{\wsx}{l+1}{l}$ is positive, interior and punctured,}\\
    (\wks_l,\wsi_l)&\text{otherwise.}
\end{cases}$$
Similarly, we define $Y^{\wt}_l$ for any $1\leq l\leq p_{\wt}$.

For any $\widetilde{L}\in\widetilde{\mathcal{L}}$, let
$$\alpha_{\widetilde{L}}:=\fM((g',h'))\circ\fM\circ\fE(f_{\widetilde{L}})\circ\fM((g,h))^{-1}:R(\bione(\wsx))\to R(\bione(\wtx)).$$
By the construction of $f_{\widetilde{L}}$ (see Construction~\ref{cons:fL}), the positive part of $\alpha_{\widetilde{L}}$ is
$$\sum_{(u,v)\in L_0}g_{(u,v)}+\sum_{E\in\widetilde{L}}\sum_{1\leq \exi\leq s_E}c_\exi^Eg_{(\uex{E}{\exi},\vex{E}{\exi})},$$
where for any $(l,l')=(u,v)\in L_0$ or $(l,l')=(\uex{E}{\exi},\vex{E}{\exi})$ for $E\in\widetilde{L}$ and $1\leq \exi\leq s_E$, we have
$g_{(l,l')}(Y_l^{\ws})=(Y_{l'}^{\wt})$. Since the coefficients $c_l^E$ are the same as $U_{\bione(\wsx),\bione(\wtx)}(\aex{E}{l})$ given in \cite[Section~3.3]{G}, the morphism $\alpha_{\widetilde{L}}$ coincides with the one constructed in \cite[Section~4.1]{G} associated to $\widetilde{L}$. Hence, by the theorem in \cite[Section~4]{G}, $\{\alpha_{\widetilde{L}} \mid\widetilde{L}\in\widetilde{\mathcal{L}}\}$ forms a basis of $\Hom(R(\bione(\wsx)),R(\bione(\wtx)))$. Then $\{\fM\circ\fE(f_{\widetilde{L}})\mid \widetilde{L}\in\widetilde{\mathcal{L}}\}$ form a basis of $\Hom_{\repb(S)}(\fM\circ\fE(\cpy{\ws}),\fM\circ\fE(\cpy{\wt}))$.

%=========================================================
\subsection{Proof of Proposition~\ref{prop:rad}}\label{app:pfs1}
%=========================================================

By Lemma~\ref{lem:G}, the set of morphisms $\{f_{\widetilde{L}}\mid\widetilde{L}\in\widetilde{\mathcal{L}}\}$ is linearly independent, and for any $f\in\Hom_{\per\sg}(\cpy{\ws},\cpy{\wt})$, there is $f'\in\k\{f_{\widetilde{L}}\mid\widetilde{L}\in\widetilde{\mathcal{L}}\}$ such that $\fM\circ\fE(f-f')=0$. Take  a representative $\varphi$ of $f-f'$ in $Z^0\huaHom_\sg(\cpy{\ws},\cpy{\wt})$. By the constructions of $\fM$ and $\fE$ (see Theorems~\ref{thm:tribu} and~\ref{thm:BD}), if any component of $\varphi$ is an isomorphism then so is the corresponding component of $\fM\circ\fE(\varphi)$, a contradiction. Hence we have $\varphi\in\huaRad_\sg(\cpy{\ws},\cpy{\wt})$. So we have $\Hom_{\per\sg}(\cpy{\ws},\cpy{\wt})=\k\{f_{\widetilde{L}}\mid\widetilde{L}\in\widetilde{\mathcal{L}}\}+\operatorname{Rad}_{\per\sg}(\cpy{\ws},\cpy{\wt}).$

    To show it is a direct sum, it suffices to show that for any nonzero $g\in\k\{f_{\widetilde{L}}\mid\widetilde{L}\in\widetilde{\mathcal{L}}\}$, we have $g\notin \operatorname{Rad}_{\per\sg}(\cpy{\ws},\cpy{\wt})$. There exist tagged real $h$-lines $L_1,\cdots,L_s\in\mathcal{L}(\ws,\wt)$ and nonzero $\lambda_1,\cdots,\lambda_n\in\k$ such that $g=\sum_{i=1}^s\lambda_i f_{\widetilde{L}_i}$. Without loss of generality, assume that $L_1$ is maximal in the set $\{L_1,\cdots,L_s\}$ under the order in $\mathcal{L}(\ws,\wt)$ given in Proposition~\ref{prop:G}. Let $(u,v)$ be a vertex of $L_1$ and $g_{(u,v)}$ (resp. $(f_{\widetilde{L}_i})_{(u,v)}$) the component of $g$ (resp. $f_{\widetilde{L}_i}$) from $\cply{\ws}{\ec{u}}$ to $\cply{\wt}{\ec{v}}$. We claim that $g_{(u,v)}\neq 0$. We divide the proof of the claim into the following cases.
    \begin{enumerate}
        \item For the case that $(u,v)$ is not the start of an arrow in $\qst_2$, by the construction, $(f_{\widetilde{L}_1})_{(u,v)}=\id$ and $(f_{\widetilde{L}_i})_{(u,v)}=0$ for $i\neq1$. So we have $g_{(u,v)}=\lambda_1\id\neq 0$.
        \item For the case that $(u,v)$ is the start of an arrow in $\qst_2$, by Remark~\ref{rmk:Q22}, $(u,v)$ is an endpoint of $L_1$ and one extension $E$ in $\widetilde{L}_1$ from $(u,v)$ contains an arrow $\alpha\in\qst_2$ whose start is $(u,v)$. Note that there is at most one $L_i$, $i\neq 1$, which contains the terminal of $\alpha$.
        \begin{enumerate}
            \item If $\alpha\in\qst_{2,-}$, for $i\neq 1$, we have $(f_{\widetilde{L}_i})_{(u,v)}=\id$ if $L_i$ contains the terminal of $\alpha$, or zero otherwise.
            \begin{enumerate}
                \item In the subcase $\alpha=(\wmu,\wnu)_{+}$, we have $(f_{\widetilde{L}_1})_{(u,v)}=\begin{pmatrix}
                1&0\\0&0
                \end{pmatrix}$. So $g_{(u,v)}=\begin{pmatrix}
                \lambda_1&0\\0&0
                \end{pmatrix}$ or $\begin{pmatrix}
                \lambda_1+\lambda_i&0\\0&\lambda_i
                \end{pmatrix}$, each of which is not zero.
                \item In the subcase $\alpha=(\wmu,\wnu)_{-}$, we have
                $(f_{\widetilde{L}_1})_{(u,v)}=\begin{pmatrix}
                0&0\\0&-1
                \end{pmatrix}$. So $g_{(u,v)}=\begin{pmatrix}
                0&0\\0&-\lambda_1
                \end{pmatrix}$ or $\begin{pmatrix}
                \lambda_i&0\\0&\lambda_i-\lambda_1
            \end{pmatrix}$, each of which is zero.
            \end{enumerate}
            \item If $\alpha\in\qst_{2,o}$, since $L_i$ is a real $h$-line, we have that $L_i$ does not contain the terminal of $\alpha$. So $(f_{\widetilde{L}_i})_{(u,v)}=0$ for any $i\neq 1$.
            \begin{enumerate}
                \item In the subcase $\alpha=(\wmu,\wnu)_{\oplus}$ and the corresponding tagging of $\wtx$ is $+$, we have $(f_{\widetilde{L}_1})_{(u,v)}=\begin{pmatrix}
                1&0
                \end{pmatrix}$.
                \item In the subcase $\alpha=(\wmu,\wnu)_{\oplus}$ and the corresponding tagging of $\wtx$ is $-$, we have $(f_{\widetilde{L}_1})_{(u,v)}=\begin{pmatrix}
                0&1
                \end{pmatrix}$.
        \item In the subcase $\alpha=(\wmu,\wnu)_{\ominus}$ and the corresponding tagging of $\wsx$ is $+$,
        $(f_{\widetilde{L}_1})_{(u,v)}=\begin{pmatrix}
        1\\0
        \end{pmatrix}$.
        \item In the subcase $\alpha=(\wmu,\wnu)_{\ominus}$ and the corresponding tagging of $\wsx$ is $-$,
        $(f_{\widetilde{L}_1})_{(u,v)}=\begin{pmatrix}
        0\\-1
        \end{pmatrix}$.
            \end{enumerate}
            In each case,  $g_{(u,v)}=\lambda_1(f_{\widetilde{L}_1})_{(u,v)}\neq0$.
        \end{enumerate}
    \end{enumerate}
    Since $(u,v)$ is a vertex of $\qst$, there are no nonzero radical morphisms from $\cply{\wsx}{\ec{u}}$ to $\cply{\wtx}{\ec{v}}$. So $g_{(u,v)}\neq 0$ implies that $g_{(u,v)}$ is not in the radical. Hence we have $g\notin\operatorname{Rad}_{\per\sg}(\cpy{\ws},\cpy{\wt})$.

\subsection{Proof of Lemma~\ref{lem:rad1}}\label{app:pfs2}

	For case (1), write $\wmu=\asu{\wsx}{u}{u'}=(i,j_1)-(i,j_2)$ and $\wnu=\asu{\wtx}{v}{v'}=(i,j_3)-(i,j_4)$ with $j_1<j_2$ and $j_3<j_4$. There are the following subcases.
	\begin{itemize}
	    \item $j_1<j_2<j_3<j_4$, or $0=j_3<j_1<j_2<j_4$, see the first picture in Figure~\ref{fig:rad101}. Then $\huaRad_{\sg}(\cpy{\ws},\cpy{\wt})_{\wmu,\wnu}=0$.
	    \item $j_1<j_3<j_4<j_2$, see the second picture in Figure~\ref{fig:rad101}. Then
	    $$|\huaRad_{\sg}(\cpy{\ws},\cpy{\wt})_{\wmu,\wnu}|=\k\ \fin{\xit_{v}}\circ f_{\omo{\wnu}{\ec{v}},\omo{\wmu}{\ec{u'}}}\circ\fout{\xis_{u'}}\oplus \k\ \fin{\xit_{v'}}\circ f_{\omo{\wnu}{\ec{v'}},\omo{\wmu}{\ec{u'}}}\circ\fout{\xis_{u'}}.$$
	    So we have
	    $$Z\huaRad_{\sg}(\cpy{\ws},\cpy{\wt})_{\wmu,\wnu}=\k\ \fin{\xit_{v}}\circ f_{\omo{\wnu}{v},\omo{\wmu}{u'}}\circ\fout{\xis_{u'}}=B\huaRad_{\sg}(\cpy{\ws},\cpy{\wt})_{\wmu,\wnu}.$$
	    \item $j_3<j_1<j_2<j_4$. This is the dual of the subcase $j_1<j_3<j_4<j_2$.
	    \item $j_3<j_4<j_1<j_2$, see the third picture in Figure~\ref{fig:rad101}. If $j_3=0$, this is the dual case of $0=j_1<j_3<j_4<j_2$. So we only consider the case $j_3>0$. Then we have
	    $$\begin{array}{rl}
	        & Z\huaRad_{\sg}(\cpy{\ws},\cpy{\wt})_{\wmu,\wnu}  \\
	       = & \k\ \fin{\xit_{v}}\circ f_{\omo{\wnu}{v},\omo{\wmu}{u'}}\circ\fout{\xis_{u'}}\oplus \k\ ((-1)^{\wsi_1-\wti_3}\fin{\xit_{v}}\circ f_{\omo{\wnu}{v},\omo{\wmu}{u}}\circ\fout{\xis_{u}}+\fin{\xit_{v'}}\circ f_{\omo{\wnu}{v'},\omo{\wmu}{u'}}\circ\fout{\xis_{u'}})\\
	       = & B\huaRad_{\sg}(\cpy{\ws},\cpy{\wt})_{\wmu,\wnu}.
	    \end{array}$$
	\end{itemize}
	
	\begin{figure}[htpb]
	    	    \begin{tikzpicture}[xscale=1.6,yscale=1.4]
	    	\begin{scope}[shift={(-3,0)}]
	    		\draw[ultra thick](-1,1)to(1,1);
	    		\draw[red,thick,dotted](-.4,1)to(-.8,.6) (-1,0)to(-.8,-.6) (-.4,-1)to(.4,-1) (.8,-.6)to(1,0) (.8,.6)to(.4,1);
	    		\draw[red,thick](-.8,.6)to(-1,0) (-.8,-.6)to(-.4,-1) (.4,-1)to(.8,-.6) (1,0)to(.8,.6);
	    		\draw[blue](0,1)\nn;
	    		\draw[red](-1.2,.3)node{$(i,j_1)$}(-.9,-1)node{$(i,j_2)$}(1.2,.3)node{$(i,j_4)$}(.9,-1)node{$(i,j_3)$}(0,1.2)node{$(i,j_3)$};
	    		
	    		\draw[blue,thick,bend right=20](0,1)to(.85,.4) (.92,.2)to(.6,-.8) (-.6,-.8)to(-.92,.2) (.3,.5)node{$\tilde{\nu}$}(.5,-.4)node{$\tilde{\nu}$}(-.5,-.4)node{$\tilde{\mu}$};
	    	\end{scope}
	    	\draw[ultra thick](-1,1)to(1,1);
	    	\draw[red,thick,dotted](-.4,1)to(-.8,.6) (-1,0)to(-.8,-.6) (-.4,-1)to(.4,-1) (.8,-.6)to(1,0) (.8,.6)to(.4,1);
	    	\draw[red,thick](-.8,.6)to(-1,0) (-.8,-.6)to(-.4,-1) (.4,-1)to(.8,-.6) (1,0)to(.8,.6);
	    	\draw[blue](0,1)\nn;
	    	\draw[red](-1.2,.3)node{$(i,j_1)$}(-.9,-1)node{$(i,j_3)$}(1.2,.3)node{$(i,j_2)$}(.9,-1)node{$(i,j_4)$};
	    	
	    	\draw[blue,thick,bend left=20](-.9,.3)to(.9,.3) (-.6,-.8)to(.6,-.8) (0,.6)node{$\tilde{\mu}$}(0,-.5)node{$\tilde{\nu}$};
	    	\begin{scope}[shift={(3,0)}]
	    		\draw[ultra thick](-1,1)to(1,1);
	    		\draw[red,thick,dotted](-.4,1)to(-.8,.6) (-1,0)to(-.8,-.6) (-.4,-1)to(.4,-1) (.8,-.6)to(1,0) (.8,.6)to(.4,1);
	    		\draw[red,thick](-.8,.6)to(-1,0) (-.8,-.6)to(-.4,-1) (.4,-1)to(.8,-.6) (1,0)to(.8,.6);
	    		\draw[blue](0,1)\nn;
	    		\draw[red](-1.2,.3)node{$(i,j_3)$}(-.9,-1)node{$(i,j_4)$}(1.2,.3)node{$(i,j_2)$}(.9,-1)node{$(i,j_1)$};
	    		
	    		\draw[blue,thick,bend right=20](.9,.3)to(.6,-.8) (-.6,-.8)to(-.9,.3) (.45,-.3)node{$\tilde{\mu}$}(-.45,-.3)node{$\tilde{\nu}$};
	    	\end{scope}
	    \end{tikzpicture}
	    \caption{Cases for disjoint $\wmu$ and $\wnu$}
	    \label{fig:rad101}
	\end{figure}
	
	For case (2), write $\wmu=\asu{\wsx}{u}{u'}=(i,j)-(i,j_1)$ and $\wnu=\asu{\wtx}{v}{v'}=(i,j)-(i,j_2)$ with $j\neq 0$. There are the following subcases.
	\begin{itemize}
	    \item $j_2<j_1<j$, see the first picture in Figure~\ref{fig:rad102}. Then
	    $$|\huaRad_{\sg}(\cpy{\ws},\cpy{\wt})_{\wmu,\wnu}|=\k\ \fin{\xit_{v'}}\circ f_{\omo{\wnu}{v'},\omo{\wmu}{u}}\circ\fout{\xis_{u}}\oplus \k\ \fin{\xit_{v'}}\circ f_{\omo{\wnu}{v'},\omo{\wmu}{u'}}\circ\fout{\xis_{u'}}.$$
	    So we have
	    $$Z\huaRad_{\sg}(\cpy{\ws},\cpy{\wt})_{\wmu,\wnu}=\k\ \fin{\xit_{v'}}\circ f_{\omo{\wnu}{v'},\omo{\wmu}{u}}\circ\fout{\xis_{u}}=B\huaRad_{\sg}(\cpy{\ws},\cpy{\wt})_{\wmu,\wnu}.$$
	    \item $j<j_2<j_1$. This is the dual of the subcase $j_2<j_1<j$.
	    \item $j_1<j<j_2$, see the second picture in Figure~\ref{fig:rad102}. Then we have $\huaRad_{\sg}(\cpy{\ws},\cpy{\wt})_{\wmu,\wnu}=0$.
	\end{itemize}
	
	\begin{figure}[htpb]
	    	    \begin{tikzpicture}[xscale=1.6,yscale=1.4]
	    	\draw[ultra thick](-1,1)to(1,1);
	    	\draw[red,thick,dotted](-.4,1)to(-.8,.6) (-1,0)to(-.8,-.6) (-.4,-1)to(.4,-1) (.8,-.6)to(.4,1);
	    	\draw[red,thick](-.8,.6)to(-1,0) (-.8,-.6)to(-.4,-1) (.4,-1)to(.8,-.6);% (1,0)to(.8,.6);
	    	\draw[blue](0,1)\nn;
	    	\draw[red](-1.2,.3)node{$(i,j_2)$}(-.9,-1)node{$(i,j_1)$}(.9,-1)node{$(i,j)$};
	    	
	    	\draw[blue,thick,bend left=20](-.9,.3)to(.65,-.75) (-.55,-.85)to(.55,-.85) (0,.2)node{$\tilde{\nu}$}(0,-.6)node{$\tilde{\mu}$};
	        \begin{scope}[shift={(3,0)}]
	        	\draw[ultra thick](-1,1)to(1,1);
	        	\draw[red,thick,dotted](-.4,1)to(-.8,.6) (-1,0)to(-.8,-.6) (.8,-.6)to(1,0) (.8,.6)to(.4,1);
	        	
	        	\draw[red,thick](-.8,.6)to(-1,0) (-.8,-.6)to(.8,-.6) (1,0)to(.8,.6);
	        	
	        	\draw[blue](0,1)\nn;
	        	\draw[red](-1.2,.3)node{$(i,j_1)$}(1.2,.3)node{$(i,j_2)$};
	        	
	        	\draw[blue,thick,bend right=20](.9,.3)to(.4,-.6) (-.4,-.6)to(-.9,.3) (.4,0)node{$\tilde{\nu}$}(-.4,0)node{$\tilde{\mu}$};
	        \end{scope}
        \end{tikzpicture}
	    \caption{Cases for $\wmu$ and $\wnu$ sharing exactly one edge}
	    \label{fig:rad102}
	\end{figure}
	
	For case (3), we have $\huaRad_{\sg}(\cpy{\ws},\cpy{\wt})_{\wmu,\wnu}=0$.

\subsection{Proof of Lemma~\ref{lem:rad2}}\label{app:pfs3}

    Since $\upas(\wsx;\wtx)$ is the union $$\upas(\wsx;\wtx;\times)\cup\upas(\wsx;\wtx;\parallel)\cup\upas(\wsx;\wtx;\overset{\M}{\wedge})\cup(\cup_{\rho\in\ZZ}D^\circ(\ws,\wt[\rho])),$$ by Lemma~\ref{lem:rad1}, we only need to consider the following cases.

    \textbf{(a).} $(\wmu,\wnu)\in\upas(\wsx;\wtx;\times)$. Let $\rho$ be the intersection index. There are the following subcases.
        \begin{enumerate}
            \item $j_1<j_3<j_2<j_4$ or $0=j_3<j_1<j_4<j_2$, see the first and second pictures in Figure~\ref{fig:rad2}. If $\rho=0$, then $\huaRad_{\sg}(\cpy{\ws},\cpy{\wt})_{\wmu,\wnu}=\huaRad^0_{\sg}(\cpy{\ws},\cpy{\wt})_{\wmu,\wnu}=\k\ \psi(\wmu,\wnu)$. So we have  $Z^0\huaRad_{\sg}(\cpy{\ws},\cpy{\wt})_{\wmu,\wnu}=\k\ \psi(\wmu,\wnu)$ and $B^0\huaRad_{\sg}(\cpy{\ws},\cpy{\wt})_{\wmu,\wnu}=0$. If $\rho\neq 0$, we have $\huaRad^0_{\sg}(\cpy{\ws},\cpy{\wt})_{\wmu,\wnu}=0$.
            \item $0<j_3<j_1<j_4<j_2$, see the third picture in Figure~\ref{fig:rad2}. If $\rho=0$, we have $$\huaRad_{\sg}(\cpy{\ws},\cpy{\wt})_{\wmu,\wnu}=\huaRad^0_{\sg}(\cpy{\ws},\cpy{\wt})_{\wmu,\wnu}\oplus\huaRad^1_{\sg}(\cpy{\ws},\cpy{\wt})_{\wmu,\wnu},$$ where $$\huaRad^0_{\sg}(\cpy{\ws},\cpy{\wt})_{\wmu,\wnu}=\k\ \fin{\xit_{v'}}\circ f_{(i,j_4),(i,j_2)}\circ\fout{\xis_{u'}}\oplus \k\ \fin{\xit_{v}}\circ f_{(i,j_3),(i,j_1)}\circ\fout{\xis_{u}}$$
            and $$\huaRad^1_{\sg}(\cpy{\ws},\cpy{\wt})_{\wmu,\wnu}=\k\ \fin{\xit_{v}}\circ f_{(i,j_3),(i,j_2)}\circ\fout{\xis_{u'}}.$$
            So we have $Z^0\huaRad_{\sg}(\cpy{\ws},\cpy{\wt})_{\wmu,\wnu}=\k\ \psi(\wmu,\wnu)$ and $B^0\huaRad^1_{\sg}(\cpy{\ws},\cpy{\wt})_{\wmu,\wnu}=0$. If $\rho=-1$, we have $$Z^0\huaRad_{\sg}(\cpy{\ws},\cpy{\wt})_{\wmu,\wnu}=B^0\huaRad^1_{\sg}(\cpy{\ws},\cpy{\wt})_{\wmu,\wnu}=\k\ \fin{\xit_{v}}\circ f_{(i,j_3),(i,j_2)}\circ\fout{\xis_{u'}};$$
            if $\rho\neq 0,-1$, we have $\huaRad^1_{\sg}(\cpy{\ws},\cpy{\wt})_{\wmu,\wnu}=0$.
        \end{enumerate}

    \textbf{(b).} $(\wmu,\wnu)\in\upas(\wsx;\wtx;\overset{\M}{\wedge})$ and with the orientations such that $\wmu$ and $\wnu$ start at the same point in $\M$, $\wmu$ is to the left of $\wnu$. Let $\rho$ be the intersection index. Write $\wmu=(i,0)-(i,j_1)$ and $\wnu=(i,0)-(i,j_2)$. If $\rho=0$, then
        $Z^0\huaRad_{\sg}(\cpy{\ws},\cpy{\wt})_{\wmu,\wnu}=\huaRad^0_{\sg}(\cpy{\ws},\cpy{\wt})_{\wmu,\wnu}=\k\ \psi(\wmu,\wnu)$ and $B^0\huaRad_{\sg}(\cpy{\ws},\cpy{\wt})_{\wmu,\wnu}=0$. If $\rho\neq 0$, then $\huaRad_{\sg}(\cpy{\ws},\cpy{\wt})_{\wmu,\wnu}=0$.

    \textbf{(c).} $(\wmu,\wnu)\in D^\circ(\ws,\wt[\rho])$ and either $\wmu\sim\wnu$, or $\wmu\nsim\wnu$ and with the orientations such that $\wmu$ and $\wnu$ start at the same edge which is not a boundary segment, $\wmu$ is to the right of $\wnu$. There are the following two subcases.
        \begin{enumerate}
            \item $\wmu\sim\wnu$. If $\rho\neq -1$, we have $$\huaRad^0_{\sg}(\cpy{\ws},\cpy{\wt})_{\wmu,\wnu}=0.$$
            If $\rho=-1$, we have $$\huaRad_{\sg}(\cpy{\ws},\cpy{\wt})_{\wmu,\wnu}=\huaRad^0_{\sg}(\cpy{\ws},\cpy{\wt})_{\wmu,\wnu}=\k\psi(\wmu,\wnu).$$
            So we have $Z^0\huaRad_{\sg}(\cpy{\ws},\cpy{\wt})_{\wmu,\wnu}=\k\ \psi(\wmu,\wnu)$ and $B^0\huaRad^1_{\sg}(\cpy{\ws},\cpy{\wt})_{\wmu,\wnu}=0$.
            \item $\wmu\nsim\wnu$. Write $\wmu=\asu{\wsx}{u}{u'}=(i,j)-(i,j_1)$ and $\wmu=\asu{\wtx}{v}{v'}=(i,j)-(i,j_2)$. If $\rho\neq -1$, then $\huaRad^0_{\sg}(\cpy{\ws},\cpy{\wt})_{\wmu,\wnu}=0$ unless $0<j_2<j<j_1$ and $\rho=-2$, where $$Z^0\huaRad_{\sg}(\cpy{\ws},\cpy{\wt})_{\wmu,\wnu}=\k\ \fin{\xit_{v'}}\circ f_{(i,j_2),(i,j_1)}\circ\fout{\xis_{u'}}=B^0\huaRad_{\sg}(\cpy{\ws},\cpy{\wt})_{\wmu,\wnu}.$$
            If $\rho=-1$, we have
            $$\huaRad_{\sg}(\cpy{\ws},\cpy{\wt})_{\wmu,\wnu}=\huaRad^0_{\sg}(\cpy{\ws},\cpy{\wt})_{\wmu,\wnu}=\k\{\fin{\xit_{v'}}\circ f_{\omo{\wnu}{v'},\omo{\wmu}{u}}\circ\fout{\xis_{u}},\fin{\xit_{v}}\circ f_{\omo{\wnu}{v},\omo{\wmu}{u'}}\circ\fout{\xis_{u'}}\}.$$
            So we have $Z^0\huaRad_{\sg}(\cpy{\ws},\cpy{\wt})_{\wmu,\wnu}=\k\ \psi(\wmu,\wnu)$ and $B^0\huaRad^1_{\sg}(\cpy{\ws},\cpy{\wt})_{\wmu,\wnu}=0$.
        \end{enumerate}

\subsection{Proof of Lemma~\ref{lem:homotopy}}\label{app:pfs4}

    We divide the proof into the following cases.

    \textbf{(a).} If $\hks_u=\hkt_v$ is an edge of a once-punctured monogon and any arc segment in the punctured side of $(u,v)$ is interior, we only need to consider the case $\wks_u=\wkt_v=(i,j^+)$, because we have $\hks_{u'}=\hkt_{v'}=(i,j^-)$ for the vertex $(u',v')$ of $Q_0^{\ws,\wt[-1]}$ such that $\asu{\wsx}{u}{u'}$ and $\asu{\wtx}{v}{v'}$ are punctured. Then there are two arrows in $Q_{2,-}^{\ws,\wt[-1]}$, one from $(u',v)$ to $(u,v)$ and the other from $(u',v)$ to $(u',v')$. Write the unpunctured arc segments $\asu{\wsx}{u''}{u}$, $\asu{\wsx}{v''}{v}$, $\asu{\wsx}{u'}{u'''}$ and $\asu{\wsx}{v'}{v'''}$. So the unpunctured side of $(u,v)$ (resp. $(u',v)$ and $(u',v')$) is $(\asu{\wsx}{u''}{u},\asu{\wtx}{v''}{v})$ (resp. $(\asu{\wsx}{u'}{u'''}, \asu{\wsx}{v''}{v})$ and $(\asu{\wsx}{u'}{u'''},\asu{\wsx}{v'}{v'''})$). Note that $\omo{\wmu}{u}=\omo{\wnu}{v}=\omo{\wmu}{u'}=\omo{\wnu}{v'}=(i,j)$. Then we have the following as required.
        $$\begin{array}{rcl}
            d(\iota(u,v)) & = & \begin{pmatrix}
            1&0\\0&0
        \end{pmatrix}\circ\fin{\xis_{u}}\circ f_{\omo{\wmu}{u},\omo{\wmu}{u''}}\circ\fout{\xis_{u''}}+\begin{pmatrix}
        1&0\\0&0
        \end{pmatrix}\circ\fin{\xis_{u'}}\circ f_{\omo{\wmu}{u'},\omo{\wmu}{u'''}}\circ\fout{\xis_{u'''}} \\
        & & +\fin{\xis_{v''}}\circ f_{\omo{\wnu}{v''},\omo{\wnu}{v}}\circ\fout{\xis_{v}}\circ\begin{pmatrix}
        1&0\\0&0
        \end{pmatrix}+\fin{\xis_{v'''}}\circ f_{\omo{\wnu}{v'''},\omo{\wnu}{v'}}\circ\fout{\xis_{v'}}\circ\begin{pmatrix}
            1&0\\0&0
        \end{pmatrix}\\
        & = & \begin{pmatrix}
            1&0\\0&0
        \end{pmatrix}\circ f_{\omo{\wmu}{u},\omo{\wmu}{u''}}\circ\fout{\xis_{u''}}+\begin{pmatrix}
        1&0\\0&0
        \end{pmatrix}\circ f_{\omo{\wmu}{u'},\omo{\wmu}{u'''}}\circ\fout{\xis_{u'''}} \\
         & & +\fin{\xis_{v''}}\circ f_{\omo{\wnu}{v''},\omo{\wnu}{v}}\circ\begin{pmatrix}
        1&0\\0&0
    \end{pmatrix}\\
    & =   & \fin{\xis_{v}}\circ f_{\omo{\wnu}{v},\omo{\wmu}{u''}}\circ\fout{\xis_{u''}}+\fin{\xis_{v}}\circ f_{\omo{\wnu}{v},\omo{\wmu}{u'''}}\circ\fout{\xis_{u'''}} \\
         & & +\fin{\xis_{v''}}\circ f_{\omo{\wnu}{v''},\omo{\wmu}{u}}\circ\fout{\xis_{u}}+\fin{\xis_{v''}}\circ f_{\omo{\wnu}{v''},\omo{\wmu}{u'}}\circ\fout{\xis_{u'}}\\
    & = & \psi(\asu{\wsx}{u''}{u},\asu{\wtx}{v''}{v})+\psi(\asu{\wsx}{u'}{u'''}, \asu{\wsx}{v''}{v}).
    \end{array}$$
    $$\begin{array}{rcl}
        d(\iota(u',v')) & = & \begin{pmatrix}
        0&0\\0&1
    \end{pmatrix}\circ\fin{\xis_{u}}\circ f_{\omo{\wmu}{u},\omo{\wmu}{u''}}\circ\fout{\xis_{u''}}+\begin{pmatrix}
        0&0\\0&1
    \end{pmatrix}\circ\fin{\xis_{u'}}\circ f_{\omo{\wmu}{u},\omo{\wmu}{u'''}}\circ\fout{\xis_{u'''}} \\
         & & +\fin{\xis_{v''}}\circ f_{\omo{\wnu}{v''},\omo{\wnu}{v}}\circ\fout{\xis_{v}}\circ\begin{pmatrix}
        0&0\\0&1
    \end{pmatrix}+\fin{\xis_{v'''}}\circ f_{\omo{\wnu}{v'''},\omo{\wnu}{v'}}\circ\fout{\xis_{v'}}\circ\begin{pmatrix}
        0&0\\0&1
    \end{pmatrix}\\
    & = &\begin{pmatrix}
        0&0\\0&1
    \end{pmatrix}\circ f_{\omo{\wmu}{u'},\omo{\wmu}{u'''}}\circ\fout{\xis_{u'''}} \\
         & & +\fin{\xis_{v''}}\circ f_{\omo{\wnu}{v''},\omo{\wnu}{v}}\circ\begin{pmatrix}
        0&0\\0&-1
    \end{pmatrix}+\fin{\xis_{v'''}}\circ f_{\omo{\wnu}{v'''},\omo{\wnu}{v'}}\circ\begin{pmatrix}
        0&0\\0&1
    \end{pmatrix}\\
    & =   & -\fin{\xis_{v}}\circ f_{\omo{\wnu}{v},\omo{\wmu}{u''}}\circ\fout{\xis_{u''}}+\fin{\xis_{v'}}\circ f_{\omo{\wnu}{v'},\omo{\wmu}{u'''}}\circ\fout{\xis_{u'''}} \\
         & & -\fin{\xis_{v''}}\circ f_{\omo{\wnu}{v''},\omo{\wmu}{u'}}\circ\fout{\xis_{u'}}+\fin{\xis_{v'''}}\circ f_{\omo{\wnu}{v'''},\omo{\wmu}{u'}}\circ\fout{\xis_{u'}}\\
    & = & -\psi(\asu{\wsx}{u}{u''},\asu{\wtx}{v''}{v})+\psi(\asu{\wsx}{u'}{u'''}, \asu{\wsx}{v'}{v'''}).
    \end{array}$$

    \textbf{(b).} If $\hks_u=\hkt_v$ is an edge of a once-punctured monogon and exactly one arc segment in the punctured side of $(u,v)$ is interior, there are the following subcases.
    \begin{enumerate}
        \item The non-interior arc segment in the punctured side of $(u,v)$ is of $\wsx$ and has tagging $+$. Then $\ec{u}=\{u\}$, $\ec{v}=\{v,v'\}$ and $\wks_u=\wkt_v=(i,j)^+$. Write the unpunctured arc segments $\asu{\wsx}{u''}{u}$, $\asu{\wsx}{v''}{v}$ and $\asu{\wsx}{v'}{v'''}$. So the unpunctured side of $(u,v)$ (resp. $(u,v')$) is $(\asu{\wsx}{u''}{u},\asu{\wtx}{v''}{v})$ (resp. $(\asu{\wsx}{u''}{u}, \asu{\wsx}{v'}{v'''})$). Note that $\omo{\wmu}{u}=(i,j^+)$ and $\omo{\wnu}{v}=\omo{\wnu}{v'}=(i,j)$. Then we have
        $$\begin{array}{rcl}
        d(\iota(u,v)) & = & \begin{pmatrix}
        1\\0
        \end{pmatrix}\circ\fin{\xis_{u}}\circ f_{(i,j^+),\omo{\wmu}{u''}}\circ\fout{\xis_{u''}}\\
         & & +\fin{\xis_{v''}}\circ f_{\omo{\wnu}{v''},(i,j)}\circ\fout{\xis_{v}}\circ\begin{pmatrix}
        1\\0
        \end{pmatrix}+\fin{\xis_{v'''}}\circ f_{\omo{\wnu}{v'''},(i,j)}\circ\fout{\xis_{v'}}\circ\begin{pmatrix}
        1\\0
        \end{pmatrix}\\
        & = & \begin{pmatrix}
        f_{(i,j^+),\omo{\wmu}{u''}}\\0
        \end{pmatrix}\circ\fout{\xis_{u''}}+\fin{\xis_{v''}}\circ f_{\omo{\wnu}{v''},(i,j^+)}\\
        & =  & \fin{\xis_{v}}\circ f_{\omo{\wnu}{v},\omo{\wmu}{u''}}\circ\fout{\xis_{u''}}+\fin{\xis_{v''}}\circ f_{\omo{\wnu}{v''},\omo{\wmu}{u}}\circ\fout{\xis_{u}}\\
        & = & \psi(\asu{\wsx}{u''}{u},\asu{\wtx}{v''}{v}).
        \end{array}$$
        In this case, there is an arrow in $Q^{\ws,\wt[-1]}_{2,o}$ from $(u,v)$ to $(u,v')$.
        \item The non-interior arc segment in the punctured side of $(u,v)$ is of $\wsx$ and has tagging $-$. Then $\ec{u}=\{u\}$, $\ec{v}=\{v,v'\}$ and $\wks_u=\wkt_v=(i,j)^-$. Write the unpunctured arc segments $\asu{\wsx}{u''}{u}$, $\asu{\wsx}{v''}{v}$ and $\asu{\wsx}{v'}{v'''}$. So the unpunctured side of $(u,v)$ (resp. $(u,v')$) is $(\asu{\wsx}{u''}{u},\asu{\wtx}{v''}{v})$ (resp. $(\asu{\wsx}{u''}{u}, \asu{\wsx}{v'}{v'''})$). Note that $\omo{\wmu}{u}=(i,j^-)$ and $\omo{\wnu}{v}=\omo{\wnu}{v'}=(i,j)$. Then we have
        $$\begin{array}{rcl}
        d(\iota(u,v)) & = & \begin{pmatrix}
        0\\1
        \end{pmatrix}\circ\fin{\xis_{u}}\circ f_{(i,j^-),\omo{\wmu}{u''}}\circ\fout{\xis_{u''}}\\
         & & +\fin{\xis_{v''}}\circ f_{\omo{\wnu}{v''},(i,j)}\circ\fout{\xis_{v}}\circ\begin{pmatrix}
        0\\1
        \end{pmatrix}+\fin{\xis_{v'''}}\circ f_{\omo{\wnu}{v'''},(i,j)}\circ\fout{\xis_{v'}}\circ\begin{pmatrix}
        0\\1
        \end{pmatrix}\\
        & = & \begin{pmatrix}
        0\\f_{(i,j^-),\omo{\wmu}{u''}}
        \end{pmatrix}\circ\fout{\xis_{u''}}\\
        & & +\fin{\xis_{v''}}\circ f_{\omo{\wnu}{v''},(i,j^-)}-\fin{\xis_{v'''}}\circ f_{\omo{\wnu}{v'''},(i,j^-)}\\
        & =   & \fin{\xis_{v}}\circ f_{\omo{\wnu}{v},\omo{\wmu}{u''}}\circ\fout{\xis_{u''}}-f_{\omo{\wnu}{v'},\omo{\wmu}{u''}}\circ\fout{\xis_{u''}}\\
        & & +\fin{\xis_{v''}}\circ f_{\omo{\wnu}{v''},\omo{\wmu}{u}}\circ\fout{\xis_{u}}-\fin{\xis_{v'''}}\circ f_{\omo{\wnu}{v'''},\omo{\wmu}{u}}\circ\fout{\xis_{u}}\\
        & = & \psi(\asu{\wsx}{u''}{u},\asu{\wtx}{v''}{v})-\psi(\asu{\wsx}{u''}{u},\asu{\wtx}{v'}{v'''}).
        \end{array}$$
        In this case, there is an arrow in $Q^{\ws,\wt[-1]}_{2,o}$ from $(u,v')$ to $(u,v)$.
        \item The non-interior arc segment in the punctured side of $(u,v)$ is of $\wtx$ and has tagging $+$. Then $\ec{u}=\{u,u'\}$, $\ec{v}=\{v\}$ and $\wks_u=\wkt_v=(i,j)^+$. Write the unpunctured arc segments $\asu{\wsx}{u''}{u}$, $\asu{\wsx}{u'}{u'''}$ and $\asu{\wsx}{v''}{v}$. So the unpunctured side of $(u,v)$ (resp. $(u',v)$) is $(\asu{\wsx}{u''}{u},\asu{\wtx}{v''}{v})$ (resp. $(\asu{\wsx}{u'}{u'''}, \asu{\wsx}{v''}{v})$). Note that $\omo{\wmu}{u}=\omo{\wmu}{u'}=(i,j)$ and $\omo{\wnu}{v}=(i,j^+)$. Then we have
        $$\begin{array}{rcl}
        d(\iota(u,v)) & = & \begin{pmatrix}
        1&0
        \end{pmatrix}\circ\fin{\xis_{u}}\circ f_{(i,j),\omo{\wmu}{u''}}\circ\fout{\xis_{u''}}+\begin{pmatrix}
        1&0
        \end{pmatrix}\circ\fin{\xis_{u'}}\circ f_{(i,j),\omo{\wmu}{u'''}}\circ\fout{\xis_{u'''}}\\
         & & +\fin{\xis_{v''}}\circ f_{\omo{\wnu}{v''},(i,j^+)}\circ\fout{\xis_{v}}\circ\begin{pmatrix}
        1&0
        \end{pmatrix}\\
        & = & f_{(i,j^+),\omo{\wmu}{u''}}\circ\fout{\xis_{u''}}+ f_{(i,j^+),\omo{\wmu}{u'''}}\circ\fout{\xis_{u'''}}\\
         & & +\fin{\xis_{v''}}\circ \begin{pmatrix}
        f_{\omo{\wnu}{v''},(i,j^+)}&0
        \end{pmatrix}\\
        & =   & \fin{\xis_{v}}\circ f_{\omo{\wnu}{v},\omo{\wmu}{u''}}\circ\fout{\xis_{u''}}+ \fin{\xis_{v}}\circ f_{\omo{\wnu}{v},\omo{\wmu}{u'''}}\circ\fout{\xis_{u'''}}\\
        & &+\fin{\xis_{v''}}\circ f_{\omo{\wnu}{v''},\omo{\wmu}{u}}\circ\fout{\xis_{u}}+\fin{\xis_{v''}}\circ f_{\omo{\wnu}{v''},\omo{\wmu}{u'}}\circ\fout{\xis_{u'}}\\
        & = & \psi(\asu{\wsx}{u''}{u},\asu{\wtx}{v''}{v})+\psi(\asu{\wsx}{u'}{u'''},\asu{\wtx}{v''}{v}).
        \end{array}$$
        In this case, there is an arrow in $Q^{\ws,\wt[-1]}_{2,o}$ from $(u',v)$ to $(u,v)$.
        \item The non-interior arc segment in the punctured side of $(u,v)$ is of $\wtx$ and has tagging $-$. Then $\ec{u}=\{u,u'\}$, $\ec{v}=\{v\}$ and $\wks_u=\wkt_v=(i,j)^-$. Write the unpunctured arc segments $\asu{\wsx}{u''}{u}$, $\asu{\wsx}{u'}{u'''}$ and $\asu{\wsx}{v''}{v}$. So the unpunctured side of $(u,v)$ (resp. $(u',v)$) is $(\asu{\wsx}{u''}{u},\asu{\wtx}{v''}{v})$ (resp. $(\asu{\wsx}{u'}{u'''}, \asu{\wsx}{v''}{v})$). Note that $\omo{\wmu}{u}=\omo{\wmu}{u'}=(i,j)$ and $\omo{\wnu}{v}=(i,j^-)$. Then we have
        $$\begin{array}{rcl}
        d(\iota(u,v)) & = & \begin{pmatrix}
        0&1
        \end{pmatrix}\circ\fin{\xis_{u}}\circ f_{(i,j),\omo{\wmu}{u''}}\circ\fout{\xis_{u''}}+\begin{pmatrix}
        0&1
        \end{pmatrix}\circ\fin{\xis_{u'}}\circ f_{(i,j),\omo{\wmu}{u'''}}\circ\fout{\xis_{u'''}}\\
         & & +\fin{\xis_{v''}}\circ f_{\omo{\wnu}{v''},(i,j^-)}\circ\fout{\xis_{v}}\circ\begin{pmatrix}
        0&1
        \end{pmatrix}\\
        & = & f_{(i,j^-),\omo{\wmu}{u''}}\circ\fout{\xis_{u''}}+\fin{\xis_{v''}}\circ \begin{pmatrix}
        0&f_{\omo{\wnu}{v''},(i,j^-)}
        \end{pmatrix}\\
        & =   & \fin{\xis_{v}}\circ f_{\omo{\wnu}{v},\omo{\wmu}{u''}}\circ\fout{\xis_{u''}}+\fin{\xis_{v''}}\circ f_{\omo{\wnu}{v''},\omo{\wmu}{u}}\circ\fout{\xis_{u}}\\
        & = & \psi(\asu{\wsx}{u''}{u},\asu{\wtx}{v''}{v}).
        \end{array}$$
        In this case, there is an arrow in $Q^{\ws,\wt[-1]}_{2,o}$ from $(u,v)$ to $(u',v)$.
    \end{enumerate}

    If $\hks_u=\hkt_v$ is an edge of a once-punctured monogon and none of arc segments in the punctured side of $(u,v)$ is interior, since $\wks_u=\wkt_v$, the taggings of the arc segments in the punctured side of $(u,v)$ are the same. So we have the required formula.

    If $\hks_u=\hkt_v$ is not an edge of a once-punctured monogon, this case is trivial.

%=========================================================
%=========================================================
%=========================================================
%=========================================================
%=========================================================

\end{document}